\numberwithin{equation}{section}
\newtheorem{predlll}{Corollary}[section]
\newtheorem{pred}{Theorem}[section]
\newtheorem{predl}{Lemma}[section]
\newtheorem{predllll}{Proposition}[section]
\newtheorem{predll}{Proposition}[section]
\newtheorem*{predd}{Teoрема}
\newtheorem{pr*}{Theorem 1.}
\begin{document}

\sloppy

\thispagestyle{empty}
\begin{center}
{\bf \large O.G. Nakonechnyi \\[3pt] Yu.K. Podlipenko \\[3pt] Yu. V. Shestopalov \\}
\end{center}
\bigskip\bigskip \bigskip

\begin{center}
{\large \bf Estimation of parameters \\[3pt]
of boundary value problems
\\[3pt] for linear ordinary differential equations \\[5pt]
with uncertain data}
\end{center}
\bigskip

%\large

\renewcommand{\contentsname}%
{\begin{center}Contents\end{center}}

\newpage

\addtocontents{toc}{\large}

\tableofcontents

\newpage

\begin{abstract}
%In this paper we construct optimal, in certain sense, estimates of values of
%linear functionals on solutions to two-point boundary
%value problems for systems of linear first-order ordinary
%differential equations from observations which are linear transformations of the %same
%solutions. It is assumed here that right-hand sides of equations and
%boundary data as well as statistical characteristics of random noises in
%observations are not known and belong to a
%certain given sets in corresponding functional spaces.
%This leads to the necessity of the
%introduction of minimax statement of estimation problem,
%when we find optimal estimates, as linear with respect to observations estimates,
%on which minimum of
%the maximal mean square error of estimation taken over the above-mentioned sets
%is attained. Such estimates are called the minimax estimates.

%when
%optimal estimates, which are sought in the class of linear with respect to %observations estimates, are calculated from the condition of minimum of
%the maximal mean square error of estimation taken over the above-mentioned sets.

In this paper we construct optimal, in certain sense, estimates of
values of linear functionals on solutions to two-point boundary
value problems (BVPs) for systems of linear first-order ordinary
differential equations from observations which are linear
transformations of the same solutions perturbed by additive random
noises. It is assumed here that right-hand sides of equations and
boundary data as well as statistical characteristics of random
noises in observations are not known and belong to certain given
sets in corresponding functional spaces.
% satisfy some quadratic restrictions.
This leads to the necessity of introducing minimax statement of an
estimation problem when optimal estimates are defined as linear,
with respect to observations, estimates for which the maximum of mean
square error of estimation taken over the above-mentioned sets
attains minimal value. Such estimates are called minimax
estimates.

We establish that the minimax estimates are expressed via
solutions of some systems of differential equations of  special
type.

Similar estimation problems for solutions of BVPs for linear
differential equations of order $n$ with general boundary
conditions are considered.

We also elaborate minimax estimation methods under incomplete data
of unknown right-hand sides of equations and boundary data and
obtain representations for the corresponding minimax estimates.

%and prove the theorems on a general form of minimax estimates.

In all the cases estimation errors are determined.
\end{abstract}

\section*{}

\addcontentsline{toc}{section}{Introduction}

\begin{center}
{\bf \large {Introduction}}
\end{center}

%\newpage
%\section{Introduction}
Minimax estimation is studied in a big number of works;
one may refer e.g. to \cite{BIBL5a}--\cite{BIBL13}
and the bibliography therein.

Let us formulate a general approach to the problem. If a state of a system is described by a linear ordinary differential equation
$$
\frac{dx(t)}{dt}=Ax(t)+Bv_1(t),\quad x(t_0)=x_0,
$$
and a function $y(t)$ is observed in a time interval $[t_0,T]$,
where $y(t)=Hx(t)+v_2(t),$ $x(t) \in \mathbb R^n$, $v_2 \in
\mathbb R^m,$ $y\in \mathbb R^m,$ and $A,\,B,\, H$ are known
matrices, the minimax estimation problem consists in the most
accurate determination of a function $x(t)$ at the "worst"
realization of unknown quantities $(x_0,v_1(\cdot),v_2(\cdot))$
taken from a certain set. N.N. Krasovskii was the first who stated
this problem in \cite{BIBL12}. Under different constraints imposed
on function $v_2(t)$ and for known function $v_1(t)$ he proposed
various methods of estimating inner products $(a,x(T))$ in the
class of operations linear with respect to observations that
minimize the maximal error. Later these estimates were called
minimax a priori estimates (see \cite{BIBL12}, \cite{BIBL13}).

Fundamental results concerning estimation under uncertainties were obtained by A. B. Kurzhanskii (see \cite{BIBL13}, \cite{BIBL14}).

 The duality principle elaborated in \cite{BIBL12},
 \cite{BIBL13}, and \cite{BIBL5a} proved its efficiency for the determination of minimax estimates \cite{BIBL5a}.
 According to this principle, finding minimax a priori estimates can be reduced to a certain problem of
 optimal control of a system; this approach enabled one to obtain, under certain restrictions, recurrent equations, namely,
 the minimax Kalman--Bucy filter (see \cite{BIBL5a}).

In this work we consider the problems of minimax estimation of solutions to
two-point boundary value problems (BVPs) for systems of linear
first-order ordinary differential equations.
We find general form of the minimax estimates of solutions
%at linear
from observations on an interval and determine estimation errors.

In the second part of the work (section 6 and 7) we formulate and
solve the problems of estimation under incomplete data of the
values of linear functionals
%with respect to observed
from solutions
and right-hand sides of linear  differential equations of order
$n$ with general boundary conditions. Additional difficulties that
arise in the course of the analysis of these estimation problems
are connected with (i) the necessity of imposing certain
solvability conditions on the data (right-hand sides of the
equations and boundary conditions) and (ii) that their solutions
are determined
%to within eigenfunctions
up to solutions
of the corresponding
homogeneous problems.

\section{Preliminaries and auxiliary results}
Assume that it is given a vector-function $f(t)=(f_1(t),f_2(t)\ldots f_n(t))^T$ with the components belonging to space $L^2(0,T)$ and vectors
$f_0=(f_1^{(0)},f_2^{(0)},\ldots,f_m^{(0)})^T\in \mathbb R^m$ and $f_1=(f_1^{(1)},f_2^{(1)},\ldots,f_{n-m}^{(1)})^T\in \mathbb
R^{n-m}$.
Consider the following BVP: find a vector-function $\varphi(t)=(\varphi_1(t),\varphi_2(t)\ldots,\varphi_n(t))^T\in H^1(0,T)^n$ that satisfies a system of linear first-order ordinary differential equations
\begin{equation}
\label{dy1} \frac{d\varphi(t)}{dt}+A\varphi(t) = f(t),\quad t\in
(0,T),
\end{equation}
almost everywhere on an interval $(0,T)$ and the boundary conditions
\begin{equation}\label{dy2}
B_{0}\varphi(0)=f_0, \quad B_1\varphi(T)=f_1
\end{equation}
at the points $0$ and $T$. Here $A=A(t)$ is an $n\times n$ matrix
with the entries $a_{ij}=a_{ij}(t)$ continuous on $[0,T]$,
$\frac{d\varphi(t)}{dt}=(\frac{d\varphi_1(t)} {dt},\,
\frac{d\varphi_2(t)}{dt}\,
\ldots,\,\,\frac{d\varphi_n(t)}{dt})^T,$ $B_0=\{b^{(0)}_{rs}\},$
$r=\overline{1,m},$ $s=\overline{1,n},$ and $B_1=
\{b^{(1)}_{rs}\},$ $r=\overline{1,n-m},$ $s=\overline{1,n},$ are
$m\times n$ and $(n-m)\times n$ matrices of rank $m$ and $n-m$,
respectively, $T$ denotes transposition, and
 $H^1(a,b)$ is a space of
functions absolutely continuous on $[a,b]$
for which the derivative that exists almost everywhere on $(a,b)$
belongs to space
$L^2(a,b).$

The problem of finding a function $\varphi(t)$ that satisfies on $(0,T)$ the equation
\begin{equation}\label{dy10}
\frac{d\varphi(t)}{dt}+A\varphi(t) = 0,
\end{equation}
and the boundary conditions
\begin{equation}\label{dy20}
 B_{0}\varphi(0)=0, \quad
B_1\varphi(T)=0
\end{equation}
will be called the homogeneous BVP corresponding to BVP (\ref{dy1}), (\ref{dy2}).

The solution $\varphi(t)\equiv 0$ to homogeneous BVP
(\ref{dy10}), (\ref{dy20}) is called the trivial solution.

BVP (\ref{dy1}), (\ref{dy2}) can be written in a scalar form:
\begin{gather}\label{xa}
\begin{array}{ccc}
\varphi'_1(t)+a_{11}\varphi_1(t)+a_{12}\varphi_2(t)
+\cdots+a_{1n}\varphi_n(t)&=&f_1(t),\\
\varphi'_2(t)+a_{21}\varphi_1(t)+a_{22}\varphi_2(t)
+\cdots+a_{2n}\varphi_n(t)&=&f_1(t),\\ \cdot  \cdot\cdot \cdot
\cdot\cdot\cdot  \cdot\cdot\cdot \cdot\cdot \cdot  \cdot\cdot
\cdot  \cdot\cdot\cdot \cdot \cdot\cdot \cdot
\cdot\cdot\cdot\cdot\cdot\cdot
\cdot\cdot\cdot\cdot\cdot\cdot\cdot\cdot&\cdots&\cdots,\\
\varphi'_n(t)+a_{n1}\varphi_1(t)+a_{n2}\varphi_2(t)
+\cdots+a_{nn}\varphi_n(t)&=&f_n(t),
\end{array}\\
\begin{split}
\label{xa1} U_i(\varphi):=\sum_{q=1}^n
b^{(0)}_{iq}\varphi_q(0)=f_i^{(0)}, \quad i=\overline{1,m},\\
 U_{m+i}(\varphi):=\sum_{q=1}^n
b^{(1)}_{iq}\varphi_q(T)=f_i^{(1)}, \quad i=\overline{1,n-m}.
\end{split}
\end{gather}
Let
\begin{equation}\label{0x}
\varphi^{(i)}(t)=(\varphi^{(i)}_1(t),\varphi^{(i)}_2(t)
\ldots,\varphi^{(i)}_n(t))^T,\quad i=\overline{1,n},
\end{equation}
be a fundamental system of solutions to (\ref{dy10})
 (for the definition, see e.g. \cite{BIBLFED} p. 179). Then the solutions to
(\ref{dy10}), (\ref{dy20}) have the form
$$
\varphi(t)=c_1\varphi^{(1)}(t)+c_2\varphi^{(2)}(t)+\cdots+
c_n\varphi^{(n)}(t),
$$
where, by virtue of (\ref{dy20}), constants $c_1,\, c_2,\ldots,c_n$
must be such that
\begin{equation}\label{xa2x}
\begin{array}{ccc}
c_1U_1(\varphi^{(1)})+c_2U_1(\varphi^{(2)})
+\cdots+c_nU_1(\varphi^{(n)})&=&0,\\
c_1U_2(\varphi^{(1)})+c_2U_2(\varphi^{(2)})
+\cdots+c_nU_2(\varphi^{(n)})&=&0,\\ \cdot \cdot\cdot \cdot
\cdot\cdot\cdot  \cdot\cdot\cdot \cdot\cdot \cdot \cdot\cdot \cdot
\cdot\cdot\cdot \cdot \cdot\cdot \cdot
\cdot\cdot\cdot\cdot\cdot\cdot
\cdot\cdot\cdot\cdot\cdot\cdot\cdot\cdot&\cdots&\cdot\cdot,\\
c_1U_n(\varphi^{(1)})+c_2U_n(\varphi^{(2)})
+\cdots+c_nU_n(\varphi^{(n)})&=&0.
\end{array}
\end{equation}
Thus, if the matrix
\begin{equation}\label{xa2}
 \left(
\begin{array}{cccc}
U_1(\varphi^{(1)})&U_1(\varphi^{(2)})& \cdots&U_1(\varphi^{(n)})\\
U_2(\varphi^{(1)})&U_2(\varphi^{(2)})& \cdots&U_2(\varphi^{(n)})\\
\cdots &\cdots
 & \cdot\cdot\cdot&
\cdots
\\U_n(\varphi^{(1)})&U_n(\varphi^{(2)})& \cdots&U_n(\varphi^{(n)})
\end{array}
\right)
\end{equation}
has rank $n,$ the homogeneous BVP has only the trivial solution. The inverse statement is also valid:  if the homogeneous BVP has only the trivial solution then the rank of matrix (\ref{xa2})
equals $n.$ \label{pl} Indeed, following the reasoning that can be found e.g. in \cite{BIBLNaym}, assume that the rank of this matrix is $r<n$; then system (\ref{xa2x}) would have
$n-r$ linearly independent solutions
$c^{(i)}=(c_1^{(i)},\ldots,c_n^{(i)})^T,$ $i=\overline{1,n-r}$
(see e.g. \cite{BIBLKUR}, p. 85). Let us show that if this assumption holds then the functions
\begin{equation}\label{xa2x1}
\tilde
\varphi^{(i)}(x)=c_1^{(i)}\varphi^{(1)}(x)+\cdots+c_n^{(i)}\varphi^{(n)}(x)
\quad i=\overline{1,n-r},
\end{equation}
satisfying conditions (\ref{dy10}), (\ref{dy20}) will be linearly independent; that is, the equality
\begin{equation}\label{xa2x2}
\sum_{i=1}^{n-r}\alpha_i\tilde \varphi^{(i)}(x)=0
\end{equation}
is fulfilled only at $\alpha_i=0,$ $i=\overline{1,n-r}.$
Substituting  (\ref{xa2x1}) into (\ref{xa2x2}), we have
$$
\sum_{i=1}^{n-r}\alpha_i\sum_{k=1}^nc_k^{(i)}\varphi^{(k)}(x)=
\sum_{k=1}^n\varphi^{(k)}(x)\sum_{i=1}^{n-r}\alpha_ic_k^{(i)}
=\sum_{k=1}^n\beta_k\varphi^{(k)}(x) =0,
$$
where $\beta_k=\sum_{i=1}^{n-r}\alpha_ic_k^{(i)}.$ However, vector-functions
$\varphi^{(k)}(x),$ $k=\overline{1,n},$ are linearly independent; therefore, $\beta_k=0,$ $k=\overline{1,n},$ or
$\sum_{i=1}^{n-r}\alpha_ic_k^{(i)}=0,$ $k=\overline{1,n}.$ Then all $\alpha_i=0,$ $i=\overline{1,n-r},$ because vectors
$c^{(i)}=(c_1^{(i)},\ldots,c_n^{(i)})^T,$ $i=\overline{1,n-r},$
are linearly independent. Next, linear independence of functions
(\ref{xa2x1}) satisfying (\ref{dy10}), (\ref{dy20}),
contradicts the assumption that BVP
(\ref{dy10}), (\ref{dy20}) has only the trivial solution which means that the rank of matrix  (\ref{xa2}) is $n.$

Assume in what follows that homogeneous BVP
(\ref{dy10}), (\ref{dy20}) corresponding to BVP
(\ref{dy1}), (\ref{dy2}) has only the trivial solution. Show that under this assumption, initial BVP (\ref{dy1}),
(\ref{dy2}) is uniquely solvable at any right-hand sides
$f(t)=(f_1(t),f_2(t)\ldots f_n(t))^T,$
$f_0=(f_1^{(0)},f_2^{(0)},\ldots,f_m^{(0)})^T\in \mathbb R^m$, and
$f_1=(f_1^{(1)},f_2^{(1)},\ldots,f_{n-m}^{(1)})^T\in \mathbb
R^{n-m}.$

Indeed, let (\ref{0x}) be a fundamental system of solutions to homogeneous system  (\ref{dy10}) and
$\varphi^{(0)}(t)=(\varphi^{(0)}_1(t),\varphi^{(0)}_2(t)
\ldots,\varphi^{(0)}_n(t))^T$ a particular solution to
(\ref{dy1}). Then the general solution to system (\ref{dy1}) or to equivalent system (\ref{xa}) has the form
$$
\varphi(t)=c_1\varphi^{(1)}(t)+c_2\varphi^{(2)}(t)+\cdots+
c_n\varphi^{(n)}(t)+\varphi^{(0)}(t),
$$
where $c_i=\mbox{const}.$ This solution satisfies conditions
(\ref{dy2}) or equivalent conditions (\ref{xa1}) if coefficients $c_i,\,\, i=\overline{1,n},$ satisfy the system of linear algebraic equations
\begin{equation}\label{xa3}
\begin{array}{rcl}
c_1U_1(\varphi^{(1)})+c_2U_1(\varphi^{(2)})
+\cdots+c_nU_1(\varphi^{(n)})&=&f_1^{(0)}-U_1(\varphi^{(0)}),\\
c_1U_2(\varphi^{(1)})+c_2U_2(\varphi^{(2)})
+\cdots+c_nU_2(\varphi^{(n)})&=&f_2^{(0)}-U_2(\varphi^{(0)}),\\
\cdots \cdots\cdots\cdots\cdots \cdots\cdots\cdots\cdots
\cdots\cdots\cdots\cdots&\cdots&\cdots\cdots\cdots\cdots\cdot\cdot,\\
c_1U_m(\varphi^{(1)})+c_2U_m(\varphi^{(2)})
+\cdots+c_nU_m(\varphi^{(n)})&=&f_m^{(0)}-U_m(\varphi^{(0)}),\\
c_1U_{m+1}(\varphi^{(1)})+c_2U_{m+1}(\varphi^{(2)})
+\cdots+c_nU_{m+1}(\varphi^{(n)})&=&f_1^{(1)}-U_{m+1}(\varphi^{(0)}),\\
c_1U_{m+2}(\varphi^{(1)})+c_2U_{m+2}(\varphi^{(2)})
+\cdots+c_nU_{m+2}(\varphi^{(n)})&=&f_2^{(1)}-U_{m+2}(\varphi^{(0)}),\\
\cdots \cdots\cdots\cdots\cdots \cdots\cdots\cdots\cdots
\cdots\cdots\cdots\cdots&\cdots&\cdots\cdots\cdots\cdots\cdot\cdot,\\
c_1U_n(\varphi^{(1)})+c_2U_n(\varphi^{(2)})
+\cdots+c_nU_n(\varphi^{(n)})&=&f_{n-m}^{(1)}-U_n(\varphi^{(0)}).
\end{array}
\end{equation}
The rank of matrix (\ref{xa2}) of this system is $n $ because
homogeneous BVP (\ref{dy10}), (\ref{dy20}) has only the trivial
solution. Therefore, system (\ref{xa3}) and, consequently, BVP
(\ref{dy1}), (\ref{dy2}), have the unique solution. We have proved
the following
\begin{pred}
Inhomogeneous BVP (\ref{dy1}),
(\ref{dy2}) is uniquely solvable if and only if the corresponding homogeneous BVP (\ref{dy10}),
(\ref{dy20}) has only the trivial solution.
\end{pred}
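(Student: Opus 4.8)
The plan is to reduce both properties in the statement to a single linear-algebraic condition: the nonsingularity of the $n\times n$ characteristic matrix (\ref{xa2}) whose entries are the boundary functionals $U_i$ evaluated on a fundamental system of solutions. Since the homogeneous BVP (\ref{dy10}), (\ref{dy20}) having only the trivial solution has already been shown equivalent to $\operatorname{rank}$ of (\ref{xa2}) being $n$, it suffices to show that unique solvability of the inhomogeneous BVP (\ref{dy1}), (\ref{dy2}) is equivalent to the same rank condition; the asserted ``if and only if'' then follows by transitivity.

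First I would fix, exactly as in the preceding discussion, a fundamental system of solutions $\varphi^{(1)},\dots,\varphi^{(n)}$ of the homogeneous equation (\ref{dy10}) together with one particular solution $\varphi^{(0)}$ of the inhomogeneous equation (\ref{dy1}); both exist globally on $[0,T]$ by the standard theory of linear systems with continuous coefficient matrix $A(t)$. Writing the general solution of (\ref{dy1}) as $\varphi(t)=\sum_{i=1}^{n}c_i\varphi^{(i)}(t)+\varphi^{(0)}(t)$ and inserting it into the boundary conditions (\ref{dy2}), the linearity of the functionals $U_i$ converts the requirement that $\varphi$ satisfy (\ref{dy2}) into the square linear algebraic system (\ref{xa3}) for the unknown coefficients $c_1,\dots,c_n$, whose coefficient matrix is precisely (\ref{xa2}).

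The core step is then purely algebraic. The inhomogeneous BVP is uniquely solvable for every admissible right-hand side $(f,f_0,f_1)$ if and only if the system (\ref{xa3}) has a unique solution $(c_1,\dots,c_n)$ for every right-hand side, which for a square system is equivalent to the determinant of (\ref{xa2}) being nonzero, i.e.\ to $\operatorname{rank}$ of (\ref{xa2}) equalling $n$. Combined with the equivalence established in the paragraph just before the theorem --- that the homogeneous BVP has only the trivial solution if and only if (\ref{xa2}) has rank $n$ --- this yields both implications at once. For a self-contained reading I would also record the coordinate-free form of the easy direction: if the inhomogeneous problem is uniquely solvable and $\psi$ were a nontrivial solution of the homogeneous BVP, then for any solution $\varphi$ of (\ref{dy1}), (\ref{dy2}) the function $\varphi+\psi$ would solve the same inhomogeneous problem with the same data, contradicting uniqueness; hence the homogeneous BVP admits only the trivial solution.

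The one point that genuinely requires care is the passage from \emph{uniqueness of the BVP solution} to \emph{invertibility of} (\ref{xa2}). Because (\ref{xa3}) is a \emph{square} system, uniqueness of $(c_1,\dots,c_n)$ is already equivalent to solvability for arbitrary right-hand sides, so no separate existence argument is needed; this is precisely where the finite dimensionality of the solution space of the homogeneous equation does the essential work, and it is the only place where one must not conflate existence with uniqueness. Everything else is routine bookkeeping of the correspondence between the BVP and the finite system (\ref{xa3}).
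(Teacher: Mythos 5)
Your proposal is correct and follows essentially the same route as the paper: fix a fundamental system plus a particular solution, reduce the boundary conditions to the square algebraic system (\ref{xa3}) with coefficient matrix (\ref{xa2}), and combine this with the previously established equivalence between triviality of the homogeneous BVP and $\operatorname{rank}$ of (\ref{xa2}) being $n$. The only difference is presentational: the paper argues in detail just the direction ``homogeneous trivial $\Rightarrow$ unique solvability'' and leaves the converse to the rank characterization, whereas you make both implications explicit (via the square-system dichotomy and the superposition argument), which is a harmless elaboration of the same proof.
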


%\section{Сопряженная краевая задача}
Formulate the notion of a BVP conjugate to
(\ref{dy1}), (\ref{dy2}). To this end, introduce the following designations: $E_k$ is the $k\times k $ unit matrix;  $O_{k,r}$ is the $k\times r $ null matrix; $B_{01}=\{b^{(0)}_{ri_k}\},$
$r=\overline{1,m},$ $k=\overline{1,m},$ is a square nondegenerate $m\times m$ submatrix of the matrix $B_0=
\{b^{(0)}_{rs}\},$ $r=\overline{1,m},$ $s=\overline{1,n}$;
$B_{02}=\{b^{(0)}_{rj_l}\},$ $r=\overline{1,m},$
$l=\overline{1,n-m},$ is an $m\times (n-m)$
submatrix of $B_0$ obtained as a result of deleting in $B_0$ all columns of matrix $B_{01}$ (so that
$\{j_1,\ldots,j_{n-m}\}=\{1,\ldots,n\}\setminus
\{i_1,\ldots,i_m\}$); $\hat
B_{0}=(-B_{02}^T(B_{01}^T)^{-1},E_{n-m})$ is\label{ox}
an $(n-m)\times n$ matrix such that its $i_k$th column equals $k$th
column of matrix $-B_{02}^T(B_{01}^T)^{-1}$ (its size is
$(n-m)\times m$), $k=\overline{1,m},$ and $j_l$th column equals
$l$th column of matrix $E_{n-m},$ $l=\overline{1,n-m};$
$\bar B_{0}=((B_{01}^T)^{-1},O_{m,n-m})$ is an
$m\times n$ matrix such that its $i_k$th column equals $k-$th column of matrix
$(B_{01}^T)^{-1},$ $k=\overline{1,m},$ and $j_l$th column equals
$l$th column of matrix $O_{m,n-m},$ $l=\overline{1,n-m};$
$\tilde B_{0}=(O_{n-m,m},E_{n-m})$ is an
$(n-m)\times n$ matrix such that its $i_k$th column equals $k$th column of matrix $O_{n-m,m},$ $k=\overline{1,m},$ and $j_l$th column equals
$l$th column of matrix $E_{n-m},$ $l=\overline{1,n-m}.$

Introduce more similar notations: $B_{11}=\{b^{(1)}_{ri'_{k}}\},$ $
r=\overline{1,n-m},$ $k=\overline{1,n-m},$ is a square nondegenerate $(n-m)\times (n-m)$  submatrix  of the matrix
$B_1= \{b^{(1)}_{rs}\},$ $r=\overline{1,n-m},$ $s=\overline{1,n};$
$B_{12}=\{b^{(1)}_{rj'_{l}}\},$ $ r=\overline{1,n-m},$
$l=\overline{1,m},$ is a $(n-m)\times m$  submatrix  of the matrix $B_1$ obtained as a result of deleting in $B_1$ all columns of matrix $B_{11}$ (so that
$\{j'_1,\ldots,j'_m\}=\{1,\ldots,n\}\setminus
\{i'_1,\ldots,i'_{n-m}\}$); $\hat
B_{1}=(-B_{12}^T(B_{11}^T)^{-1},E_m)$ is an
$m\times n$ matrix such that its $i'_{k}$th column equals $k$th column of matrix  $-B_{12}^T(B_{11}^T)^{-1}$ (the size of the latter is $m\times
(n-m)$), $k=\overline{1,n-m},$ and $j'_{l}$th column equals $l$th
column of matrix $E_m,$ $l=\overline{1,m};$  $\bar
B_{1}=((B_{11}^T)^{-1},O_{n-m,m})$ is\label{ox1} an $(n-m)\times n$ matrix such that its  $i'_{k}$th column equals
$k$th column of matrix $(B_{11}^T)^{-1},$ $k=\overline{1,n-m},$ and
$j'_{l}$th column equals $l$th column of matrix $O_{n-m,m},$
$l=\overline{1,m};$ $\tilde B_{1}=(O_{m,n-m},E_m)$ is an $m\times n$ matrix such that its $i'_k$th column equals
$k$th column of matrix $O_{m,n-m},$ $k=\overline{1,n-m},$ and
$j'_l$th column equals $l$th column of matrix $E_m,$
$l=\overline{1,m}.$

By
$$
(u,v)_N=\sum_{i=1}^Nu_iv_i
$$
we will denote the inner product of vectors $u=(u_1,\ldots,u_N)^T$ and
$v=(v_1,\ldots,v_N)^T$ in the Euclidean space $\mathbb R^N.$
Set
$$
L=\frac{d}{dt}+A,
$$
then
$$
L\varphi(t)=\frac{d\varphi(t)}{dt}+A\varphi(t).
$$
Calculate the inner product of both sides of the latter equality and the vector-function
$\psi(t)=(\psi_1(t),\ldots,\psi_n(t))^T$ and integrate the result from $0$ to $T$ to obtain
$$
\int_0^T(L\varphi(t),\psi(t))_ndt=
\int_0^T\left(\frac{d\varphi(t)}{dt}+A\varphi(t),
\psi(t)\right)_ndt
$$
$$
=\int_0^T\left(\frac{d\varphi(t)}{dt},
\psi(t)\right)_ndt+\int_0^T\left(A\varphi(t), \psi(t)\right)_ndt
$$
$$
=\int_0^T\sum_{i=1}^n\frac{d\varphi_i(t)}{dt}\psi_i(t)dt+
\int_0^T\sum_{i=1}^n\sum_{j=1}^na_{ij}\varphi_j(t)\psi_i(t)dt
$$
$$
=\sum_{i=1}^n\varphi_i(t)\psi_i(t)\left|_0^T\right.-
\int_0^T\sum_{i=1}^n\frac{d\psi_i(t)}{dt}\varphi_i(t)dt +
\int_0^T\sum_{j=1}^n\left(\sum_{i=1}^na_{ij}\psi_i(t)
\varphi_j(t)\right)dt
$$
$$
\!\!\!\!=\!(\varphi(T),\psi(T))_n\!-(\varphi(0),\psi(0))_n\!-\!
\int_0^T\!\!\left(\frac{d\psi(t)}{dt}, \varphi(t)\right)_n\!dt+\!
\int_0^T\!\!\sum_{i=1}^n\!\!\left(\sum_{j=1}^n\!a_{ij}\psi_j(t)
\varphi_i(t)\right)\!dt
$$
$$
\!\!\!\!=(\varphi(T),\psi(T))_n-(\varphi(0),\psi(0))_n+
\int_0^T\left(-\frac{d\psi(t)}{dt}, \varphi(t)\right)_ndt+
\int_0^T\left(\varphi(t),A^T\psi(t)\right)_ndt
$$
\begin{equation}\label{xa5}
=(\varphi(T),\psi(T))_n-(\varphi(0),\psi(0))_n+
\int_0^T\left(\varphi(t),L^*\psi(t)\right)_n,
\end{equation}
where the differential operator
$$
L^*=-\frac{d}{dt}+A^T
$$
will be called formally conjugate to operator $L.$

Let us show that the integrands in (\ref{xa5})
can be represented as
$$
(\psi(T),\varphi(T))_n-(\psi(0),\varphi(0))_n=(\bar
B_1\psi(T),B_1\varphi(T))_{n-m}+(\hat{B}_1\psi(T),\tilde B_1
\varphi(T))_m
$$
\begin{equation}\label{xa6}
-(\bar B_0\psi(0),B_0\varphi(0))_m-(\hat{B}_0\psi(0),\tilde B_0
\varphi(0))_{n-m}.
\end{equation}
Note first that
$$
B_0\varphi(0)=\left(
\begin{array}{c}
\sum_{q=1}^nb_{1q}^{(0)}\varphi_q(0)\\ \vdots\\
\sum_{q=1}^nb_{nq}^{(0)}\varphi_q(0)
\end{array}\right)=
\left(
\begin{array}{c}
\sum_{k=1}^nb_{1i_k}^{(0)}\varphi_{i_k}(0)
+\sum_{l=1}^{n-m}b_{1j_l}^{(0)}\varphi_{j_l}(0)\\ \vdots\\
\sum_{k=1}^nb_{mi_k}^{(0)}\varphi_{i_k}(0)
+\sum_{l=1}^{n-m}b_{mj_l}^{(0)}\varphi_{j_l}(0)
\end{array}\right)
$$
$$
=B_{01}\varphi^{(0)}_1(0)+B_{02}\varphi^{(0)}_2(0),
$$
where
$$
\varphi^{(0)}_1(0):=\left(
\begin{array}{c}
\varphi_{i_1}(0)\\ \vdots\\ \varphi_{i_m}(0)\end{array}\right),
\quad \varphi^{(0)}_2(0):=\left(
\begin{array}{c}
\varphi_{j_1}(0)\\ \vdots\\
\varphi_{j_{n-m}}(0)\end{array}\right).
$$
Then \label{pt} $\varphi^{(0)}_1(0)=B_{01}^{-1}B_0\varphi(0)
-B_{01}^{-1}B_{02}\varphi^{(0)}_2(0),$ and
$$
(\psi(0),\varphi(0))_n=(\psi^{(0)}_1(0),\varphi^{(0)}_1(0))_m
+(\psi^{(0)}_2(0),\varphi^{(0)}_2(0))_{n-m}
$$
$$
=
(\psi^{(0)}_1(0),B_{01}^{-1}B_0\varphi(0))_m
-(\psi^{(0)}_1(0),B_{01}^{-1}B_{02}
\varphi^{(0)}_2(0))_m+(\psi^{(0)}_2(0),\varphi^{(0)}_2(0))_{n-m}
$$
$$
=(\bar B_0\psi(0),B_0\varphi(0))_m
-(B_{02}^T(B_{01}^T)^{-1}\psi^{(0)}_1(0),
\varphi^{(0)}_2(0))_{n-m}+(\psi^{(0)}_2(0),
\varphi^{(0)}_2(0))_{n-m}
$$
$$
=(\bar B_0\psi(0),B_0\varphi(0))_m+
((-B_{02}^T(B_{01}^T)^{-1},0)\psi(0),\varphi^{(0)}_2(0) )_{n-m}
$$
$$
+\bigl((0,E_{n-m})\psi(0),\varphi^{(0)}_2(0) \bigr)_{n-m},
$$
where $\psi^{(0)}_1(0)$ and $\psi^{(0)}_2(0)$ are vectors composed of components of vector $\psi(0)$ with the numbers equal to the numbers of components of
vectors $\varphi^{(0)}_1(0)$ and $\varphi^{(0)}_2(0)$, respectively.
Taking into account that
$$
(-B_{02}^T(B_{01}^T)^{-1},O_{n-m,n-m})+(O_{n-m,m},E_{n-m})
=(-B_{02}^T(B_{01}^T)^{-1},E_{n-m})=\hat B_0,
$$
we have
$$
(\psi(0),\varphi(0))_n=(\bar
B_0\psi(0),B_0\varphi(0))_m+(\hat{B}_0\psi(0),\tilde B_0
\varphi(0))_{n-m}.
$$
Thus
$$(\psi(T),\varphi(T))_n=(\bar
B_1\psi(T),B_1\varphi(T))_{n-m}+(\hat{B}_1\psi(T),\tilde B_1
\varphi(T))_m.
$$
These two equalities yield representation  (\ref{xa6});
using the latter and (\ref{xa5}), we obtain
$$
\int_0^T(L\varphi(t),\psi(t))_ndt=(\bar
B_1\psi(T),B_1\varphi(T))_{n-m}+(\hat{B}_1\psi(T),\tilde B_1
\varphi(T))_m
$$
\begin{equation}\label{xa7}
-(\bar B_0\psi(0),B_0\varphi(0))_m-(\hat{B}_0\psi(0),\tilde B_0
\varphi(0))_{n-m}+ \int_0^T\left(\varphi(t),L^*\psi(t)\right)_n.
\end{equation}
In order to write the sum of the first four terms on the
right-hand side of (\ref{xa7}) in a scalar form, introduce the
following notations:
\begin{equation}\label{xa8}
\left(
\begin{array}{c}
U_{n+1}(\varphi)\\ \vdots\\ U_{2n-m}(\varphi)
\end{array}\right):=\tilde B_0\varphi(0)=\left(
\begin{array}{c}
\sum_{q=1}^n \tilde b^{(0)}_{1q}\varphi_q(0)\\ \vdots\\
\sum_{q=1}^n \tilde b^{(0)}_{n-m,q}\varphi_q(0)
\end{array}\right),
\end{equation}
\begin{equation}\label{xa9}
\left(
\begin{array}{c}
U_{2n-m+1}(\varphi)\\ \vdots\\ U_{2n}(\varphi)
\end{array}\right):=\tilde B_1\varphi(T)=\left(
\begin{array}{c}
\sum_{q=1}^n \tilde b^{(1)}_{1q}\varphi_q(T)\\ \vdots\\
\sum_{q=1}^n \tilde b^{(1)}_{m,q}\varphi_q(T)
\end{array}\right),
\end{equation}
\begin{equation}\label{xa10}
\left(
\begin{array}{c}
V_{2n}(\psi)\\ \vdots\\ V_{2n-m+1}(\psi)
\end{array}\right):=\bar B_0\psi(0)=\left(
\begin{array}{c}
\sum_{q=1}^n \bar b^{(0)}_{1q}\psi_q(0)\\ \vdots\\ \sum_{q=1}^n
\bar b^{(0)}_{m,q}\psi_q(0)
\end{array}\right),
\end{equation}
\begin{equation}\label{xa11}
\left(
\begin{array}{c}
V_{2n-m}(\psi)\\ \vdots\\ V_{n+1}(\psi)
\end{array}\right):=\bar B_1\psi(T)=\left(
\begin{array}{c}
\sum_{q=1}^n \bar b^{(1)}_{1q}\psi_q(T)\\ \vdots\\ \sum_{q=1}^n
\bar b^{(1)}_{n-m,q}\psi_q(T)
\end{array}\right),
\end{equation}
\begin{equation}\label{xa12}
\left(
\begin{array}{c}
V_{n}(\psi)\\ \vdots\\ V_{m+1}(\psi)
\end{array}\right):=\hat B_0\psi(0)=\left(
\begin{array}{c}
\sum_{q=1}^n \hat b^{(0)}_{1q}\psi_q(0)\\ \vdots\\ \sum_{q=1}^n
\hat b^{(0)}_{n-m,q}\psi_q(0)
\end{array}\right),
\end{equation}
\begin{equation}\label{xa13}
\left(
\begin{array}{c}
V_{m}(\psi)\\ \vdots\\ V_{1}(\psi)
\end{array}\right):=\hat B_1\psi(T)=\left(
\begin{array}{c}
\sum_{q=1}^n \hat b^{(1)}_{1q}\psi_q(T)\\ \vdots\\ \sum_{q=1}^n
\hat b^{(1)}_{m,q}\psi_q(T)
\end{array}\right).
\end{equation}
The equality (\ref{xa7}) can be written as
$$
\int_0^T(L\varphi(t),\psi(t))_ndt-
\int_0^T\left(\varphi(t),L^*\psi(t)\right)_n
$$
$$
=-U_1(\varphi)V_{2n}(\psi) -U_2(\varphi)V_{2n-1}(\psi)-\cdots-
U_m(\varphi)V_{2n-m+1}(\psi)
$$
$$
+U_{m+1}(\varphi)V_{2n-m}(\psi)
+U_{m+2}(\varphi)V_{2n-m-1}(\psi)+\cdots+
U_n(\varphi)V_{n+1}(\psi)
$$
$$
-U_{n+1}(\varphi)V_{n}(\psi)
-U_{n+2}(\varphi)V_{n-1}(\psi)-\cdots-
U_{2n-m}(\varphi)V_{m+1}(\psi)
$$
\begin{equation}\label{xa77}
 + U_{2n-m+1}(\varphi)V_{m}(\psi)
+U_{2n-m+2}(\varphi)V_{m-1}(\psi)+\cdots+
U_{2n}(\varphi)V_{1}(\psi).
\end{equation}

Now we can introduce the notion of the conjugate BVP.
\begin{predlll}
The homogeneous BVP
\begin{gather}\label{xax3}
L^*\psi(t)=0, \quad t \in (0,T),\\ \label{xax4} \hat B_0
\psi(0)=0, \quad \hat B_1 \psi(T)=0,
\end{gather}
is called conjugate to homogeneous BVP (\ref{dy10}),
(\ref{dy20}).
\end{predlll}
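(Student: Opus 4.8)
The plan is to justify the terminology by showing that the boundary conditions (\ref{xax4}) are precisely those that turn the Lagrange identity (\ref{xa77}) into the symmetric Green formula
\[
\int_0^T(L\varphi(t),\psi(t))_n\,dt=\int_0^T(\varphi(t),L^*\psi(t))_n\,dt
\]
for every $\varphi$ satisfying the homogeneous conditions (\ref{dy20}). First I would impose (\ref{dy20}) on $\varphi$: by the definitions (\ref{xa1}) this is exactly $U_1(\varphi)=\cdots=U_n(\varphi)=0$, so the first two groups of products on the right-hand side of (\ref{xa77}) — those pairing $U_1,\ldots,U_n$ with $V_{2n},\ldots,V_{n+1}$ — drop out identically. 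What survives is
\[
-\sum_{i=1}^{n-m}U_{n+i}(\varphi)\,V_{n+1-i}(\psi)+\sum_{i=1}^{m}U_{2n-m+i}(\varphi)\,V_{m+1-i}(\psi),
\]
in which, by (\ref{xa8}) and (\ref{xa9}), the coefficients $U_{n+1},\ldots,U_{2n-m}$ are the components of $\tilde B_0\varphi(0)$ and $U_{2n-m+1},\ldots,U_{2n}$ are the components of $\tilde B_1\varphi(T)$.

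The key step is to observe that these surviving coefficients are free. Indeed, $\tilde B_0=(O_{n-m,m},E_{n-m})$ and $\tilde B_1=(O_{m,n-m},E_m)$ were built (page~\pageref{ox}) to select exactly the components $\varphi_{j_l}(0)$ and $\varphi_{j'_l}(T)$ complementary to those fixed by $B_0$ and $B_1$; in particular $\tilde B_0\varphi(0)=\varphi^{(0)}_2(0)$. Since, for $\varphi\in H^1(0,T)^n$, the endpoint vectors $\varphi(0)$ and $\varphi(T)$ may be prescribed independently subject only to $B_0\varphi(0)=0$ and $B_1\varphi(T)=0$, the relation $\varphi^{(0)}_1(0)=-B_{01}^{-1}B_{02}\varphi^{(0)}_2(0)$ derived on page~\pageref{pt} shows that the free block $\varphi^{(0)}_2(0)$ — hence $\tilde B_0\varphi(0)$ — ranges over all of $\mathbb R^{n-m}$, and the analogous statement holds at $t=T$ for $\tilde B_1\varphi(T)$. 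Consequently the displayed bilinear form vanishes for every admissible $\varphi$ if and only if
\[
V_{m+1}(\psi)=\cdots=V_n(\psi)=0\quad\text{and}\quad V_1(\psi)=\cdots=V_m(\psi)=0.
\]

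Finally, by the definitions (\ref{xa12}) and (\ref{xa13}), the equalities $V_{m+1}(\psi)=\cdots=V_n(\psi)=0$ are exactly $\hat B_0\psi(0)=0$ and $V_1(\psi)=\cdots=V_m(\psi)=0$ are exactly $\hat B_1\psi(T)=0$, i.e. precisely the conjugate boundary conditions (\ref{xax4}). Adjoining the equation $L^*\psi=0$ of (\ref{xax3}) then yields the symmetric Green formula displayed above, which is the defining property that warrants calling (\ref{xax3}), (\ref{xax4}) the conjugate BVP. I expect the main obstacle to be the freeness claim of the second paragraph: one must ensure that no hidden coupling between $\varphi(0)$ and $\varphi(T)$ restricts the range of the pair $(\tilde B_0\varphi(0),\tilde B_1\varphi(T))$. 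This is exactly why it is essential that $\varphi$ be required only to lie in $H^1(0,T)^n$ with the two endpoint constraints and not to solve any differential equation; the complementary block structure of $\tilde B_0,\tilde B_1$ against $B_0,B_1$ then gives surjectivity onto $\mathbb R^{n-m}\times\mathbb R^m$, and hence the necessity of (\ref{xax4}).
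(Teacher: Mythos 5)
Your proposal is correct, but note that the statement you were asked to prove is in fact a definition: the paper offers no proof of it at all, simply introducing the conjugate BVP immediately after deriving the Lagrange identity (\ref{xa77}) and the designations (\ref{xa8})--(\ref{xa13}). What you have supplied is a genuine justification of the terminology, and it goes beyond what the paper does: you prove that the conditions $\hat B_0\psi(0)=0$, $\hat B_1\psi(T)=0$ are not merely sufficient but also necessary for the boundary terms in (\ref{xa77}) to vanish for every $\varphi\in H^1(0,T)^n$ satisfying (\ref{dy20}), i.e.\ they are exactly the adjoint boundary conditions in the classical sense. Your indexing of the surviving terms is consistent with (\ref{xa77}), and the key surjectivity claim is sound: since $B_0$ has rank $m$ with invertible block $B_{01}$, the constraint $B_0\varphi(0)=0$ leaves the block $\varphi^{(0)}_2(0)=\tilde B_0\varphi(0)$ completely free in $\mathbb R^{n-m}$, analogously $\tilde B_1\varphi(T)$ is free in $\mathbb R^{m}$, and endpoint values of an $H^1$ function can be prescribed independently (e.g.\ by linear interpolation), so the pair $(\tilde B_0\varphi(0),\tilde B_1\varphi(T))$ sweeps all of $\mathbb R^{n-m}\times\mathbb R^{m}$. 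You are also right that this necessity argument would collapse if $\varphi$ were restricted to solutions of $L\varphi=0$ with (\ref{dy20}): under the paper's standing assumption that homogeneous BVP has only the trivial solution, so the test class must be all of $H^1$ with the endpoint constraints. In short: the paper's derivation of (\ref{xa77}) yields only the sufficiency direction, which is what it actually uses later (e.g.\ in obtaining (\ref{xa15}) and Theorem 2); your argument adds the converse characterization, which is what truly warrants the name ``conjugate.''
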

\begin{predlll}
The inhomogeneous BVP
\begin{gather}\label{xax5}
L^*\psi(t)=\tilde f(t), \quad t\in (0,T),\\ \label{xax6} \hat
B_0\psi(0)=\tilde f_0, \quad \hat B_1\psi(T)=\tilde f_1,
\end{gather}
is called conjugate to inhomogeneous BVP (\ref{dy1}),
(\ref{dy2}).
\end{predlll}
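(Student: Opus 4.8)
The statement is definitional in character: it assigns the name \emph{conjugate} to the inhomogeneous BVP \eqref{xax5}--\eqref{xax6}, whose differential part $L^*=-d/dt+A^T$ and boundary operators $\hat B_0,\hat B_1$ have already been produced by the computation leading to \eqref{xa77}. Accordingly the plan is not to prove an implication but to justify that these data are \emph{forced}, i.e.\ that $L^*,\hat B_0,\hat B_1$ are exactly the objects for which the bilinear boundary concomitant in the Green--Lagrange identity \eqref{xa77} vanishes on the class of functions meeting the homogeneous boundary conditions of the original problem. Since the inhomogeneous corollary \eqref{xax5}--\eqref{xax6} is obtained from the homogeneous one \eqref{xax3}--\eqref{xax4} merely by allowing nonzero right-hand sides, I would first settle the homogeneous definition and then verify that the dimensions of $\tilde f_0,\tilde f_1$ are the correct ones.

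For the homogeneous step I would set $J(\varphi,\psi):=\int_0^T(L\varphi,\psi)_n\,dt-\int_0^T(\varphi,L^*\psi)_n\,dt$ and use \eqref{xa77} to record the crucial structural fact that $J$ depends only on the boundary values of $\varphi$ and $\psi$: it is the bilinear form $\sum_i (\pm)\,U_i(\varphi)V_{2n+1-i}(\psi)$, with the anti-diagonal pairing $U_i\leftrightarrow V_{2n+1-i}$ and the alternating signs displayed in \eqref{xa77}. I would then restrict $\varphi$ to the subspace of functions satisfying $B_0\varphi(0)=0$, $B_1\varphi(T)=0$, that is $U_1(\varphi)=\cdots=U_n(\varphi)=0$; on this subspace all terms of \eqref{xa77} carrying these factors drop out and $J$ collapses to the sum over $U_{n+1}(\varphi),\dots,U_{2n}(\varphi)$, which are the components of $\tilde B_0\varphi(0)$ and $\tilde B_1\varphi(T)$ by \eqref{xa8}--\eqref{xa9}. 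By the pairing these multiply precisely $V_n(\psi),\dots,V_1(\psi)$, namely the components of $\hat B_0\psi(0)$ and $\hat B_1\psi(T)$ by \eqref{xa12}--\eqref{xa13}. The decisive point is that $U_{n+1},\dots,U_{2n}$ are \emph{free} coordinates on this subspace: because $B_{01}$ and $B_{11}$ are nonsingular, the stacked matrices $\binom{B_0}{\tilde B_0}$ and $\binom{B_1}{\tilde B_1}$ are nonsingular $n\times n$ matrices (after the obvious column permutation each is block upper triangular with diagonal blocks $B_{01},E_{n-m}$, respectively $B_{11},E_m$), so $\bigl(\tilde B_0\varphi(0),\tilde B_1\varphi(T)\bigr)$ ranges over all of $\mathbb{R}^{n-m}\times\mathbb{R}^m$ as $\varphi$ ranges over the subspace. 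Hence $J(\varphi,\psi)=0$ for every such $\varphi$ if and only if $V_1(\psi)=\cdots=V_n(\psi)=0$, i.e.\ $\hat B_0\psi(0)=0$ and $\hat B_1\psi(T)=0$, which are exactly the conditions \eqref{xax4}.

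This forces the homogeneous conjugate problem, and the inhomogeneous corollary then follows by transcription: placing arbitrary data $\tilde f,\tilde f_0,\tilde f_1$ on the right-hand sides of the operator equation and of the two boundary operators yields \eqref{xax5}--\eqref{xax6}. The consistency point worth recording is the dimension bookkeeping: $\hat B_0$ is $(n-m)\times n$ and $\hat B_1$ is $m\times n$, so $\tilde f_0\in\mathbb{R}^{n-m}$ and $\tilde f_1\in\mathbb{R}^{m}$, with the total $ (n-m)+m=n$ matching the order of the first-order system and the roles of $m$ and $n-m$ at the two endpoints interchanged relative to \eqref{dy2}.

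I expect the only delicate step to be the careful tracking of the anti-diagonal index pairing and the alternating signs in \eqref{xa77}, together with the verification that the two stacked boundary matrices are nonsingular; once these are in hand the identification of the conjugate conditions is dictated rather than chosen. As a sanity check I would also note that the construction is an involution: since $L^{**}=L$ and applying the same bordering procedure to $\hat B_0,\hat B_1$ reproduces $B_0,B_1$, the conjugate of the conjugate problem is the original, which confirms that the term \emph{conjugate} is used consistently.
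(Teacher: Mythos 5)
Your proposal is correct, and it rests on exactly the computation the paper itself relies on. Note that the statement is purely definitional (despite the environment being labelled ``Corollary,'' nothing is asserted beyond attaching the name \emph{conjugate} to \eqref{xax5}--\eqref{xax6}), and the paper offers no proof at all: it derives the Green--Lagrange identity \eqref{xa77} with the notations \eqref{xa8}--\eqref{xa13} and then simply writes the conjugate problem down. What you do differently is to close the gap the paper leaves implicit: you show that $\hat B_0$ and $\hat B_1$ are \emph{forced}, i.e. that for $\varphi$ ranging over the functions with $B_0\varphi(0)=0$, $B_1\varphi(T)=0$, the boundary concomitant in \eqref{xa77} vanishes identically precisely when $\hat B_0\psi(0)=0$ and $\hat B_1\psi(T)=0$. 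Your key surjectivity step is sound: since $B_{01}$ and $B_{11}$ are nonsingular, the stacked matrices $\binom{B_0}{\tilde B_0}$ and $\binom{B_1}{\tilde B_1}$ are nonsingular (block triangular after the column permutation you indicate), so $\bigl(\tilde B_0\varphi(0),\tilde B_1\varphi(T)\bigr)$ sweeps all of $\mathbb{R}^{n-m}\times\mathbb{R}^{m}$, and the alternating signs in \eqref{xa77} are harmless because each coefficient $V_i(\psi)$, $i=\overline{1,n}$, must then vanish separately. This is the classical Naimark-style characterization of adjoint boundary conditions; it buys canonicity of the definition and makes the subsequent theorem (trivial solvability of the original homogeneous problem implies the same for the conjugate one) unsurprising rather than fortuitous. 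One small caution: your closing ``involution'' sanity check is overstated as written --- applying the construction to $\hat B_0,\hat B_1$ recovers boundary conditions \emph{equivalent} to $B_0\varphi(0)=0$, $B_1\varphi(T)=0$ (the same solution subspace), but not necessarily the literal matrices $B_0,B_1$, since the construction depends on the choice of the nonsingular submatrices $B_{01},B_{11}$; this does not affect your main argument.
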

Using designations (\ref{xa1}) and (\ref{xa8})--(\ref{xa13}),
we can write BVP (\ref{xax3}), (\ref{xax4}) conjugate to BVP
(\ref{dy10}), (\ref{dy20}) in the scalar form:
\begin{gather}\label{xax7}
\begin{array}{ccc}
-\psi'_1(t)+a_{11}\psi_1(t)+a_{21}\psi_2(t)
+\cdots+a_{n1}\psi_n(t)&=&0,\\
-\psi'_2(t)+a_{12}\psi_1(t)+a_{22}\psi_2(t)
+\cdots+a_{n2}\psi_n(t)&=&0,\\ \cdot  \cdot\cdot \cdot
\cdot\cdot\cdot \cdot\cdot\cdot \cdot\cdot \cdot  \cdot\cdot \cdot
\cdot\cdot\cdot \cdot \cdot\cdot \cdot
\cdot\cdot\cdot\cdot\cdot\cdot
\cdot\cdot\cdot\cdot\cdot\cdot\cdot\cdot&\cdots&\cdots,\\
-\psi'_n(t)+a_{1n}\psi_1(t)+a_{2n}\psi_2(t)
+\cdots+a_{nn}\psi_n(t)&=&0,
\end{array}\\
\label{xax8} V_i(\psi):=0, \quad i=\overline{1,n}.
\end{gather}
Let $z^{(1)}(t),$ $z^{(2)}(t),$ $\ldots,$
$z^{(n)}(t)$ is a fundamental system of solutions to the homogeneous system
$L^*\psi(t)=0$. Show that the rank of matrix
\begin{equation}\label{xa14}
 \left(
\begin{array}{cccc}
V_1(z^{(1)})&V_1(z^{(2)})& \cdots&V_1(z^{(n)})\\
V_2(z^{(1)})&V_2(z^{(2)})& \cdots&V_2(z^{(n)})\\ \cdots &\cdots
 & \cdot\cdot\cdot&
\cdots
\\V_n(z^{(1)})&V_n(z^{(2)})& \cdots&V_n(z^{(n)})
\end{array}
\right)
\end{equation}
equals $n.$
Assume that it is wrong and the rank of matrix (\ref{xa14})
is $r<n.$ Every solution of the equation $L^*\psi(t)=0$
and, in particular, of homogeneous  BVP (\ref{xax7}),
(\ref{xax8}) has the form
$$
\psi(t)=c_1z^{(1)}(t)+\cdots+c_nz^{(n)}(t),
$$
where $c_k=\mbox{const},$ $k=\overline{1,n}.$ Substituting the latter into (\ref{xax8}), we obtain a homogeneous linear equation system
\begin{equation}\label{xxa1}
\begin{array}{ccc}
V_1(\psi)=c_1V_1(z^{(1)})+\cdots+c_nV_1(z^{(n)})&=&0,\\
\cdots\cdots\cdots\cdots\cdots\cdots\cdots\cdots\cdots\cdots\cdots
&\cdots&\cdot,
 \\V_n(\psi)=c_1V_n(z^{(1)})+\cdots+c_nV_n(z^{(n)})&=&0
\end{array}
\end{equation}
with respect to constants $c_k,$ $k=\overline{1,n}.$
Since the rank of the system matrix equals $r$ and
$r<n,$ system (\ref{xxa1}) has $n-r$ linearly independent solutions $c^{(i)}=(c_1^{(i)},\ldots,c_n^{(i)})^T,$
$i=\overline{1,n-r}$;
%(см. цитированную выше книгу \cite{BIBLKUR}, с.85),
therefore, the functions
$$
\psi^{(i)}(t)=c_1^{(i)}z^{(1)}(t)+\cdots+c_nz^{(n)}(t),
$$
which solve  conjugate homogeneous BVP (\ref{xax7}), (\ref{xax8})
will be linearly independent (in line with the reasoning on p.
\pageref{pl}).

If now $\psi(t)$ is a solution to homogeneous BVP (\ref{xax7}),
(\ref{xax8}), then, if we set $\varphi(t)=\varphi^{(i)}(t),$ where
$\varphi^{(i)}(t),$ $i=\overline{1,n}$ is the fundamental system
of solutions to the homogeneous equation $L\varphi(t)=0,$ the
integrals in (\ref{xa77}) vanish. Also,
$V_1(\psi)=V_2(\psi)=\ldots=V_n(\psi)=0,$ and therefore,
(\ref{xa77}) takes the form
\begin{equation}\label{xa15}
\begin{array}{rcl}
\!U_1(\varphi^{(1)})(-V_{2n}(\psi))
\!+\!\cdots\!+\!U_m(\varphi^{(1)})(-V_{2n-m+1}(\psi))
\!+\!U_{m+1}(\varphi^{(1)})V_{2n-m}(\psi)\!+\!\cdots
\!+\!U_n(\varphi^{(1)})V_{n+1}(\psi)\! &=&\!0,\\ \!\cdots
\cdots\cdots\cdots\cdots\cdots \cdots\cdots\cdots\cdots
\cdots\cdots\cdots\cdots\cdots\cdots\cdots\cdot\cdot
\cdots\cdots\cdots\cdots\cdots\cdots\cdots\cdot\cdot
\cdots\cdots\cdots\cdots &\cdots& \cdot,\\
\!U_1(\varphi^{(n)})(-V_{2n}(\psi))
\!+\!\cdots\!+\!U_m(\varphi^{(n)})(-V_{2n-m+1}(\psi))
\!+\!U_{m+1}(\varphi^{(n)})V_{2n-m}(\psi)\!+\!\cdots
\!+\!U_n(\varphi^{(n)})V_{n+1}(\psi)\! &=&\!0.
\end{array}
\end{equation}
This system has $n-r$ linearly independent solutions
\begin{equation}\label{xxa15}
-V_{2n}(\psi^{(i)}),\,\cdots,\,-V_{2n-m+1}(\psi^{(i)}),\,
V_{2n-m}(\psi^{(i)}),\,\cdots,\,V_{n+1}(\psi^{(i)}) \quad
(i=1,2,\ldots,n-r),
\end{equation}
Indeed, if we assume their linear dependence, then the rank of
matrix
\begin{equation}\label{xa16}
 \left(
\begin{array}{cccccc}
V_{n+1}(\psi^{(1)})&\cdots&V_{2n-m}(\psi^{(1)})&
-V_{2n-m+1}(\psi^{(1)})&\cdots&-V_{2n}(\psi^{(1)})\\
\cdots\cdots\cdots &\cdots&\cdots\cdots\cdots&
 \cdots\cdots\cdots&\cdots&\cdots\cdots\cdots\\
 V_{n+1}(\psi^{(n-r)})&\cdots&V_{2n-m}(\psi^{((n-r))})&
-V_{2n-m+1}(\psi^{((n-r))})&\cdots&-V_{2n}(\psi^{((n-r))})
\end{array}
\right)
\end{equation}
will be less than $n-r.$ However, since the rank of (\ref{xa16})
equals the maximum number of its linearly independent  rows,
%(см., напр., цитированную выше книгу \cite{BIBLKUR}, стр.71), мы
there exist numbers $a_1,$ $\ldots,$ $a_{n-r},$ such that at least
one of them is nonzero and
$$
a_1V_i(\psi^{(1)})+a_2V_i(\psi^{(2)})+\cdots+
a_{n-r}V_i(\psi^{(n-r)})=0, \quad i=\overline{n+1,2n}
$$
or
$$
V_i\left(a_1\psi^{(1)}+a_2\psi^{(2)}+\cdots+
a_{n-r}\psi^{(n-r)}\right)=0, \quad i=\overline{n+1,2n}.
$$
Thus, setting
\begin{equation}\label{xaxx}
\psi(t)=a_1\psi^{(1)}(t)+a_2\psi^{(2)}(t)+\cdots+
a_{n-r}\psi^{(n-r)}(t),
\end{equation}
we have
$$
V_i(\psi)=0\quad (i=1,2,\ldots,n,n+1,\ldots,2n);
$$
in a more detailed form
\begin{gather}
\hat b_{m1}^{(1)}\psi_1(T)+\cdots+\hat
b_{mn}^{(1)}\psi_n(T)=0,\notag\\ \cdots \cdots\cdots \cdots \cdots
\cdots \cdots \cdots\cdots \cdots,\notag\\ \hat
b_{11}^{(1)}\psi_1(T)+\cdots+\hat b_{1n}^{(1)}\psi_n(T)=0,\notag\\
\hat b_{n-m,1}^{(0)}\psi_1(0)+\cdots+\hat
b_{n-m,n}^{(0)}\psi_n(0)=0,\notag\\ \cdots \cdots\cdots \cdots
\cdots \cdots \cdots \cdots\cdots \cdots,\notag\\ \hat
b_{11}^{(0)}\psi_1(0)+\cdots+\hat b_{1n}^{(0)}\psi_n(0)=0,\notag\\
\bar b_{n-m,1}^{(1)}\psi_1(T)+\cdots+\bar
b_{n-m,n}^{(1)}\psi_n(T)=0,\notag\\ \cdots \cdots\cdots \cdots
\cdots \cdots \cdots \cdots\cdots \cdots,\notag\\ \bar
b_{11}^{(1)}\psi_1(T)+\cdots+\bar b_{1n}^{(1)}\psi_n(T)=0,\notag\\
\bar b_{m,1}^{(0)}\psi_1(0)+\cdots+\bar
b_{m,n}^{(0)}\psi_n(0)=0,\notag\\ \cdots \cdots\cdots \cdots
\cdots \cdots \cdots \cdots\cdots \cdots,\notag\\ \label{xa17}
\bar b_{11}^{(0)}\psi_1(0)+\cdots+\bar b_{1n}^{(0)}\psi_n(0)=0.
\end{gather}
Show that the determinant of the $2n\times 2n$ matrix
\begin{equation}\label{xa18}
 \left(
\begin{array}{cccccc}
0&\cdots&0& \hat b_{m1}^{(1)}&\cdots&\hat b_{mn}^{(1)}\\ \cdots
&\cdots&\cdots& \cdots&\cdots&\cdots\\ 0&\cdots&0& \hat
b_{11}^{(1)}&\cdots&\hat b_{1n}^{(1)}\\
 \hat
b_{n-m,1}^{(0)}&\cdots&\hat b_{n-m,n}^{(0)}& 0&\cdots&0 \\ \cdots
&\cdots&\cdots& \cdots&\cdots&\cdots\\ \hat
b_{11}^{(0)}&\cdots&\hat b_{1n}^{(0)}&0&\cdots&0 \\0&\cdots&0&
\bar b_{n-m,1}^{(1)}&\cdots&\bar b_{n-m,n}^{(1)}\\ \cdots
&\cdots&\cdots& \cdots&\cdots&\cdots\\ 0&\cdots&0& \bar
b_{11}^{(1)}&\cdots&\bar b_{1n}^{(1)}\\
 \bar
b_{m,1}^{(0)}&\cdots&\bar b_{m,n}^{(0)}& 0&\cdots&0 \\ \cdots
&\cdots&\cdots& \cdots&\cdots&\cdots\\ \bar
b_{11}^{(0)}&\cdots&\bar b_{1n}^{(0)}&0&\cdots&0
\end{array}
\right):=G
\end{equation}
of system (\ref{xa17}) with respect to
$\psi_1(0),\ldots,\psi_n(0),\psi_1(T),\ldots,\psi_n(T)$ is not
equal to  zero. By virtue of the Laplace theorem  (see, e.g.
\cite{BIBLKUR}, p. 51),
the sum of all $n$th order minors in the last $n$ rows of matrix
$G$ multiplied by their algebraic complements equals the matrix
determinant. However, in the rows of matrix $G$ that have numbers
$n+1,$ $\ldots,$ $2n$ there is only one $n$th order minor: its
elements are in the columns with numbers $i_{1}$, $\ldots,$
$i_{m}$ and rows with numbers $2n-m+1,$ $\ldots$ $2n$ that form
matrix  $(B_{01}^T)^{-1},$ and also in the columns $n+i'_{1},$
$\ldots,$ $n+i'_{m}$ and rows $n+1,$ $\ldots,$ $2n-m$ that form
matrix $(B_{11}^T)^{-1}.$ Its complement equals the determinant
situated in in the rows  $1,$ $\ldots,$ $m$ and columns
$n+j'_{1},$ $\ldots,$ $n+j'_{l}$ forming a matrix $E_m,$ and also
in the rows $m+1,$ $\ldots,$ $n$ and columns $j_{1},$ $\ldots,$
$j_{l}$ forming a matrix  $E_{n-m}.$ Calculating these minors with
the help of the Laplace theorem, we obtain
$$
\mbox{det}\,G=(-1)^{i_1+\cdots+i_m+2n-m+1+\cdots+2n
+n+i'_1+\cdots+n+i'_{n-m}+(n+1)+\cdots+2n-m} \times
$$
$$
\times
(-1)^{i_1+\cdots+i_m+2n-m+1+\cdots+2n}\mbox{det}\,(B_{01}^T)^{-1}
\mbox{det}\,(
B_{11}^T)^{-1}(-1)^{n+j'_1+\cdots+n+j'_m+1+\cdots+m}=
$$
$$
=(-1)^{i'_1+\cdots+i'_{n-m}+j'_1+\cdots+j'_m}(-1)^{n^2}
(-1)^{1+\cdots+m+(n+1)+\cdots+2n-m}\mbox{det}\,(B_{01}^T)^{-1}
\mbox{det}\,( B_{11}^T)^{-1}=
$$
$$
=(-1)^{m^2}\mbox{det}\,(B_{01}^T)^{-1} \mbox{det}\,(
B_{11}^T)^{-1}=\frac{(-1)^m}{\mbox{det}\,B_{01} \mbox{det}\,
B_{11}}\neq 0.
$$
Thus, linear equation system (\ref{xa17}) has only the trivial
solution
$\psi_1(0)=\cdots=\psi_n(0)=\psi_1(T)=\cdots=\psi_n(T)=0$;
therefore,  $\psi(t)\equiv 0$ which contradicts the linear
independence of functions $\psi^{(1)}(t),$ $\ldots,$
$\psi^{(n-r)}(t)$ (see equality (\ref{xaxx})). Finally, linear
equation system (\ref{xa15}) has $n-r$ linearly independent
solutions (\ref{xxa15}) so that the rank of matrix (\ref{xa2}) of
system (\ref{xa15}) does not exceed $r$ which is impossible
because this rank equals $n$ according to the assumption.

We see that our initial assumption that the rank of matrix
(\ref{xa14}) is less than  $n$ leads to contradiction.
Consequently, the rank of this matrix equals $n$ and homogeneous
BVP (\ref{dy10}), (\ref{dy20}) has only the trivial solution.

The reasoning above shows that the following statement is valid.
\begin{pred}
If homogeneous BVP (\ref{dy10}), (\ref{dy20}) has only the trivial
solution, then the corresponding conjugate BVP (\ref{xax3}),
(\ref{xax4}) also has only the trivial solution.
\end{pred}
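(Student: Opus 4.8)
The plan is to translate the statement into a rank condition on the boundary forms of the conjugate equation and then reach a contradiction through the Green--Lagrange identity. By the notations (\ref{xa10})--(\ref{xa13}), the conjugate boundary conditions (\ref{xax4}) are exactly $V_1(\psi)=\cdots=V_n(\psi)=0$, and since every solution of $L^*\psi=0$ is a combination $\psi=\sum_{k=1}^n c_k z^{(k)}$ of the fundamental system, the conjugate homogeneous BVP has only the trivial solution precisely when the coefficient matrix (\ref{xa14}) of system (\ref{xxa1}) has rank $n$. So I would suppose, for contradiction, that this rank equals some $r<n$; then (\ref{xxa1}) admits $n-r$ linearly independent coefficient vectors, which produce $n-r$ linearly independent solutions $\psi^{(1)},\dots,\psi^{(n-r)}$ of the conjugate BVP, the independence passing from the constants to the functions as on p.~\pageref{pl}.

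Next I would feed these into the bilinear identity (\ref{xa77}), letting $\varphi$ range over the fundamental system $\varphi^{(1)},\dots,\varphi^{(n)}$ of $L\varphi=0$ and taking $\psi=\psi^{(i)}$. Since $L\varphi^{(k)}=0$ and $L^*\psi^{(i)}=0$, both integrals drop out, and because $V_1(\psi^{(i)})=\cdots=V_n(\psi^{(i)})=0$, the identity collapses to the homogeneous linear system (\ref{xa15}), whose unknowns are the remaining boundary forms $V_{n+1}(\psi^{(i)}),\dots,V_{2n}(\psi^{(i)})$ and whose coefficient matrix is precisely the matrix (\ref{xa2}) of the original problem. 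Thus each of the $n-r$ vectors (\ref{xxa15}) solves a homogeneous system governed by (\ref{xa2}), and if these vectors are linearly independent then the null space of (\ref{xa2}) has dimension at least $n-r\geq 1$, so $\operatorname{rank}(\ref{xa2})\leq r<n$; but as shown on p.~\pageref{pl} the hypothesis that the original homogeneous BVP has only the trivial solution forces $\operatorname{rank}(\ref{xa2})=n$, the desired contradiction.

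The crux is therefore to establish that the $n-r$ vectors (\ref{xxa15}) are linearly independent, and I would argue this by a second contradiction: a nontrivial relation among them makes the rank of (\ref{xa16}) drop below $n-r$, yielding constants $a_1,\dots,a_{n-r}$, not all zero, with $V_j\bigl(\sum_i a_i\psi^{(i)}\bigr)=0$ for $j=n+1,\dots,2n$; combined with the conjugate boundary conditions this gives $V_j(\psi)=0$ for all $j=1,\dots,2n$, where $\psi=\sum_i a_i\psi^{(i)}$. I expect the genuine obstacle to sit exactly here: one must rule out that all $2n$ boundary forms can vanish nontrivially. This is supplied by the nonvanishing of the determinant of the $2n\times 2n$ coupling matrix $G$ in (\ref{xa18}), whose evaluation by the Laplace expansion (reducing to $\det B_{01}$ and $\det B_{11}$) is the one truly computational step. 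Granting $\det G\neq 0$, the vector $(\psi_1(0),\dots,\psi_n(0),\psi_1(T),\dots,\psi_n(T))$ must vanish, hence $\psi\equiv 0$ by uniqueness for the linear system $L^*\psi=0$, which contradicts the linear independence of $\psi^{(1)},\dots,\psi^{(n-r)}$ and closes the argument.
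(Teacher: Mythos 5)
Your proposal is correct and follows essentially the same route as the paper's own proof: the same double-contradiction structure, reducing the claim to the statement that the matrix (\ref{xa14}) has rank $n$, converting the $n-r$ independent solutions of the conjugate BVP via the identity (\ref{xa77}) into solutions (\ref{xxa15}) of the homogeneous system (\ref{xa15}) with matrix (\ref{xa2}), and securing their linear independence through the Laplace-expansion computation $\det G\neq 0$ for the matrix (\ref{xa18}), after which vanishing boundary values force $\psi\equiv 0$ by uniqueness for $L^*\psi=0$. There are no gaps; your argument matches the paper step for step.
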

\begin{pred}
Under the conditions of Theorem 2 inhomogeneous BVP  (\ref{xax5}),
(\ref{xax6}) has one and only one solution.
\end{pred}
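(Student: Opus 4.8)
The plan is to recognize that the conjugate inhomogeneous BVP (\ref{xax5}), (\ref{xax6}) is itself a two-point boundary value problem of exactly the type treated in this section, so that the Fredholm-type alternative of Theorem 1 applies to it, and then to invoke Theorem 2 to supply the hypothesis that the alternative requires.

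First I would observe that equation (\ref{xax5}), written out as $-d\psi/dt + A^T\psi = \tilde f$, is equivalent to $d\psi/dt + (-A^T)\psi = -\tilde f$, a first-order linear system with coefficient matrix $-A^T(t)$ whose entries are continuous on $[0,T]$ because those of $A$ are. The boundary conditions (\ref{xax6}) are prescribed by the matrix $\hat B_0$ (of size $(n-m)\times n$) at $t=0$ and $\hat B_1$ (of size $m\times n$) at $t=T$, together imposing $n$ scalar conditions. I would then verify the rank hypotheses directly from the definitions: since $\hat B_0=(-B_{02}^T(B_{01}^T)^{-1},E_{n-m})$ contains the columns of $E_{n-m}$, it has rank $n-m$, while $\hat B_1=(-B_{12}^T(B_{11}^T)^{-1},E_m)$ contains the columns of $E_m$ and hence has rank $m$. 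Thus the conjugate BVP fits the standing assumptions of the section verbatim, with the roles of the integers $m$ and $n-m$ merely interchanged between the two endpoints.

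Consequently, the entire argument that established Theorem 1 — passing to a fundamental system of solutions of $L^*\psi=0$, writing the general solution as their linear combination plus a particular solution, substituting into the boundary forms $V_i$, and reducing to a linear algebraic system whose coefficient matrix is (\ref{xa14}) — carries over word for word to the conjugate problem. Therefore the analogue of Theorem 1 holds for it: the inhomogeneous conjugate BVP (\ref{xax5}), (\ref{xax6}) is uniquely solvable if and only if the homogeneous conjugate BVP (\ref{xax3}), (\ref{xax4}) admits only the trivial solution. I would then close the argument by citing Theorem 2: under the standing hypothesis that the original homogeneous BVP (\ref{dy10}), (\ref{dy20}) has only the trivial solution, the homogeneous conjugate BVP (\ref{xax3}), (\ref{xax4}) likewise has only the trivial solution. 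The solvability criterion just obtained then forces (\ref{xax5}), (\ref{xax6}) to have one and only one solution.

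I expect the only genuine obstacle to be the bookkeeping needed to confirm that the conjugate problem really does satisfy every standing hypothesis of the section — in particular the full-rank conditions on $\hat B_0$ and $\hat B_1$ and the continuity of the coefficient matrix $-A^T$ — so that Theorem 1 may be applied as a black box rather than reproved. Once that verification is in place, the core logical content of the proof is simply the two-line combination of the Fredholm alternative with Theorem 2.
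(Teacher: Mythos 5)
Your proposal is correct and follows essentially the same route as the paper: the paper's proof simply invokes Theorem 2 to conclude that the conjugate homogeneous BVP (\ref{xax3}), (\ref{xax4}) has only the trivial solution and then states that ``literally repeating the proof of Theorem 1'' for the conjugate problem gives the result. Your additional bookkeeping (continuity of $-A^T$, and that $\hat B_0$ and $\hat B_1$ have full ranks $n-m$ and $m$ because they contain the identity blocks $E_{n-m}$ and $E_m$) is exactly the verification the paper leaves implicit, so you have merely made the paper's own argument explicit.
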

\begin{proof}
According to Theorem 2,  BVP (\ref{xax3}), (\ref{xax4}) conjugate to (\ref{dy10}), (\ref{dy20}) has only the trivial
solution. Literally repeating the proof of Theorem  1,
we obtain the required result.
\end{proof}
\section{Statement of the minimax estimation
problem and its reduction to an optimal control problem}
Let a vector-function
%$y(t)$
\begin{equation}
\label{d1} y(t)=H(t)\varphi(t)+\xi(t),
\end{equation}
with the values form the space $R^l$ be observed on an interval $(\alpha, \beta)\subseteq (0,T)$;
here $H(t)$ is an $l\times n$ matrix with the entries that are continuous functions on $[\alpha, \beta ],$
$\xi(t)$ is a random vector process with
zero expectation $M\xi(t)$ and unknown $l\times l$ correlation matrix
$R(t,s)=M\xi(t)\xi^{T}(s)$. Let a vector-function $\varphi(t)$ be a solution to BVP (\ref{dy1}), (\ref{dy2}).

Denote by $V$ the set of random vector processes $\tilde \xi(t)$ with zero expectation $M\tilde \xi(t)$ and second moments
$M\tilde \xi(t)^2$ integrable on $(\alpha,\beta)$
such that their correlation matrix $\tilde R(t,s)$ belong to the space
\begin{equation}
\label{d6} \left\{\tilde  R:\int_{\alpha}^{\beta} Sp\,[Q(t)\tilde R(t,t)]dt\leq
1\right\}.
\end{equation}
Set
\begin{multline}\label{d7}
\!\!G=\left\{\tilde F:=(\tilde f_0,\tilde f_1,\tilde f(\cdot)):(Q_0(\tilde f_0-f_0^{(0)}),\tilde f_0-f_0^{(0)})_m+(Q_1(\tilde f_1-f_1^{(0)}),\tilde f_1-f_1^{(0)})_{n-m}\right.\\ \left.+
\int_{0}^{T}\!\!(Q_2(t)(\tilde f(t)-f^{(0)}),\tilde f(t)-f^{(0)})_ndt\leq1\right\},
\end{multline}
where $f_0^{(0)}\in\mathbb R^m,$ $f_1^{(0)}\in \mathbb
R^{n-m}$ are given vectors; $f^{(0)}(t)=(f_1^{(0)}(t),f_2^{(0)}(t)\ldots f_n^{(0)}(t))^T$
is a given vector-function with the components belonging to the space $L^2(0,T)$; $Q(t),$ $Q_0,$ $Q_1$, and $Q_2(t)$ are positive definite matrices of dimensions $l\times l,$ $m\times m,$ $(n-m)\times (n-m)$, and $n\times n$, respectively, the entries of $Q(t),\,
Q^{-1}(t)$, and $Q_2(t),\, Q_2^{-1}(t)$ are continuous on
$[\alpha,\beta]$ and $[0,T]$; and $\mbox{Sp}\,B = \sum_{i=1}^lb_{ii}$ denotes the trace of the matrix $B=\{b_{ij}\}_{i,j=1}^l$.

Assume that the right-hand sides $f(\cdot),$ $f_0,$ and $f_1$ of equation (\ref{dy1}) and boundary conditions (\ref{dy2}) are not known exactly and it is known only that the element  $F:=(f_0, f_1, f(\cdot))$ belongs to a set $G$ and, additionally, $\xi(t)\in V.$

We will look for an estimation of the inner product
\begin{equation}\label{skal}
(a,\varphi(s))_n,
\end{equation}
in the class of estimates  linear with respect to observations that have the form
\begin{equation}
\label{d3}
\widehat{(a,\varphi(s))_n}=\int_{\alpha}^s(u_1(t),y(t))_ldt+
\int_s^{\beta}(u_2(t),y(t))_ldt+c,
\end{equation}
where $s\in(\alpha,\beta)$ and $a$ are vectors belonging to  $\mathbb R^n,$ $u_i(t),$ $i=1,2$ are
vector-functions belonging, respectively, to $L^2(\alpha,s)$ and $L^2(s,\beta)$, and $c$ is certain constant. Set $u:=(u_1,u_2)\in H:=L^2(\alpha,s)\times L^2(s,\beta)=L^2(\alpha,\beta).$

An estimate
$$
\widehat{\widehat{(a,\varphi(s))_n}}=
\int_{\alpha}^s(\hat{u}_1(t),y(t))_ldt+
\int_s^{\beta}(\hat{u}_2(t),y(t))_ldt+\hat{c}
$$
for which vector-function $\hat u(t)=(\hat{u}_1(t),\hat{u}_2(t))$ and constant
$\hat{c}$
are determined from the condition
$$
\sigma(u,c):=\sup_{\tilde F\in
G, \tilde\xi \in
V}
M|(a,\tilde\varphi(s))_n-\widehat{(a,\tilde\varphi(s))_n}
|^2\to\inf_{u\in H, c \in \mathbb
R}:=\sigma^2,
$$
where $\tilde\varphi$ is a solution to BVP (\ref{dy1}),
(\ref{dy2}) at $f(t)=\tilde f(t),$ $f_0=\tilde f_0,$ $f_1=\tilde
f_1,$ and
$$
\widehat{(a,\tilde
\varphi(s))_n}=\int_{\alpha}^s(u_1(t),\tilde y(t))_ldt+
\int_s^{\beta}(u_2(t),\tilde y(t))_ldt+c,\quad \tilde y(t)=H(t)\tilde
\varphi(t)+\tilde \xi(t),
$$
will be called a minimax estimate of inner product $(a,\varphi(s))_n.$
The quantity
$$
\sigma=\{\sup_{\tilde F \in G,\,\tilde \xi \in V}M[(a,\tilde \varphi(s))_n
-\widehat{\widehat{(a,\tilde \varphi(s))_n}}]^2 \}^{1/2}
$$
will be called the minimax estimation error.

We see that the minimax mean square estimate of inner product
$(a,\varphi(s))_n$ is an estimate at which the maximum
mean square estimation error calculated for the worst realization of perturbations attains its minimum.

%\section{Сведение задачи минимаксного оценивания к решению задачи
%оптимального управления}
In this section, we will show that solution to the minimax estimation problem is reduced to the solution of a certain optimal control problem.

For every fixed $u:=(u_1,u_2)\in H$ introduce
vector-functions $z_1(\cdot;u)\in H^1(0,\alpha)^n,$ $
z_2(\cdot;u)\in H^1(\alpha,s)^n,$ $z_3(\cdot;u)\in
H^1(s,\beta)^n,$ and $z_4(\cdot;u)\in H^1(\beta,T)^n$ as solutions
to the following BVP:
$$
L^{\ast}z_1(t;u)=0,\quad 0<t<\alpha, \quad\hat{B}_0z_1(0;u)=0,
$$
$$
L^{\ast}z_2(t;u)=-H^{T}(t)u_1(t),\quad \alpha<t<s,\quad
z_2(\alpha;u)=z_1(\alpha;u),
$$
$$
L^{\ast}z_3(t;u)=-H^{T}(t)u_2(t),\quad s<t<\beta,\quad
z_3(s;u)=z_2(s;u)-a,
$$
\begin{equation}\label{d5} L^{\ast}z_4(t;u)=0,\quad \beta<t<T,\quad
z_4(\beta;u)=z_3(\beta;u), \quad \hat{B}_1z_4(T;u)=0.
\end{equation}
\begin{predl}\label{lem1}
Determination of the minimax estimate of inner product
$(a,\varphi(s))_n$ is equivalent to the problem of optimal control of the system described by BVP (\ref{d5}) with the cost function
$$
I(u) =(Q_0^{-1}\bar{B}_0 z_1(0;u),\bar{B}_0 z_1(0;u))_m +
(Q_1^{-1}\bar{B}_1z_4(T;u),\bar{B}_1z_4(T;u))_{n-m}
$$
$$
+ \int_0^{\alpha} (Q_2^{-1}(t)z_1(t;u), z_1(t;u))_n dt +
\int_{\alpha}^s (Q_2^{-1}(t) z_2(t;u), z_2(t;u))_n dt
$$
$$
+ \int_{s}^{\beta} (Q_2^{-1}(t) z_3(t;u), z_3(t;u))_n dt+
\int_{\beta}^T (Q_2^{-1}(t) z_4(t;u), z_4(t;u))_n dt
$$
\begin{equation}\label{N4}
+\int^s_{\alpha}\left(Q^{-1}(t)u_1(t),u_1(t)\right)_ldt+
\int_s^{\beta}\left(Q^{-1}(t)u_2(t),u_2(t)\right)_ldt.
\end{equation}
\end{predl}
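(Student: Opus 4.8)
The plan is to compute the worst-case error $\sigma(u,c)$ in closed form and recognize it as the cost functional $I(u)$ of (\ref{N4}). First I would separate the random and deterministic contributions. Substituting $\tilde y=H\tilde\varphi+\tilde\xi$ into the estimate and using $M\tilde\xi(t)=0$ together with the independence of the noise from the data, the mean square error splits as
$$
M|(a,\tilde\varphi(s))_n-\widehat{(a,\tilde\varphi(s))_n}|^2=|D(\tilde F)|^2+M\left|\int_\alpha^s(u_1,\tilde\xi)_l\,dt+\int_s^\beta(u_2,\tilde\xi)_l\,dt\right|^2,
$$
where $D(\tilde F):=(a,\tilde\varphi(s))_n-\int_\alpha^s(u_1,H\tilde\varphi)_l\,dt-\int_s^\beta(u_2,H\tilde\varphi)_l\,dt-c$ is the bias. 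Since $G$ and $V$ constrain $\tilde F$ and $\tilde R$ independently, the supremum over $G\times V$ separates into $\sup_{\tilde F\in G}|D(\tilde F)|^2$ plus the supremum of the noise term.

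For the noise term I would observe that $Sp[Q(t)\tilde R(t,t)]=M(\tilde\xi(t),Q(t)\tilde\xi(t))_l$, so (\ref{d6}) reads $M\int_\alpha^\beta(\tilde\xi,Q\tilde\xi)_l\,dt\le1$; the Cauchy--Schwarz inequality in the weight $Q$ then gives
$$
\sup_{\tilde\xi\in V}M\left|\int_\alpha^\beta(u,\tilde\xi)_l\,dt\right|^2=\int_\alpha^s(Q^{-1}u_1,u_1)_l\,dt+\int_s^\beta(Q^{-1}u_2,u_2)_l\,dt,
$$
the equality being attained by the rank-one process proportional to $Q^{-1}u$. This already reproduces the last two terms of (\ref{N4}).

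The main work, and the step I expect to be hardest, is representing the bias $D(\tilde F)$ through the adjoint state $z=(z_1,z_2,z_3,z_4)$. Applying the Lagrange identity (\ref{xa5}) on each of the four subintervals with $\varphi=\tilde\varphi$, $\psi=z_i$ and $L\tilde\varphi=\tilde f$, the interior equations $L^{\ast}z_2=-H^Tu_1$ and $L^{\ast}z_3=-H^Tu_2$ generate exactly the observation integrals, the matching conditions $z_2(\alpha)=z_1(\alpha)$ and $z_4(\beta)=z_3(\beta)$ cancel the interface terms, and the jump $z_3(s)=z_2(s)-a$ produces precisely $(a,\tilde\varphi(s))_n$. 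Summation over the four intervals leaves
$$
D(\tilde F)=\int_0^T(\tilde f,z)_n\,dt+(z_1(0),\tilde\varphi(0))_n-(z_4(T),\tilde\varphi(T))_n-c.
$$
Invoking the boundary representation (\ref{xa6}) together with $\hat B_0z_1(0)=0$ and $\hat B_1z_4(T)=0$ collapses the endpoint terms, since $B_0\tilde\varphi(0)=\tilde f_0$ and $B_1\tilde\varphi(T)=\tilde f_1$, to $(\bar B_0z_1(0),\tilde f_0)_m$ and $(\bar B_1z_4(T),\tilde f_1)_{n-m}$. Thus $D(\tilde F)$ becomes an affine functional of the data $\tilde F=(\tilde f_0,\tilde f_1,\tilde f)$.

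Finally I would maximize over $G$ and minimize over $c$. Writing $\tilde F=F^{(0)}+\delta F$ with $\delta F$ in the unit ball of the norm defined by $Q_0,Q_1,Q_2$, symmetry of the ball gives $\sup_{\tilde F\in G}|D(\tilde F)|^2=(|d_0|+\|L\|_*)^2$, where $d_0=D(F^{(0)})$ and the dual norm of the linear part equals
$$
\|L\|_*^2=(Q_0^{-1}\bar B_0z_1(0),\bar B_0z_1(0))_m+(Q_1^{-1}\bar B_1z_4(T),\bar B_1z_4(T))_{n-m}+\int_0^T(Q_2^{-1}z,z)_n\,dt.
$$
Splitting the last integral over the four subintervals reproduces the four integral terms of (\ref{N4}), while the two boundary terms match its first two terms. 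Since $c$ enters $\sigma(u,c)$ only through $d_0$, for each fixed $u$ the choice $\hat c$ making $d_0=0$ is optimal, whence $\inf_c\sigma(u,c)=\|L\|_*^2+\int_\alpha^s(Q^{-1}u_1,u_1)_l\,dt+\int_s^\beta(Q^{-1}u_2,u_2)_l\,dt=I(u)$. Therefore $\sigma^2=\inf_uI(u)$, and the minimax estimate is furnished by the optimal control $\hat u$ of system (\ref{d5}), which is the asserted equivalence.
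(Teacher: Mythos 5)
Your argument is correct in substance and, apart from phrasing, follows the same route as the paper: the paper likewise derives the error representation (your affine bias $D(\tilde F)$ plus the noise integrals) by integrating the adjoint states against $\tilde\varphi$ on the four subintervals --- this is formula (\ref{xq}) --- then splits bias and variance via $M\eta^2=(M\eta)^2+D\eta$, computes the supremum of the bias over $G$ with the generalized Cauchy--Bunyakovsky inequality (your dual-norm computation with the symmetric ball is exactly this, the paper writing out the attaining element explicitly), removes the constant part by the optimal choice of $\hat c$, and evaluates the noise supremum by the same weighted Cauchy--Schwarz argument with the same rank-one attaining process.

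There is, however, one genuine omission: you never establish that BVP (\ref{d5}) is solvable, i.e. that the adjoint states $z_1(\cdot;u),\dots,z_4(\cdot;u)$ you integrate against actually exist (and are unique) for every $u\in H$. This is not automatic: (\ref{d5}) has boundary conditions split between $t=0$ and $t=T$ together with a prescribed jump $-a$ at the interior point $s$, so existence does not follow from Cauchy theory alone, and without it neither the functional $I(u)$ of (\ref{N4}) nor the asserted equivalence with an optimal control problem is well defined. The paper devotes the first half of its proof to precisely this point: it writes the solution as $\bar z(t;u)+\bar{\bar z}(t)$, where $\bar z$ solves the conjugate problem (\ref{d31}) with right-hand side (\ref{du31}) (solvable by Theorem 3, since the homogeneous conjugate BVP has only the trivial solution), while $\bar{\bar z}$ accounts for the jump $-a$; the latter reduces, via a fundamental system of $L^*z=0$, to the $4n\times 4n$ linear algebraic system (\ref{d35})--(\ref{d39}), whose homogeneous version is shown to have only the trivial solution --- again because the conjugate homogeneous BVP does. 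You need this step, or an equivalent well-posedness argument, before the rest of your proof can proceed.
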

\begin{proof}
Show first that BVP (\ref{d5}) is uniquely solvable under the condition that functions $u_1(t)$ and $u_2(t)$ belong, respectively, to the spaces $L^2(\alpha,s)$ and $L^2(s,\beta).$

Since homogeneous BVP (\ref{dy10}), (\ref{dy20})
has only the trivial solution, the BVP
\begin{equation}\label{d31}
L^*\psi(t)=g(t),\quad 0<t<T, \quad\hat{B}_0\psi(0)=0,
\quad\hat{B}_1\psi(T)=0
\end{equation}
has, in line with Theorem 3, the unique solution for any right-hand side, in particular, at
\begin{equation}\label{du31}
g(t)=g(t;u)=\left\{
\begin{array}{lc}
0,&0<t<\alpha;\\
-H^T(t)u_1(t),&\alpha<t<s;\\-H^T(t)u_2(t),&s<t<\beta;\\
0,&\beta<t<T.
\end{array}
\right.
\end{equation}
%а в точках $t=\alpha,s,\beta$ функция $g(t)$ может быть определена
%произвольным образом.

Denote this solution by $\bar z(t;u)$ and its reductions on intervals $(0,\alpha),$ $(\alpha,s),$ $(s,\beta)$, and $(\beta,T)$
by $\bar z_1(t;u),$ $\bar z_2(t;u),$ $\bar
z_3(t;u),$ and $\bar z_4(t;u)$, respectively. Note that function $\bar z(t;u)$ is absolutely continuous on  $[0,T]$ (see \cite{BIBLAtk}).

Let us show that the problem
$$
L^{\ast}\Bar{\Bar z}^{(1)}(t)=0,\quad 0<t<\alpha,
\quad\hat{B}_0\Bar{\Bar z}^{(1)}(0)=0,
$$
$$
L^{\ast}\Bar{\Bar z}^{(2)}(t)=0,\quad \alpha<t<s,\quad \Bar{\Bar
z}^{(2)}(\alpha;u)=\Bar{\Bar z}^{(1)}(\alpha;u),
$$
$$
L^{\ast}\Bar{\Bar z}^{(3)}(t)=0,\quad s<t<\beta,\quad \Bar{\Bar
z}^{(3)}(s;u)=\Bar{\Bar z}^{(2)}(s)-a,
$$
\begin{equation}\label{d33} L^{\ast}\Bar{\Bar z}^{(4)}(t)=0,
\quad \beta<t<T,\quad \Bar{\Bar z}^{(4)}(\beta)=\Bar{\Bar
z}^{(3)}(\beta), \quad \hat{B}_1\Bar{\Bar z}^{(4)}(T)=0
\end{equation}
has one and only one solution at any vector $a \in \mathbb R^n.$

Denote by $\Bar{\Bar z}_i^{(j)}(t),$ $i=\overline{1,n},$
$j=\overline{1,4},$ the coordinates of vector-function $\Bar{\Bar z}
^{(j)}(t),$ $j=\overline{1,4}.$ Let $y_{ik}(t),$
$i,k=\overline{1,n}$ is the fundamental system of solutions
of the equation system $L^*z(t)=0$ on  $[0,T].$ The we can represent functions
$\Bar{\Bar z}_i^{(j)}(t),$ $i=\overline{1,n},$ $j=\overline{1,4},$
in the form
$$
\Bar{\Bar z}_i^{(j)}(t)=\sum_{k=1}^n c_k^{(j)}y_{ik}(t),
$$
where $c_k^{(j)}$ are constants.  Taking into account the boundary conditions at the points $t=0,\,T$ and conjugation conditions at
$t=\alpha,\,s,\,\beta$ in (\ref{d33}), we see that the solution to
BVP (\ref{d33}) is equivalent to the solution of the following linear equation system with $4n$ unknowns $c_k^{(j)},$
$k=\overline{1,n},$ $j=\overline{1,4}:$
\begin{eqnarray}
\sum_{k=1}^n a_{ik}^0 c_k^{(1)}&=& 0,\quad i
=\overline{1,n-m},\label{d35}\\ \sum_{k=1}^n y_{ik}(\alpha)
(c_k^{(1)}-c_k^{(2)})&=& 0,\quad i =\overline{1,n},\label{d36}\\
\sum_{k=1}^n y_{ik}(s) (c_k^{(2)}-c_k^{(3)})&=& a_i,\quad i
=\overline{1,n},\label{d37}\\ \sum_{k=1}^n y_{ik}(\beta)
(c_k^{(3)}-c_k^{(4)})&=& 0,\quad i =\overline{1,n},\label{d38}\\
\sum_{k=1}^n a_{ik}^1 c_k^{(4)}&=& 0,\quad i
=\overline{1,m},\label{d39}
\end{eqnarray}
where
$$
a_{ik}^0=\sum_{r=1}^n \hat b_{ir}^{(0)} y_{rk}(0),\quad
i=\overline{1,n-m},\quad k =\overline{1,n},
$$
$$
a_{ik}^1=\sum_{s=1}^n \hat b_{is}^{(1)} y_{sk}(T),\quad i
=\overline{1,m},\quad k =\overline{1,n},
$$
$a_i,$ $i=\overline{1,n},$ denote the coordinates of vector
$a,$ and  $\hat b_{ir}^{(0)},$ $i=\overline{1,n-m},$
$r=\overline{1,n},$ and $\hat b_{is}^{(1)},$ $i=\overline{1,m},$
$s=\overline{1,n},$ denote the entries of matrices $\hat B_0$ and $\hat B_1$, respectively.

Show that system (\ref{d35})--(\ref{d39}) is uniquely solvable at any vector $a \in \mathbb R^n.$ To this end, note that homogeneous system (\ref{d35})--(\ref{d39}) (at $a=0$) has only the trivial solution.

Indeed, setting $a=0$ in equations (\ref{d36}) and (\ref{d37}), taking into account (\ref{d38}) and the fact that
$\det\{y_{ik}(\alpha)\}_{i,k=1}^n\neq 0,$
$\det\{y_{ik}(s)\}_{i,k=1}^n\neq 0$, and
$\det\{y_{ik}(\beta)\}_{i,k=1}^n\neq 0$ because
$y_{ik}(t),$ $i,k=\overline{1,n}$ is the fundamental system of
solutions of the equation system $L^*z(t)=0$ on $[0,T],$
we obtain
$$
c_k^{(1)}=c_k^{(2)}=c_k^{(3)}=c_k^{(4)}=:c_k.
$$
Coefficients $c_k$ satisfy equations
(\ref{d35}) and (\ref{d39}); therefore vector-function $\psi(t)$ with the components $\psi_i(t)=\sum_{k=1}^n c_ky_{ik}(t),$
$i=\overline{1,n}$ is a solution to conjugate BVP
(\ref{xax3}), (\ref{xax4}) which has only the trivial
solution $\psi(t)\equiv 0$ on $[0,T].$ This implies $c_k=0$, so the homogeneous linear equation system (\ref{d35})--(\ref{d39}) (at $a=0$) has only the trivial solution. Consequently, system
(\ref{d35})--(\ref{d39}) and therefore BVP
(\ref{d33}) which is equivalent to this system are uniquely solvable at any vector $a \in
\mathbb R^n.$ Then vector-functions $z_i(t;u)=\bar z_i(t;u)+
\Bar{\Bar z}^{(i)}(t),$ $i=\overline{1,4},$ form the unique solution to BVP (\ref{d5}).

Show next that the determination of the minimax estimate of inner product
$(a,\varphi(s))_n$ is equivalent to the problem of optimal control of the system described by BVP (\ref{d5}) with the cost function (\ref{N4}).
%Покажем далее, что задача нахождения минимаксной оценки скалярного
%произведения $(a,\varphi(s))_n$ эквивалентна задаче оптимального
%управления системой (\ref{d5}) с function стоимости

Using the second and third equations in (\ref{d5}) and the fact that
$\tilde\varphi$ is a solution to BVP (\ref{dy1}),
(\ref{dy2}) at $f(t)=\tilde f(t),$ $f_0=\tilde f_0,$ and $f_1=\tilde
f_1,$ we easily obtain the relationships
$$
-\int^s_{\alpha}(H^T(t)u_1(t),\tilde \varphi(t))_ndt
=\left(z_2(\alpha;u),
\tilde \varphi(\alpha)\right)_n-\left(z_2(s;u),\tilde \varphi(s)\right)_n
+\int^s_{\alpha}(z_2(t;u),\tilde f(t))_ndt,
$$
$$
-\int_s^{\beta}(H^T(t)u_2(t),\tilde \varphi(t))_ndt=\left(z_3(s;u),
\tilde \varphi(s)\right)_n-\left(z_3(\beta;u),\tilde \varphi(\beta)\right)_n
+\int_s^{\beta}(z_3(t;u),\tilde f(t))_ndt.
$$
Taking into account the equalities
$$
z_2(\alpha;u)=z_1(\alpha;u),\quad z_2(s;u)-z_3(s;u)=a, \quad
z_3(\beta;u)=z_4(\beta;u),
$$
$$
\left(z_1(\alpha;u),\tilde \varphi(\alpha)\right)_n=\int_0^{\alpha}
d(z_1(t;u),\tilde \varphi(t))_n+(z_1(0;u),\tilde \varphi(0))_n,
$$
$$
\left(z_4(\beta;u),\tilde \varphi(\beta)\right)_n=-\int_{\beta}^T
d(z_4(t;u),\tilde \varphi(t))_n+(z_4(T;u),\tilde \varphi(T))_n,
$$
and that (we refer to the reasoning on p. \pageref{pt})
$$
\left(z_1(0;u),\tilde \varphi(0)\right)_n=\left(\bar{B}_0z_1(0;u)
,B_0\tilde \varphi(0)\right)_m +\left(\hat{B}_0z_1(0;u),\tilde
B_0\tilde \varphi(0)\right)_{n-m}=\left(\bar{B}_0z_1(0;u),\tilde f_0\right)_m,
$$
$$
\left(z_4(T;u),\tilde \varphi(T)\right)_n
=\left(\bar{B}_1z_4(T;u),B_1\tilde \varphi(T)\right)_{n-m}
+\left(\hat{B}_1z_4(T;u),\tilde
B_1\tilde \varphi(T)\right)_m=\left(\bar{B}_1z_4(T;u),\tilde f_1\right)_{n-m},
$$
we obtain
$$
(a,\tilde \varphi(s))_n-(\widehat{a,\tilde \varphi(s)})_n=
(z_2(s;u),\tilde \varphi(s))_n-(z_3(s;u),\tilde \varphi(s))_n
-(\widehat{a,\tilde \varphi(s)})_n=
$$
$$
=\left(z_2(\alpha;u),\tilde \varphi(\alpha)\right)_n
+\int^s_{\alpha}(H^T(t)u_1(t),\tilde \varphi(t))_ndt
+\int^s_{\alpha}(z_2(t;u),\tilde f(t))_ndt
$$
$$
-\left(z_3(\beta;u),\tilde \varphi(\beta)\right)_n
+\int_s^{\beta}(H^T(t)u_2(t),\tilde \varphi(t))_ndt
+\int_s^{\beta}(z_3(t;u),\tilde f(t))_ndt
$$
$$
-\int_{\alpha}^s(u_1(t),\tilde y(t))_ldt-
\int_s^{\beta}(u_2(t),\tilde y(t))_ldt-c
$$
$$
=\int_0^{\alpha} d(z_1(t;u),\tilde \varphi(t))_n+(z_1(0;u),\tilde \varphi(0))_n
+\int^s_{\alpha}(H^T(t)u_1(t),\tilde \varphi(t))_ndt
$$
$$
+\int^s_{\alpha}(z_2(t;u),\tilde f(t))_ndt +\int_{\beta}^T
d(z_4(t;u),\tilde \varphi(t))_n-(z_4(T;u),\tilde \varphi(T))_n
$$
$$
+\int_s^{\beta}(H^T(t)u_2(t),\tilde \varphi(t))_ndt+
\int_s^{\beta}(z_3(t;u),\tilde f(t))_ndt
-\int^s_{\alpha}(H^T(t)u_1(t),\tilde \varphi(t))_ndt
$$
$$
-\int^s_{\alpha}(u_1(t),\tilde \xi(t)_ldt
-\int_s^{\beta}(H^T(t)u_2(t),\tilde \varphi(t))_ndt
-\int_s^{\beta}(u_2(t),\tilde \xi(t)_ldt-c
$$
$$
=\int_0^{\alpha} \left(\frac
{dz_1(t;u)}{dt},\tilde \varphi(t)\right)_ndt+\int_0^{\alpha} \left(\frac
{d\tilde \varphi(t)}{dt},z_1(t;u)\right)_ndt+(\bar B_0z_1(0;u),\tilde f_0)_m
$$
$$
+\int^s_{\alpha}(z_2(t;u),\tilde f(t))_ndt +\int^T_{\beta} \left(\frac
{dz_4(t;u)}{dt},\tilde \varphi(t)\right)_ndt+\int^T_{\beta} \left(\frac
{d\tilde \varphi(t)}{dt},z_4(t;u)\right)_ndt
$$
$$
\!\!-(\bar B_1z_4(T;u),\tilde f_1)_{n-m}
+\int_s^{\beta}\!(z_3(t;u),\tilde f(t))_ndt
-\int^s_{\alpha}\!(u_1(t),\tilde \xi(t)_ldt
-\int_s^{\beta}\!(u_2(t),\tilde \xi(t)_ldt-c.
$$
Taking into notice that
$$
\frac {dz_1(t;u)}{dt}=A^T(t)z_1(t;u)\quad \mbox{на}\quad
(0,\alpha), \quad  \frac {dz_4(t;u)}{dt}=A^T(t)z_4(t;u)\quad
\mbox{на}\quad (\beta,T)
$$
$$
\mbox{и}\quad \frac {d\tilde \varphi(t)}{dt}=\tilde f(t)-A(t)\tilde \varphi(t)\quad
\mbox{на}\quad (0,T),
$$
and therefore
$$
\int_0^{\alpha} \left(\frac
{dz_1(t;u)}{dt},\tilde \varphi(t)\right)_ndt+\int_0^{\alpha} \left(\frac
{d\tilde \varphi(t)}{dt},z_1(t;u)\right)_ndt
$$
$$
=\int_0^{\alpha}(A^T(t)z_1(t;u),\tilde \varphi(t))_ndt+\int_0^{\alpha}(z_1(t;u),
\tilde f(t)-A(t)\tilde \varphi(t))_ndt=\int_0^{\alpha}(z_1(t;u), \tilde f(t))_ndt,
$$
$$
\int^T_{\beta} \left(\frac
{dz_4(t;u)}{dt},\tilde \varphi(t)\right)_ndt+\int^T_{\beta} \left(\frac
{d\tilde \varphi(t)}{dt},z_4(t;u)\right)_ndt=\int^T_{\beta}(z_4(t;u),
\tilde f(t))_ndt,
$$
we use the last equality to obtain
$$
(a,\tilde \varphi(s))_n-(\widehat{a,\tilde \varphi(s)})_n=
\left(\bar{B}_0z_1(0;u),\tilde f_0\right)_m+ \int^T_0(\tilde
z(t;u),\tilde f(t))_ndt
$$
\begin{equation}\label{xq}
- \left(\bar{B}_1z_4(T;u),\tilde f_1\right)_{n-m}
-\int^s_{\alpha}(u_1(t),\tilde \xi(t))_ldt-
\int_s^{\beta}(u_2(t),\tilde \xi(t))_ldt-c=:\eta,
\end{equation}
where
$$
\tilde z(t;u)=\left\{
\begin{array}{lc}
z_1(t;u),&0<t<\alpha;\\
z_2(t;u),&\alpha<t<s;\\z_3(t;u),&s<t<\beta;\\ z_4(t;u),&\beta<t<T.
\end{array}
\right.
$$
Recalling that $\tilde \xi(t)$ is a vector process with zero expectation, we use condition (\ref{d6}) and the known relationship
$D\eta=M\eta^2-(M\eta)^2$ that couples the dispersion
$D\eta:=M[\eta-M\eta]^2$ of random quantity $\eta$ with its expectation $M\eta,$ to obtain
$$
M\eta=\left(\bar{B}_0z_1(0;u),\tilde f_0\right)_m+ \int^T_0(\tilde
z(t;u),\tilde f(t))_ndt- \left(\bar{B}_1z_4(T;u),\tilde f_1\right)_{n-m}-c,
$$
$$
\eta-M\eta=-\int^s_{\alpha}(u_1(t),\tilde \xi(t))_ldt-
\int_s^{\beta}(u_2(t),\tilde \xi(t))_ldt,
$$
$$
D\eta=M[\eta-M\eta]^2=M\left[\int^s_{\alpha}(u_1(t),\tilde \xi(t))_ldt+
\int_s^{\beta}(u_2(t),\tilde \xi(t))_ldt\right]^2,
$$
$$
M\eta^2=D\eta+(M\eta)^2=M\left[\int^s_{\alpha}(u_1(t),\tilde \xi(t))_ldt+
\int_s^{\beta}(u_2(t),\tilde \xi(t))_ldt\right]^2
$$
$$
+\left[\left(\bar{B}_0z_1(0;u),\tilde f_0\right)_m+ \int^T_0(\tilde
z(t;u,\tilde f(t))_ndt-
\left(\bar{B}_1z_4(T;u),\tilde f_1\right)_{n-m}-c\right]^2,
$$
which yields
$$
\inf_{c \in \mathbb R^1} \sup_{\tilde F \in G,\, \tilde \xi \in V}
M[(a,\tilde \varphi(s))_n-(\widehat{a,\tilde \varphi(s)})_n]^2=
$$
$$
=\inf_{c \in \mathbb R^1}\sup_{\tilde F \in
G}\left[(\bar{B}_0z_1(0;u),\tilde f_0)_m+ \int^T_0(\tilde
z(t;u),\tilde f(t))_ndt- (\bar{B}_1z_4(T;u),\tilde f_1)_{n-m}-c \right]^2
$$
\begin{equation}\label{xq1}
+\sup_{\tilde \xi \in V} M\left[\int^s_{\alpha}(u_1(t),\tilde \xi(t))_ldt+
\int_s^{\beta}(u_2(t),\tilde \xi(t))_ldt\right]^2.
\end{equation}
In order to calculate the supremum on the right-hand side of
(\ref{xq1}) we apply the generalized Cauchy$-$Bunyakovsky
inequality \cite{BIBL366}. Let us write this inequality in the
form convenient for further analysis.

{\bf Lemma.} {\it For any $f_0^{(1)}, f_0^{(2)} \in R^m, \,\,
f_1^{(1)}, f_1^{(2)} \in R^{n-m},\,\, f_1,f_2 \in
\left(L^2(0,T)\right)^n$, the generalized Cauchy$-$Bunyakovsky inequality holds
$$
\left| (f_0^{(1)}, f_0^{(2)})_m+ (f_1^{(1)}, f_1^{(2)})_{n-m} +
\int_0^T(f_1(t),f_2(t))_n\,dt \right| \leq
$$
$$
\leq \left\{ (Q_0^{-1}f_0^{(1)}, f_0^{(1)})_m +
(Q_1^{-1}f_1^{(1)}, f_1^{(1)})_{n-m} + \int_0^T(Q_2^{-1}(t)f_1(t),
f_1(t))_n\,dt \right\}^\frac{1}{2} \times
$$
$$
\times \left\{ (Q_0f_0^{(2)}, f_0^{(2)})_m + (Q_1f_1^{(2)},
f_1^{(2)})_{n-m} + \int_0^T(Q_2(t)f_2(t), f_2(t))_n\,dt
\right\}^\frac{1}{2},
$$
in which the equality is attained at
$$
f_0^{(2)}=\lambda Q_0^{-1}f_0^{(1)}, \quad f_1^{(2)}=\lambda
Q_1^{-1}f_1^{(1)}, \quad f_2(t) =\lambda Q_2^{-1}f_1(t).
$$
} Setting in the generalized Cauchy$-$Bunyakovsky inequality
$$
f_0^{(1)}=\bar{B}_0z_1(0;u), \quad f_1^{(1)}=-\bar{B}_1z_4(T;u),
\quad f_1(t)=\tilde{z}(t;u),
$$
$$
f_0^{(2)}=\tilde f_0-f_0^{(0)}, \quad f_1^{(2)}=\tilde f_1-f_1^{(0)}, \quad f_2(t)=\tilde f(t)-f^{(0)}(t),
$$
and denoting
$$
Y:= (\bar{B}_0z_1(0;u),\tilde f_0-f_0^{(0)})_m - (\bar{B}_1z_4(T;u),\tilde f_1-f_1^{(0)})_{n-m}+
\int_0^T (\tilde{z}(t;u),\tilde f(t)-f^{(0)}(t))_n\,dt
$$
we obtain, in line with (2.7), the inequality
$$
|Y| \leq \left\{ \vphantom{\int_0^T}
(Q_0^{-1}\bar{B}_0z_1(0;u),\bar{B}_0z_1(0;u))_m +
(Q_1^{-1}\bar{B}_1z_4(T;u),\bar{B}_1z_4(T;u))_{n-m} + \right.$$
$$
\left.+\int_0^T(Q_2^{-1}(t)\tilde{z}(t;u),\tilde{z}(t;u))_n dt
\right\}^\frac{1}{2} \times \biggl\{ (Q_0(\tilde f_0-f_0^{(0)}),\tilde f_0-f_0^{(0)})_m
+(Q_1(\tilde f_1-f_1^{(0)}),\tilde f_1-f_1^{(0)})_{n-m}+ \biggr.
$$
$$
\left.+ \int_0^T(Q_2(t)(\tilde f(t)-f^{(0)}(t)),
\tilde f(t)-f^{(0)}(t))_n dt \right\}^\frac{1}{2} \leq
\left\{ \vphantom{\int_0^T}
(Q_0^{-1}\bar{B}_0z_1(0;u),\bar{B}_0z_1(0;u))_m  \right.
$$
$$
\left. + (Q_1^{-1}\bar{B}_1z_4(T;u),\bar{B}_1z_4(T;u))_{n-m} +
\int_0^T(Q_2^{-1}(t)\tilde{z}(t;u),\tilde{z}(t;u))_n dt
\right\}^\frac{1}{2}:= q,
$$
where the equality is attained at
$$
\tilde f_0=\pm \frac{1}{q}\, Q_0^{-1}\bar{B}_0z_1(0;u)+f_0^{(0)}, \quad \tilde f_1 = \mp
\frac{1}{q}\, Q_1^{-1}\bar{B}_1z_4(T;u)+f_1^{(0)}, \quad \tilde f(t)=\pm
\frac{1}{q}\, Q_2(t)\tilde{z}(t;u)+f^{(0)}(t).
$$
%Поэтому, учитывая плотность множества непрерывных на отрезке
%$[0,T]$ функций, удовлетворяющих неравенству (\ref{d7}) в
%$множестве функций, суммируемых с квадратом на $[0,T]$ и
%удовлетворяющих этому же неравенству,
Thus,
$$
\inf_{c \in \mathbb R^1}\sup_{\tilde F \in
G}\left[(\bar{B}_0z_1(0;u),\tilde f_0)_m+ \int^T_0(\tilde
z(t;u),\tilde f(t))_ndt- (\bar{B}_1z_4(T;u),\tilde f_1)_{n-m}-c \right]^2
$$
$$
=\inf_{c \in R^1} \sup_{\tilde{F} \in G_0} \left[
(\bar{B}_0z_1(0;u),\tilde f_0-f_0^{(0)})_m - (\bar{B}_1z_4(T;u),\tilde f_1-f_1^{(0)})_{n-m}+
\int_0^T (\tilde{z}(t;u),\tilde f(t)-f^{(0)}(t))_n\,dt \right. =
$$
$$
\left.+(\bar{B}_0z_1(0;u),f_0^{(0)})_m - (\bar{B}_1z_4(T;u),f_1^{(0)})_{n-m}+
\int_0^T (\tilde{z}(t;u),f^{(0)}(t))_n\,dt- c \right]^2
$$
$$
=\inf_{c \in R^1} \sup_{|Y|\leq q} \left[Y+(\bar{B}_0z_1(0;u),f_0^{(0)})_m - (\bar{B}_1z_4(T;u),f_1^{(0)})_{n-m}+
\int_0^T (\tilde{z}(t;u),f^{(0)}(t))_n\,dt-c\right]^2= q^2
$$
$$
= (Q_0^{-1}\bar{B}_0z_1(0;u),\bar{B}_0z_1(0;u))_m +
(Q_1^{-1}\bar{B}_1z_4(T;u),\bar{B}_1z_4(T;u))_{n-m}
$$
\begin{equation}\label{xq0}
 + \int_0^T (Q_2^{-1}(t)\tilde{z}(t;u),\tilde{z}(t;u))_n dt
\end{equation}
at $c=(\bar{B}_0z_1(0;u),f_0^{(0)})_m - (\bar{B}_1z_4(T;u),f_1^{(0)})_{n-m}+
\int_0^T (\tilde{z}(t;u),f^{(0)}(t))_n\,dt.$

Calculate the second term on the right-hand side of (\ref{xq1}).
Setting
$$
\tilde u(t)=\left\{
\begin{array}{lc}
u_1(t),&\alpha< t<s,\\ u_2(t),&s<t< \beta,
\end{array}
\right.
$$
and applying the generalized Cauchy$-$Bunyakovsky inequality, we have
$$
M\left[\int^s_{\alpha}(u_1(t),\tilde \xi(t))_ldt+
\int_s^{\beta}(u_2(t),\tilde \xi(t))_ldt\right]^2
=M\left[\int^{\beta}_{\alpha}(\tilde u(t),\tilde \xi(t))_ldt\right]^2
$$
$$ \leq
M\left[\int^{\beta}_{\alpha}(Q^{-1}(t)\tilde u(t),\tilde u(t))_ldt
\cdot \int^{\beta}_{\alpha}(Q(t)\xi(t),\xi(t))_ldt\right]
$$
\begin{equation}\label{xq2}
=\int^{\beta}_{\alpha}(Q^{-1}(t)\tilde u(t),\tilde u(t))_ldt \cdot
\int^{\beta}_{\alpha}M(Q(t)\tilde \xi(t),\tilde \xi(t))_ldt.
\end{equation}
Here $M$ can be placed under the integral sign according to the
Fubini theorem because we assume that $\tilde \xi(t)$ is a random
process of the integrable second moment. Transform the last factor
on the right-hand side of (\ref{xq2}):
$$
\int^{\beta}_{\alpha}M(Q(t)\tilde \xi(t),\tilde \xi(t))_ldt
=\int^{\beta}_{\alpha} M(\sum_{i=1}^l(Q(t)\tilde \xi(t))_i\tilde \xi_i(t))dt
$$
$$
=\int^{\beta}_{\alpha}
M(\sum_{i=1}^l\sum_{k=1}^lq_{ik}\tilde \xi_k(t)\tilde \xi_i(t))dt
=\int^{\beta}_{\alpha}\sum_{i=1}^l\sum_{k=1}^lq_{ik}M(\tilde \xi_k(t)\tilde \xi_i(t))dt
$$
$$
=\int^{\beta}_{\alpha}Sp\,[Q(t)\tilde R(t,t)]dt.
$$
Taking into account that (\ref{d6}) holds, we see that (\ref{xq2})
yields
\begin{multline}\label{xq3}
\sup_{\tilde \xi \in V}M\left[\int^s_{\alpha}(u_1(t),\tilde \xi(t))_ldt+
\int_s^{\beta}(u_2(t),\tilde \xi(t))_ldt\right]^2\leq \\
\leq\int^s_{\alpha}\left(Q^{-1}(t)u_1(t),u_1(t)\right)_ldt+
\int_s^{\beta}\left(Q^{-1}(t)u_2(t),u_2(t)\right)_ldt.
\end{multline}
It is not difficult to check that here, the equality sign is attained at the element
$$
\tilde \xi(t)=\left\{
\begin{array}{lc}
\frac{\displaystyle \eta Q^{-1}(t)u_1(t)}{\displaystyle
\left[\int^s_{\alpha}\left(Q^{-1}(t)u_1(t),u_1(t)\right)_ldt+
\int_s^{\beta}\left(Q^{-1}(t)u_2(t),u_2(t)\right)_ldt\right]^{1/2}}\,,
&\alpha\leq t<s;\\[4pt] \frac{\displaystyle \eta
Q^{-1}(t)u_2(t)}{\displaystyle
\left[\int^s_{\alpha}\left(Q^{-1}(t)u_1(t),u_1(t)\right)_ldt+
\int_s^{\beta}\left(Q^{-1}(t)u_2(t),u_2(t)\right)_ldt\right]^{1/2}}\,,&s<t\leq
\beta,
\end{array}
\right.
$$
where $\eta$ is a random quantity such that $M\eta=0$ and
$M\eta^2=1.$ We conclude that statement of the lemma follows now from (\ref{xq1}), (\ref{xq0}), and
(\ref{xq3}).
\end{proof}

\section{Representations for minimax estimates of functionals of solutions
to two-point boundary value problems and estimation errors}

In this section we prove the theorem concerning general form of
minimax mean square estimates. Solving optimal control problem
(\ref{d5}), (\ref{N4}), we arrive at the following result.
\begin{pred}\label{tm1}
The minimax estimate of expression $(a,\varphi(s))$ has the form
$$
\widehat{\widehat{(a,\varphi(s))}}=
\int^s_{\alpha}(\hat{u}_1(t),y(t))_ldt+
\int_s^{\beta}(\hat{u}_2(t),y(t))_ldt
$$
where
\begin{equation}\label{ddd7}
\hat{u}_1(t)=Q(t)H(t)p_2(t),\quad \hat{u}_2(t)=Q(t)H(t)p_3(t),
\end{equation}
$$
\hat c=(\bar{B}_0z_1(0),f_0^{(0)})_m - (\bar{B}_1z_4(T),f_1^{(0)})_{n-m}+
\int_0^T (\tilde{z}(t),f^{(0)}(t))_n\,dt,
$$
$$
\tilde z(t)=\left\{
\begin{array}{lc}
z_1(t),&0<t<\alpha;\\
z_2(t),&\alpha<t<s;\\z_3(t),&s<t<\beta;\\ z_4(t),&\beta<t<T,
\end{array}
\right.
$$
and vector-functions $p_i(t)$ and $z_i(t)$, $i=\overline{1,4},$
are determined from the solution to the equation systems
$$
L^*z_1(t)=0,\quad  0<t<\alpha, \quad \hat{B}_0z_1(0)=0,
$$
$$
L^{\ast}z_2(t)=-H^{T}(t)Q(t)H(t)p_2(t),\quad  \alpha<t<s,\quad
z_2(\alpha)=z_1(\alpha),
$$
$$
L^{\ast}z_3(t)=-H^{T}(t)Q(t)H(t)p_3(t),\quad  s<t<\beta,\quad
z_3(s)=z_2(s)-a,
$$
\begin{equation} \label{d8}
L^{\ast}z_4(t)=0,\quad  \beta<t<T,\quad
z_4(\beta)=z_3(\beta),\quad \hat{B}_1z_4(T)=0,
\end{equation}
$$
Lp_1(t)=Q_2^{-1}(t)z_1(t),\quad  0<t<\alpha, \quad
B_0p_1(0)=Q_0^{-1}\bar B_0z_1(0),
$$
$$
Lp_2(t)=Q_2^{-1}(t)z_2(t),\quad  \alpha<t<s, \quad  p_2(\alpha)=
p_1(\alpha),
$$
$$
Lp_3(t)=Q_2^{-1}(t)z_3(t), \quad  s<t<\beta,\quad  p_3(s)=p_2(s),
$$
$$
Lp_4(t)=Q_2^{-1}(t)z_4(t),\quad  \beta<t<T,\quad
p_4(\beta)=p_3(\beta), \quad B_1p_4(T)=-Q_1^{-1}\bar B_1z_4(T).
$$
Here $z_1,p_1\in H^1(0,\alpha)^n,$ $ z_2,p_2\in H^1(\alpha,s)^n,$
$z_3,p_3\in H^1(s,\beta)^n,$ and $z_4,p_4\in H^1(\beta,T)^n.$ The
minimax estimation error
% $\sigma$ определяется по формуле
\begin{equation} \label{ddd8}
\sigma=(a,p_2(s))_n^{1/2}.
\end{equation}
System (\ref{d8}) is uniquely solvable.
\end{pred}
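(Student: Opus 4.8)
The plan is to solve the optimal control problem (\ref{d5}), (\ref{N4}) produced by Lemma~\ref{lem1} by the standard variational (adjoint) method, using that $I(u)$ is a strictly convex, coercive, continuous quadratic functional on the Hilbert space $H$. First I would settle existence and uniqueness of a minimizer $\hat u=(\hat u_1,\hat u_2)$: the two control penalties in (\ref{N4}) carry the positive definite $Q^{-1}(t)$, so $I(u)\ge \mathrm{const}\cdot\|u\|_H^2$ is coercive, while all remaining terms are nonnegative quadratic forms in $u$; continuity and strict convexity follow from the \emph{affine} dependence of the states $z_i(\cdot\,;u)$ on $u$, whose unique solvability was already established in the proof of Lemma~\ref{lem1}. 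Hence a unique $\hat u$ exists, characterized by $I'(\hat u)(v)=0$ for every $v=(v_1,v_2)\in H$.

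The core of the argument is the computation of this first variation. Writing $\delta z_i$ for the variation of $z_i$ induced by a variation $v$ of $u$, the $\delta z_i$ solve the system (\ref{d5}) with right-hand sides $-H^T v_1,\,-H^T v_2$ and with the constant $a$ absent from the interface condition at $s$, so that $\delta z_3(s)=\delta z_2(s)$. I would then introduce the adjoint functions $p_i$ as the solutions of the subsystem $Lp_i=Q_2^{-1}(t)z_i$ in (\ref{d8}); these are well defined because that subsystem is itself a BVP of the type (\ref{dy1}), (\ref{dy2}), uniquely solvable by Theorem~1 under the standing hypothesis. Applying the Lagrange identity (\ref{xa5}) on each of the four subintervals to the pairs $(p_i,\delta z_i)$ and summing, the interface terms cancel in pairs thanks to $p_2(\alpha)=p_1(\alpha)$, $p_3(s)=p_2(s)$, $p_4(\beta)=p_3(\beta)$ matched with the conditions on $\delta z_i$; the surviving endpoint terms $(p_1(0),\delta z_1(0))_n$ and $(p_4(T),\delta z_4(T))_n$ are rewritten through the boundary identities (\ref{xa6}), with $\delta z_i$ in the conjugate ($L^\ast$) slot and $p_i$ in the primal ($L$) slot. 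Here $\hat B_0\delta z_1(0)=0$ and $\hat B_1\delta z_4(T)=0$ annihilate the $\hat B,\tilde B$ contributions, while the adjoint boundary conditions $B_0p_1(0)=Q_0^{-1}\bar B_0z_1(0)$ and $B_1p_4(T)=-Q_1^{-1}\bar B_1z_4(T)$ are chosen precisely so that the residue cancels the first two boundary terms of $I'$. What remains is $I'(\hat u)(v)=2\int_\alpha^s(Q^{-1}\hat u_1-Hp_2,v_1)_l\,dt+2\int_s^\beta(Q^{-1}\hat u_2-Hp_3,v_2)_l\,dt$, and forcing this to vanish for arbitrary $v$ yields (\ref{ddd7}); substituting $\hat u_i=QHp_i$ back into (\ref{d5}) closes the coupled system (\ref{d8}).

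To obtain the error (\ref{ddd8}) I would evaluate $I(\hat u)=\sigma^2$ at the optimum by running the same bookkeeping on the pairs $(p_i,z_i)$: applying (\ref{xa5}) interval by interval and using $Lp_i=Q_2^{-1}z_i$ together with $L^\ast z_i=-H^T\hat u_i$, the $z$-integrals and the control terms of (\ref{N4}) collapse into endpoint contributions. The interface terms again cancel except at $s$, where the jump $z_3(s)=z_2(s)-a$ produces exactly $(a,p_2(s))_n$, while the endpoint terms at $0$ and $T$, transformed via (\ref{xa6}) as before, cancel the two quadratic boundary terms of (\ref{N4}). Everything else drops out, leaving $I(\hat u)=(a,p_2(s))_n$, which is (\ref{ddd8}).

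Finally, unique solvability of (\ref{d8}) follows from the equivalence just established: any solution of (\ref{d8}) yields through (\ref{ddd7}) a control with $I'(u)=0$, hence the unique minimizer $\hat u$ by strict convexity; the $z_i$ are then determined uniquely from (\ref{d5}) as in the proof of Lemma~\ref{lem1}, and the $p_i$ uniquely from the BVP $Lp_i=Q_2^{-1}z_i$ by Theorem~1, so the solution of (\ref{d8}) is unique. I expect the main obstacle to be organizational rather than conceptual: maintaining consistent sign conventions and the correct assignment of each function to its primal ($L$) or conjugate ($L^\ast$) slot in the boundary identities (\ref{xa6}) across the many terms, so that the engineered cancellations in both the variation and the cost evaluation genuinely occur.
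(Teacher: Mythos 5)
Your proposal is correct and takes essentially the same approach as the paper: the paper likewise splits $z_i(t;u)$ into its $u$-linear and $a$-dependent parts (your affine-dependence observation), obtains the unique minimizer from coercivity of the resulting quadratic functional, derives (\ref{ddd7}) from the vanishing first variation via the adjoint system and the boundary identities, and evaluates $I(\hat u)$ so that only the jump term at $s$ survives, giving (\ref{ddd8}). Your closing argument for the unique solvability of (\ref{d8}) — mapping any solution back to a stationary control and using strict convexity, then recovering the $z_i$ and $p_i$ uniquely — is the same reasoning the paper compresses into one sentence, just spelled out more fully.
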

\begin{proof}
We will solve optimal control problem  (\ref{d5}), (\ref{N4}).
%при условии, что функции $u_1(t)$ и $u_2(t)$ принадлежат
%соответственно пространствам $L^2(\alpha,s)$ и $L^2(s,\beta).$
Represent solutions $z_i(x;u),$ $i=\overline{1,4},$ of problem
(\ref{d5}) as $z_i(t;u)=\bar z_i(t;u)+\Bar{\Bar z}_i(t),$ where
$\bar z_1(t;u),$ $\bar z_2(t;u),$ $\bar z_3(t;u),$ $\bar z_4(t;u)$
and $\Bar{\Bar z}_1(t),$ $\Bar{\Bar z}_2(t),$ $\Bar{\Bar z}_3(t),$
$\Bar{\Bar z}_4(t)$ \label{BB} denote the solutions to this
problem at $a=0$ and $u_1\equiv 0,$ $u_2\equiv 0,$ respectively.
Then function (\ref{N4}) can be represented in the form \label{D}
$$
I(u)=\tilde I(u)+2L(u) +A,
$$
where
$$
\tilde I(u) =(Q_0^{-1}\bar{B}_0\bar z_1(0;u),\bar{B}_0\bar
z_1(0;u))_m + (Q_1^{-1}\bar{B}_1\bar z_4(T;u),\bar{B}_1\bar
z_4(T;u))_{n-m}
$$
$$
+ \int_0^{\alpha} (Q_2^{-1}(t)\bar z_1(t;u),\bar z_1(t;u))_n dt +
\int_{\alpha}^s (Q_2^{-1}(t)\bar z_2(t;u),\bar z_2(t;u))_n dt
$$
$$
+ \int_{s}^{\beta} (Q_2^{-1}(t)\bar z_3(t;u),\bar z_3(t;u))_n dt+
\int_{\beta}^T (Q_2^{-1}(t)\bar z_4(t;u),\bar z_4(t;u))_n dt
$$
$$
+\int^s_{\alpha}\left(Q^{-1}(t)u_1(t),u_1(t)\right)_ldt+
\int_s^{\beta}\left(Q^{-1}(t)u_2(t),u_2(t)\right)_ldt,
$$
$$
L(u)=(Q_0^{-1}\bar{B}_0\bar z_1(0;u),\bar{B}_0 \Bar{\Bar
z}_1(0))_m + (Q_1^{-1}\bar{B}_1\bar z_4(T;u),\bar{B}_1\Bar{\Bar
z}_4(T))_{n-m}
$$
$$
+ \int_0^{\alpha} (Q_2^{-1}(t)\bar z_1(t;u),\Bar{\Bar z}_1(t))_n
dt + \int_{\alpha}^s (Q_2^{-1}(t)\bar z_2(t;u),\Bar{\Bar
z}_2(t))_n dt
$$
$$
+ \int_{s}^{\beta} (Q_2^{-1}(t)\bar z_3(t;u),\Bar{\Bar z}_3(t))_n
dt+ \int_{\beta}^T (Q_2^{-1}(t)\bar z_4(t;u),\Bar{\Bar z}_4(t))_n
dt,
$$
$$
A=(Q_0^{-1}\bar{B}_0\Bar{\Bar z}_1(0),\bar{B}_0\Bar{\Bar
z}_1(0))_m + (Q_1^{-1}\bar{B}_1\Bar{\Bar
z}_4(T),\bar{B}_1\Bar{\Bar z}_4(T))_{n-m}
$$
$$
+ \int_0^{\alpha} (Q_2^{-1}(t)\Bar{\Bar z}_1(t),\Bar{\Bar
z}_1(t))_n dt + \int_{\alpha}^s (Q_2^{-1}(t)\Bar{\Bar
z}_2(t),\Bar{\Bar z}_2(t))_n dt
$$
$$
+ \int_{s}^{\beta} (Q_2^{-1}(t)\Bar{\Bar z}_3(t),\Bar{\Bar
z}_3(t))_n dt+ \int_{\beta}^T (Q_2^{-1}(t)\Bar{\Bar
z}_4(t),\Bar{\Bar z}_4(t))_n dt.
$$
Since solution $\bar z(t;u)$ of BVP (\ref{d31}) is
continuous\footnote{This continuous dependence follows from the
representation of function $\bar z(t;u)$ in terms of Green's
matrix $G^*(t,\xi)$ of BVP  (\ref{d31}) (see \cite{BIBLNaym}, p.
115):
$$
\bar z(t;u)=-\int_{\alpha}^{s}G^*(t,\xi)H^T(\xi)u_1(\xi)\,d\xi
-\int_{s}^{\beta}G^*(t,\xi)H^T(\xi)u_2(\xi)\,d\xi.
$$
} with respect to right-hand side $g(t;u)$ defined by
(\ref{du31}), the function $u \to \bar z(\cdot;u)$ is a linear
bounded operator mapping the space $H=L^2(\alpha,s)\times
L^2(s,\beta)$ to
$$
H_1:=H^1(0,\alpha)\times H^1(\alpha,s)\times H^1(s,\beta)\times
H^1(\beta,T).
$$
Thus, $\tilde I(u)$ is a continuous quadratic form corresponding
to a symmetric continuous bilinear form
$$
\pi(u,v):=(Q_0^{-1}\bar{B}_0\bar z_1(0;u),\bar{B}_0\bar
z_1(0;v))_m + (Q_1^{-1}\bar{B}_1\bar z_4(T;u),\bar{B}_1\bar
z_4(T;v))_{n-m}
$$
$$
+ \int_0^{\alpha} (Q_2^{-1}(t)\bar z_1(t;u),\bar z_1(t;v))_n dt +
\int_{\alpha}^s (Q_2^{-1}(t)\bar z_2(t;u),\bar z_2(t;v))_n dt
$$
$$
+ \int_{s}^{\beta} (Q_2^{-1}(t)\bar z_3(t;u),\bar z_3(t;v))_n dt+
\int_{\beta}^T (Q_2^{-1}(t)\bar z_4(t;u),\bar z_4(t;v))_n dt
$$
$$
+\int^s_{\alpha}\left(Q^{-1}(t)u_1(t),v_1(t)\right)_ldt+
\int_s^{\beta}\left(Q^{-1}(t)u_2(t),v_2(t)\right)_ldt,
$$
$L(u)$ is a linear continuous functional defined on $H,$ and $A$
is a constant independent of  $u.$ We have
\begin{equation*}
 \tilde
I(u)=\tilde I(u_1,u_2) \geq
\int^s_{\alpha}\left(Q^{-1}(t)u_1(t),u_1(t)\right)_ldt+
\int_s^{\beta}\left(Q^{-1}(t)u_2(t),u_2(t)\right)_ldt \geq
c\|u\|_H^2,\quad \mbox{c=const};
\end{equation*}
using Theorem 1.1 from \cite{BIBL20}, we conclude that there is
one and only one element  $\hat u=(\hat u_1,\hat u_2) \in H$ such
that
%\footnote{формулировку этой теоремы см. в дополнении на стр.
%\pageref{t1.1} }
$$
I(\hat u)=I(\hat u_1,\hat u_2)= \inf_{( u_1,u_2) \in H} I(u_1,u_2) =
\inf_{u_1\in L^2(\alpha,s),u_2 \in L^2(s,\beta)}I(u_1, u_2).
$$
Therefore
$$
\frac {d}{d\tau} I(\hat u_1+\tau v_1, \hat u_2+\tau v_2)
\left.\right|_{\tau=0}=0, \quad \forall v=(v_1,v_2) \in H.
$$
Taking into consideration the latter equality,  (\ref{N4}), and
designations on p. \pageref{BB}, we obtain
%$$
%J(u)= \left( Q_0^{-1}\bar{B}_0z_1(0;u),\bar{B}_0z_1(0;u) \right)_m
%+ \left( Q_1^{-1}\bar{B}_1z_4(T;u),\bar{B}_1z_4(T;u) \right)_{n-m}
%+
%$$
%$$
%\int_0^T(Q_2^{-1}(t) \tilde{z}(t;u),\tilde{z}(t;u))_n dt+
%\int_{\alpha}^s(Q^{-1}u_1,u_1)_l dt +
%\int_s^{\beta}(Q^{-1}u_2,u_2)_ldt.
%$$
$$
0 =\frac{1}{2} \frac{dI(\hat{u}+\tau
v)}{d\tau}\left.\right|_{\tau=0}=
$$
$$
=\int_{\alpha}^s(Q^{-1}(t)\hat{u}_1,v_1)_l dt +
\int_s^{\beta}(Q^{-1}(t)\hat{u}_2,v_2)_ldt+
$$
$$
+(Q_0^{-1}\bar{B}_0z_1(0;\hat{u}),\bar{B}_0\bar{z}_1(0;v))_m +
(Q_1^{-1}\bar{B}_1z_4(T;\hat{u}),\bar{B}_1\bar{z}_4(T;v))_{n-m}
$$
$$
+\int_0^{\alpha}(Q_2^{-1}(t)z_1(t;\hat{u}),\bar{z}_1(t;v))_n dt+
\int_{\alpha}^s(Q_2^{-1}(t)z_2(t;\hat{u}),\bar{z}_2(t;v))_n dt
$$
\begin{equation}\label{j1}
+\int_s^{\beta}(Q_2^{-1}(t)z_3(t;\hat{u}),\bar{z}_3(t;v))_n dt +
\int_{\beta}^T(Q_2^{-1}(t)z_4(t;\hat{u}),\bar{z}_4(t;v))_n dt
\end{equation}
Introduce functions $p_1\in H^1(0,\alpha)^n,$ $p_2\in
H^1(\alpha,s)^n,$ $p_3\in H^1(s,\beta)^n,$ and $p_4\in
H^1(\beta,T)^n$ as unique solutions to the following problem:
$$
Lp_1(t)=Q_2^{-1}(t)z_1(t;\hat{u}), \quad 0<t<\alpha, \quad
B_0p_1(0)=Q_0^{-1}\bar{B}_0z_1(0;\hat{u}),
$$
$$
Lp_2(t)=Q_2^{-1}(t)z_2(t;\hat{u}), \quad \alpha<t<s, \quad
p_2(\alpha)=p_1(\alpha),
$$
\begin{equation}\label{j2}
Lp_3(t)=Q_2^{-1}(t)z_3(t;\hat{u}), \quad s<t<\beta, \quad
p_3(s)=p_2(s),
\end{equation}
$$
Lp_4(t)=Q_2^{-1}(t)z_4(t;\hat{u}), \quad \beta<t<T,
$$
$$
p_4(\beta)=p_3(\beta), \quad
B_1p_4(T)=-Q_1^{-1}\bar{B}_1z_4(T;\hat{u}).
$$
Now transform the sum of the last for terms on the right-hand side
of (\ref{j1}) taking into notice that $(\bar
z_1(0;v),p_1(0))_n=(\bar{B}_0\bar z_1(0;v),B_0p_1(0))_m$ and
$(\bar z_4(T;v),p_4(T))_n=(\bar{B}_1\bar
z_4(T;v),B_1p_4(T))_{n-m}.$ We have
$$
\int_0^{\alpha}(Q_2^{-1}(t)z_1(t;\hat{u}),\bar{z}_1(t;v))_n dt+
\int_{\alpha}^s(Q_2^{-1}(t)z_2(t;\hat{u}),\bar{z}_2(t;v))_n dt
$$
$$
+\int_s^{\beta}(Q_2^{-1}(t)z_3(t;\hat{u}),\bar{z}_3(t;v))_n dt +
\int_{\beta}^T(Q_2^{-1}(t)z_4(t;\hat{u}),\bar{z}_4(t;v))_n dt
$$
$$
=\int_0^{\alpha}(Lp_1(t),\bar{z}_1(t;v))_n dt +
\int_{\alpha}^s(Lp_2(t),\bar{z}_2(t;v))_n dt
$$
$$
+\int_s^{\beta}(Lp_3(t),\bar{z}_3(t;v))_n dt + \int_{\beta}^T
(Lp_4(t),\bar{z}_4(t;v))_n dt
$$
$$
=\int_0^{\alpha}(p_1(t),L^{\ast}\bar z_1(t;v))_n dt + (\bar
z_1(\alpha;v),p_1(\alpha))_n-(\bar z_1(0;v),p_1(0))_n
$$
$$
+\int_{\alpha}^s(p_2(t),L^{\ast}\bar z_2(t;v))_n dt + (\bar
z_2(s;v),p_2(s))_n-(\bar z_2(\alpha;v),p_2(\alpha))_n
$$
$$
+\int_s^{\beta}(p_3(t),L^{\ast}\bar z_3(t;v))_n dt + (\bar
z_3(\beta;v),p_3(\beta))_n-(\bar z_3(s;v),p_3(s))_n
$$
$$
+\int_{\beta}^T (p_4(t),L^{\ast}\bar z_4(t;v))_n dt + (\bar
z_4(T;v),p_4(T))_n-(\bar z_4(\beta;v),p_4(\beta))_n
$$
$$
=-(\bar z_1(0;v),p_1(0))_n+(\bar z_4(T;v),p_4(T))_n
$$
$$
-\int_{\alpha}^s(p_2(t),H^T(t)v_1(t))_ndt -
\int_s^{\beta}(p_3(t),H^T(t)v_2(t))_ndt + (\bar z_2(s;v)-\bar
z_3(s;v),p_2(s))_n
$$
$$
=-(\bar{B}_0\bar z_1(0;v),B_0p_1(0))_m+ (\bar{B}_1\bar
z_4(T;v),B_1p_4(T))_{n-m}
$$
$$
-\int_{\alpha}^s (H(t)p_2(t),v_1(t))_ldt-
\int_s^{\beta}(H(t)p_3(t),v_2(t))_ldt
$$
$$
=-(\bar{B}_0\bar z_1(0;v),Q_0^{-1}\bar{B}_0 z_1(0;\hat{u}))_m-
(B_1\bar z_4(T;v),Q_1^{-1}\bar{B}_1z_4(T;\hat{u}))_{n-m}
$$
\begin{equation}\label{j3}
-\int_{\alpha}^s (H(t)p_2(t),v_1(t))_ldt-
\int_s^{\beta}(H(t)p_3(t),v_2(t))_ldt
\end{equation}
From equalities (\ref{j1})--(\ref{j3}) it follows that
$$
\int_{\alpha}^s (Q^{-1}(t)\hat{u}_1(t),v_1(t))_l dt +
\int_s^{\beta}(Q^{-1}(t)\hat{u}_2(t),v_2(t))_l dt
$$
$$
=\int_{\alpha}^s(H(t)p_2(t),v_1(t))_l dt
+\int_s^{\beta}(H(t)p_3(t),v_2(t))_l dt,
$$
so that
$$
Q^{-1}(t)\hat{u}_1(t)=H(t)p_2(t),\quad
Q^{-1}(t)\hat{u}_2(t)=H(t)p_3(t),
$$
\begin{equation}\label{j4}
\hat{u}_1(t)=Q(t)H(t)p_2(t), \quad \hat{u}_2(t)=Q(t)H(t)p_3(t).
\end{equation}
Functions $p_1(t),$ $p_2(t),$ $p_3(t),$ and $p_4(t)$ are
absolutely continuous on segments $[0,\alpha],$ $[\alpha,s],$
$[s,\beta]$, and $[\beta,T]$, respectively, as solutions to BVP
(\ref{j2}); therefore, functions $\hat u_1(t)$ and $\hat u_2(t)$
that perform optimal control are continuous on $[\alpha,s]$ and
$[s,\beta].$ Replacing in (\ref{d5}) functions $u_1(t)$ and $
u_2(t)$ by
 $\hat u_1(t)$ and $\hat u_2(t)$ defined by formulas
(\ref{j4}) and denoting $z(t)=z(t;\hat u)$ we arrive at problem
(\ref{d8}) and equalities (\ref{ddd7}).

Taking into consideration the way this problem was formulated we
can state that its unique solvability follows from the fact that
 functional (\ref{N4}) has one minimum point $\hat u$.

Now let us prove representation (\ref{ddd8}). Substituting into
formula  $ \sigma^2=I(\hat u)$ expressions (\ref{ddd7}) for $\hat
u_1(t)$ and $\hat u_2(t),$ we have
$$
\sigma^2=(Q_0^{-1}\bar{B}_0z_1(0),\bar{B}_0z_1(0))_m +
(Q_1^{-1}\bar{B}_1z_4(T),\bar{B}_1z_4(T))_{n-m}
$$
$$
+\int_0^{\alpha} (Q_2^{-1}(t) z_1(t), z_1(t))_n dt +
\int_{\alpha}^s (Q_2^{-1}(t) z_2(t), z_2(t))_n dt
$$
$$
+ \int_{s}^{\beta} (Q_2^{-1}(t) z_3(t), z_3(t))_n dt+
\int_{\beta}^T (Q_2^{-1}(t) z_4(t), z_4(t))_n dt
$$
\begin{equation}\label{k1}
+\int^s_{\alpha}\left(Q(t)H(t)p_2(t), H(t)p_2(t)\right)_ldt+
\int_s^{\beta}\left(Q(t)H(t)p_3(t), H(t)p_3(t)\right)_ldt.
\end{equation}
Next, we can apply the reasoning similar to that on p.
\pageref{pt} and use (\ref{d8}) to obtain
$$
(z_1(0),p_1(0))_n=(\bar B_0z_1(0),B_0p_1(0))_m=(\bar
B_0z_1(0),Q_0^{-1}\bar B_0z_1(0))_m,
$$
which yields
$$
\int_0^{\alpha} (Q_2^{-1}(t) z_1(t), z_1(t))_n
dt+\int^s_{\alpha}\left(Q(t)H(t)p_2(t),H(t)p_2(t)\right)_ldt
$$
$$
=\int_0^{\alpha} (Lp_1(t), z_1(t))_n
dt-\int^s_{\alpha}(L^{\ast}z_2(t),p_2(t))_ndt
$$
$$
=\int_0^{\alpha} (p_1(t), L^*z_1(t))_n dt
+(z_1(\alpha),p_1(\alpha))_n-(z_1(0),p_1(0))_n
$$
$$
-\int_{\alpha}^s(z_2(t),Lp_2(t))_ndt -(z_2(\alpha),p_2(\alpha))_n
+(z_2(s),p_2(s))_n
$$
$$
=-(z_1(0),p_1(0))_n-\int_{\alpha}^s(z_2(t),Q_2^{-1}(t)z_2(t))_ndt
+(z_2(s),p_2(s))_n
$$
\begin{equation}\label{k2}
 =-\int_{\alpha}^s(Q_2^{-1}(t)z_2(t),z_2(t))_ndt- (\bar
B_0z_1(0),Q_0^{-1}\bar B_0z_1(0))_m+(z_2(s),p_2(s))_n
\end{equation}
In a similar manner, using the equality
$$
(z_4(T),p_4(T))_n=(\bar B_1z_4(T),B_1p_4(T))_{n-m}=-(\bar
B_1z_4(T),Q_1^{-1}\bar B_1z_4(T))_{n-m},
$$
we obtain
$$
\int_{\beta}^T (Q_2^{-1}(t) z_4(t), z_4(t))_n
dt+\int_s^{\beta}\left(Q(t)H(t)p_3(t), H(t)p_3(t)\right)_ldt
$$
$$
=\int_{\beta}^T (Lp_4(t), z_4(t))_n
dt-\int_s^{\beta}(L^{\ast}z_3(t),p_3(t))_ndt
$$
$$
=\int_{\beta}^T (p_4(t), L^*z_4(t))_n dt
+(p_4(T),z_4(T))_n-(p_4(\beta),z_4(\beta))_n
$$
$$
-\int_s^{\beta}(z_3(t),Lp_3(t))_ndt +(z_3(\beta),p_3(\beta))_n
-(z_3(s),p_3(s))_n
$$
$$
=(p_4(T),z_4(T))_n-\int_s^{\beta}(z_3(t),Q_2^{-1}(t)z_3(t))_ndt
-(z_3(s),p_2(s))_n
$$
\begin{equation}\label{k3}
\!\!=\!-\int_s^{\beta}\!\!(Q_2^{-1}(t)z_3(t),z_3(t))_ndt- (\bar
B_1z_4(T),Q_1^{-1}\bar B_1z_4(T))_{n-m}-(z_3(s),p_2(s))_n.
\end{equation}
Relationships (\ref{k1})--(\ref{k3}) yield
$$
\sigma^2=(a,p_2(s)),
$$
which is to be proved.
\end{proof}
%\section{ Альтернативное представление для минимаксных оценок}

Obtain now another representation for the minimax estimate of
quantity $(a,\varphi(s))_n$ which is independent of vector $a.$ To
this end, introduce vector-functions
 $\hat{p}_1,\hat{\varphi}_1\in H^1(0,\alpha)^n,$
 $ \hat{p}_2,\hat{\varphi}_2\in H^1(\alpha,s)^n,$
 $\hat{p}_3,\hat{\varphi}_3\in H^1(s,\beta)^n,$
 and $\hat{p}_4,\hat{\varphi}_4\in H^1(\beta,T)^n$
as solutions to the equation system
$$
L^{\ast}\hat{p}_1(t)=0,\quad 0<t<\alpha, \quad
\hat{B}_0\hat{p}_1(0)=0,
$$
$$
L^*\hat{p}_2(t)=H^T(t)Q(t)(y(t)-H(t)\hat{\varphi}_2(t)),\quad
\alpha<t<s, \quad \hat{p}_2(\alpha)=\hat{p}_1(\alpha),
$$
$$
L^*\hat{p}_3(t)=H^T(t)Q(t)(y(t)-H(t)\hat{\varphi}_3(t)), \quad
s<t<\beta,\quad  \hat{p}_3(s)=\hat{p}_2(s),
$$
\begin{equation}\label{tk}
L^*\hat{p}_4(t)=0,\quad  \beta<t<T,\quad
\hat{p}_4(\beta)=\hat{p}_3(\beta),\quad \hat{B}_1\hat{p}_4(T)=0,
\end{equation}
$$
L\hat{\varphi}_1(t)=Q_2^{-1}(t)\hat{p}_1(t)+f^{(0)}(t),\quad  0<t<\alpha,
\quad B_0\hat{\varphi}_1(0)=Q_0^{-1}\bar{B}_0\hat{p}_1(0)+f_0^{(0)},
$$
$$
L\hat{\varphi}_2(t)=Q_2^{-1}(t)\hat{p}_2(t)+f^{(0)}(t),\quad  \alpha<t<s,
\quad \hat{\varphi}_2(\alpha)=\hat{\varphi}_1(\alpha),
$$
$$
L\hat{\varphi}_3(t)=Q_2^{-1}(t)\hat{p}_3(t)+f^{(0)}(t), \quad
 s<t<\beta,\quad  \hat{\varphi}_3(s)=\hat{\varphi}_2(s),
$$
$$
L\hat{\varphi}_4(t)=Q_2^{-1}(t)\hat{p}_4(t)+f^{(0)}(t),\quad  \beta<t<T,\quad
$$
$$
\hat{\varphi}_4(\beta)=\hat{\varphi}_3(\beta),\quad
B_1\hat{\varphi}_4(T)=-Q_1^{-1}\bar{B}_1\hat{p}_4(T)-f_1^{(0)}
$$
at realizations $y$ that belong with probability 1 to space
$L^2(\alpha,\beta).$
%$$
%\hat c=\bar{B}_0z_1(0),f_0^{(0)})_m - (\bar{B}_1z_4(T),f_1^{(0)})_{n-m}+
%\int_0^T (\tilde{z}(t),f^{(0)}(t))_n\,dt,
%$$

Note that unique solvability of problem  (\ref{tk}) can be proved
similarly to the case of (\ref{d8}). Namely, one can show that
solutions to the problem of optimal control of the system
$$
L^{\ast}\hat{p}_1(t;v)=0,\quad 0<t<\alpha, \quad
\hat{B}_0\hat{p}_1(0;v)=0,
$$
$$
L^{\ast}\hat{p}_2(t;v)=d(t)-H^T(t)v_1(t),\quad \alpha<t<s, \quad
\hat{p}_1(\alpha;v)=\hat{p}_2(\alpha;v),
$$
$$
L^{\ast}\hat{p}_3(t;v)=d(t)-H^T(t)v_2(t), \quad
 s<t<\beta,\quad  \hat{p}_2(s;v)=\hat{p}_3(s;v),$$
$$
L^{\ast}\hat{p}_4(t;v)=0,\quad  \beta<t<T,\quad \hat
p_4(\beta;v)=\hat p_3(\beta;v),\quad \hat{B}_1\hat{p}_4(T;v)=0
$$
with the cost  function
$$
J(v) =\left(Q_0^{-1}(\bar{B}_0 \hat p_1(0;v)+Q_0f_0^{(0)}),\bar{B}_0 \hat p_1(0;v)+Q_0f_0^{(0)}\right)_m
$$
$$
+ \left(Q_1^{-1}(\bar{B}_1\hat p_4(T;v)+Q_1f_1^{(0)}),\bar{B}_1\hat p_4(T;v)+Q_1f_1^{(0)}\right)_{n-m}
$$
$$
+ \int_0^{\alpha} (Q_2^{-1}(t)(\hat p_1(t;v)+Q_2(t)f^{(0)}(t)), \hat p_1(t;v)+Q_2(t)f^{(0)}(t))_n dt
$$
$$
+\int_{\alpha}^s (Q_2^{-1}(t) (\hat p_2(t;v)+Q_2(t)f^{(0)}(t)), \hat p_2(t;v)+Q_2(t)f^{(0)}(t))_n dt
$$
$$
+ \int_{s}^{\beta} (Q_2^{-1}(t)(\hat p_3(t;v)+Q_2(t)f^{(0)}(t)), \hat p_3(t;v)+Q_2(t)f^{(0)}(t))_n
dt
$$
$$
+ \int_{\beta}^T (Q_2^{-1}(t)(\hat p_4(t;v)+Q_2(t)f^{(0)}(t)), \hat p_4(t;v)+Q_2(t)f^{(0)}(t))_n dt
$$
$$
+\int^s_{\alpha}\left(Q^{-1}(t)v_1(t),v_1(t)\right)_ldt+
\int_s^{\beta}\left(Q^{-1}(t)v_2(t),v_2(t)\right)_ldt\to \min_{v=(v_1,v_2)\in H},
$$
$$
d(t)=H^T(t)Q(t)y(t),\quad \alpha<t<\beta,
$$
can be reduced to the solution of problem (\ref{tk}) where the
optimal control $\hat v=(\hat v_1,\hat v_2)$ is expressed in terms
of the solution to this problem as $\hat
v_1=Q(t)H(t)\hat\varphi_2(t),$ $\hat
v_2=Q(t)H(t)\hat\varphi_3(t)$; the unique solvability of the
problem follows from the existence of the unique minimum point
$\hat v$ of functional $J(v)$.

Considering system (\ref{tk}) at realizations $y$ it is easy to
see that its solution is continuous with respect to the right-hand
side. This property enables us to conclude, using the general
theory of linear continuous  transformations of random processes,
that these solutions, i.e. the functions $\hat
p_i(t)=\hat p_i(t,\omega),$ $\hat \varphi_1(t)=\hat
\varphi_1(t,\omega),$ $i=\overline{1,4},$ considered as random
fields have finite second moments.
%, т.е. $\hat p_1,\hat \varphi_1\in L^2(\Omega,
%H^1(0,\alpha)),$ $\hat p_2,\hat \varphi_2\in L^2(\Omega,
%H^1(\alpha,s)),$ $\hat p_3,\hat \varphi_3\in L^2(\Omega,
%H^1(s,\beta)),$ $\hat p_4,\hat \varphi_4\in L^2(\Omega,
%H^1(\beta,T)).$
\begin{pred}\label{tm2}
The following representation is valid
$$
\widehat{\widehat{(a,\varphi(s))_n}}=(a,\hat{\varphi}_2(s))_n.
$$
\end{pred}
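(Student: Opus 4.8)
The plan is to prove the identity by two successive applications of the Green (integration-by-parts) formula (\ref{xa5}), exploiting the fact that the auxiliary fields $z_i,p_i$ constructed in Theorem \ref{tm1}, which define $\hat u_1,\hat u_2$ and $\hat c$, solve a system that is adjoint to the observation-driven system (\ref{tk}). Concretely, I will compute $(a,\hat\varphi_2(s))_n$ by pairing the $\hat\varphi_i$ against the $z_i$, compute the estimate of Theorem \ref{tm1} by pairing the $p_i$ against the $\hat p_i$, and show that both reduce to the same sum of a $Q_2^{-1}$-integral, a mixed $H^TQH$-integral, two boundary cross-terms, and the constant $\hat c$.

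First I would rewrite the target. Since $z_3(s)=z_2(s)-a$ and $\hat\varphi_3(s)=\hat\varphi_2(s)$, we have $(a,\hat\varphi_2(s))_n=(z_2(s),\hat\varphi_2(s))_n-(z_3(s),\hat\varphi_3(s))_n$. Applying (\ref{xa5}) on each of the four subintervals to the pair $(\hat\varphi_i,z_i)$ and inserting $L\hat\varphi_i=Q_2^{-1}\hat p_i+f^{(0)}$ together with $L^\ast z_i=-H^TQHp_i$ on $(\alpha,s),(s,\beta)$ and $L^\ast z_i=0$ on the outer intervals, the interface conditions $z_2(\alpha)=z_1(\alpha)$, $\hat\varphi_2(\alpha)=\hat\varphi_1(\alpha)$, $z_3(\beta)=z_4(\beta)$, $\hat\varphi_3(\beta)=\hat\varphi_4(\beta)$ telescope all intermediate boundary terms. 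What survives is the pair of endpoint contributions $(\hat\varphi_1(0),z_1(0))_n$ and $-(z_4(T),\hat\varphi_4(T))_n$, plus the integrals $\int_0^T(Q_2^{-1}\hat p,\tilde z)_n\,dt$ (call it term I), $\int_\alpha^s(\hat\varphi_2,H^TQHp_2)_n\,dt+\int_s^\beta(\hat\varphi_3,H^TQHp_3)_n\,dt$ (term II), and $\int_0^T(f^{(0)},\tilde z)_n\,dt$.

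Next I would transform the estimate itself. Writing $(\hat u_i,y)_l=(QHp_i,y)_l=(H^TQy,p_i)_n$ and substituting, from (\ref{tk}), the relation $H^TQy=L^\ast\hat p_i+H^TQH\hat\varphi_i$ on the respective intervals, the mixed pieces $(H^TQH\hat\varphi_i,p_i)_n$ reproduce term II verbatim by symmetry of $H^TQH$. The pieces $(L^\ast\hat p_i,p_i)_n$ are then handled by applying (\ref{xa5}) to the pair $(p_i,\hat p_i)$, using $Lp_i=Q_2^{-1}z_i$ on every subinterval and $L^\ast\hat p_i=0$ on the outer ones; the interface matchings $p_2(s)=p_3(s)$, $\hat p_2(s)=\hat p_3(s)$, $p_2(\alpha)=p_1(\alpha)$, $\hat p_2(\alpha)=\hat p_1(\alpha)$, $p_4(\beta)=p_3(\beta)$, $\hat p_4(\beta)=\hat p_3(\beta)$ cancel the interior boundary terms and leave exactly term I together with the endpoint terms $(p_1(0),\hat p_1(0))_n$ and $-(p_4(T),\hat p_4(T))_n$.

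Finally I would reconcile the endpoint terms by the boundary decompositions $(\cdot,\cdot)_n=(\bar B_0\cdot,B_0\cdot)_m+(\hat B_0\cdot,\tilde B_0\cdot)_{n-m}$ and its analogue at $T$ (the reasoning on p.~\pageref{pt}). Using $\hat B_0z_1(0)=0$, $B_0\hat\varphi_1(0)=Q_0^{-1}\bar B_0\hat p_1(0)+f_0^{(0)}$ in the first computation and $\hat B_0\hat p_1(0)=0$, $B_0p_1(0)=Q_0^{-1}\bar B_0z_1(0)$ in the second (and their counterparts at $T$ with $\bar B_1,B_1$, $\hat B_1,\tilde B_1$), every endpoint term collapses to the common cross-terms $(\bar B_0z_1(0),Q_0^{-1}\bar B_0\hat p_1(0))_m$ and $(\bar B_1z_4(T),Q_1^{-1}\bar B_1\hat p_4(T))_{n-m}$, while the inhomogeneous data $f_0^{(0)},f_1^{(0)}$ together with $\int_0^T(\tilde z,f^{(0)})_n\,dt$ assemble precisely into the constant $\hat c$. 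Since both $(a,\hat\varphi_2(s))_n$ and the Theorem~\ref{tm1} estimate reduce to term I $+$ term II $+$ the two cross-terms $+\hat c$, the desired equality follows. I expect the main obstacle to be exactly this endpoint bookkeeping: one must apply the four boundary-matrix decompositions in the correct orientation at all of $0,\alpha,s,\beta,T$ so that the homogeneous parts vanish and the data reassemble into $\hat c$, with particular care for the signs of the terminal conditions at $T$.
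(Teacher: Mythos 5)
Your proposal is correct and is essentially the paper's own argument: the paper likewise writes $H^TQy=L^{\ast}\hat p_i+H^TQH\hat\varphi_i$ from (\ref{tk}), applies the Green formula (\ref{xa5}) twice on each subinterval (pairing $p_i$ with $\hat p_i$ and $z_i$ with $\hat\varphi_i$ so that the $Q_2^{-1}$- and $H^TQH$-integrals cancel), telescopes the interface terms of (\ref{d8}) and (\ref{tk}), and collapses the endpoint terms at $0$ and $T$ through the boundary decompositions and the symmetry of $Q_0^{-1},Q_1^{-1}$ so that the $f^{(0)},f_0^{(0)},f_1^{(0)}$ contributions recombine into $\hat c$. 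The only difference is organizational: you reduce both $(a,\hat\varphi_2(s))_n$ and the estimate to a common normal form and match them, whereas the paper transforms the estimate directly, letting $(a,\hat\varphi_2(s))_n$ emerge as the interface term at $s$ from $z_2(s)-z_3(s)=a$ and $\hat\varphi_3(s)=\hat\varphi_2(s)$ — and, as you anticipated, the endpoint sign bookkeeping at $T$ is precisely the delicate spot.
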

\begin{proof}
By virtue of (\ref{ddd7}) and (\ref{tk}),
$$
\widehat{\widehat{(a,\varphi(s))_n}} =
\int_{\alpha}^s(\hat{u}_1(t),y(t))_ldt
+\int_s^{\beta}(\hat{u}_2(t),y(t))_ldt+\hat c
$$
$$
=\int_{\alpha}^s(Q(t)H(t)p_2(t),y(t))_ldt +
\int_{\alpha}^s(Q(t)H(t)p_3(t),y(t))_ldt+\hat c
$$
\begin{equation}\label{aj19}
=\int_{\alpha}^s(p_2(t),H^T(t)Q(t)y(t))_ndt +
\int_{\alpha}^s(p_3(t),H^T(t)Q(t)y(t))_ndt+\hat c .
\end{equation}
Next,
\begin{equation*}
\int_{\alpha}^s(p_2(t),H^T(t)Q(t)y(t))_l dt=
\end{equation*}
$$
=\int_{\alpha}^s(p_2(t),L^{\ast}\hat{p}_2(t))_n dt +
\int_{\alpha}^s(p_2(t),H^T(t)Q(t)H(t)\hat{\varphi}_2(t))_n dt
$$
$$
=\int_{\alpha}^s(Lp_2(t),\hat{p}_2(t))_ndt +
(p_2(\alpha),\hat{p}_2(\alpha))_n - (p_2(s),\hat{p}_2(s))_n
$$
$$
+\int_{\alpha}^s(H^T(t)Q(t)H(t)p_2(t),\hat{\varphi}_2(t))_n dt
$$
$$
=\int_{\alpha}^s(Q_2^{-1}(t)z_2(t),\hat{p}_2(t))_n dt  +
(p_2(\alpha),\hat{p}_2(\alpha))_n - (p_2(s),\hat{p}_2(s))_n
$$
$$
-\int_{\alpha}^s(L^{\ast}z_2(t),\hat{\varphi}_2(t))_ndt
$$
\begin{equation*}
=\int_{\alpha}^s(z_2(t),L\hat{\varphi}_2(t))_n dt  +
(p_2(\alpha),\hat{p}_2(\alpha))_n - (p_2(s),\hat{p}_2(s))_n -
\int_{\alpha}^s(z_2(t),L\hat{\varphi}_2(t))_n dt
\end{equation*}
\begin{equation*}
+(z_2(s),\hat{\varphi}_2(s))_n -
(z_2(\alpha),\hat{\varphi}_2(\alpha))_n-\int_{\alpha}^s(z_2(t),f^{(0)}(t))_ndt
\end{equation*}
\begin{equation}\label{aj20}
=(p_2(\alpha),\hat{p}_2(\alpha))_n - (p_2(s),\hat{p}_2(s))_n
+(z_2(s),\hat{\varphi}_2(s))_n -
(z_2(\alpha),\hat{\varphi}_2(\alpha))_n-\int_{\alpha}^s(z_2(t),f^{(0)}(t))_ndt.
\end{equation}
Similarly,
$$
\int_s^{\beta}(p_3(t),H^T(t)Q(t)y(t))_l dt=
$$
\begin{equation}\label{j19}
=(p_3(s),\hat{p}_3(s))_n - (p_3(\beta),\hat{p}_3(\beta))_n
+(z_3(\beta),\hat{\varphi}_3(\beta))_n -
(z_3(s),\hat{\varphi}_3(s))_n-\int_{s}^\beta(z_3(t),f^{(0)}(t))_ndt
\end{equation}
From (\ref{aj19}), (\ref{aj20}), and (\ref{j19}) it follows
$$
(\widehat{\widehat{a,\varphi(s)}} ) =
(p_1(\alpha),\hat{p}_1(\alpha))_n -\int_{\alpha}^s(z_2(t),f^{(0)}(t))_ndt-\int_{s}^\beta(z_3(t),f^{(0)}(t))_ndt
$$
\begin{equation}\label{jj19}
-(z_1(\alpha),\hat{\varphi}_1(\alpha))_n -
(p_4(\beta),\hat{p}_4(\beta))_n +
(z_4(\beta),\hat{\varphi}_4(\beta))_n + (a,\hat{\varphi}_2(s))_n+\hat c.
\end{equation}
However,
$$
0=\int_0^{\alpha}(\hat \varphi_1(t),L^*\hat z_1(t))_n dt=
\int_0^{\alpha}(L\hat \varphi_1(t),z_1(t))_n dt
$$
$$
-(z_1(\alpha),\hat \varphi_1(\alpha))_n+(z_1(0),\hat
\varphi_1(0))_n
$$
\begin{equation}\label{j20}
=\int_0^{\alpha}(z_1(t),Q_2(t)\hat p_1(t))_n dt+\int_0^{\alpha}(z_1(t),f^{(0)}(t))_n dt -(z_1(\alpha),\hat
\varphi_1(\alpha))_n+(z_1(0),\hat \varphi_1(0))_n,
\end{equation}
%\newpage
$$
0=\int_0^{\alpha}(L^*\hat p_1(t),p_1(t))_n dt=
\int_0^{\alpha}(\hat p_1(t),Lp_1(t))_n dt
$$
$$
-(\hat p_1(\alpha),p_1(\alpha))_n+(\hat p_1(0),p_1(0))_n
$$
\begin{equation}\label{j21}
=\int_0^{\alpha}(\hat p_1(t),Q_2(t) z_1(t))_n dt-(\hat
p_1(\alpha),p_1(\alpha))_n+(\hat p_1(0),p_1(0))_n.
\end{equation}
Subtracting from (\ref{j20}) equality (\ref{j21}), we obtain
$$
0=(\hat p_1(\alpha),p_1(\alpha))_n-(z_1(\alpha),\hat
\varphi_1(\alpha))_n -(\hat p_1(0),p_1(0))_n+(z_1(0),\hat
\varphi_1(0))_n+\int_0^{\alpha}(z_1(t),f^{(0)}(t))_n dt
$$
or
\begin{equation}\label{j22}
\int_0^{\alpha}(z_1(t),f^{(0)}(t))_n dt+(\hat p_1(\alpha),p_1(\alpha))_n-(z_1(\alpha),\hat
\varphi_1(\alpha))_n =(\hat p_1(0),p_1(0))_n-(z_1(0),\hat
\varphi_1(0))_n.
\end{equation}
Since
$$
(z_1(0),\hat \varphi_1(0))_n=(\bar B_0z_1(0),B_0\hat
\varphi_1(0))_m=(\bar B_0z_1(0),Q_0^{-1}\bar B_0\hat p_1(0))_m+(\bar B_0z_1(0),f_0^{(0)})_m,
$$
$$
(\hat p_1(0),p_1(0))_n=(\bar B_0\hat p_1(0),B_0p_1(0))_m=(\bar
B_0\hat p_1(0),Q_0^{-1}\bar B_0z_1(0))_m,
$$
we can use the latter equalities, (\ref{j22}), and the fact that
$Q_0^{-1}$ is a symmetric matrix to obtain
\begin{equation}\label{j23}
(\hat p_1(\alpha),p_1(\alpha))_n-(z_1(\alpha),\hat
\varphi_1(\alpha))_n =-\int_0^{\alpha}(z_1(t),f^{(0)}(t))_n dt-(\bar B_0z_1(0),f_0^{(0)})_m.
\end{equation}
Performing a similar analysis, one can prove that
\begin{equation}\label{j24}
-(p_4(\beta),\hat p_4(\beta))_n+(z_4(\beta),\hat
\varphi_4(\beta))_n =-\int_\beta^{T}(z_4(t),f^{(0)}(t))_n dt+(\bar B_1z_1(T),f_1^{(0)})_{n-m}.
\end{equation}
From (\ref{j23}), (\ref{j24}), and (\ref{jj19}) and the expression
for  $\hat c,$ it follows
$$
\widehat{\widehat{(a,\varphi(s))}}=(a,\hat{\varphi}_2(s)).
$$
The theorem is proved.
\end{proof}

{\bf Corollary.} {\it Function $ \hat{\varphi}_2(s) $ can be taken
as an estimate of solution $\varphi(s)$ of initial BVP
(\ref{dy1}), (\ref{dy2}).}

As an example, consider the case  when a  vector-function
$y(t)=H(t)\varphi(t)+\xi(t)$ is observed on an interval $(0,T)$,
where a vector-function $\varphi(t)$ with values in  $\mathbb R^n$
is a solution to the BVP
\begin{equation}\label{za}
L_1\varphi=f(t), \quad \varphi(0)=f_0, \quad  \varphi(T)=f_1,
\end{equation}
and operator $L_1$ is defined by the relation
$$
L_1\varphi(t)=-\varphi''(t)+q(t)\varphi(t),
$$
where $q(t)$ is a positive definite $n\times n$-matrix whose
entries are continuous functions on $[0,T].$

Note that this problem has the unique classical solution if $f(t)$
is continuous on $[0,T]$ and the unique generalized solution if
$f(t)\in L_2(0,T).$

Assume that, as well as in the previous case,
%matrix $H(t)$ and random process $\xi(t)$ obey the
%conditions formulated above:
$H(t)$ is an $l\times n$ matrix with the entries that are
continuous functions on $[\alpha, \beta ]$ and  $\xi(t)$ is a
random vector process with zero expectation $M\xi(t)$ and unknown
$l\times l$ correlation matrix $R(t,s)=M\xi(t)\xi^{T}(s)$. Assume
also that domains $V$ and $G$ are given in the form (\ref{d6}) and
(\ref{d7}) where matrices $Q_0,$ $Q_1$, and $Q_2(t)$ entering
(\ref{d7}) have dimensions $n\times n,$ $f_0^{(0)}=0,$
$f_1^{(0)}=0$, and $f^{(0)}(x)=0.$

%Тогда если записать
Write equation (\ref{za}) as a first-order system by setting
$\varphi_1(t)=\varphi'(t),$ $\varphi_2(t)=\varphi(t)$ and
introducing a vector-function
$$
\tilde\varphi(t)=\left(\begin{array} {c}\varphi_1(t)\\
\varphi_2(t)\end{array}\right),\quad
\frac{d\tilde\varphi(t)}{dt}=\left(\begin{array}
{c}\frac{d\varphi_1(t)}{dt}\\
\frac{d\varphi_2(t)}{dt}\end{array}\right),
$$
$$
\tilde f (t)=\left(\begin{array} {c}f(t)\\
0\end{array}\right),\quad \tilde f_0 =\left(\begin{array} {c}
0\\f_0 \end{array}\right),\quad \tilde f_1 =\left(\begin{array}
{c} 0\\f_1 \end{array}\right)
$$
with $2n$ components, a vector $\tilde a=(0,a)$ with $2n$
components, a $2n\times 2n-$matrix
$$
\tilde A(t)=\left(\begin{array} {rl}O_{n,n}&-q(t)\\ -E_n
&O_{n,n}\end{array}\right)
$$
matrices $B_0=B_1=\bar B_0=\bar B_1=(O_{n,n},E_n),$ and $\hat
B_0=\hat B_1=(E_n,O_{n,n})$, and an operator
$$ \tilde L
\tilde\varphi(t)=-\frac{d\tilde\varphi(t)}{dt}+\tilde
A^{T}(t)\tilde\varphi(t).
$$
Then system (\ref{za}) can be written as
\begin{equation}
\label{za1} \tilde L
\tilde\varphi(t)=\frac{d\tilde\varphi(t)}{dt}+A\tilde\varphi(t) =
\tilde f(t),\quad B_{0}\tilde\varphi(0)=\tilde f_0, \quad
B_1\tilde\varphi(T)=\tilde f_1.
\end{equation}
Applying Theorems 1 and 2 and performing necessary transformations
in the resulting equations that are similar to (\ref{d8}) and
(\ref{tk}) (in terms of the designations introduced above) we
prove the following
\begin{pred}
The minimax estimate of expression $(a,\varphi(s))_n$ has the form
$$
\widehat{\widehat{(a,\varphi(s))_n}}=\int_0^s(\hat{u}_1(t),y(t))_ldt+
\int_s^T(\hat{u}_2(t),y(t))_ldt=(a,\hat{\varphi}_2(s))_n,
$$
where
$$
\sigma^2=(a,p_2(s))_n,
$$
$\hat{u}_1(t)=Q(t)H(t)p_2(t),$ $\hat{u}_2(t)=Q(t)H(t)p_3(t),$ and
vector-functions $\hat{\varphi}_2(t),$ $p_2$, and $p_3$ are
determined from the  solution to the equation systems
$$
L_1z_2(t)=-H^T(t)Q(t)H(t)p_2(t),\quad 0<t<s, \quad z_2(0)=0,
$$
$$
L_1z_3(t)=-H^T(t)Q(t)H(t)p_3(t), \quad s<t<T, \quad
z_2(s)-z_3(s)=a, \quad z_3(T)=0,
$$
$$
L_1p_2(t)=Q_2^{-1}(t)z_2(t), \quad 0<t<s, \quad p_2(0)=Q_0^{-1}
z_2(0),
$$
$$
L_1p_3(t)=Q_2^{-1}(t)z_3(t),\quad s<t<T,\quad p_2(s)=p_3(s),\quad
p_3(T)=-Q_1^{-1} z_3(T),
$$
$$
L_1\hat{p}_2(t)=H^T(t)Q(t)(y(t)-H(t)\hat{\varphi}_2(t)),\quad
0<t<s,\quad \hat{p}_2(0)=0,
$$
$$
L_1\hat{p}_3(t)=H^T(t)Q(t)(y(t)-H(t)\hat{\varphi}_3(t)),\quad
s<t<T,
$$
$$
\hat{p}_3(s)=\hat{p}_2(s),\quad  \hat{p}_3(T)=0,
$$
$$
L_1\hat{\varphi}_2(t)=Q_2^{-1}(t)\hat p_2(t),\quad 0<t<s,\quad
\hat{\varphi}_2(0)=Q_0^{-1} \hat{p}_2(0),
$$
$$
L_1\hat{\varphi}_3(t)=Q_2^{-1}(t)\hat p_3(t),\quad s<t<T, \quad
\hat{\varphi}_3(s)=\hat{\varphi}_2(s), \quad
\hat{\varphi}_3(T)=-Q_1^{-1} \hat{p}_3(T).
$$
\end{pred}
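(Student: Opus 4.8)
The plan is to recognize this statement as a direct specialization of the general theory of Sections 2 and 3, applied to the first-order reformulation already set up in (\ref{za1}). First I would check that the standing hypothesis of Section 1 holds for (\ref{za}): the homogeneous problem $-\varphi''+q\varphi=0$, $\varphi(0)=0$, $\varphi(T)=0$ has only the trivial solution. This is immediate from positive definiteness of $q$ — pairing the equation with $\varphi$ in $\mathbb R^n$ and integrating over $(0,T)$, the boundary term drops because $\varphi(0)=\varphi(T)=0$ and one is left with $\int_0^T(\varphi'(t),\varphi'(t))_n\,dt+\int_0^T(q(t)\varphi(t),\varphi(t))_n\,dt=0$, whence $\varphi\equiv 0$. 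Consequently the equivalent $2n$-dimensional system (\ref{za1}), with matrix $\tilde A$ and the boundary matrices $B_0=B_1=\bar B_0=\bar B_1=(O_{n,n},E_n)$, $\hat B_0=\hat B_1=(E_n,O_{n,n})$ displayed before the statement, has a homogeneous BVP with only the trivial solution, so Theorems 1, 2, and 3 are in force and the minimax problem is well posed.

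Next I would invoke the representation results, Theorem \ref{tm1} and Theorem \ref{tm2}, for this $2n$-dimensional system, taking observation operator $\tilde H=(O_{l,n},H)$ acting on $\tilde\varphi=(\varphi',\varphi)^T$ (so that $\tilde H\tilde\varphi=H\varphi$) and target vector $\tilde a=(0,a)$ (so that $(\tilde a,\tilde\varphi(s))_{2n}=(a,\varphi(s))_n$), with the observation interval equal to the whole of $(0,T)$. The essential simplification is that $\alpha=0$ and $\beta=T$, so the two outer subintervals of (\ref{d5}), (\ref{d8}), (\ref{tk}) are empty; hence $z_1,z_4,p_1,p_4$ and their hatted counterparts disappear, and the relations that Theorem \ref{tm1} imposed through them, namely $\hat B_0 z_1(0)=0$, $\hat B_1 z_4(T)=0$, $B_0p_1(0)=Q_0^{-1}\bar B_0 z_1(0)$, $B_1p_4(T)=-Q_1^{-1}\bar B_1 z_4(T)$, are transferred directly to the endpoints of the two surviving intervals $(0,s)$ and $(s,T)$. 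This already collapses the four-interval systems to the two-interval systems of the statement.

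The substantive step, and the one I expect to be the main obstacle, is to unfold the $2n$-dimensional systems (\ref{d8}) and (\ref{tk}) into the $n$-dimensional second-order systems of the statement. The mechanism is structural: because $\tilde A$ has the companion-type block form, each first-order operator collapses onto $L_1$ once one $n$-block is eliminated. Writing a $2n$-vector as $(w^{(1)},w^{(2)})^T$, the relation $L^{\ast}(w^{(1)},w^{(2)})^T=(g^{(1)},g^{(2)})^T$ yields $w^{(2)}=-(w^{(1)})'-g^{(1)}$ together with $L_1 w^{(1)}=(g^{(1)})'-g^{(2)}$, and dually for $L$ the second block satisfies an $L_1$-equation; moreover $\tilde H^{T}Q\tilde H=\mathrm{diag}(O_{n,n},H^{T}QH)$ and $\tilde a=(0,a)$ respect this splitting. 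Carrying the elimination through (\ref{d8}) and (\ref{tk}) turns the block systems into second-order systems for the retained blocks, which I would then identify with $z_2,z_3,p_2,p_3$ and $\hat p_2,\hat p_3,\hat\varphi_2,\hat\varphi_3$. The genuinely delicate work is the accompanying bookkeeping: deciding which block of each variable survives (the $L^{\ast}$-variables $z,\hat p$ and the $L$-variables $p,\hat\varphi$ reduce on complementary blocks), verifying that $\hat B_0=\hat B_1=(E_n,O_{n,n})$ and $B_0=\bar B_0=(O_{n,n},E_n)$ collapse to the stated relations at $0$, $s$, and $T$ (with $z_2(s)-z_3(s)=a$ reproducing the jump $\tilde a$), and confirming that the $2n\times 2n$ weights in $G$ restrict to the $n\times n$ matrices $Q_0,Q_1,Q_2$, using $f_0^{(0)}=f_1^{(0)}=0$ and $f^{(0)}\equiv 0$, which also forces $\hat c=0$ and removes every $f^{(0)}$-term from the $\hat\varphi$-system.

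Once the systems are matched, the three asserted conclusions are restatements of the general results. The first representation $\int_0^s(\hat u_1,y)_l\,dt+\int_s^T(\hat u_2,y)_l\,dt$ with $\hat u_1=QHp_2$, $\hat u_2=QHp_3$ is (\ref{ddd7}); the second representation $(a,\hat\varphi_2(s))_n$ is Theorem \ref{tm2}; and the error identity follows from (\ref{ddd8}) via $\sigma^2=(\tilde a,p_2(s))_{2n}=(a,p_2(s))_n$, the last equality holding precisely because $\tilde a=(0,a)$ and the theorem's $p_2$ is the second block of the reduced momentum variable. Finally, unique solvability of the reduced systems is inherited from the unique solvability asserted at the end of Theorem \ref{tm1}, so nothing further need be argued.
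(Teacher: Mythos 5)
Your route is the paper's own: the paper proves this theorem by a two-line sketch (rewrite (\ref{za}) as the first-order system (\ref{za1}), apply Theorems \ref{tm1} and \ref{tm2}, and ``perform necessary transformations''), and your proposal is that sketch with the transformations made explicit. Your preliminary steps are correct: the trivial-kernel verification for $L_1$ with Dirichlet data, the collapse of the outer intervals of (\ref{d5}), (\ref{d8}), (\ref{tk}) when $\alpha=0$, $\beta=T$, the elimination identities ($w^{(2)}=-(w^{(1)})'-g^{(1)}$ and $L_1w^{(1)}=(g^{(1)})'-g^{(2)}$ for $L^{\ast}$, and dually for $L$), and the error identity $\sigma^2=(\tilde a,P_2(s))_{2n}=(a,p_2(s))_n$ with $p_2$ the retained second block. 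One small caveat you share with the paper: Theorem \ref{tm1} cannot be cited entirely verbatim, because for the $2n$-dimensional system the admissible right-hand side $(\pm\tilde f,0)^T$ lies in a subspace, so the set $G$ is not of the form (\ref{d7}) with a positive definite $2n\times 2n$ weight; one must rerun the Cauchy--Bunyakovsky step with the constraint acting on the first block only. This is harmless but should be said.

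The genuine gap is exactly in the step you defer to ``bookkeeping'' and assert will confirm the printed conditions: it will not. The jump vector $\tilde a=(0,a)^T$ and the matrices $\bar B_0=\bar B_1=(O_{n,n},E_n)$ read off the \emph{second} block of the $2n$-vectors, and by your own elimination the second block of each $L^{\ast}$-variable is (up to sign) the \emph{derivative} of its retained first block. Writing $z_2:=-z^{(1)}$, $p_2:=p^{(2)}$ (the sign flip needed to match the differential equations $L_1z_2=-H^TQHp_2$, $L_1p_2=Q_2^{-1}z_2$), a faithful reduction of (\ref{d8}) gives
$$
z_2(s)=z_3(s),\qquad z_2'(s)-z_3'(s)=a,\qquad p_2(0)=Q_0^{-1}z_2'(0),\qquad p_3(T)=-Q_1^{-1}z_3'(T),\qquad p_2'(s)=p_3'(s),
$$
not $z_2(s)-z_3(s)=a$, $p_2(0)=Q_0^{-1}z_2(0)$, $p_3(T)=-Q_1^{-1}z_3(T)$ as printed (the latter two vanish identically since $z_2(0)=z_3(T)=0$, decoupling $p$ from $z$ at the endpoints, which is clearly not intended); the same value-versus-derivative corrections apply to the $\hat p,\hat\varphi$ system from (\ref{tk}). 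A count of conditions confirms this: the coupled $z,p$ system of four second-order $n$-dimensional equations requires $8n$ conditions, while the statement as printed supplies only $6n$. A direct derivation for the second-order problem says the same thing: in the error representation obtained from Lagrange's identity for $L_1$ on $(0,s)$ and $(s,T)$, the coefficient of $\tilde\varphi'(s)$ is $z_2(s)-z_3(s)$ and must vanish, while annihilating the coefficient of $\tilde\varphi(s)$ forces $z_2'(s)-z_3'(s)=a$. So either you read the theorem's interface and endpoint conditions as misprints (values and derivatives interchanged at $0$, $s$, $T$) and prove the corrected statement --- which your method does deliver --- or your parenthetical claim that ``$z_2(s)-z_3(s)=a$ reproduces the jump $\tilde a$'' is false and the proof does not close on the statement as literally written. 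The paper's proof, which never carries out the transformation, is silent on this; your write-up cannot be.
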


\section{
%[%\small
Minimax estimates of solutions subject to incomplete restrictions
on unknown parameters
%are known incompletely
%]{}
}

%(варианты перевода:

%1. Minimax estimates of solutions when the restrictions on unknown
%parameters are known incompletely
%
%2. Minimax estimates of solutions under partly known restrictions on unknown
%parameters)
%\\\\\\

Assume again that observations have form (\ref{d1}) and
undetermined parameters $f_0$, $f_1$ and $f(t)$  belong to the
domain
\begin{equation}
\label{n4-1}
G=\{ \tilde{F}=(\tilde f_0, \tilde f_1,
\tilde f):\int_{0}^{T}(Q_2(t)\tilde f(t),\tilde f(t))dt \leq 1 \},
\end{equation}
where $Q_2(t)$ is given in (\ref{d7}). %(2.3).
The correlation function of process $\xi(t)$  belongs to domain
(\ref{d6}). %(2.2).

Introduce the set
\begin{equation}
\label{n4-2} U=\{ u(\cdot) : \bar{B}_0 z_1(0,u)=0, \bar{B}_1
z_4(T,u)=0 \}
\end{equation}
here $u(t)= \left\{
\begin{array}{lc}
u_1(t),&\alpha<t<s,\\ u_2(t),&s<t<\beta,
\end{array}
\right.   $ where $z_i(t,u), i=\overline{1,4},$
 is the solution to BVP (\ref{d5}). %(2.6).

\begin{predl}
%Имеет место равенство
\begin{equation*}
\sigma (u,c)= \left\{
\begin{array}{lc}
\infty,&u\notin U,\\ \sigma_1(u,c),&u\in U,
\end{array}
\right.
\end{equation*}
where
\begin{equation}
\label{n4-3} \sigma_1(u,c)=\int_0^T(Q_2^{-1}\tilde{z}(t,u),
\tilde{z}(t,u))dt + \int_{\alpha}^{\beta}(Q^{-1}(t)u(t),u(t))dt +
c^2 = J(u) + c^2.
\end{equation}
\end{predl}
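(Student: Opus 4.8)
The plan is to start from the error representation already derived inside the proof of Lemma~\ref{lem1}, which is valid for the present data sets as well since nothing in its derivation used the specific shape of $G$. For any admissible $\tilde F=(\tilde f_0,\tilde f_1,\tilde f)$ and any $\tilde\xi$, identity (\ref{xq}) gives $(a,\tilde\varphi(s))_n-\widehat{(a,\tilde\varphi(s))_n}=\eta$, where $\eta$ is affine in $(\tilde f_0,\tilde f_1,\tilde f,\tilde\xi)$ and the boundary data enter only through the coefficients $\bar B_0 z_1(0;u)$ and $\bar B_1 z_4(T;u)$. Since $M\tilde\xi\equiv 0$, I would split $M\eta^2=(M\eta)^2+D\eta$, where $M\eta$ collects the deterministic ($\tilde F$-)part and $D\eta=M(\eta-M\eta)^2$ depends only on the noise. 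Because $D\eta$ is independent of $\tilde F$ and $M\eta$ is independent of $\tilde\xi$, the two suprema decouple:
$$
\sigma(u,c)=\sup_{\tilde F\in G}(M\eta)^2+\sup_{\tilde\xi\in V}D\eta .
$$

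For the noise term I would reuse verbatim the computation leading to (\ref{xq3}): applying the generalized Cauchy--Bunyakovsky inequality to $M\bigl[\int_\alpha^\beta(\tilde u,\tilde\xi)_l\,dt\bigr]^2$ together with the constraint (\ref{d6}) yields $\sup_{\tilde\xi\in V}D\eta=\int_\alpha^\beta(Q^{-1}(t)u(t),u(t))_l\,dt$, the supremum being attained at the extremal process exhibited in Lemma~\ref{lem1}. This already produces the middle term of $\sigma_1(u,c)$, and it holds regardless of whether $u\in U$.

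The decisive new feature is the supremum over $\tilde F$, and this is where the set $U$ enters. In contrast to (\ref{d7}), the domain (\ref{n4-1}) leaves $\tilde f_0\in\mathbb R^m$ and $\tilde f_1\in\mathbb R^{n-m}$ completely free. Writing
$$
M\eta=(\bar B_0 z_1(0;u),\tilde f_0)_m-(\bar B_1 z_4(T;u),\tilde f_1)_{n-m}+\int_0^T(\tilde z(t;u),\tilde f(t))_n\,dt-c ,
$$
the contributions of $\tilde f_0$ and $\tilde f_1$ are unbounded linear functionals unless their coefficients vanish. Hence, if $\bar B_0 z_1(0;u)\neq 0$ or $\bar B_1 z_4(T;u)\neq 0$, i.e.\ $u\notin U$, one drives $|M\eta|\to\infty$ by scaling $\tilde f_0$ or $\tilde f_1$, so $\sigma(u,c)=\infty$; this is the first branch. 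I expect this dichotomy---establishing that the supremum is $+\infty$ \emph{precisely} off $U$---to be the main point of the lemma, since it is exactly the absence of constraints on the boundary data that forces the admissibility restriction $u\in U$ and thereby explains the very definition (\ref{n4-2}).

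When $u\in U$ both boundary contributions drop out and $M\eta=\int_0^T(\tilde z(t;u),\tilde f(t))_n\,dt-c$. The supremum over $\{\tilde f:\int_0^T(Q_2(t)\tilde f,\tilde f)_n\,dt\le 1\}$ is then handled by the same generalized Cauchy--Bunyakovsky inequality, giving $\sup_{\tilde f}\bigl|\int_0^T(\tilde z,\tilde f)_n\,dt\bigr|=\bigl(\int_0^T(Q_2^{-1}(t)\tilde z(t;u),\tilde z(t;u))_n\,dt\bigr)^{1/2}$, with equality at $\tilde f$ proportional to $Q_2^{-1}\tilde z$. Isolating the remaining dependence on the constant $c$ then yields the additive $c^2$, so that $\sigma(u,c)=\int_0^T(Q_2^{-1}\tilde z(t;u),\tilde z(t;u))_n\,dt+\int_\alpha^\beta(Q^{-1}(t)u(t),u(t))_l\,dt+c^2=J(u)+c^2=\sigma_1(u,c)$ on $U$. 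Assembling the two branches gives the stated formula. The one point I would treat with care in the write-up is precisely this last step: organizing the supremum of the squared affine functional in $\tilde f$ so that the $c$-dependence is correctly recorded as the term $c^2$ of $\sigma_1$.
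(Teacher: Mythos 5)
Your overall route is exactly the paper's: the paper's entire proof of this lemma is the single sentence that it "can be proved using formula (\ref{xq})", and your plan --- reuse (\ref{xq}), decouple $\sup_{\tilde F}$ from $\sup_{\tilde\xi}$ via $M\eta^2=(M\eta)^2+D\eta$ over the product set $G\times V$, recycle the computation (\ref{xq3}) for the noise term, and obtain the branch $\sigma(u,c)=\infty$ for $u\notin U$ from the fact that (\ref{n4-1}) leaves $\tilde f_0,\tilde f_1$ completely unconstrained --- is precisely how that sentence must be unpacked. The unboundedness argument (scale $\tilde f_0$ along $\bar B_0z_1(0;u)$ with $\tilde f=0$, $\tilde f_1=0$, similarly for $\bar B_1z_4(T;u)$) is correct and is indeed the main new content of the lemma.

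However, the step you flagged at the end is a genuine obstruction, not merely a matter of careful write-up. For $u\in U$ and a \emph{fixed} $c$, set $Y=\int_0^T(\tilde z(t;u),\tilde f(t))_n\,dt$ and $q=\bigl(\int_0^T(Q_2^{-1}(t)\tilde z(t;u),\tilde z(t;u))_n\,dt\bigr)^{1/2}$. The Cauchy--Bunyakovsky argument shows that $Y$ sweeps the whole interval $[-q,q]$ as $\tilde f$ ranges over $G$ (equality is attained at $\tilde f=\pm Q_2^{-1}\tilde z/q$, and intermediate values follow by convexity), hence
$$
\sup_{\tilde F\in G}(M\eta)^2=\sup_{|Y|\le q}\,(Y-c)^2=(q+|c|)^2=q^2+2q|c|+c^2,
$$
not $q^2+c^2$. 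So the identity $\sigma(u,c)=J(u)+c^2$ cannot be obtained by "organizing" the supremum; it is in fact false whenever $q\neq 0$ and $c\neq 0$, the cross term $2q|c|$ being unavoidable. This imprecision is inherited from the statement itself (the paper proves nothing here), and it is harmless downstream: both $(q+|c|)^2+\int_\alpha^\beta(Q^{-1}(t)u(t),u(t))_l\,dt$ and $J(u)+c^2$ are minimized over $c$ at $c=0$ with the common value $J(u)$, which is all that Theorem \ref{ntm4-1} uses (there $\hat c=0$). If you want a literally correct version of your final step, either restrict to $c=0$, or record the exact value $\sigma(u,c)=\bigl(\sqrt{J_1(u)}+|c|\bigr)^2+\int_\alpha^\beta(Q^{-1}(t)u(t),u(t))_l\,dt$, where $J_1(u)=\int_0^T(Q_2^{-1}(t)\tilde z(t;u),\tilde z(t;u))_n\,dt$, and then note that its infimum over $c$ coincides with that of $J(u)+c^2$.
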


This lemma can be proved using formula (\ref{xq}). %(2.16).

\begin{predl}
$U$ is a convex closed set in the space $L_2(\alpha,\beta)$.
\end{predl}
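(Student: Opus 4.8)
The plan is to realize $U$ as the preimage of the single point $0$ under an affine continuous map; once this is done, both convexity and closedness follow by soft arguments, with no further computation.

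First I would recall from the proof of Theorem \ref{tm1} that, for the fixed vector $a$, the solution of BVP (\ref{d5}) decomposes as $z_i(t;u)=\bar z_i(t;u)+\bar{\bar z}_i(t)$, where $u\mapsto\bar z(\cdot;u)$ is a \emph{linear bounded} operator from $H=L^2(\alpha,s)\times L^2(s,\beta)=L^2(\alpha,\beta)$ into
$$
H_1=H^1(0,\alpha)^n\times H^1(\alpha,s)^n\times H^1(s,\beta)^n\times H^1(\beta,T)^n,
$$
the boundedness being exactly the mapping property established there via the Green's-matrix representation of $\bar z(\cdot;u)$, while $\bar{\bar z}(\cdot)$ is the $u$-independent part carrying the data $a$. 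Hence $u\mapsto z(\cdot;u)$ is an \emph{affine} continuous map from $H$ into $H_1$.

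Next I would use that each component space $H^1(a,b)$ consists of functions absolutely continuous on $[a,b]$, so that the endpoint evaluations $w\mapsto w(0)$ on $H^1(0,\alpha)^n$ and $w\mapsto w(T)$ on $H^1(\beta,T)^n$ are bounded linear maps into $\mathbb R^n$ (the embedding $H^1(a,b)\hookrightarrow C[a,b]$). Composing these with the constant matrices $\bar B_0$ and $\bar B_1$ yields an affine continuous map
$$
\Phi:L^2(\alpha,\beta)\to\mathbb R^m\times\mathbb R^{n-m},\qquad \Phi(u)=\bigl(\bar B_0 z_1(0;u),\,\bar B_1 z_4(T;u)\bigr).
$$
By the definition (\ref{n4-2}), $U=\Phi^{-1}(\{0\})$. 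Since $\{0\}$ is closed in $\mathbb R^m\times\mathbb R^{n-m}$ and $\Phi$ is continuous, $U$ is closed in $L^2(\alpha,\beta)$; since $\{0\}$ is convex and $\Phi$ is affine, $U$ is convex (indeed it is an affine subspace of $H$, and both affine subspaces and the empty set are convex).

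The only genuine point requiring care is the continuity of the two endpoint functionals $u\mapsto z_1(0;u)$ and $u\mapsto z_4(T;u)$: it is not $L^2$-continuity of $u$ itself but the composition of the bounded solution operator of (\ref{d5}), $L^2(\alpha,\beta)\to H_1$, with the trace (point-evaluation) functionals on the $H^1$-factors that must be invoked. The first arrow is already available from Theorem \ref{tm1}, and the second is the standard one-dimensional Sobolev embedding; once both are in place the convex-closed conclusion is purely formal.
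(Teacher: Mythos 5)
Your proof is correct, and its overall strategy coincides with the paper's: both arguments reduce the lemma to the fact that the endpoint values $z_1(0;u)$ and $z_4(T;u)$ depend affinely and continuously on $u\in L^2(\alpha,\beta)$, after which closedness (and, in your version, convexity as well) is purely formal. The difference is in how that continuity is obtained. The paper proceeds by explicit computation: it introduces the fundamental matrix $\Phi(t,\tau)$ of the adjoint system, determines the initial vector $z_0=z_1(0,u)$ from an algebraic system that is uniquely solvable because BVP (\ref{d5}) is, and thereby derives the integral representations (\ref{n4-4}),
$z_1(0,u)=a_1+\int_\alpha^\beta\Phi_1(t)u(t)\,dt$ and
$z_4(T,u)=a_2+\int_\alpha^\beta\Phi_2(t)u(t)\,dt$,
from which sequential closedness under $L^2$-convergence is immediate; convexity it simply declares obvious. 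You instead compose two maps already available in the paper: the linear bounded solution operator $u\mapsto\bar z(\cdot;u)$ from $H$ into $H_1$ (established via the Green's-matrix footnote in the proof of Theorem \ref{tm1}) and the endpoint-evaluation functionals, bounded thanks to the embedding $H^1(a,b)\hookrightarrow C[a,b]$. Your route is shorter, reuses proven machinery instead of re-deriving a representation, yields convexity automatically (it exhibits $U$ as an affine subspace, i.e.\ a translate of a closed linear subspace, which is slightly more structural information than the lemma states), and the same soft argument transfers verbatim to the set $U_1$ appearing later in Theorem \ref{ntm4-2}. The paper's route, in exchange, is self-contained within its section and produces the explicit kernels $\Phi_1,\Phi_2$, so the constraint $u\in U$ becomes a pair of concrete linear integral equations on $u$ --- useful if one wants to parametrize or compute with $U$ rather than merely know its topological properties.
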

\begin{proof}
The convexity of set $U$ is obvious. Let us prove that this set is
closed.

Note that functions $z_1(0,u)$ and $z_4(T,u)$ can be represented
as
\begin{equation}
\label{n4-4}
\begin{array}{c}
z_1(0,u)= a_1 + \int_{\alpha}^{\beta} \Phi_1(t)u(t) dt,\\
z_4(T,u)= a_2 + \int_{\alpha}^{\beta} \Phi_2(t)u(t) dt,
\end{array}
\end{equation}
where $\Phi_1(t)$ and $\Phi_2(t)$ are known matrix functions with
the elements from $L_2(\alpha,\beta)$ and $a_1$ and $a_2$ are
vectors. Expression (\ref{n4-4}) can be obtained if we introduce a
vector $z_0$ such that $z_1(0,u)=z_0.$  Then $z_1(\alpha,u)= \Phi
(\alpha,0)z_0, $ where $\Phi(t,\tau)$ is a solution to the
equation
\begin{equation*}
\frac{d \Phi (t, \tau)}{dt} = -A^{\ast}(t) \Phi(t,\tau), \Phi
(\tau, \tau) = E.
\end{equation*}
\begin{equation*}
z_2(s,u)= \Phi (s,\alpha)z_2(\alpha,u)=\Phi (s,\alpha) z_1
(\alpha, u)+ \int_{\alpha}^{s} \Phi (s,\tau) H^T (\tau)
u_1(\tau)d\tau =
\end{equation*}
\begin{equation*}
\Phi (s,0) z_0 + \int_{\alpha}^{s} \Phi (s,\tau) H^T (\tau)
u_1(\tau)d\tau.
\end{equation*}
Next,
\begin{equation*}
z_4 (T,u)= \Phi (T,0) z_0 + \int_{\alpha}^{\beta} \Phi (T,\tau)
H^T (\tau) u(\tau)d\tau + \Phi (T,s)a.
\end{equation*}

Since BVP (\ref{d5}) is uniquely solvable, %(2.6)
there exists one and only one vector $z_0$ satisfying the
algebraic equation system
\begin{equation*}
\left\{
\begin{array}{l}
\bar{B}_0z_0=0,\\ B_1 \Phi (T,0)z_0 = - \int_{\alpha}^{\beta}
\Phi(t,\tau)H^T(\tau)u(\tau) d\tau -\Phi(T,s)a.
\end{array} \right.
\end{equation*}
Solving this system we determine $z_0$ in the form
\begin{equation*}
z_0=b + \int_{\alpha}^{\beta} \Phi_0(\tau)u(\tau) d\tau,
\end{equation*}
where $\Phi_0(\tau)$ is a known matrix function continuous on
$[\alpha,\beta]$ and $b$ is a known vector.
Taking into account this equality, we obtain expression (\ref{j1}). %(3.4).
From these relationships, it follows that if a sequence $u_n (t)$
converges in $L_2(\alpha, \beta)$ to a function $u_0 (t),$ then
$$
\lim_{n\rightarrow \infty } \bar{B}_0 z_1(0,u_n)= \bar{B}_0 z_1
(0,u_0),
$$
$$
\lim_{n\rightarrow \infty } \bar{B}_1 z_4(0,u_n)= \bar{B}_1 z_4
(0,u_0),
$$
which proves that $U$ is a closed set.
\end{proof}

Assume now that $U$ is nonempty. Then the following statement is
valid.
\begin{pred}
\label{ntm4-1} There exists the unique minimax estimate of
expression $(a, \varphi (s))$ which can be represented in the form (\ref{ddd7}) %(3.1)
at $\hat{c}=0,$ where vector-functions $p_2(t)$ and $p_3(t)$ solve
the equations
$$
L^{\ast}z_1(t)=0,\quad 0<t<\alpha, \quad \hat{B}_0z_1(0)=0,
$$
$$
L^{\ast}z_2(t)=-H^{T}(t)Q(t)H(t)p_2(t),\quad \alpha<t<s,\quad
z_2(\alpha)=z_1(\alpha),
$$
$$
L^{\ast}z_3(t)=-H^{T}(t)Q(t)H(t)p_3(t),\quad s<t<\beta,\quad
z_3(s)=z_2(s)-a,
$$
$$
L^{\ast}z_4(t)=0,\quad \beta<t<T,\quad z_4(\beta)=z_3(\beta),
$$
\begin{equation}\label{n4-5}
\hat{B}_1z_4(T)=0, \quad \bar{B}_0z_1(0), \quad \bar{B}_1z_4(T)=0,
\end{equation}
$$
Lp_1(t)=Q_2^{-1}(t)z_1(t),\quad  0<t<\alpha,
$$
$$
Lp_2(t)=Q_2^{-1}(t)z_2(t),\quad  \alpha<t<s, \quad  p_2(\alpha)=
p_1(\alpha),
$$
$$
Lp_3(t)=Q_2^{-1}(t)z_3(t), \quad  s<t<\beta,\quad  p_3(s)=p_2(s),
$$
$$
Lp_4(t)=Q_2^{-1}(t)z_4(t),\quad s<t< \beta.
$$
\end{pred}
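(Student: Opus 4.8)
The plan is to reuse the two-stage reduction from the proof of Theorem~\ref{tm1}, replacing the unconstrained minimization of the cost functional by a minimization over the admissible set $U$ from (\ref{n4-2}). First I would apply the first Lemma of this section: since $\sigma(u,c)=+\infty$ for $u\notin U$ and $\sigma(u,c)=\sigma_1(u,c)=J(u)+c^2$ for $u\in U$, with $J$ as in (\ref{n4-3}), the minimax problem reduces to $\inf_{c\in\mathbb R}\inf_{u\in U}\bigl(J(u)+c^2\bigr)$. Minimizing in $c$ is immediate and gives $\hat c=0$, which accounts for the value stated in the theorem; it then remains to minimize $J(u)$ over $u\in U$. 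The functional $J$ is a continuous, strictly convex quadratic form on $H=L^2(\alpha,\beta)$, its coercivity and strict convexity coming from the term $\int_\alpha^\beta(Q^{-1}(t)u(t),u(t))_l\,dt$ with $Q^{-1}$ uniformly positive definite, exactly as for $\tilde I$ in the proof of Theorem~\ref{tm1}. By the second Lemma, $U$ is a nonempty, convex, closed subset of $H$; hence Theorem~1.1 of \cite{BIBL20} yields a unique minimizer $\hat u\in U$. Because the estimate depends on $u$ through the affine map (\ref{d3}), its uniqueness follows at once.

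The main effort, and the principal obstacle, lies in turning the abstract optimality condition into the concrete coupled system (\ref{n4-5}). Writing $z_i(t;u)=\bar z_i(t;u)+\Bar{\Bar z}_i(t)$ as on p.~\pageref{BB}, the set $U$ is the affine subset of $H$ cut out by the $n$ linear constraints $\bar B_0\bar z_1(0;u)=0$ and $\bar B_1\bar z_4(T;u)=0$. I would introduce Lagrange multipliers $\lambda_0\in\mathbb R^m$, $\lambda_1\in\mathbb R^{n-m}$ and impose, for every $v\in H$, the stationarity of $J(u)+(\lambda_0,\bar B_0\bar z_1(0;u))_m+(\lambda_1,\bar B_1\bar z_4(T;u))_{n-m}$. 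Introducing adjoint functions $p_1,\dots,p_4$ via $Lp_i=Q_2^{-1}z_i$ and integrating by parts on each subinterval exactly as in (\ref{j1})--(\ref{j4}) transfers the derivatives onto the $\bar z_i(\cdot;v)$ and collects boundary terms through the conjugation identities of Section~1; vanishing of the coefficient of $v$ then yields $\hat u_1=Q(t)H(t)p_2(t)$ and $\hat u_2=Q(t)H(t)p_3(t)$, that is (\ref{ddd7}).

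The decisive point is the treatment of the endpoints $t=0$ and $t=T$. In (\ref{d8}) the conditions $B_0p_1(0)=Q_0^{-1}\bar B_0z_1(0)$ and $B_1p_4(T)=-Q_1^{-1}\bar B_1z_4(T)$ arose from the quadratic boundary penalties in the cost functional; those penalties are absent from $J$ here, since $f_0,f_1$ are now unconstrained, and their role is played instead by the hard constraints. Consequently the multipliers $\lambda_0,\lambda_1$ are absorbed into free endpoint data for $p_1(0)$ and $p_4(T)$, while the two vector conditions $\bar B_0z_1(0)=0$ and $\bar B_1z_4(T)=0$ are appended to the state system — precisely the structure of (\ref{n4-5}). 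I expect the careful bookkeeping of these boundary contributions, and the verification that the $n$ multiplier degrees of freedom exactly balance the $n$ appended constraints so that the two-point problem (\ref{n4-5}) is well posed, to be the hardest part. Unique solvability of (\ref{n4-5}) then follows from the already-established uniqueness of the minimizer $\hat u$.
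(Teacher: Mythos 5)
Your proposal is correct and follows essentially the same route as the paper: the paper likewise invokes the lemma $\sigma(u,c)=J(u)+c^2$ on $U$ (infinite off $U$), deduces $\hat c=0$, obtains the unique minimizer $\hat u$ from strict convexity, lower semicontinuity and coercivity of $J$ on the closed convex set $U$, and then characterizes $\hat u$ by stationarity of the Lagrangian $J_\mu(u)=J(u)+(\mu_1,\bar B_0 z_1(0,u))+(\mu_2,\bar B_1 z_4(T,u))$, with the remaining integration-by-parts bookkeeping carried out exactly as in the proof of Theorem \ref{tm1}. Your explicit discussion of how the multipliers replace the boundary penalties of (\ref{d8}) and how the constraints $\bar B_0 z_1(0)=0$, $\bar B_1 z_4(T)=0$ get appended to the state system merely spells out what the paper compresses into ``further analysis is similar to the proof of Theorem \ref{tm1}.''
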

\begin{proof}
Similarly to Theorem \ref{tm1} %3.1
one can show that for $u \in U$ the following equality holds
$$
\sigma (u,c) = J(u)+c^2,
$$
where
$$
J(u) =  \int_0^{\alpha} (Q_2^{-1}(t)z_1(t,u), z_1(t,u))_n dt +
\int_{\alpha}^s (Q_2^{-1}(t)z_2(t,u), z_2(t,u))_n dt
$$
$$
+ \int_{s}^{\beta} (Q_2^{-1}(t) z_3(t,u), z_3(t,u))_n dt +
\int_{\beta}^T (Q_2^{-1}(t) z_4(t,u), z_4(t,u))_n dt
$$
$$
+\int^s_{\alpha}(Q^{-1}(t)u_1(t),u_1(t))dt+
\int_s^{\beta}(Q^{-1}(t)u_2(t),u_2(t))dt,
$$
and $z_i(t,u), i=\overline{1,4},$ are solutions to equations
(\ref{n4-5}) at $\bar{B}_0 z_1(0,u)=0 $ and $\bar{B}_1z_4(T,u)=0.$
$J(u)$ is a strictly convex lower semicontinuous functional on a
closed convex set $U$ and $\lim_{\| u \| \rightarrow \infty } J(u)
= \infty.$  Therefore there exists one and only one vector $
\hat{u}$ such that $\min_{u \in U} J(u)=J(\hat{u}).$ This vector
can be determined from the relationship
$$
\left.
\frac{d}{d\tau} J_{\mu} (\hat{u}_1+\tau v_1, \hat{u}_2+\tau
v_2)\right|_{\tau=0} \equiv 0, \quad \forall v=(v_1, v_2) \in H,
$$
where
$$
J_{\mu}(u)=J(u_1,u_2)+(\mu_1, \bar{B}_0z_1(0,u))+(\mu_2,
\bar{B}_1z_4(T,u)),
$$
$\mu=(\mu_1,\mu_2),$ $\mu_1\in \mathbb R^m,$ and $\mu_2\in \mathbb R^{n-m}$ are Lagrange multipliers.

Further analysis is similar to the proof of Theorem
\ref{tm1}. %3.1
\end{proof}
Let vector-functions $\hat{p}_1(t),
\hat{p}_2(t),\hat{p}_3(t),\hat{p}_4(t), \hat{\varphi}_1(t),
\hat{\varphi}_2(t), \hat{\varphi}_3(t), \hat{\varphi}_4(t)$ be
solutions to the system
$$
L^{\ast}\hat{p}_1(t)=0,\quad  0<t<\alpha,
$$
$$
L^{\ast}\hat{p}_2(t)=H^T(t)Q(t)[y(t)-H(t)\hat{\varphi}_2(t)),\quad
\alpha<t<s,
$$
$$
L^{\ast}\hat{p}_3(t)= H^T(t)Q(t)[y(t)-H(t)\hat{\varphi}_3(t)),
\quad  s<t<\beta,
$$
$$
L^{\ast}\hat{p}_4(t)=0,\quad \beta <t< T,
$$
$$
L\hat{\varphi}_1(t)=Q_2^{-1}(t)\hat p_1(t), \quad 0<t<\alpha,
$$
\begin{equation}
\label{n4-6} L\hat{\varphi}_2(t)=Q_2^{-1}(t)\hat p_2(t), \quad \alpha
<t<s,
\end{equation}
$$
L\hat{\varphi}_3(t)=Q_2^{-1}(t)\hat p_3(t), \quad s<t<\beta,
$$
$$
L\hat{\varphi}_4(t)=Q_2^{-1}(t)\hat p_4(t), \quad \beta<t<T,
$$
$$
\hat{B}_0\hat{p}_1(0)=0, \quad
\hat{p}_2(\alpha)=\hat{p}_1(\alpha), \quad
\hat{p}_3(s)=\hat{p}_2(s), \quad
\hat{p}_4(\beta)=\hat{p}_3(\beta),
$$
$$
\hat{B}_1\hat{p}_1(T)=0, \quad \bar{B}_0\hat
p_1(0)=0, \quad
\bar{B}_1\hat p_4(T)=0,
$$
$$
\hat{\varphi}_2(\alpha)=\hat{\varphi}_1(\alpha), \quad
\hat{\varphi}_3(s)=\hat{\varphi}_2(s) \quad
\hat{\varphi}_4(\beta)=\hat{\varphi}_3(\beta).
$$
\begin{pred}
\label{ntm4-2} Assume that for any vector $a \in \mathbb{R}^n$ set
$U$ is nonempty. Then system (\ref{n4-6}) is uniquely solvable and
the equality
$$
(\widehat{\widehat{a,\varphi(s)}})=(a,\hat{\varphi}_2(s))
$$
holds
\end{pred}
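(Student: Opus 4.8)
The argument parallels that of Theorem \ref{tm2}, with the terms that there involved the centers $f^{(0)}$, $f_0^{(0)}$, $f_1^{(0)}$ and the constant $\hat c$ now being absent (in the present setting $f^{(0)}\equiv 0$, $f_0^{(0)}=f_1^{(0)}=0$, and $\hat c=0$ by Theorem \ref{ntm4-1}), their role being taken over by the additional boundary constraints $\bar B_0\hat p_1(0)=0$ and $\bar B_1\hat p_4(T)=0$ that are built into system (\ref{n4-6}). The plan is first to establish unique solvability of (\ref{n4-6}) and then to verify the identity.

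For solvability, I would proceed exactly as in the reduction preceding Theorem \ref{tm2}: the forward–backward system (\ref{n4-6}) is the optimality (Euler--Lagrange) system of an auxiliary optimal control problem whose cost is a strictly convex, coercive quadratic functional of the control $v=(v_1,v_2)$ and whose admissible set is $U$, the control being recovered as $\hat v_1=Q(t)H(t)\hat\varphi_2(t)$, $\hat v_2=Q(t)H(t)\hat\varphi_3(t)$. The extra relations $\bar B_0\hat p_1(0)=0$, $\bar B_1\hat p_4(T)=0$ appear as the stationarity conditions for the Lagrange multipliers enforcing membership in $U$. Since $U$ is nonempty by hypothesis (for every $a\in\mathbb R^n$), closed, and convex in $L_2(\alpha,\beta)$ by the second lemma of this section, and the functional is strictly convex, lower semicontinuous, and tends to $+\infty$ as $\|v\|\to\infty$, there is a unique minimizer; unwinding the optimality conditions yields existence and uniqueness of $(\hat p_i,\hat\varphi_i)$, in the same manner as the solvability of (\ref{n4-5}) was obtained in Theorem \ref{ntm4-1}.

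For the identity I would start from the representation of Theorem \ref{ntm4-1}, namely $\widehat{\widehat{(a,\varphi(s))}}=\int_\alpha^s(\hat u_1,y)_l\,dt+\int_s^\beta(\hat u_2,y)_l\,dt$ with $\hat c=0$ and $\hat u_1=Q H p_2$, $\hat u_2=QHp_3$ as in (\ref{ddd7}). Rewriting these as $\int_\alpha^s(p_2,H^TQy)_n\,dt+\int_s^\beta(p_3,H^TQy)_n\,dt$ and substituting $H^TQy=L^*\hat p_2+H^TQH\hat\varphi_2$ on $(\alpha,s)$ and $H^TQy=L^*\hat p_3+H^TQH\hat\varphi_3$ on $(s,\beta)$ from (\ref{n4-6}), I would integrate by parts via Green's formula (\ref{xa5}), using $Lp_2=Q_2^{-1}z_2$, $L\hat\varphi_2=Q_2^{-1}\hat p_2$ and their counterparts on $(s,\beta)$. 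This is the computation (\ref{aj20})--(\ref{j19}) of Theorem \ref{tm2} with the $f^{(0)}$ terms dropped: the integral terms cancel in pairs and one is left with boundary contributions at $\alpha$, $s$, and $\beta$, which telescope (using $p_3(s)=p_2(s)$, $\hat p_3(s)=\hat p_2(s)$, $\hat\varphi_3(s)=\hat\varphi_2(s)$) to leave $(a,\hat\varphi_2(s))_n$ together with interface terms at $\alpha$ and $\beta$.

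The main obstacle is disposing of the boundary terms at $t=0$ and $t=T$ — the analogues of (\ref{j20})--(\ref{j24}) — which in Theorem \ref{tm2} were balanced against the data $f^{(0)}$, $f_0^{(0)}$, $f_1^{(0)}$ and the constant $\hat c$; here they must vanish outright. On $(0,\alpha)$ one pairs the homogeneous equations $L^*z_1=0$, $L^*\hat p_1=0$, $Lp_1=Q_2^{-1}z_1$, $L\hat\varphi_1=Q_2^{-1}\hat p_1$ through Green's formula and subtracts the two resulting identities exactly as in (\ref{j20})--(\ref{j22}); the surviving boundary values at $t=0$ are then annihilated by the constraints $\bar B_0 z_1(0)=0$ and $\bar B_0\hat p_1(0)=0$ (together with $\hat B_0 z_1(0)=0$, $\hat B_0\hat p_1(0)=0$), via the splitting identity established on p.~\pageref{pt}. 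The symmetric reasoning on $(\beta,T)$, using $\bar B_1 z_4(T)=0$ and $\bar B_1\hat p_4(T)=0$, removes the terms at $t=T$. Collecting all contributions yields $\widehat{\widehat{(a,\varphi(s))}}=(a,\hat\varphi_2(s))$, as asserted.
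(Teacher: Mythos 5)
Your proposal is correct and takes essentially the same route as the paper: unique solvability of (\ref{n4-6}) is obtained from an auxiliary constrained optimal control problem with a strictly convex coercive quadratic cost over a nonempty closed convex admissible set (the paper phrases this set as $U_1=\{v:\bar B_0\hat p_1(0,v)=0,\ \bar B_1\hat p_4(T,v)=0\}$, nonempty because $U$ is, rather than $U$ itself, and recovers the optimal control as $\hat v_i$ expressed through $\hat\varphi_2,\hat\varphi_3$), and the identity $(\widehat{\widehat{a,\varphi(s)}})=(a,\hat\varphi_2(s))$ is proved by repeating the computation of Theorem \ref{tm2} with the $f^{(0)}$, $f_0^{(0)}$, $f_1^{(0)}$ terms and $\hat c$ absent. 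The paper dismisses that second step as ``similar to Theorem \ref{tm2}''; your explicit observation that the boundary terms at $t=0$ and $t=T$ are annihilated by the constraints $\bar B_0z_1(0)=0$, $\bar B_0\hat p_1(0)=0$, $\bar B_1z_4(T)=0$, $\bar B_1\hat p_4(T)=0$ via the splitting identity is precisely the adaptation required.
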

\begin{proof}
Introduce functions $\hat{p}_i(t,v)$ as solutions to the BVP
$$
L^{\ast}\hat{p}_1(t,v)=0,\quad  0<t<\alpha,
$$
$$
L^{\ast}\hat{p}_2(t,v)=d(t)-H^T(t)v_1(t),\quad \alpha<t<s,
$$
$$
L^{\ast}\hat{p}_3(t,v)=d(t)-H^T(t)v_2(t), \quad s<t<\beta,
$$
$$
L^{\ast}\hat{p}_4(t,v)=0,\quad  \beta<t<T,
$$
$$
\hat{B}_0\hat{p}_1(0,v)=0, \quad \hat{B}_1\bar{p}_4(T,v)=0,
$$
$$
\hat{p}_2(\alpha,v)=\hat{p}_1(\alpha,v), \quad
\hat{p}_3(s,v)=\hat{p}_2(s,v), \quad
\hat{p}_4(\beta,v)=\hat{p}_3(\beta,v),
$$
where $d(t)=H^T(t)Q(t)y(t).$

Define a set
%$U_1$ вида
$$
U_1=\{ v: \bar{B}_0\hat p_1(0,v)=0, \bar{B}_1\hat p_4(T,v)=0  \}.
$$
Since $U$ is nonempty, the same is valid for $U_1$ for any vector
$a$. Similarly to the case of $U,$ one can show that  $U_1$ is a
convex closed set. Denote by $J_1(v)$ the functional of the form
$$
J_1(v) =  \int_0^{\alpha} (Q_2^{-1}(t)\hat {p}_1(t,v),
\hat{p}_1(t,v)) dt + \int_{\alpha}^s (Q_2^{-1}(t)\hat{p}_2(t,v),
\hat{p}_2(t,v)) dt
$$
$$
+ \int_{s}^{\beta} (Q_2^{-1}(t) \hat{p}_3(t,v), \hat{p}_3(t,v))dt
+ \int_{\beta}^T (Q_2^{-1}(t) \hat{p}_4(t,v), \hat{p}_4(t,v)) dt
$$
$$
+\int^s_{\alpha}(Q^{-1}(t)v_1(t), v_1(t))dt+
\int_s^{\beta}(Q^{-1}(t)v_2(t),v_2(t))dt.
$$
One can show, following Theorem \ref{ntm4-1}, that on set $U_1$
there is one and only one point of minimum of functional $J_1(v),$
namely,
$$
\hat{v}_1(t)=Q^{-1}(t)H(t)\hat{\varphi}_2(t), \quad
\hat{v}_2(t)=Q^{-1}(t)H(t)\hat{\varphi}_3(t),
$$
where functions $\hat{\varphi}_2(t) $ and $ \hat{\varphi}_3(t)$
are determined from system (\ref{n4-6}). The proof of the equality
$$
\widehat{\widehat{(a,\varphi(s))_n}}=(a,\hat{\varphi}_2(s))_n
$$
is similar to that in Theorem \ref{tm2}.
\end{proof}

\section{Elimination technique in minimax estimation problems}
\newtheorem{predl1}{Предложение}
Assume that a  vector-function
\begin{equation} \label{dd1}
y_1(t)=C_{11}\varphi (t)+C_{12}\varphi '(t)+\xi_1(t), \quad
y_2(t)=C_{21}\varphi (t)+C_{22}\varphi '(t)+\xi_2(t)
\end{equation}
is observed on interval $(0,1)$. Here $C=C_{ij}(t)$ is an $m\times
n$ matrix with the entries continuous on $[0,1]$; $\xi_i(t)$ are
 vector random processes continuous in the mean square sense
 and such that  $M\xi_i(t)=0,$
$i=1,2$; function $\varphi(t)$ is a solution to the BVP
\begin{equation} \label{dd2}
\varphi''(t)=A(t)\varphi(t)+B(t)f(t),\quad \varphi'(0)=0,\quad
\varphi(1)=0,
\end{equation}
where $A(t)$ and $B(t)$ are, respectively, $n\times n$ and
$n\times r$ matrices with the entries continuous on $[0,1]$ and
$A(t)$ is a symmetric nonnegative definite matrix; and $f(t)$ is a
square integrable vector-function on $(0,1)$.

Set $\varphi=\varphi_1$ and $\varphi_1'=\varphi_2$  and rewrite
(\ref{dd2}) as an equation system
\begin{equation} \label{dd3}
\varphi_1'=\varphi_2,\quad \varphi_2'=A(t)\varphi_1+B(t)f,\quad
\varphi_2(0)=\varphi_1(1)=0.
\end{equation}

Reduce the solution of BVP  (\ref{dd3}) to the solution of a
Cauchy problem using elimination.

We look for function $\varphi_2(t)$ in the form
$\varphi_2(t)=P(t)\varphi_1(t)+\psi(t),$ where matrix $P(t)$ and
vector-function $\psi(t)$ is chosen so that
$$
\varphi_2'(t)=A(t)\varphi_1(t)+B(t)f(t).
$$
We have
$$
\varphi_2'(t)=P'(t)\varphi_1(t)+P(t)\varphi_1'(t)+\psi'(t)=
P'(t)\varphi_1(t)+P(t)\varphi_2(t)+\psi'(t)=
$$
$$
=P'(t)\varphi_1(t)+P(t)[P(t)\varphi_1(t)+\psi(t)]+\psi'(t)=
A(t)\varphi_1(t)+B(t)f(t);
$$
therefore, functions $P(t)$ and $\psi(t)$ must satisfy the
equations
\begin{equation} \label{dd4}
\!P'(t)+P^2(t)=A(t),\,\, P(0)=0, \,\,
\psi'(t)+P(t)\psi(t)=B(t)f(t),\,\, \psi(0)=0.
\end{equation}

Thus we have reduced BVP (\ref{dd2}) to the Cauchy problem for
functions $P(t),$ $\psi(t),$ and $\varphi_1(t).$

Note that for function $P(t)$ a Riccati equation is obtained which
is uniquely solvable on any finite interval; in addition, there is
one and only one function $\psi(t)$ which is absolutely continuous
and satisfies the corresponding equation almost everywhere.

Using the notations introduced above, we can write the expressions
for functions $y_1(t)$ and $y_2(t)$ as
\begin{gather*}
y_1(t)=[C_{11}(t)+C_{12}(t)P(t)]\varphi_1(t)+
C_{12}(t)\psi(t)+\xi_1(t),   \\
y_2(t)=[C_{21}(t)+C_{22}(t)P(t)]\varphi_1(t)+
C_{22}(t)\psi(t)+\xi_2(t),
\end{gather*}
or
\begin{equation} \label{dd5}
y(t)=H(t)x(t)+\xi(t),
\end{equation}
where
\begin{gather*}
H(t)=\left(H_{ij}(t)\right)_{i,j=1,2},\quad
H_{11}=C_{11}+C_{12}P,\\ H_{12}=C_{12},\quad
H_{21}=C_{21}+C_{22}P,\quad H_{22}=C_{22},       \\ x(t)=\left(
\begin{array}{c}
\varphi_1(t) \\ \psi(t)
\end{array}
\right),
%\end{gather*}
\quad \xi(t)=\left(
\begin{array}{c}
\xi_1(t) \\ \xi_2(t)
\end{array}
\right),
\end{gather*}
and function $x(t)$ is a solution to the equation
\begin{equation} \label{dd6}
\frac{dx}{dt}=A_1(t)x(t)+B_1(t)f(t),\quad  \varphi_1(1)=0, \quad
\psi(0)=0,
\end{equation}
where
\begin{gather*}
A_1(t)=\left( \begin{array}{cc} P(t)& E \\ 0& -P(t)\end{array}
\right), \quad B_1(t)=\left(
\begin{array}{c} 0 \\ B(t)
\end{array}
\right).
\end{gather*}

Denote by  $V$ a class of random processes $\tilde \xi(t)$ whose
correlation matrices $\tilde R(t,s)=M\tilde \xi(t)\tilde \xi^T(s)$
satisfy the inequality
$$
\int\limits_0^1q_1^2(t)Sp\,\tilde R(t,t)dt\leq 1,
$$
here $q_1(t)$ is a continuous function on $[0,T]$ such that
$|q_1(t)|\geq \rm{q}>0$ where $\rm{q}$ is a certain number. Assume
in what follows that correlation matrix $R(t,s)$ of process
$\xi(t)$ belongs to $V.$

Assume also that function $f(t)$ belongs to a set
%$G$ вида
\begin{equation} \label{dd8}
G=\left\{ \tilde f: \, \int\limits_0^1(Q\tilde f,\tilde f)dt\leq 1 \right\} ,
\end{equation}
where $Q$ is a positive definite matrix.

Our problem is to find a minimax estimate of expression
$(a_1,\varphi_1(s))+(a_2,\varphi_2(s))$ using observations
(\ref{dd1}). Since
$(a_1,\varphi_1(s))+(a_2,\varphi_2(s))=(a_1+Pa_2,\varphi_1(s))+(a_2,\psi(s))
=(b,x(s))$ where $b=\left(
\begin{array}{c}
a_1+Pa_2 \\ a_2
\end{array}
\right),$ there is a one-to-one correspondence that couples the
estimates.

We look for an estimate\footnote{In this section $(\cdot,\cdot)$
will denote the inner product in the corresponding Euclidean
space} of expression $(b,x(s))$ in the class of estimates  of the
form
\begin{equation} \label{dd9}
(\widehat{b,x(s)})= \int\limits_0^s(u_1(t),y(t))dt+
\int\limits_s^1(u_2(t),y(t))dt+d
\end{equation}
linear with respect to observations where $u_1(t) $ and $ u_2(t)$
are square integrable functions on $(0,s)$ and $(s,1)$,
respectively, and $d \in \mathbb R^1.$

As before, we look for minimax estimate $(\widehat{\widehat{b,
x(s)}})=\int\limits_0^s(\hat u_1(t),\tilde
y(t))dt+ \int\limits_s^1(\hat u_2(t),\tilde y(t))dt+\hat d$ of inner product $(b,x(s))$
in the class of linear estimates of the form
$(\widehat{b,x(s)})= \int\limits_0^s(u_1(t),
y(t))dt+ \int\limits_s^1(u_2(t),y(t))dt+d.$
For this estimate
vector-function $\hat u(t)=(\hat{u}_1(t),\hat{u}_2(t))$ and constant
$\hat{d}$
are determined from the condition
\begin{equation} \label{dd10}
\inf_{u_1\in L^2(0,s),u_2\in L^2(s,1),d\in \mathbb R}\sup_{\tilde \xi \in V, \tilde f \in G}M[(b,\tilde x(s))-(\widehat{b,\tilde x(s)})]^2=
\sup_{\tilde \xi \in V,\tilde f \in G}M[(b,\tilde x(s))-(\widehat{\widehat{b,\tilde x(s)}})]^2,
\end{equation}
where $\tilde x$ is a solution to problem \eqref{dd2} at $f=\tilde
f,$ $(\widehat{b,\tilde x(s)})= \int\limits_0^s(u_1(t),\tilde
y(t))dt+ \int\limits_s^1(u_2(t),\tilde y(t))dt+d,$ $\tilde
y(t)=H(t)\tilde x(t)+\tilde \xi(t).$

Introduce functions $z_1(s) $ and $ z_2(s)$ as solutions to the
equations
$$
z_1'=-A_1^Tz_1+H^Tu_1, \quad z_{11}(0)=0, \quad 0<t<s,
$$
\begin{equation} \label{dd11}
z_2'=-A_1^Tz_2+H^Tu_2, \quad z_{22}(1)=0, \quad s<t<1,
\end{equation}
$$
-z_2(s)+z_1(s)=b.
$$
\begin{predllll} Let $z_1(s)$ and $z_2(s)$ be solutions to the BVP for equations
 (\ref{dd11}); then the following equality holds
\begin{equation*}
\sup_{\tilde \xi \in V, \tilde f \in G}M[(b,\tilde x(s))-(\widehat{b,\tilde x(s)})]^2=
\int\limits_0^s(Q_1z_1,z_1)dt+\int\limits_s^1(Q_1z_2,z_2)dt+
\end{equation*}
\begin{equation} \label{dd12}
+\int\limits_0^sq_1^{-2}(u_1,u_1)dt+\int\limits_s^1q_1(u_2,u_2)dt+
d^2,
\end{equation}
where
$$
Q_1=B_1Q^{-1}B_1^T.
$$
\end{predllll}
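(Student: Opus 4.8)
The plan is to reproduce, in the notation of this section, the computation that produced (\ref{xq}), now using the adjoint system (\ref{dd11}) in place of (\ref{d5}). On each of the intervals $(0,s)$ and $(s,1)$ I would differentiate the pairing of $z_i$ with the solution $\tilde x$ of (\ref{dd6}). Since $\tilde x'=A_1\tilde x+B_1\tilde f$ while $z_1'=-A_1^{T}z_1+H^{T}u_1$ and $z_2'=-A_1^{T}z_2+H^{T}u_2$, the $A_1$-terms cancel in $(z_i',\tilde x)+(z_i,\tilde x')$, leaving $\frac{d}{dt}(z_i,\tilde x)=(u_i,H\tilde x)+(z_i,B_1\tilde f)$. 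Writing $H\tilde x=\tilde y-\tilde\xi$ and integrating, the observation terms assemble into $(\widehat{b,\tilde x(s)})-d$.

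Adding the two integrated identities produces the boundary contribution $(z_1(s)-z_2(s),\tilde x(s))-(z_1(0),\tilde x(0))+(z_2(1),\tilde x(1))$. Here I would use the mixed boundary data: since $\tilde x=(\varphi_1,\psi)^{T}$ with $\psi(0)=0$ and $\varphi_1(1)=0$ by (\ref{dd6}), only one block of each of $(z_1(0),\tilde x(0))$ and $(z_2(1),\tilde x(1))$ survives, and these are killed exactly by the conditions $z_{11}(0)=0$ and $z_{22}(1)=0$ in (\ref{dd11}); together with $z_1(s)-z_2(s)=b$ this yields the error identity
\begin{equation*}
(b,\tilde x(s))-(\widehat{b,\tilde x(s)})=\int_0^1(\tilde z(t),B_1(t)\tilde f(t))\,dt-\int_0^s(u_1,\tilde\xi)\,dt-\int_s^1(u_2,\tilde\xi)\,dt-d=:\eta,
\end{equation*}
where $\tilde z=z_1$ on $(0,s)$ and $\tilde z=z_2$ on $(s,1)$. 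This is the exact analogue of (\ref{xq}).

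As in the proof of Lemma~\ref{lem1} I would then use $M\tilde\xi=0$ and $M\eta^2=(M\eta)^2+D\eta$. The variance $D\eta=M[\int_0^s(u_1,\tilde\xi)+\int_s^1(u_2,\tilde\xi)]^2$ involves only $\tilde\xi$, whereas $(M\eta)^2=(\int_0^1(\tilde z,B_1\tilde f)\,dt-d)^2$ involves only $\tilde f$, so the supremum over the product set $V\times G$ splits into the sum of two separate suprema. For the noise part I would apply the generalized Cauchy--Bunyakovsky inequality pointwise with weight $q_1^2(t)$, move $M$ under the integral by Fubini (legitimate since $\tilde\xi$ has integrable second moment), and invoke $\int_0^1 q_1^2\,\mathrm{Sp}\,\tilde R(t,t)\,dt\le 1$; this gives $\int_0^s q_1^{-2}(u_1,u_1)+\int_s^1 q_1^{-2}(u_2,u_2)$ (so the factor $q_1$ in the second integral of (\ref{dd12}) is to be read as $q_1^{-2}$). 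For the deterministic part I would apply the same inequality to $\int_0^1(B_1^{T}\tilde z,\tilde f)$ with weight $Q$ and use the identity $(Q^{-1}B_1^{T}\tilde z,B_1^{T}\tilde z)=(Q_1\tilde z,\tilde z)$ with $Q_1=B_1Q^{-1}B_1^{T}$, producing $\int_0^s(Q_1z_1,z_1)+\int_s^1(Q_1z_2,z_2)$, while the free constant accounts for the remaining $d^2$.

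The step I expect to be the main obstacle is precisely the bookkeeping in this last supremum: reconciling the constant offset $-d$ with the $\tilde f$-linear term so that the deterministic contribution collapses to the additive form $d^2+\int_0^1(Q_1\tilde z,\tilde z)\,dt$ of (\ref{dd12}) rather than to $\bigl(|d|+\rho\bigr)^2$ with $\rho=(\int_0^1(Q_1\tilde z,\tilde z)\,dt)^{1/2}$. This is the same delicate point that underlies the $+c^2$ term in (\ref{n4-3}), and I would handle it by the argument used there, exhibiting the extremal $\tilde f$ and $\tilde\xi$ at which equality holds in the generalized Cauchy--Bunyakovsky inequality so that the two bounds are attained simultaneously and the inequalities become the asserted equality.
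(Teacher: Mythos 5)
Your route is the paper's own: for this Proposition the paper offers no argument beyond the remark that the proof ``is similar to the proof of the corresponding assertion from Section 2.1'', i.e.\ to Lemma~\ref{lem1}, and your reconstruction of that argument is faithful and correct through almost every step --- the cancellation of the $A_1$-terms, the error identity with $\tilde z=z_1$ on $(0,s)$ and $\tilde z=z_2$ on $(s,1)$, the elimination of the boundary terms via $z_{11}(0)=0$, $z_{22}(1)=0$, $\psi(0)=0$, $\varphi_1(1)=0$, the splitting of the supremum over the product set $G\times V$ into a bias part and a variance part, the Fubini argument for the noise term (and your reading of the stray factor $q_1$ in the second integral of (\ref{dd12}) as a misprint for $q_1^{-2}$ is right), and the identity $(Q^{-1}B_1^{T}\tilde z,B_1^{T}\tilde z)=(Q_1\tilde z,\tilde z)$.

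The genuine gap is exactly the step you flagged as the main obstacle, and the repair you propose cannot close it. Set $\rho=\bigl(\int_0^1(Q_1\tilde z,\tilde z)\,dt\bigr)^{1/2}$. Since the set $G$ of (\ref{dd8}) is symmetric about the origin, the image of $G$ under $\tilde f\mapsto\int_0^1(B_1^{T}\tilde z,\tilde f)\,dt$ is the whole interval $[-\rho,\rho]$, hence
\begin{equation*}
\sup_{\tilde f\in G}\Bigl(\int_0^1(B_1^{T}\tilde z,\tilde f)\,dt-d\Bigr)^{2}
=\max_{|Y|\le\rho}\,(Y-d)^{2}=(\rho+|d|)^{2},
\end{equation*}
the maximizer being the Cauchy--Bunyakovsky extremal element taken with sign opposite to $d$. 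This strictly exceeds $\rho^{2}+d^{2}$ whenever $d\neq0$ and $\rho\neq0$, so exhibiting extremal $\tilde f$ and $\tilde\xi$ proves that the supremum equals $(\rho+|d|)^{2}$ plus the noise term; it cannot make the inequalities ``become'' (\ref{dd12}), because (\ref{dd12}) is false as an equality for $d\neq0$. What is true --- and is all that is ever used afterwards --- is the statement at $d=0$, equivalently that both sides of (\ref{dd12}) have the same infimum over $d\in\mathbb R$, attained at $d=0$; compare the step $\inf_{u,d}J(u,d)=\inf_{u}J(u,0)$ in the proof of the proposition on $\mathcal{U}$-optimal estimates leading to (\ref{hgf}), and the same silent recentering behind the $+c^{2}$ in (\ref{n4-3}). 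In Lemma~\ref{lem1} this issue does not arise because the infimum over the constant $c$ is taken before the supremum is evaluated and the optimal $c$ recenters the ellipsoid $G$, whose center $(f_0^{(0)},f_1^{(0)},f^{(0)})$ is nonzero there; in the present section the center of $G$ is zero, so the honest version of the Proposition either fixes $d=0$ or replaces the right-hand side of (\ref{dd12}) by $(\rho+|d|)^{2}$ plus the noise integrals.
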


The proof of this statement is similar to the proof of the
corresponding assertion from Section 2.1.

{\bf Remark.} The BVP is uniquely solvable in the class of
absolutely continuous  functions.

Indeed, writing the equations for the components of vectors $z_1$
and $z_2$ we obtain an equation system
$$
z_{11}'=-Pz_{11}+(H^Tu_1)_1,   \quad z_{11}(0)=0,
$$
\begin{equation}  \label{dd13}
z_{12}'=-z_{11}+Pz_{12}+(H^Tu_1)_1,   \quad
-z_{21}(s)+z_{11}(s)=b_1,
\end{equation}
$$
z_{21}'=-Pz_{21}+(H^Tu_2)_1,  \quad -z_{22}(s)+z_{12}(s)=b_2,
$$
$$
z_{22}'=-z_{21}+Pz_{22}+(H^Tu_2)_2,   \quad z_{22}(1)=0.
$$
For  function $z_{11}$ we have a Cauchy problem; solving this
problem we find $z_{21}(s)$ and, consequently, $z_{22}(s).$
Solving the Cauchy problem for $z_{21}(t),$ we obtain this
function for any $t\in (s,1).$
Function $z_{12}(t)$ can be
determined in a similar manner.
\begin{predllll}
Let functions $u_1(t)$ and $u_2(t)$ have the form
$u_1(t)=p_1(t)H(t)z_1(t)$ and $u_2(t)=p_2(t)H(t)z_2(t),$ where
$p_1(t)$ and $p_2(t)$ are continuous, respectively, on $[0,s]$ and
$[s,1].$ Then the following representation is valid
\begin{equation}  \label{dd14}
\int\limits_0^s(u_1(t),y(t))dt+\int\limits_s^1(u_2(t),y(t))dt=
(b,\hat{x}_1(s)),
\end{equation}
where $\hat{x}_1(s)$ is determined from the solution to the
equation system
$$
\hat{x}_1'(t)=A_1\hat{x}_1+p_1(t)H^T(y(t)-H\hat{x}_1(t)),
$$
\begin{equation}  \label{dd15}
\hat{x}_2'(t)=A_1\hat{x}_2+p_2(t)H^T(y(t)-H\hat{x}_2(t)),
\end{equation}
$$
\hat{x}_1(s)=\hat{x}_2(s), \quad
\hat{x}_{12}(0)=\hat{x}_{21}(1)=0.
$$
\end{predllll}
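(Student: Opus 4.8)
The plan is to reproduce, in the notation of this section, the duality/integration-by-parts scheme already used in the proof of the representation $\widehat{\widehat{(a,\varphi(s))_n}}=(a,\hat\varphi_2(s))_n$ (Theorem 3.2). First I would substitute the assumed form $u_1=p_1Hz_1$, $u_2=p_2Hz_2$ into the left-hand side of (\ref{dd14}) and shift $H$ onto $y$ by transposition, obtaining
$$
\int_0^s(u_1,y)\,dt+\int_s^1(u_2,y)\,dt=\int_0^s(z_1,p_1H^Ty)\,dt+\int_s^1(z_2,p_2H^Ty)\,dt.
$$
On each subinterval I would then eliminate $p_iH^Ty$ using the filtering equations (\ref{dd15}); from $\hat x_1'=A_1\hat x_1+p_1H^T(y-H\hat x_1)$ one has $p_1H^Ty=\hat x_1'-A_1\hat x_1+p_1H^TH\hat x_1$, and analogously on $(s,1)$.

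Second, I would integrate by parts the term $\int_0^s(z_1,\hat x_1')\,dt$ and insert the adjoint equation (\ref{dd11}) in the form $z_1'=-A_1^Tz_1+p_1H^THz_1$. Using $(A_1^Tz_1,\hat x_1)=(z_1,A_1\hat x_1)$ together with the fact that $p_1$ is scalar and $H^TH$ is symmetric, the four volume integrals cancel in pairs, leaving only endpoint contributions:
$$
\int_0^s(u_1,y)\,dt=(z_1(s),\hat x_1(s))-(z_1(0),\hat x_1(0)),\qquad \int_s^1(u_2,y)\,dt=(z_2(1),\hat x_2(1))-(z_2(s),\hat x_2(s)).
$$
Adding these and using the continuity $\hat x_1(s)=\hat x_2(s)$ and the jump condition $z_1(s)-z_2(s)=b$ from (\ref{dd11}), the terms at $t=s$ collapse to $(z_1(s)-z_2(s),\hat x_1(s))=(b,\hat x_1(s))$.

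The main obstacle — and the only place where the block structure of the problem is used — is to check that the two remaining endpoint terms $(z_1(0),\hat x_1(0))$ and $(z_2(1),\hat x_2(1))$ both vanish. I would verify this from the complementary pattern of the scalar boundary conditions: writing $z_1=(z_{11},z_{12})$ and $\hat x_1=(\hat x_{11},\hat x_{12})$, the conditions $z_{11}(0)=0$ in (\ref{dd11}) and $\hat x_{12}(0)=0$ in (\ref{dd15}) annihilate each summand of $(z_1(0),\hat x_1(0))$; similarly $z_{22}(1)=0$ and $\hat x_{21}(1)=0$ annihilate $(z_2(1),\hat x_2(1))$. One must confirm that this is exactly the pairing dictated by the state decomposition $x=(\varphi_1,\psi)$ with $\psi(0)=0$, $\varphi_1(1)=0$. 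Once these two endpoint terms are shown to vanish, the identity (\ref{dd14}) follows immediately. The existence, uniqueness and absolute continuity of the solutions $z_i$ and $\hat x_i$ needed to justify the integrations by parts are already guaranteed by the Remark preceding Proposition 5.2 and by the elimination reduction to Cauchy problems in (\ref{dd13}).
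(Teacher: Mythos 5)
Your proposal is correct and follows essentially the same route as the paper's own proof: substitute $u_i=p_iHz_i$, eliminate $p_iH^Ty$ via the filtering equations (\ref{dd15}), integrate by parts against the adjoint system (\ref{dd11}) so that the volume integrals cancel by transposition and the symmetry of $H^TH$ (with $p_i$ scalar), and collapse the terms at $t=s$ using $\hat{x}_1(s)=\hat{x}_2(s)$ and $z_1(s)-z_2(s)=b$. The only difference is cosmetic: the paper treats $(s,1)$ explicitly and handles $(0,s)$ "in a similar way," leaving the vanishing of the endpoint terms $(z_1(0),\hat{x}_1(0))$ and $(z_2(1),\hat{x}_2(1))$ implicit, whereas you verify it explicitly from the complementary boundary conditions $z_{11}(0)=\hat{x}_{12}(0)=0$ and $z_{22}(1)=\hat{x}_{21}(1)=0$.
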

\begin{proof} Since $u_2(t)=p_2(t)H(t)z_2(t),$ we have
$$
\int\limits_s^1(u_2(t),y(t))dt=
\int\limits_s^1(z_2(t),p_2(t)H^T(t)y(t))dt.
$$
Let $\hat{x}_2(s)$ be a solution determined from system
(\ref{dd15}). Multiplying both sides of equations (\ref{dd15}) by
$z_2(t)$ and integrating from $s$ to $1,$ we obtain
$$
\int\limits_s^1(z_2,p_2H^Ty)dt=\int\limits_s^1(\hat{x}_2',z_2)dt-
\int\limits_s^1(A_1\hat{x}_2,z_2)dt+
\int\limits_s^1(p_2H^TH\hat{x}_2,z_2)dt,
$$
however,
$$
\int\limits_s^1(\hat{x}_2',z_2)dt=
-\int\limits_s^1(\hat{x}_2,z_2')dt+(\hat{x}_2(1),z_2(1))-
(\hat{x}_2(s),z_2(s))=
$$
$$
=\int\limits_s^1(\hat{x}_2,A_1^Tz_2)dt-
\int\limits_s^1(\hat{x}_2,p_2H^THz_2)dt.
$$
Thus
$$
\int\limits_s^1(z_2,p_2H^Ty)dt=-(\hat{x}_2(s),z_2(s)).
$$
In a similar way, we can show that
$$
\int\limits_0^s(u_1,y)dt=\int\limits_0^s(z_1,p_1H^Ty)dt=
(\hat{x}_1(s),z_1(s)).
$$
The desired representation follows now from the latter
relationship and the equalities $\hat{x}_1(s)=\hat{x}_2(s),$
$z_1(s)-z_2(s)=b$.
\end{proof}

Introduce functions $p_1(t)$, $p_2(t)$, $\hat{x}_1(t)$, and
$\hat{x}_1(t)$ as solutions to the initial value problems
$$
z_1'=-A_1^Tz_1+H^Tq_1^2Hp_1, \quad z_{11}(0)=0,
$$
$$
z_2'=-A_1^Tz_2+H^Tq_1^2Hp_2, \quad z_{22}(1)=0,
$$
\begin{equation} \label{dd17}
z_1(s)-z_2(s)=b,
\end{equation}
$$
p_1'=A_1p_1+Q_1z_1, \quad p_{12}(0)=0,
$$
$$
p_2'=A_1p_2+Q_1z_2, \quad p_{21}(1)=0,
$$
$$
p_1(s)=p_2(s),
$$
$$
\hat{x}_1'=A_1\hat{x}_1+Q_1\hat{p}_1, \quad \hat{x}_{12}(0)=0,
$$
$$
\hat{x}_2'=A_1\hat{x}_2+Q_1\hat{p}_2, \quad \hat{x}_{21}(1)=0,
$$
\begin{equation} \label{dd18}
\hat{x}_1(s)=\hat{x}_2(s),
\end{equation}
$$
-\hat{p}_1'=A_1^T\hat{p}_1+H^Tq_1^2(y-H\hat{x}_1), \quad
\hat{p}_{11}(0)=0,
$$
$$
-\hat{p}_2'=A_1^T\hat{p}_2+H^Tq_1^2(y-H\hat{x}_2), \quad
\hat{p}_{22}(1)=0,
$$
$$
\hat{p}_1(s)=\hat{p}_2(s).
$$
The following statement is valid.
%%§§
\begin{predllll}
Let the set $G$ have form (\ref{dd8}). The the minimax estimate
admits the representation
\begin{equation} \label{dd16}
(\widehat{\widehat{b,x(s)}})=\int\limits_0^s(q_1^2Hp_1,y)dt+
\int\limits_s^1(q_1^2Hp_2,y)dt=(b,\hat{x}_1(s)),
\end{equation}
where functions $p_1$, $p_2$, and $\hat{x}_1$ are determined from
the solution to equation systems (\ref{dd17}), (\ref{dd18}).
\end{predllll}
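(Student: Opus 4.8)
The plan is to read off from the preceding Proposition that, via~\eqref{dd12}, the minimax problem~\eqref{dd10} is equivalent to minimizing the deterministic functional
$$
J(u)+d^{2}=\int_{0}^{s}(Q_{1}z_{1},z_{1})\,dt+\int_{s}^{1}(Q_{1}z_{2},z_{2})\,dt+\int_{0}^{s}q_{1}^{-2}(u_{1},u_{1})\,dt+\int_{s}^{1}q_{1}^{-2}(u_{2},u_{2})\,dt+d^{2}
$$
over $u=(u_{1},u_{2})\in L^{2}(0,s)\times L^{2}(s,1)$ and $d\in\mathbb{R}$, where $z_{1},z_{2}$ solve the state system~\eqref{dd11} with jump $z_{1}(s)-z_{2}(s)=b$. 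Minimisation in $d$ forces $d=0$ at once, so the optimal estimate carries no free constant, in agreement with $\hat c=0$. The remaining functional is a continuous quadratic form in $u$ bounded below by $c\|u\|_{H}^{2}$, since $u\mapsto z(\cdot;u)$ is a bounded linear map into $H^{1}$; hence, exactly as in the proof of Theorem~\ref{tm1} and by Theorem~1.1 of~\cite{BIBL20}, there is a unique minimiser $\hat u=(\hat u_{1},\hat u_{2})$.

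Next I would write the Euler equation $\left.\frac{d}{d\tau}J(\hat u+\tau v)\right|_{\tau=0}=0$ for every $v=(v_{1},v_{2})$ and convert it to pointwise form by introducing the adjoint functions $p_{1},p_{2}$ as the solutions of the $p$-equations in~\eqref{dd17}, that is $p_{i}'-A_{1}p_{i}=Q_{1}z_{i}$ with junction $p_{1}(s)=p_{2}(s)$ and split endpoint data $p_{12}(0)=0$, $p_{21}(1)=0$. Pairing $Q_{1}z_{i}=p_{i}'-A_{1}p_{i}$ against the variation $\delta z_{i}$ and integrating by parts over $(0,s)$ and $(s,1)$ trades the running term $\int(Q_{1}z_{i},\delta z_{i})$ for $-\int(v_{i},Hp_{i})$ plus boundary contributions. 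Those contributions vanish: at $t=s$ the identities $p_{1}(s)=p_{2}(s)$ and (since $b$ is fixed) $\delta z_{1}(s)=\delta z_{2}(s)$ cancel the interface terms, while at $t=0,1$ the complementary zero conditions on the $n$-blocks of $z$ and of $p$ annihilate the endpoint terms. The Euler equation thus collapses to $\int q_{1}^{-2}(\hat u_{i},v_{i})=\int(Hp_{i},v_{i})$ for all $v_{i}$, whence $\hat u_{1}=q_{1}^{2}Hp_{1}$ and $\hat u_{2}=q_{1}^{2}Hp_{2}$. Substituting these into~\eqref{dd11} produces precisely the coupled two-point problem~\eqref{dd17}, whose unique solvability follows from the uniqueness of $\hat u$; this already gives the first equality in~\eqref{dd16}.

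For the second equality I would follow Theorem~\ref{tm2}. Introduce the observation-driven pair $(\hat x_{i},\hat p_{i})$ solving~\eqref{dd18}, write the estimate as $\int_{0}^{s}(p_{1},H^{T}q_{1}^{2}y)\,dt+\int_{s}^{1}(p_{2},H^{T}q_{1}^{2}y)\,dt$, and eliminate $y$ through the $\hat p_{i}$-equation of~\eqref{dd18}, namely $H^{T}q_{1}^{2}y=-\hat p_{i}'-A_{1}^{T}\hat p_{i}+H^{T}q_{1}^{2}H\hat x_{i}$. Two integrations by parts --- one moving the derivative from $\hat p_{i}$ onto $p_{i}$ (using $p_{i}'-A_{1}p_{i}=Q_{1}z_{i}$), one moving it from $\hat x_{i}$ onto $z_{i}$ (using $z_{i}'+A_{1}^{T}z_{i}=H^{T}q_{1}^{2}Hp_{i}$ and $\hat x_{i}'-A_{1}\hat x_{i}=Q_{1}\hat p_{i}$) --- turn each integrand into the boundary expression $-[p_{i}\hat p_{i}]+[z_{i}\hat x_{i}]$, the two interior integrals $\int(Q_{1}z_{i},\hat p_{i})$ and $\int(z_{i},Q_{1}\hat p_{i})$ cancelling by symmetry of $Q_{1}=B_{1}Q^{-1}B_{1}^{T}$. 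Summing over the two intervals, the $t=0,1$ terms drop out by the block zero conditions of~\eqref{dd17}--\eqref{dd18}, while at $t=s$ the matchings $p_{1}(s)=p_{2}(s)$, $\hat p_{1}(s)=\hat p_{2}(s)$, $\hat x_{1}(s)=\hat x_{2}(s)$ together with $z_{1}(s)-z_{2}(s)=b$ leave exactly $(z_{1}(s)-z_{2}(s),\hat x_{1}(s))=(b,\hat x_{1}(s))$, as required.

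The delicate step is the boundary accounting across the interface $t=s$ combined with the block, non-symmetric structure of $A_{1}$ and of $Q_{1}$: one must check that in every non-integrated term a pinned component (one of $z_{11}(0)$, $z_{22}(1)$, $p_{12}(0)$, $p_{21}(1)$, $\hat x_{12}(0)$, $\hat x_{21}(1)$, $\hat p_{11}(0)$, $\hat p_{22}(1)$) is paired against the matching free component, so that nothing but $(b,\hat x_{1}(s))$ survives. These are the same Lagrange--Green identities used on p.~\pageref{pt} and in Theorems~\ref{tm1} and~\ref{tm2}, here specialised to $\frac{d}{dt}+A_{1}^{T}$ and its formal adjoint; as before Theorem~\ref{tm2}, one also invokes the continuity of the solution of~\eqref{dd18} in its right-hand side to guarantee that $\hat x_{1}$ has finite second moments as a random field, so that the estimate is well defined.
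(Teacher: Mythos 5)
Your proposal is correct and takes essentially the same route the paper intends: the paper's own "proof" is a one-line remark that the argument mirrors the corresponding assertions of Sections 2--3 (the reduction to a coercive quadratic functional as in Lemma \ref{lem1}, the Euler equation with adjoint pair giving $\hat u_i=q_1^2Hp_i$ as in Theorem \ref{tm1}, and the integration by parts against system (\ref{dd18}) as in Theorem \ref{tm2}). You have simply filled in those steps explicitly, and your interface/endpoint bookkeeping (cancellation at $t=s$ via $p_1(s)=p_2(s)$, $\hat p_1(s)=\hat p_2(s)$, $\hat x_1(s)=\hat x_2(s)$, $z_1(s)-z_2(s)=b$, and vanishing at $t=0,1$ via the complementary block conditions) is exactly what the omitted details require.
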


The proof of this statement is similar to the proof of the
corresponding assertion from Section 2.1.

Write equations (\ref{dd18}) in a detailed form, row by row,
$$
\hat{x}_{11}'=P\hat{x}_{11}+\hat{x}_{11}, \quad \hat{x}_{12}(0)=0,
$$
$$
\hat{x}_{12}'=-P\hat{x}_{12}+\tilde Q\hat{p}_{12}, \quad
\hat{x}_{21}(1)=0,
$$
$$
\hat{x}_{21}'=P\hat{x}_{21}+\hat{x}_{22}, \quad
\hat{x}_{11}(s)=\hat{x}_{21}(s),
$$
$$
\hat{x}_{22}'=-P\hat{x}_{22}+\tilde Q\hat{p}_{22}, \quad
\hat{x}_{12}(s)=\hat{x}_{22}(s),
$$
where $\tilde Q=BQ_1B^T.$ Let us go back to equation (\ref{dd6}).
Since the equation for function $\psi(t)$ has the form
$$
\psi'(t)=-P(t)\psi(t)+B(t)f(t), \quad \psi(0)=0,
$$
the Cauchy formula yields
$$
\psi(1)=\int\limits_0^1\Phi(1,t)B(t)f(t)dt=\mathcal{D}f,
$$
where $\Phi(s,t)$ is a solution to the equation
$$
\frac{\partial \Phi(s,t)}{\partial s}=P(s)\Phi(s,t),
$$
$$
\Phi(t,t)=E.
$$

Below, we will study estimates of solutions to equation
(\ref{dd6}) subject to the conditions $\varphi_1(1)=0,$
$\psi(1)=\mathcal{D}f.$ Let the function $f(t)$ belong to a
bounded subset $G$ of $L_2(0,1)$.

Introduce functions $z_1(t;u)$ and $z_2(t;u)$ as solutions to the
equations
$$
z_1'(\cdot;u)=-A_1^Tz_1(\cdot;u)+H^Tu_1, \quad z_1(0)=0,
$$
\begin{equation} \label{dd19}
z_2'(\cdot;u)=-A_1^Tz_2(\cdot;u)+H^Tu_2, \quad z_1(s;u)-z_2(s;u)=b.
\end{equation}
%Имеет место

\begin{predllll}
Let $z_1(t;u)$ and $z_2(t;u)$ be solutions to the equation system
(\ref{dd19}). Then
$$
\sup_{\tilde \xi \in V, \tilde f \in G}M((b,\tilde x(s))-(\widehat{b,\tilde x(s)}))^2=
\sup_{\tilde f \in G}\Biggl(\Biggr.\int_0^s\left(z_1(t;u)+\Phi^T(1,t)z_1(1;u),
B(t)\tilde f(t)\right)dt+
$$
$$
+\int_s^1\left(z_2(t;u)+\Phi^T(1,t)z_2(1;u),B(t)\tilde f(t)\right)dt-d\Biggl.\Biggr)^2+ \int_0^s
q_1^{-2}(t)(u_1(t),u_1(t))dt+
$$
$$
+\int_s^1q_1^{-2}(t)(u_2(t),u_2(t))dt.
$$
\end{predllll}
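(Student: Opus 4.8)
The plan is to mimic the reduction carried out for Lemma \ref{lem1}, adapting it to the present situation: after elimination the only uncertain data are the forcing $\tilde f\in G$ (see (\ref{dd8})) and the noise $\tilde\xi\in V$, and the terminal value of the state is the global functional $\psi(1)=\mathcal{D}\tilde f=\int_0^1\Phi(1,t)B(t)\tilde f(t)\,dt$. I would first write the estimation error $\eta:=(b,\tilde x(s))-(\widehat{b,\tilde x(s)})$, with $(\widehat{b,\tilde x(s)})=\int_0^s(u_1,\tilde y)dt+\int_s^1(u_2,\tilde y)dt+d$ and $\tilde y=H\tilde x+\tilde\xi$, and transform $(b,\tilde x(s))$ using the adjoint system (\ref{dd19}). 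Differentiating $(z_i,\tilde x)$ and inserting $\tilde x'=A_1\tilde x+B_1\tilde f$ together with $z_i'=-A_1^Tz_i+H^Tu_i$, the $A_1$-terms cancel and one is left with $\frac{d}{dt}(z_i,\tilde x)=(u_i,H\tilde x)+(z_i,B_1\tilde f)$.

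Integrating this over $(0,s)$ for $z_1$ and over $(s,1)$ for $z_2$, using the boundary condition $z_1(0)=0$ to discard the unknown $\tilde x(0)$ and the jump $z_1(s)-z_2(s)=b$, expresses $(b,\tilde x(s))$ as a sum of $\int(u_i,H\tilde x)$ terms, the forcing integrals $\int(z_i,B_1\tilde f)$, and the terminal boundary term $-(z_2(1),\tilde x(1))$. The $\int(u_i,H\tilde x)$ pieces then cancel exactly against the like terms in $(\widehat{b,\tilde x(s)})$, which is the analogue of the cancellation producing (\ref{xq}); consequently $\eta$ splits into a part linear in $\tilde f$ and the noise part $-\int_0^s(u_1,\tilde\xi)dt-\int_s^1(u_2,\tilde\xi)dt-d$.

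The genuinely new step, and the one I expect to be the main obstacle, is the treatment of the terminal term. Since $B_1=(0,B)^T$, only the $\psi$-block of each $z_i$ pairs with $B\tilde f$, and $(z_2(1),\tilde x(1))$ reduces to a pairing with $\psi(1)=\mathcal{D}\tilde f$; substituting $\mathcal{D}\tilde f=\int_0^1\Phi(1,t)B\tilde f\,dt$ moves $\Phi^T(1,t)z(1;u)$ under the integral sign, and distributing $\int_0^1=\int_0^s+\int_s^1$ produces exactly the correction terms $\Phi^T(1,t)z_1(1;u)$ and $\Phi^T(1,t)z_2(1;u)$ appearing in the asserted identity. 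Here the terminal value of the continuous adjoint trajectory is understood as the common endpoint value, distributed over the two subintervals; reconciling the block structure and the sign convention for $\Phi$ (which satisfies $\partial_s\Phi=P\Phi$) with the stated form is the delicate point, and it is precisely this feature that distinguishes the present statement from its Section~2.1 counterpart.

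Finally, since $M\tilde\xi=0$ and $\tilde f$ is deterministic, the identity $D\eta=M\eta^2-(M\eta)^2$, used exactly as in Lemma \ref{lem1}, gives $M\eta^2=(M\eta)^2+M[\int_0^s(u_1,\tilde\xi)dt+\int_s^1(u_2,\tilde\xi)dt]^2$, where $M\eta$ is the linear-in-$\tilde f$ part minus $d$. The two summands depend on disjoint data, so $\sup_{\tilde f\in G,\tilde\xi\in V}$ separates additively: the first yields $\sup_{\tilde f\in G}(\cdots-d)^2$ verbatim, while for the second I would set $\tilde u=u_1$ on $(0,s)$ and $\tilde u=u_2$ on $(s,1)$, apply the generalized Cauchy--Bunyakovsky inequality in its $q_1$-weighted form together with the constraint $\int_0^1q_1^2(t)\,\mathrm{Sp}\,\tilde R(t,t)dt\le1$ defining $V$, obtaining $M[\int_0^1(\tilde u,\tilde\xi)dt]^2\le\int_0^1q_1^{-2}(t)(\tilde u,\tilde u)dt$, with equality attained at an explicit $\tilde\xi$ as in (\ref{xq3}). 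This bound equals $\int_0^sq_1^{-2}(u_1,u_1)dt+\int_s^1q_1^{-2}(u_2,u_2)dt$, which completes the claimed formula.
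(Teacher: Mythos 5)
Your proposal is correct and coincides with the paper's approach: the paper supplies no details for this statement, saying only that it ``can be easily proved using the methods set forth in Section 2.1,'' and your argument --- the adjoint identity $\frac{d}{dt}(z_i,\tilde x)=(u_i,H\tilde x)+(z_i,B_1\tilde f)$, the cancellation of the $H\tilde x$ terms against the estimate, the decomposition $M\eta^2=(M\eta)^2+D\eta$, and the $q_1$-weighted Cauchy--Bunyakovsky bound with its extremal $\tilde\xi$ --- is exactly that method. On the one point you flag as delicate, your bookkeeping is the right resolution: the derivation yields the single endpoint value $z_2(1;u)$ (only its $\psi$-block, with an overall sign that is immaterial under the square since $G$ is symmetric) in both correction terms, and this reading of the statement's ``$z_1(1;u)$'' is confirmed by the paper's subsequent invocation of this proposition for $\mathcal{U}$-optimal estimates, where both $\Phi^T(1,t)$-corrections are dropped precisely because $z_2(1;u)=0$ on $\mathcal{U}$.
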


This statement can be easily proved using the methods set forth in
Section 2.1.

We will call minimax estimate $(\widehat{\widehat{b,x(s)}})$
$\mathcal{U}$ optimal if it has the form
$$
(\widehat{\widehat{b,x(s)}})=\int_0^s\left(\hat{u}_1(t),y(t)\right)dt
+\int_s^1\left(\hat{u}_2(t),y(t)\right)dt+\hat{d},
$$
where functions $\hat{u}_1$ and $\hat{u}_2$ and number $\hat{d}$
are determined from the condition
$$
\inf_{(u_1,u_2)\in
\mathcal{U},d\in \mathbb R}\sup_{\tilde \xi \in V, \tilde f \in G}M\left((b,\tilde x(s))-(\widehat{b,\tilde x(s)})\right)^2=
\sup_{\tilde \xi \in V, \tilde f \in G}M\left((b,\tilde x(s))-(\widehat{\widehat{b,\tilde x(s)}})\right)^2
$$
and $\mathcal{U}$  is a subset of $L_2(0,1).$

Let the set $\mathcal{U}$ be given by
$\mathcal{U}=\{(u_1,u_2):z_2(1;u)=0\},$ and $G$ is determined by
formula  (\ref{dd8}).
\begin{predllll}
Assume that set $\mathcal{U}$ is not empty. Then $\hat{d}=0,$
$\hat{u}_1(t)=q_1(t)H(t)p_1(t),$ and
$\hat{u}_2(t)=q_1(t)H(t)p_2(t),$ where $p_1(t)$ and $p_2(t)$ are
determined from the  solution to the equation system
$$
z_1'=-A_1^Tz_1+H^T\hat{u}_1, \quad z_1(0)=0,
$$
$$z_2'=-A_1^Tz_2+H^T\hat{u}_2, \quad z_1(s)-z_2(s)=b, \quad z_2(1)=0,
$$
$$
p_1'=A_1p_1+Q_1z_1, \quad 0<t<s,
$$
\begin{equation}
\label{hgf}
p_2'=A_1p_2+Q_1z_2, \quad s<t<1,\quad p_1(s)=p_2(s).
\end{equation}
\end{predllll}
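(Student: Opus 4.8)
The plan is to follow the scheme of Theorem \ref{ntm4-1}, treating $\mathcal{U}=\{(u_1,u_2):z_2(1;u)=0\}$ as the admissible set and reducing the $\mathcal U$-optimal problem to a constrained quadratic minimization. First I would insert the worst-case error representation of the preceding Proposition, which writes $\sup_{\tilde\xi\in V,\,\tilde f\in G}M((b,\tilde x(s))-(\widehat{b,\tilde x(s)}))^2$ as the square of the bias $\sup_{\tilde f\in G}(\int_0^s(\zeta_1,B\tilde f)\,dt+\int_s^1(\zeta_2,B\tilde f)\,dt-d)^2$ plus the noise part $\int_0^s q_1^{-2}(u_1,u_1)\,dt+\int_s^1 q_1^{-2}(u_2,u_2)\,dt$, where $\zeta_i=z_i(t;u)+\Phi^T(1,t)z_i(1;u)$. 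For $u\in\mathcal U$ the terminal value $z_2(1;u)$ vanishes, so every contribution $\Phi^T(1,t)z(1;u)$ drops out and $\zeta_i$ collapses to $z_i(t;u)$. Because $G$ is the ellipsoid (\ref{dd8}), the generalized Cauchy--Bunyakovsky inequality evaluates the remaining bias: the linear map $\tilde f\mapsto\int_0^s(z_1,B\tilde f)+\int_s^1(z_2,B\tilde f)$ has dual norm $(\int_0^s(Q_1z_1,z_1)\,dt+\int_s^1(Q_1z_2,z_2)\,dt)^{1/2}$ with $Q_1$ as in (\ref{dd12}), so the supremum equals $(m(u)+|d|)^2$ with $m(u)$ that norm; this is minimized in $d$ at $\hat d=0$.

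After $\hat d=0$ the problem becomes the minimization over $u\in\mathcal U$ of
$$J(u)=\int_0^s(Q_1z_1,z_1)\,dt+\int_s^1(Q_1z_2,z_2)\,dt+\int_0^s q_1^{-2}(u_1,u_1)\,dt+\int_s^1 q_1^{-2}(u_2,u_2)\,dt.$$
I would then argue existence and uniqueness exactly as in Theorem \ref{ntm4-1}. The map $u\mapsto z_2(1;u)$ is affine and bounded from $H=L^2(0,s)\times L^2(s,1)$ to $\mathbb R^{2n}$, by the Cauchy-formula representation analogous to the one used for the set $U$ in the previous section; hence $\mathcal U$, the preimage of $\{0\}$, is a closed affine subspace, in particular convex and closed, and it is nonempty by hypothesis. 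Since $q_1$ is continuous on $[0,1]$ we have $q_1^{-2}(t)\ge(\max_{[0,1]}q_1^2)^{-1}>0$, so $J(u)\ge c\|u\|_H^2$ with $c>0$; thus $J$ is a strictly convex, lower semicontinuous, coercive functional and attains its minimum on the closed convex set $\mathcal U$ at a unique $\hat u=(\hat u_1,\hat u_2)$.

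To identify $\hat u$ I would pass to stationarity. Adjoining the constraint with a multiplier $\mu\in\mathbb R^{2n}$, I set $J_\mu(u)=J(u)+(\mu,z_2(1;u))$ and impose $\frac{d}{d\tau}J_\mu(\hat u+\tau v)|_{\tau=0}=0$ for all $v=(v_1,v_2)\in H$. Introducing the co-state $p=(p_1,p_2)$ by $p_1'=A_1p_1+Q_1z_1$ on $(0,s)$, $p_2'=A_1p_2+Q_1z_2$ on $(s,1)$, $p_1(s)=p_2(s)$, and noting that the variation $\delta z=(\partial z/\partial u)v$ satisfies $\delta z'=-A_1^T\delta z+H^T\delta u$, $\delta z_1(0)=0$, $\delta z_1(s)-\delta z_2(s)=0$, I would use the identity $\frac{d}{dt}(p,\delta z)=(Q_1z,\delta z)+(Hp,\delta u)$ to integrate by parts on each subinterval. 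The interface terms cancel through $p_1(s)=p_2(s)$ and $\delta z_1(s)=\delta z_2(s)$; the term at $t=0$ vanishes since $\delta z_1(0)=0$; and the term at $t=1$ is absorbed by the multiplier, which leaves $p$ with no prescribed value at the outer endpoints. What survives is
$$0=\int_0^s(q_1^{-2}\hat u_1-Hp_1,v_1)\,dt+\int_s^1(q_1^{-2}\hat u_2-Hp_2,v_2)\,dt\qquad\forall\,v\in H,$$
whence $q_1^{-2}(t)\hat u_i(t)=H(t)p_i(t)$, i.e. $\hat u_i=q_1^2Hp_i$, in agreement with the normalization of (\ref{dd17})--(\ref{dd18}). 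Substituting this back into (\ref{dd19}) closes the coupled system precisely as (\ref{hgf}); its unique solvability is equivalent to the existence of the unique minimizer, just as in Theorem \ref{tm1}.

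The step I expect to be the main obstacle is the exact bookkeeping of the constraint $z_2(1;u)=0$: one must check that it annihilates precisely the boundary contributions $\Phi^T(1,t)z(1;u)$ in the worst-case functional, and then read off the correct transversality data for $p$ --- namely that $p$ carries no prescribed values at $0$ and $1$, these degrees of freedom being consumed by the constraint and its multiplier, while only the interface condition $p_1(s)=p_2(s)$ remains. Getting this accounting right is what makes the Euler--Lagrange system close up as (\ref{hgf}) rather than as the unconstrained system (\ref{dd17})--(\ref{dd18}).
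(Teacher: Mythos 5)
Your proof is correct and follows essentially the same route as the paper's own, much terser, argument: restriction to $\mathcal{U}$ removes the boundary terms in the representation of Proposition 4, the supremum over $G$ turns the bias into the $Q_1$-quadratic form so that the optimal constant is $\hat d=0$, and the unique minimizer of the resulting coercive, strictly convex functional on the closed affine set $\mathcal{U}$ is identified through exactly the Lagrange-multiplier/co-state computation that the paper compresses into the words ``finding conditional extremum.'' The only discrepancy is the gain: your stationarity condition $q_1^{-2}\hat u_i=Hp_i$ yields $\hat u_i=q_1^2Hp_i$ rather than the statement's $q_1Hp_i$; since the noise enters the worst-case functional with weight $q_1^{-2}$, your factor is the consistent one (it matches (\ref{dd16})--(\ref{dd18})), and the missing square in the proposition as printed is a typo of the paper rather than a gap in your argument.
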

\begin{proof} If set $U$ is not empty, then from Proposition 4, it follows that
for $(u_1,u_2)\in U$,
\begin{gather*}
\sup_{\tilde \xi \in V, \tilde f \in G}M((b,\tilde x(s))-(\widehat{b,\tilde x(s)}))^2=\\
\sup_{\tilde f\in G}\left[\int_0^s(z_1(t;u),B(t)\tilde f(t))dt+\int_s^1(z_2(t;u),B(t)\tilde f(t))dt-d
\right]^2+
\\
+\int_0^sq_1^{-2}(t)(u_1(t),u_1(t))dt+
\int_s^1q_1^{-2}(t)(u_2(t),u_2(t))dt=J(u,d).
\end{gather*}
Taking into account the definition of set $G$, we obtain
\begin{gather*}
J(u,d)=\int_0^s(Q_1z_1(\cdot;u),z_1(\cdot;u))dt
+\int_s^1(Q_1z_2(\cdot;u),z_2(\cdot;u))dt +\\
+\int_0^sq_1^{-2}(t)(u_1(t),u_1(t))dt+
\int_s^1q_1^{-2}(t)(u_2(t),u_2(t))dt+d^2.
\end{gather*}
This expression yields
$$
\inf_{(u_1,u_2)\in U,d\in\mathbb R}J(u,d)=\inf_{(u_1,u_2)\in U}J(u,0).
$$
It is easy to see that there exists a unique function $\hat u(t)=(\hat u_1(t), \hat u_2(t))$ such that $$J(\hat u,0)=\inf_{u=(u_1,u_2)\in U}J(u,0).$$
Set $z_1(t):=z_1(t;\hat u)$ and $z_2(t):=z_2(t;\hat u).$
Finding conditional extremum of functional  $J(u,0)$ we obtain
that for the function $\hat u(t)$ at which the infimum is attained, the
following equalities hold
$$
\hat{u}_1(t)=q_1(t)H(t)p_1(t), \quad
\hat{u}_2(t)=q_1(t)H(t)p_2(t),
$$
where $p_1(t)$ and $p_2(t)$
are
determined from the solution to the equation system
(\ref{hgf}).
The theorem is proved.
\end{proof}

Introduce functions $\hat{p}_1(t),$ $ \hat{p}_2(t),$
$\hat{x}_1(t),$ and $\hat{x}_2(t)$ as solutions to the equation
system
$$
-\hat{p}_1'=A_1^T\hat{p}_1+q_1H^T(y(t)-H\hat{x}_1(t)), \quad
\hat{p}_1(0)=0,
$$
$$
-\hat{p}_2'=A_1^T\hat{p}_2+q_1H^T(y(t)-H\hat{x}_2(t)), \quad
\hat{p}_2(1)=0, \quad \hat{p}_1(s)=\hat{p}_2(s),
$$
$$
\hat{x}_1'=A_1\hat{x}_1+Q_1\hat{p}_1, \quad 0<t<s,
$$
$$
\hat{x}_2'=A_1\hat{x}_2+Q_1\hat{p}_2, \quad s<t<1,\quad
\hat{x}_1(s)=\hat{x}_2(s).
$$
\begin{predllll}
The minimax estimate admits the representation
$(\widehat{\widehat{b,x(s)}})=(b,\hat{x}_1(s)).$
\end{predllll}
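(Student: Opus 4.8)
The plan is to follow the template of the proof of Theorem~\ref{tm2}, replacing the operator pair $(L,L^*)$ by the first-order pair attached to \eqref{dd6}. Write $Nw:=w'-A_1w$ for the operator governing the state-type variables $p_1,p_2,\hat x_1,\hat x_2$, and $Mw:=w'+A_1^Tw$ for its formal adjoint governing $z_1,z_2,\hat p_1,\hat p_2$. The cancellation $(u,A_1^Tv)=(A_1u,v)$ gives the pointwise identity $\frac{d}{dt}(u,v)=(Nu,v)+(u,Mv)$, which integrates to the Green formula $\int_a^b(u,Mv)\,dt=-\int_a^b(Nu,v)\,dt+(u(b),v(b))-(u(a),v(a))$; this is the only computational tool needed. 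As a preliminary I would note that unique solvability of the filter system preceding the statement is obtained exactly as for \eqref{d8} and \eqref{tk} in the unconstrained case, so $\hat x_1(s)$ is well defined.

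First I would insert the optimal controls $\hat u_1=q_1Hp_1$, $\hat u_2=q_1Hp_2$ and $\hat d=0$ furnished by the preceding proposition into $(\widehat{\widehat{b,x(s)}})=\int_0^s(\hat u_1,y)\,dt+\int_s^1(\hat u_2,y)\,dt$, and move $H$ into the other argument to obtain $\int_0^s(p_1,q_1H^Ty)\,dt+\int_s^1(p_2,q_1H^Ty)\,dt$. On each subinterval I would then substitute $q_1H^Ty=q_1H^TH\hat x_i-M\hat p_i$, read off from the filter equations $M\hat p_i=-q_1H^T(y-H\hat x_i)$, splitting the integral into $\int(p_i,q_1H^TH\hat x_i)\,dt=\int(\hat x_i,Mz_i)\,dt$ (using $Mz_i=H^T\hat u_i=q_1H^THp_i$ from \eqref{hgf}) and $-\int(p_i,M\hat p_i)\,dt$. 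Applying the Green formula to each, with $Np_i=Q_1z_i$ and $N\hat x_i=Q_1\hat p_i$, the two volume integrals that appear are $\int(Q_1z_i,\hat p_i)\,dt$ and $\int(Q_1\hat p_i,z_i)\,dt$; they cancel because $Q_1=B_1Q^{-1}B_1^T$ is symmetric. What survives on $[0,s]$ is the boundary combination $(\hat x_1(s),z_1(s))-(p_1(s),\hat p_1(s))-(\hat x_1(0),z_1(0))+(p_1(0),\hat p_1(0))$, with the analogous expression on $[s,1]$.

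The decisive step, where the full architecture of the boundary and interface conditions is used, is the collapse of these endpoint terms. The outer conditions $z_1(0)=0$, $\hat p_1(0)=0$ and $z_2(1)=0$, $\hat p_2(1)=0$ kill the exterior contributions; the interface conditions $p_1(s)=p_2(s)$ and $\hat p_1(s)=\hat p_2(s)$ cancel the inner $p$--$\hat p$ terms; and finally $\hat x_1(s)=\hat x_2(s)$ together with $z_1(s)-z_2(s)=b$ merges the two remaining terms into $(\hat x_1(s),z_1(s))-(\hat x_2(s),z_2(s))=(\hat x_1(s),z_1(s)-z_2(s))=(b,\hat x_1(s))$, which is the assertion. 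I expect no genuine analytic obstacle here; the only real care required is bookkeeping the signs consistently across the two intervals and checking that each of the eight boundary and matching conditions of the two coupled systems removes precisely one surviving endpoint term.
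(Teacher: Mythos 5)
Your proof is correct and follows essentially the same route as the paper's: substitute $\hat u_i = q_1 H p_i$ and $\hat d=0$, use the filter equations to replace $q_1H^Ty$, integrate by parts so that the volume terms cancel via the symmetry of $Q_1=B_1Q^{-1}B_1^T$, and let the boundary and interface conditions collapse the endpoint terms to $(b,\hat x_1(s))$. If anything, your bookkeeping is more careful than the paper's, whose per-interval identities $\int_0^s(\hat u_1,y)\,dt=(\hat x_1(s),z_1(s))$ and $\int_s^1(\hat u_2,y)\,dt=-(\hat x_1(s),z_2(s))$ silently omit the terms $\mp(p_i(s),\hat p_i(s))$, which, as your argument makes explicit, only cancel when the two intervals are combined using $p_1(s)=p_2(s)$ and $\hat p_1(s)=\hat p_2(s)$.
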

\begin{proof} Since $\hat{u}_1(s)=q_1(t)H(t)p_1(t),$ we have
$$
\int\limits_0^s(\hat{u}_1,y)dt=\int\limits_0^s(q_1Hp_1,y)dt=
\int\limits_0^s(p_1,q_1H^Ty)dt=
$$
$$
=-\int\limits_0^s(p_1,\hat{p}_1')dt-
\int\limits_0^s(p_1,A_1^T\hat{p}_1)dt+
\int\limits_0^s(p_1,q_1H^TH\hat{x}_1)dt=
$$
$$
=\int\limits_0^s(p_1',\hat{p}_1)dt-
\int\limits_0^s(A_1^Tp_1,\hat{p}_1)dt-(p_1(s),\hat{p}_1(s))+
\int\limits_0^s(p_1,q_1H^TH\hat{x}_1)dt=
$$
$$
=\int\limits_0^s(z_1,Q_1\hat{p}_1)dt-
(p_1(s),\hat{p}_1(s))+\int\limits_0^s(p_1,q_1H^TH\hat{x}_1)dt,
$$
$$
\int\limits_0^s(z_1,Q_1\hat{p}_1)dt=
\int\limits_0^s(\hat{x}_1'-A_1\hat{x}_1,z_1)dt=
$$
$$
=-\int\limits_0^s(\hat{x}_1,z_1'+A_1^Tz_1)dt+
(\hat{x}_1(s),z_1(s))-\int\limits_0^s(p_1,q_1H^TH\hat{x}_1)dt.
$$
The latter relationships yield
$\int\limits_0^s(\hat{u}_1,y)dt=(\hat{x}_1(s),z_1(s)).$

In a similar manner, we can show that
$$
\int\limits_s^1(\hat{u}_2,y)dt=-(\hat{x}_2(s),z_2(s))=
-(\hat{x}_1(s),z_2(s)).
$$
Therefore,
$$
\int\limits_0^s(\hat{u}_1,y)dt+
\int\limits_s^1(\hat{u}_2,y)dt=(b,\hat{x}_1(s)),
$$
The proposition is proved.
\end{proof}

%\section{Elimination technique in minimax estimation problems}

\section{
%[%\small
Minimax estimation of the solutions to the boundary value  problems
from point observations
%]{}
}

%\begin{quote}
%{\bf Minimax estimation of the solutions to the boundary value  problems
%from point observations
%}
%\end{quote}

In this chapter we study minimax estimation problems in the case
of point observations and propose constructive methods for
obtaining minimax estimates.

Let $t_i,$ $i=\overline{1,N}$ be a given system of points on an
interval $(0,T).$ Set $t_0=0$ and $t_{N+1}=T.$ The problem is to
estimate the expression
\begin{equation}\label{2p3v}
(a,\varphi(s))_n=\sum_{i=1}^{n}a_i\varphi_i(s),
\end{equation}
from observations of the form
\begin{equation}\label{1p3v}
y_i=\varphi(t_i)+\xi_i,\,\,i=\overline{1,N},
\end{equation}
over the state  $\varphi$ of a system described by BVP
(\ref{dy1}), (\ref{dy2}) in the class of estimates
\begin{equation}\label{3p3v}
\widehat{(a,\varphi(s))_n}=\sum_{i=1}^N (u_i, y_i)_n+c
\end{equation}
linear with respect to observations (\ref{1p3v}); here $s\in
(t_{i_0-1}, t_{i_0}),$ $i_0\in \{1, \ldots, N+1\}.$ The
assumptions are as follows\footnote{Set $G$ is defined by
\eqref{d7}}
%$h\in H^{1/2}(\Gamma)$ подчинена условию \eqref{7}, в котором  $h_0\in %H^{1/2}(\Gamma).$
$F=(f_0,f_1,f(\cdot))\in G,$
$ \xi:=(\xi_{1},\ldots,\xi_{N})\in V,$
%\pageref{page},
where $\xi_i$ are errors in estimations  \eqref{1p3v} that are
realizations of random vectors $\xi_i=\xi_i(\omega)\in \mathbb
R^n$ and $V$ denotes the set of random elements $\tilde
\xi=(\tilde \xi_{1},\ldots,\xi_{N})$ whose components $\xi_i$ has
integrable second moments  $M\tilde \xi_i^2$, zero means
$M\tilde \xi_i=0,$, and correlation matrices
 $\tilde R_i=M\tilde\xi_i\tilde\xi_i^{T}$ satisfying the
 condition
\begin{equation}\label{d6llv}
\sum_{i=1}^NSp\,[\tilde Q_i\tilde R_i]\leq
1,
\end{equation}
where $\tilde Q_i$ are positive definite $n\times n$ matrices,
$\mbox{Sp}\,B$ denotes the trace of the matrix
$B=\{b_{ij}\}_{i,j=1}^l,$ i.e., the quantity $\sum_{i=1}^lb_{ii},$
$u_i\in \mathbb R^n,$ and $c\in \mathbb R.$

Set $ u:=(u_1,\ldots,u_N)\in \mathbb R^{N\times n}. $

{\bf Definition.} {\it The estimate
$$
\widehat{\widehat
{(a,\varphi(s))_n}}=\sum_{i=1}^N(\hat u_i,
y_i)_n+\hat c,
$$
in which vectors $\hat u_i,$ and a number $\hat c$
are determined from the condition
\begin{equation} \label{11fv}
\sup_{\tilde F \in V,\, \tilde \xi
\in G}M|(a,\tilde\varphi(s))_n-\widehat
{(a,\tilde\varphi(s))_n}|^2 \to \inf_{u\in \mathbb R^{N\times n},\,c\in \mathbb R} ,
\end{equation}
where
\begin{equation} \label{llxfv}
\widehat
{(a,\tilde\varphi(s))_n}=\sum_{i=1}^N(u_i,
\tilde y_i)_n +c,
\end{equation}
\begin{equation} \label{4iufv}
\tilde y_{i}=\tilde \varphi(t_i)+ \tilde \eta_{i},\quad i= \overline{1,N},
\end{equation}
and $\tilde\varphi(t)$ is the solution to the BVP (\ref{dy1}), (\ref{dy2}) at  $f=\tilde f,$ $f_0=\tilde f_0,$ and $f_1=\tilde f_1,$
will be called the minimax estimate of expression (\ref{2p3v}).

The quantity
\begin{equation} \label{12dfv}
\sigma:=\{\sup_{\tilde F \in G,\,
\tilde \xi \in V}M|(a,\varphi(s))_n-\widehat{\widehat
{(a,\varphi(s))_n}}|^2\}^{1/2}
\end{equation}
will be called the error of the minimax estimation of $(a,\varphi(s))_n.$}

Let again $t_0=0$, $t_{N+1}=T$, and $s\in (t_{i_0-1},t_{i_0}),$
$i_0=\overline{1,N+1}.$ For any fixed $u:=(u_1,\ldots, u_N)\in
\mathbb R^{N\times n}$ introduce vector-functions $z_1(\cdot;u)\in
H^1(t_0,t_1)^n,$ $\ldots,$ $z_{i_0-1}(\cdot;u)\in
H^1(t_{i_0-1},t_{i_0})^n,$ $z_{i_0}^{(1)}(\cdot;u)\in
H^1(t_{i_0},s)^n,$ $z_{i_0}^{(2)}(\cdot;u)\in H^1(s,t_{i_0})^n,$
$z_{i_0+1}(\cdot;u)\in H^1(t_{i_0},t_{i_0+1})^n,$ $\ldots,$
$z_{N+1}(\cdot;u)\in H^1(t_{i_N},t_{i_N+1})^n,$  as solution to
the following BVP
\begin{gather}
L^* z_1(t;u)=0,\quad 0<t<t_1, \quad \hat B_0z_1(0;u)=0,\notag\\
L^* z_2(t;u)=0,\quad t_1<t<t_2, \quad z_2(t_1;u)=z_1(t_1;u)+u_1,\notag\\
\ldots\ldots\ldots\ldots\ldots\ldots\ldots\ldots\ldots
\ldots\ldots\ldots\ldots\ldots\ldots\ldots\ldots\notag\\
L^* z_{i_0-1}(t;u)=0,\quad t_{i_0-2}<t<t_{i_0-1}, \quad z_{i_0-1}(t_{i_0-2};u)=z_{i_0-2}(t_{i_0-2};u)+u_{i_0-2},\notag\\
L^* z_{i_0}^{(1)}(t;u)=0,\quad t_{i_0-1}<t<s, \quad z_{i_0}^{(1)}(t_{i_0-1};u)=z_{i_0-1}(t_{i_0-1};u)+u_{i_0-1},\notag\\
L^* z_{i_0}^{(2)}(t;u)=0,\quad s<t<t_{i_0}, \quad z_{i_0}^{(2)}(s;u)=z_{i_0}^{(1)}(s;u)-a, \label{sopr1v}\\
L^* z_{i_0+1}(t;u)=0,\quad t_{i_0}<t<t_{i_0+1}, \quad z_{i_0+1}(t_{i_0};u)=z_{i_0}^{(2)}(t_{i_0};u)+u_{i_0},\notag\\
\ldots\ldots\ldots\ldots\ldots\ldots\ldots\ldots\ldots
\ldots\ldots\ldots\ldots\ldots\ldots\ldots\ldots\notag\\
L^* z_N(t;u)=0,\quad t_{N-1}<t<t_{N}, \quad z_N(t_{N-1};u)=z_{N-1}(t_{N-1};u)+u_{N-1},\notag\\
L^* z_{N+1}(t;u)=0,\quad t_{N}<t<t_{N+1}, \quad z_{N+1}(t_{N};u)=z_{N}(t_{N};u)+u_{N},\quad \hat B_1z_{N+1}(T;u)=0.\notag
\end{gather}

Using a reasoning similar to the proof of the unique solvability
of BVP \eqref{d33}, one can show that problem \eqref{sopr1v} is
uniquely solvable.
\begin{predl}
Finding the minimax estimate of functional
$(a,\varphi(s))$ is equivalent to the problem of optimal control of the system described by BVP (\ref{sopr1v})
%(\ref{{sopr1BIBL20}})$-$(\ref{16'BIBL20})
with the cost function
\begin{multline*}
I(u)=\sum_{i=1, i\neq i_0}^{N+1}\int_{t_{i-1}}^{t_{i}}(Q_2^{-1}(t)z_{i}(t;u),z_{i}(t;u))_ndt\\
+\int_{t_{i_0-1}}^{s}(Q_2^{-1}(t)z_{i_0}^{(1)}(t;u),z_{i_0}^{(1)}(t;u))_ndt
+\int_{s}^{t_{i_0}}(Q_2^{-1}(t)z_{i_0}^{(2)}(t;u),z_{i_0}^{(2)}(t;u))_ndt
\end{multline*}
\begin{equation}\label{N4llv}
+(Q_0^{-1}\bar{B}_0 z_1(0;u),\bar{B}_0 z_1(0;u))_m \!+\!
(Q_1^{-1}\bar{B}_1z_4(T;u),\bar{B}_1z_4(T;u))_{n-m}
\!+\!\sum_{i=1}^N(\tilde Q_i^{-1}u_i,u_i)_ndt
\to \!\!\inf_{u\in \mathbb R^{N\times n}}.
\end{equation}
\end{predl}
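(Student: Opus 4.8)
The plan is to mirror the proof of Lemma \ref{lem1}, replacing the continuous observation on $(\alpha,\beta)$ by the finitely many point observations \eqref{1p3v}, and replacing the distributed adjoint forcing $-H^{T}u$ by the jump conditions $+u_i$ imposed on the piecewise adjoint field in \eqref{sopr1v}. Unique solvability of \eqref{sopr1v} has just been recorded (by the argument used for \eqref{d33}), so the control-to-state map $u\mapsto z(\cdot;u)$ is well defined and the cost \eqref{N4llv} makes sense; I will take this for granted.

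First I would compute the estimation error. Writing $\tilde z$ for the function that equals $z_i(\cdot;u)$ on $(t_{i-1},t_i)$ (and $z_{i_0}^{(1)},z_{i_0}^{(2)}$ on the two halves of $(t_{i_0-1},t_{i_0})$), I would apply the Lagrange identity on each subinterval. Since $L^{*}z=0$ there and $\tilde\varphi$ solves $L\tilde\varphi=\tilde f$, one has $\frac{d}{dt}(z,\tilde\varphi)_n=(z,\tilde f)_n$, so integrating over each subinterval and summing telescopes the boundary terms. At each node $t_i$ the jump $z(t_i^{+})-z(t_i^{-})=u_i$ produces the contribution $-(u_i,\tilde\varphi(t_i))_n$, while the jump $-a$ at $s$ produces $+(a,\tilde\varphi(s))_n$; the endpoints leave $(z_1(0),\tilde\varphi(0))_n$ and $-(z_{N+1}(T),\tilde\varphi(T))_n$. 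Exactly as on p.\ \pageref{pt}, the conjugate boundary conditions $\hat B_0z_1(0)=0$, $\hat B_1z_{N+1}(T)=0$ together with $B_0\tilde\varphi(0)=\tilde f_0$, $B_1\tilde\varphi(T)=\tilde f_1$ collapse these to $(\bar B_0z_1(0),\tilde f_0)_m$ and $(\bar B_1z_{N+1}(T),\tilde f_1)_{n-m}$. Subtracting the estimate $\sum_i(u_i,\tilde y_i)_n+c=\sum_i(u_i,\tilde\varphi(t_i))_n+\sum_i(u_i,\tilde\xi_i)_n+c$ then cancels the node terms $\sum_i(u_i,\tilde\varphi(t_i))_n$ and yields the representation
\[
(a,\tilde\varphi(s))_n-\widehat{(a,\tilde\varphi(s))_n}
=(\bar B_0z_1(0),\tilde f_0)_m-(\bar B_1z_{N+1}(T),\tilde f_1)_{n-m}
+\int_0^T(\tilde z,\tilde f)_n\,dt-\sum_{i=1}^N(u_i,\tilde\xi_i)_n-c=:\eta,
\]
the discrete analogue of \eqref{xq}.

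Next I would split the mean square error. Because $M\tilde\xi_i=0$, the node sum is exactly $\eta-M\eta$, so $M\eta^2=(M\eta)^2+M[\sum_i(u_i,\tilde\xi_i)_n]^2$ and the two suprema separate. For the deterministic part I apply the generalized Cauchy--Bunyakovsky inequality with $f_0^{(1)}=\bar B_0z_1(0)$, $f_1^{(1)}=-\bar B_1z_{N+1}(T)$, $f_1=\tilde z$ and the shifted data $\tilde f_0-f_0^{(0)}$, $\tilde f_1-f_1^{(0)}$, $\tilde f-f^{(0)}$; choosing $c$ to absorb the $f^{(0)}$-terms (as in the passage leading to \eqref{xq0}) gives
\[
\inf_{c}\sup_{\tilde F\in G}(M\eta)^2
=(Q_0^{-1}\bar B_0z_1(0),\bar B_0z_1(0))_m
+(Q_1^{-1}\bar B_1z_{N+1}(T),\bar B_1z_{N+1}(T))_{n-m}
+\int_0^T(Q_2^{-1}\tilde z,\tilde z)_n\,dt.
\]
For the random part I use the discrete Cauchy--Schwarz bound $[\sum_i(u_i,\tilde\xi_i)_n]^2\le(\sum_i(\tilde Q_i^{-1}u_i,u_i)_n)(\sum_i(\tilde Q_i\tilde\xi_i,\tilde\xi_i)_n)$; taking expectations, noting $M(\tilde Q_i\tilde\xi_i,\tilde\xi_i)_n=\mathrm{Sp}[\tilde Q_i\tilde R_i]$, and invoking the constraint \eqref{d6llv} give $\sup_{\tilde\xi\in V}M[\sum_i(u_i,\tilde\xi_i)_n]^2=\sum_i(\tilde Q_i^{-1}u_i,u_i)_n$, with equality realized by $\tilde\xi_i$ proportional to $\tilde Q_i^{-1}u_i$ times a normalized scalar random factor. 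Adding the two contributions reproduces $I(u)$ in \eqref{N4llv}, so minimizing $\sigma(u,c)$ over $(u,c)$ is the same as minimizing $I(u)$ over $u\in\mathbb R^{N\times n}$, which is the asserted optimal control problem.

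I expect the only delicate step to be the telescoping bookkeeping in the error representation: one must track the signs of the jump contributions at every node $t_i$ and at $s$, and verify that the $\sum_i(u_i,\tilde\varphi(t_i))_n$ produced by the adjoint jumps cancels precisely against the corresponding term coming from the observations $\tilde y_i=\tilde\varphi(t_i)+\tilde\xi_i$. Everything else is a direct transcription of the distributed-observation argument of Lemma \ref{lem1}.
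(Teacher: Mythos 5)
Your proposal is correct and follows essentially the same route as the paper's own proof: the Lagrange identity $\frac{d}{dt}(z,\tilde\varphi)_n=(z,\tilde f)_n$ on each subinterval, telescoping with the jump conditions of (\ref{sopr1v}) to get the error representation (the paper's (\ref{sopr12v1})), then separating the deterministic and stochastic suprema and evaluating each by the generalized Cauchy--Bunyakovsky inequality exactly as in Lemma \ref{lem1}. Your sign bookkeeping at the nodes and at $s$, and the cancellation of $\sum_i(u_i,\tilde\varphi(t_i))_n$ against the observation terms, match the paper's computation.
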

\begin{proof}
It is easy to see that the following equalities hold
\begin{equation}\label{sopr10v}
(z_{i}(t_{i-1};u),\tilde\varphi(t_{i-1}))_n-(z_{i}(t_{i};u),\tilde\varphi(t_{i}))_n
+\int_{t_{i-1}}^{t_{i}}(z_{i}(t;u),\tilde f(t))_ndt=0,
\end{equation}
$$
i=1,\ldots,i_0-1,i_0+1,\ldots,N+1,
$$
\begin{equation}\label{sopr11v}
(z_{i_0}^{(1)}(t_{i_0-1};u),\tilde\varphi(t_{i_0-1}))_n-
(z_{i_0}^{(1)}(s;u),\tilde\varphi(s))_n
+\int_{t_{i_0-1}}^{s}(z_{i_0}^{(1)}(t;u),\tilde f(t))_ndt=0,
\end{equation}
\begin{equation}\label{sopr12v}
(z_{i_0}^{(2)}(s;u),\tilde\varphi(s))_n-
(z_{i_0}^{(2)}(t_{i_0};u),\tilde\varphi(t_{i_0}))_n
+\int_{s}^{t_{i_0}}(z_{i_0}^{(2)}(t;u),\tilde f(t))_ndt=0.
\end{equation}
Using \eqref{llxfv}, \eqref{4iufv}, and \eqref{sopr1v}, we find
$$
(a,\tilde\varphi(s))_n-\widehat
{(a,\tilde\varphi(s))_n}=(a,\tilde\varphi(s))_n-\sum_{i=1}^N(u_i,
\tilde y_i)_n -c
$$
$$
=(z_{i_0}^{(1)}(s;u),\tilde\varphi(s))_n-(z_{i_0}^{(2)}(s;u),\tilde\varphi(s))_n
-\sum_{i=1,i\neq i_0-1,i_0}^{N}\left(z_{i+1}(t_i;u)-z_{i}(t_i;u),\tilde\varphi(t_i)\right)_n
$$
\begin{equation}\label{0hkv}
-\left(z_{i_0+1}(t_{i_0};u)-z_{i_0}^{(2)}(t_{i_0};u),\tilde\varphi(t_{i_0})\right)_n -\left(z_{i_0}^{(1)}(t_{i_0-1};u)
-z_{i_0-1}(t_{i_0-1};u),\tilde\varphi(t_{i_0-1})\right)_n
-\sum_{i=1}^N(u_i,\xi_i)_n-c,
\end{equation}
where the fourth or the third term on the right-hand side of
\eqref{0hkv} should be taken equal to $0$ at, respectively,
$i_0=1$ or $i_0=N+1$.

Taking into account the latter, relationships
\eqref{sopr10v}--\eqref{sopr12v}, and the equalities (see page
\pageref{pt})
$$
\left(z_1(0;u),\tilde \varphi(0)\right)_n=\left(\bar{B}_0z_1(0;u),\tilde f_0\right)_m,
$$
$$
\left(z_{N+1}(T;u),\tilde \varphi(T)\right)_n
=\left(\bar{B}_1z_{N+1}(T;u),\tilde f_1\right)_{n-m},
$$
we obtain
$$
(a,\tilde\varphi(s))_n-\widehat
{(a,\tilde\varphi(s))_n}=
\left(z_1(0;u),\tilde \varphi(0)\right)_n-\left(z_{N+1}(T;u),\tilde \varphi(T)\right)_n
$$
$$
+ \int^T_0(\tilde
z(t;u),\tilde f(t))_ndt-\sum_{i=1}^{N}(u_i,\xi_i)_n-c
$$
\begin{equation}\label{sopr12v1}
=\left(\bar{B}_0z_1(0;u),\tilde f_0\right)_m+ \int^T_0(\tilde
z(t;u),\tilde f(t))_ndt
- \left(\bar{B}_1z_{N+1}(T;u),\tilde f_1\right)_{n-m}
-\sum_{i=1}^{N}(u_i,\xi_i)_n-c,
\end{equation}
where
$$
\tilde z(t;u)=\left\{
\begin{array}{lc}
z_1(t;u),&t_0=0<t<t_1;\\ \cdots\cdots\cdots&\cdots\cdots\cdots\cdots\\
z_{i_0-1}(t;u),&t_{i_0-2}<t<t_{i_0-1};\\z_{i_0}^{(1)}(t;u),&t_{i_0-1}<t<s;\\ z_{i_0}^{(2)}(t;u),&s<t<t_{i_0};\\z_{i_0+1}(t;u),&t_{i_0}<t<t_{i_0+1};\\
\cdots\cdots\cdots&\cdots\cdots\cdots\cdots\\
z_{N+1}(t;u),&t_{N}<t<t_{N+1}=T.
\end{array}
\right.
$$
Thus,
$$
\inf_{c \in \mathbb R^1} \sup_{\tilde F \in G,\, \tilde \xi \in V}
M[(a,\tilde \varphi(s))_n-(\widehat{a,\tilde \varphi(s)})_n]^2=
$$
$$
=\inf_{c \in \mathbb R^1}\sup_{\tilde F \in
G}\left[(\bar{B}_0z_1(0;u),\tilde f_0)_m+ \int^T_0(\tilde
z(t;u),\tilde f(t))_ndt- (\bar{B}_1z_{N+1}(T;u),\tilde f_1)_{n-m}-c \right]^2
$$
\begin{equation}\label{xq1llv}
+\sup_{\tilde \xi \in V} M\left[\sum_{i=1}^{N}(u_i,\xi_i)_n\right]^2.
\end{equation}
%%ЅЅЅ
Calculating the supremum on the right-hand side of (\ref{xq1llv})
and taking into consideration \eqref{d7} and \eqref{d6llv}, we
find
$$
\inf_{c \in \mathbb R^1} \sup_{\tilde F \in G,\, \tilde \xi \in V}
M[(a,\tilde \varphi(s))_n-(\widehat{a,\tilde \varphi(s)})_n]^2=I(u),
$$
where $I(u)$ is given by \eqref{N4llv}.
 \end{proof}

 Starting from this lemma and applying the reasoning that led from
 Lemma \ref{lem1} to Theorems  \ref{tm1} and \ref{tm2}, we obtain
 the following results.
\begin{pred}
The minimax estimate of expression $(a,\varphi(s))$ has the form
$$
\widehat{\widehat
{(a,\varphi(s))_n}}=\sum_{i=1}^N(\hat u_i,
y_i)_n+\hat c,
$$
where
\begin{equation}\label{ddd7llv}
\hat{u}_i=\tilde Q_ip_i(t_i),\quad i=1,\ldots,i_0-2,i_0+1,\ldots,N,
\quad \hat{u}_{i_0-1}=\tilde Q_{i_0-1}p_{i_0}^{(1)}(t_{i_0-1}), \quad \hat{u}_{i_0}=\tilde Q_{i_0}p_{i_0}^{(2)}(t_{i_0}),
\end{equation}
$$
\hat c=(\bar{B}_0z_1(0),f_0^{(0)})_m - (\bar{B}_1z_{N+1}(T),f_1^{(0)})_{n-m}+
\int_0^T (\tilde{z}(t),f^{(0)}(t))_n\,dt,
$$
$$
\tilde z(t)=\left\{
\begin{array}{lc}
z_1(t),&t_0=0<t<t_1;\\ \cdots\cdots\cdots&\cdots\cdots\cdots\cdots\\
z_{i_0-1}(t),&t_{i_0-2}<t<t_{i_0-1};\\z_{i_0}^{(1)}(t),&t_{i_0-1}<t<s;\\ z_{i_0}^{(2)}(t),&s<t<t_{i_0};\\z_{i_0+1}(t),&t_{i_0}<t<t_{i_0+1};\\
\cdots\cdots\cdots&\cdots\cdots\cdots\cdots\\
z_{N+1}(t),&t_{N}<t<t_{N+1}=T,
\end{array}
\right.
$$
and vector-functions $p_i(t),$ $z_i(t)$, $i=\overline{1,N+1},
i\neq i_0,$ $z_{i_0}^{(1)}(t),$ $p_{i_0}^{(1)}(t),$
$z_{i_0}^{(2)}(t),$ and $p_{i_0}^{(2)}(t),$ are determined from
the solution to the equation systems
\begin{gather}
L^* z_1(t)=0,\quad 0<t<t_1, \quad \hat B_0z_1(0;u)=0,\notag\\
L^* z_2(t)=0,\quad t_1<t<t_2, \quad z_2(t_1)=z_1(t_1)+\tilde Q_1p_1(t_1),\notag\\
\ldots\ldots\ldots\ldots\ldots\ldots\ldots\ldots\ldots
\ldots\ldots\ldots\ldots\ldots\ldots\ldots\ldots\notag\\
L^* z_{i_0-1}(t)=0,\quad t_{i_0-2}<t<t_{i_0-1}, \quad z_{i_0-1}(t_{i_0-2})=z_{i_0-2}(t_{i_0-2})+\tilde Q_{i_0-2}p_{i_0-2}(t_{i_0-2}),\notag\\
L^* z_{i_0}^{(1)}(t)=0,\quad t_{i_0-1}<t<s, \quad z_{i_0}^{(1)}(t_{i_0-1})=z_{i_0-1}(t_{i_0-1})+\tilde Q_{i_0-1}p_{i_0}^{(1)}(t_{i_0-1}),\notag\\
L^* z_{i_0}^{(2)}(t)=0,\quad s<t<t_{i_0}, \quad z_{i_0}^{(2)}(s)=z_{i_0}^{(1)}(s)-a, \notag\\
L^* z_{i_0+1}(t)=0,\quad t_{i_0}<t<t_{i_0+1}, \quad z_{i_0+1}(t_{i_0})=z_{i_0}^{(2)}(t_{i_0})+\tilde Q_{i_0}p_{i_0}^{(2)}(t_{i_0}),\notag\\
\ldots\ldots\ldots\ldots\ldots\ldots\ldots\ldots\ldots
\ldots\ldots\ldots\ldots\ldots\ldots\ldots\ldots\notag\\
L^* z_N(t)=0,\quad t_{N-1}<t<t_{N}, \quad z_N(t_{N-1})=z_{N-1}(t_{N-1})+\tilde Q_{N-1}p_{N-1}(t_{N-1}),\notag\\
L^* z_{N+1}(t)=0,\quad t_{N}<t<t_{N+1}, \quad z_{N+1}(t_{N})=z_{N}(t_{N})+\tilde Q_Np_N(t_N),\quad \hat B_1z_{N+1}(T;u)=0.\notag\\
Lp_1(t)=Q_2^{-1}(t)z_1(t),\quad  0<t<t_1, \quad
B_0p_1(0)=Q_0^{-1}\bar B_0z_1(0),\notag\\
Lp_2(t)=Q_2^{-1}(t)z_2(t),\quad  t_1<t<t_2, \quad  p_2(t_1)=
p_1(t_1),\notag\\
\ldots\ldots\ldots\ldots\ldots\ldots\ldots\ldots\ldots
\ldots\ldots\ldots\ldots\ldots\ldots\ldots\ldots\notag\\
Lp_{i_0-1}(t)=Q_2^{-1}(t)z_{i_0-1}(t),\quad t_{i_0-2}<t<t_{i_0-1}, \quad p_{i_0-1}(t_{i_0-2})=p_{i_0-2}(t_{i_0-2}),\notag\\
Lp_{i_0}^{(1)}(t)=Q_2^{-1}(t)z_{i_0}^{(1)}(t),\quad t_{i_0-1}<t<s, \quad p_{i_0}^{(1)}(t_{i_0-1})=p_{i_0-1}(t_{i_0-1}),\notag\\
Lp_{i_0}^{(2)}(t)=Q_2^{-1}(t)z_{i_0}^{(2)}(t),\quad s<t<t_{i_0}, \quad p_{i_0}^{(2)}(s)=p_{i_0}^{(1)}(s),\notag\\
Lp_{i_0+1}(t)=Q_2^{-1}(t)z_{i_0+1}(t),\quad t_{i_0}<t<t_{i_0+1}, \quad p_{i_0+1}(t_{i_0})=p_{i_0}^{(2)}(t_{i_0}),\notag\\
\ldots\ldots\ldots\ldots\ldots\ldots\ldots\ldots\ldots
\ldots\ldots\ldots\ldots\ldots\ldots\ldots\ldots\notag\\
Lp_N(t)=Q_2^{-1}(t)z_N,\quad t_{N-1}<t<t_{N}, \quad p_N(t_{N-1})=p_{N-1}(t_{N-1}),\notag\\
Lp_{N+1}(t)=Q_2^{-1}(t)z_{N+1}(t),\,\, t_N<t<T,\,\,
p_{N+1}(t_N)=p_{N}(t_N), \,\, B_1p_{N+1}(T)=-Q_1^{-1}\bar B_1z_{N+1}(T).\label{syst5.1v}
\end{gather}
Here $z_i,p_i\in H^1(t_{i-1},t_{i})^n,$
%$
%i=1,\ldots,i_0-1,i_0+1,\ldots,N+1,
%$
$ i=\overline{1, N+1},i\neq i_0,$ $z_{i_0}^{(1)},p_{i_0}^{(1)}\in
H^1(t_{i_0-1},s)^n,$ and $z_{i_0}^{(2)},p_{i_0}^{(2)}\in
H^1(s,t_{i_0})^n.$ The minimax estimation error
% $\sigma$ определяется по формуле
\begin{equation} \label{ddd8v}
\sigma=(a,p_{i_0}^{(2)}(s))_n^{1/2}.
\end{equation}
System (\ref{syst5.1v}) is uniquely solvable.
\end{pred}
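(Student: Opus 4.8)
The plan is to carry over, almost verbatim, the argument that established Theorem~\ref{tm1}, the only genuinely new feature being that here the controls $u_i$ enter the state equations (\ref{sopr1v}) through the jump conditions at the observation points $t_i$ rather than as a distributed right-hand side. By the preceding Lemma the determination of the minimax estimate is equivalent to minimizing the cost functional $I(u)$ of (\ref{N4llv}) over $u\in\mathbb R^{N\times n}$, so everything reduces to solving this optimal control problem. First I would split each state as $z_i(t;u)=\bar z_i(t;u)+\Bar{\Bar z}_i(t)$, where $\bar z_i(\cdot;u)$ solves (\ref{sopr1v}) with $a=0$ and $\Bar{\Bar z}_i$ solves it with $u=0$, exactly as on p.~\pageref{BB}. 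This writes $I(u)=\tilde I(u)+2L(u)+A$ with $\tilde I$ a continuous quadratic form, $L$ a continuous linear functional, and $A$ a constant. Since $\tilde I(u)\ge\sum_{i=1}^N(\tilde Q_i^{-1}u_i,u_i)_n\ge c\|u\|^2$ with $c>0$ and the problem is finite-dimensional (strictly convex, coercive), there is a unique minimizer $\hat u=(\hat u_1,\dots,\hat u_N)$; one may equally invoke Theorem~1.1 of \cite{BIBL20} as in Theorem~\ref{tm1}. Consequently the stationarity condition $\tfrac{d}{d\tau}I(\hat u+\tau v)|_{\tau=0}=0$ holds for every $v\in\mathbb R^{N\times n}$.

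The core of the proof is transforming this stationarity condition. I would introduce the adjoint functions $p_i,\,p_{i_0}^{(1)},\,p_{i_0}^{(2)}$ as the solutions of the $L$-subsystem of (\ref{syst5.1v}), whose right-hand sides are $Q_2^{-1}(t)z_i(\cdot;\hat u)$, which are continuous across the interior nodes, and which satisfy $B_0p_1(0)=Q_0^{-1}\bar B_0z_1(0)$ and $B_1p_{N+1}(T)=-Q_1^{-1}\bar B_1z_{N+1}(T)$. Because each $\bar z_i(\cdot;v)$ satisfies $L^*\bar z_i(\cdot;v)=0$ on its subinterval (the control acts only through jumps), each term $\int(Q_2^{-1}z_i,\bar z_i(\cdot;v))_n=\int(Lp_i,\bar z_i(\cdot;v))_n$ is converted by the Green identity (\ref{xa5}) into pure boundary contributions $(p_i(t_i),\bar z_i(t_i;v))_n$ at the nodes. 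The contributions at $0$ and $T$ are absorbed by the $Q_0^{-1},Q_1^{-1}$ quadratic parts through the relations $(\bar z_1(0;v),p_1(0))_n=(\bar B_0\bar z_1(0;v),B_0p_1(0))_m$ and its counterpart at $T$ (the reasoning on p.~\pageref{pt}, using $\hat B_0\bar z_1(0;v)=0$). Telescoping over the intervals and using the jump relations $\bar z_{i+1}(t_i;v)-\bar z_i(t_i;v)=v_i$ of (\ref{sopr1v}) together with continuity of $p$, the interior terms collapse to $-\sum_i(v_i,p_i(t_i))_n$, while the node $s$ contributes nothing since $\bar z$ carries no $a$-jump. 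The stationarity condition thus reads $\sum_i(\tilde Q_i^{-1}\hat u_i-p_i(t_i),v_i)_n=0$ for all $v$, giving $\hat u_i=\tilde Q_ip_i(t_i)$, i.e. (\ref{ddd7llv}). Replacing $u$ by $\hat u$ in (\ref{sopr1v}) and writing $z_i(t):=z_i(t;\hat u)$ produces precisely (\ref{syst5.1v}); the constant $\hat c$ is fixed, exactly as in Theorem~\ref{tm1}, so as to cancel the deterministic mean and minimize the worst-case square error.

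For the error (\ref{ddd8v}) I would substitute $\hat u_i=\tilde Q_ip_i(t_i)$ into $\sigma^2=I(\hat u)$ and repeat the Green-identity bookkeeping, this time pairing $p_i$ with $z_i$ itself: each $\int(Q_2^{-1}z_i,z_i)_n=\int(Lp_i,z_i)_n$ telescopes to $(p_{N+1}(T),z_{N+1}(T))_n-(p_1(0),z_1(0))_n$ plus node terms. At each observation node the jump $z_{i+1}(t_i)=z_i(t_i)+\tilde Q_ip_i(t_i)$ produces $-(\tilde Q_i^{-1}\hat u_i,\hat u_i)_n$, exactly cancelling the control-penalty terms of $I(\hat u)$; the $0$ and $T$ contributions cancel the $Q_0^{-1},Q_1^{-1}$ quadratic terms via the same node conditions of (\ref{syst5.1v}). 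The only node that does not cancel is $s$, where the jump $z_{i_0}^{(2)}(s)=z_{i_0}^{(1)}(s)-a$ and the continuity $p_{i_0}^{(2)}(s)=p_{i_0}^{(1)}(s)$ leave the single term $(a,p_{i_0}^{(2)}(s))_n$. Collecting everything gives $\sigma^2=(a,p_{i_0}^{(2)}(s))_n$. Finally, unique solvability of (\ref{syst5.1v}) is not argued directly but follows, as in Theorem~\ref{tm1}, from the fact that (\ref{N4llv}) possesses exactly one minimizer $\hat u$, to which the system (\ref{syst5.1v}) is equivalent.

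The main obstacle I anticipate is the careful node-by-node bookkeeping of the boundary terms produced by the two integrations by parts. Since the controls act through the jumps at the $t_i$ and the vector $a$ enters only through the extra jump at $s$, one must track every sign and the correct pairing of $\bar z_i(t_i;v)$ (or $z_i(t_i)$) with $p_i(t_i)$ across all $N+2$ interfaces, and verify that in the stationarity computation everything cancels except the $v_i$-pairings, while in the error computation everything cancels except the single $a$-term. This is routine but delicate, and it is precisely where the matching and boundary conditions built into (\ref{syst5.1v}) are indispensable.
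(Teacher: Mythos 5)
Your proposal is correct and follows essentially the same route as the paper: the paper's own justification of this theorem is precisely the remark that one should repeat, starting from the Lemma of this section, the argument that led from Lemma \ref{lem1} to Theorems \ref{tm1} and \ref{tm2}, i.e.\ the decomposition $z_i(t;u)=\bar z_i(t;u)+\Bar{\Bar z}_i(t)$, the coercive quadratic form and unique minimizer, the stationarity condition transformed via the adjoint functions $p_i$ and the Green identity with the boundary-term reduction of p.~\pageref{pt}, and the back-substitution $\sigma^2=I(\hat u)$. Your node-by-node bookkeeping (jumps at $t_i$ yielding $\hat u_i=\tilde Q_ip_i(t_i)$, cancellation of the $Q_0^{-1}$, $Q_1^{-1}$ and control-penalty terms, and the lone surviving $a$-jump term at $s$ giving $\sigma^2=(a,p_{i_0}^{(2)}(s))_n$) is exactly the adaptation the paper intends.
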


\begin{pred}
The following representation is valid
$$
\widehat{\widehat{(a,\varphi(s))_n}}=(a,\hat{\varphi}_{i_0}^{(2)}(s))_n,
$$
where vector-functions $\hat\varphi_i(t),$ $i=\overline{1,N+1}, i\neq i_0,$ $\hat\varphi_{i_0}^{(1)}(t),$
$\hat\varphi_{i_0}^{(2)}(t)$
are determined from the solution to the equation systems
\begin{gather*}
L^* \hat p_1(t)=0,\quad 0<t<t_1, \quad \hat B_0\hat p_1(0;u)=0,\notag\\
L^* \hat p_2(t)=0,\quad t_1<t<t_2, \quad \hat p_2(t_1)=\hat p_1(t_1)+\tilde Q_1(\hat\varphi_1(t_1)-y_1),\notag\\
\ldots\ldots\ldots\ldots\ldots\ldots\ldots\ldots\ldots
\ldots\ldots\ldots\ldots\ldots\ldots\ldots\ldots\notag\\
L^* \hat p_{i_0-1}(t)=0,\quad t_{i_0-2}<t<t_{i_0-1}, \quad \hat p_{i_0-1}(t_{i_0-2})=\hat p_{i_0-2}(t_{i_0-2})+\tilde Q_{i_0-2}(\hat\varphi_{i_0-2}(t_{i_0-2})-y_{i_0-2}),\notag\\
L^* \hat p_{i_0}^{(1)}(t)=0,\quad t_{i_0-1}<t<s, \quad \hat p_{i_0}^{(1)}(t_{i_0-1})=\hat p_{i_0-1}(t_{i_0-1})+\tilde Q_{i_0-1}(\hat\varphi_{i_0}^{(1)}(t_{i_0-1})-y_{i_0-1}),\notag\\
L^* \hat p_{i_0}^{(2)}(t)=0,\quad s<t<t_{i_0}, \quad \hat p_{i_0}^{(2)}(s)=\hat p_{i_0}^{(1)}(s), \notag\\
L^* \hat p_{i_0+1}(t)=0,\quad t_{i_0}<t<t_{i_0+1}, \quad \hat p_{i_0+1}(t_{i_0})=\hat p_{i_0}^{(2)}(t_{i_0})+\tilde Q_{i_0}(\hat\varphi_{i_0}^{(2)}(t_{i_0})-y_{i_0}),\notag\\
\ldots\ldots\ldots\ldots\ldots\ldots\ldots\ldots\ldots
\ldots\ldots\ldots\ldots\ldots\ldots\ldots\ldots\notag\\
L^* \hat p_N(t)=0,\quad t_{N-1}<t<t_{N}, \quad \hat p_N(t_{N-1})=\hat p_{N-1}(t_{N-1})+\tilde Q_{N-1}(\hat\varphi_{N-1}(t_{N-1})-y_{N-1}),\notag\\
L^* \hat p_{N+1}(t)=0,\quad t_{N}<t<t_{N+1}, \quad \hat p_{N+1}(t_{N})=\hat p_{N}(t_{N})+\tilde Q_N(\hat\varphi_N(t_N)-y_N),\quad \hat B_1\hat p_{N+1}(T;u)=0.\notag\\
L\hat\varphi_1(t)=Q_2^{-1}(t)\hat p_1(t)+f^{(0)}(t),\quad  0<t<t_1, \quad
B_0\hat\varphi_1(0)=Q_0^{-1}\bar B_0\hat p_1(0)+f_0^{(0)},\notag\\
L\hat\varphi_2(t)=Q_2^{-1}(t)\hat p_2(t)+f^{(0)}(t),\quad  t_1<t<t_2, \quad  \hat\varphi_2(t_1)=
\hat\varphi_1(t_1),\notag\\
\ldots\ldots\ldots\ldots\ldots\ldots\ldots\ldots\ldots
\ldots\ldots\ldots\ldots\ldots\ldots\ldots\ldots\notag\\
L\hat\varphi_{i_0-1}(t)=Q_2^{-1}(t)\hat p_{i_0-1}(t)+f^{(0)}(t),\quad t_{i_0-2}<t<t_{i_0-1}, \quad \hat\varphi_{i_0-1}(t_{i_0-2})=\hat\varphi_{i_0-2}(t_{i_0-2}),\notag\\
L\hat\varphi_{i_0}^{(1)}(t)=Q_2^{-1}(t)\hat p_{i_0}^{(1)}(t)+f^{(0)}(t),\quad t_{i_0-1}<t<s, \quad \hat\varphi_{i_0}^{(1)}(t_{i_0-1})=\hat\varphi_{i_0-1}(t_{i_0-1}),\notag\\
L\hat\varphi_{i_0}^{(2)}(t)=Q_2^{-1}(t)\hat p_{i_0}^{(2)}(t)+f^{(0)}(t),\quad s<t<t_{i_0}, \quad \hat\varphi_{i_0}^{(2)}(s)=\hat\varphi_{i_0}^{(1)}(s),\notag\\
L\hat\varphi_{i_0+1}(t)=Q_2^{-1}(t)\hat p_{i_0+1}(t)+f^{(0)}(t),\quad t_{i_0}<t<t_{i_0+1}, \quad \hat\varphi_{i_0+1}(t_{i_0})=\hat\varphi_{i_0}^{(2)}(t_{i_0}),\notag\\
\ldots\ldots\ldots\ldots\ldots\ldots\ldots\ldots\ldots
\ldots\ldots\ldots\ldots\ldots\ldots\ldots\ldots\notag\\
L\hat\varphi_N(t)=Q_2^{-1}(t)\hat p_N(t)+f^{(0)}(t),\quad t_{N-1}<t<t_{N}, \quad \hat\varphi_N(t_{N-1})=\hat\varphi_{N-1}(t_{N-1}),\notag\\
\end{gather*}\\[-59pt]
\begin{multline}
L\hat\varphi_{N+1}(t)=Q_2^{-1}(t)\hat p_{N+1}(t)+f^{(0)}(t),\quad t_N<t<T,\quad
\hat\varphi_{N+1}(t_N)=\hat\varphi_{N}(t_N), \\ B_1\hat\varphi_{N+1}(T)=-Q_1^{-1}\bar B_1\hat p_{N+1}(T)-f_1^{(0)}.\label{syst5.1llv}
\end{multline}
Here $\hat p_i=\hat p_i(\cdot;\omega),\hat\varphi_i=\hat\varphi_i(\cdot;\omega)\in H^1(t_{i-1},t_{i})^n,$
%$
%i=1,\ldots,i_0-1,i_0+1,\ldots,N+1,
%$
$
i=\overline{1, N+1},i\neq i_0,$
$\hat p_{i_0}^{(1)}=\hat p_{i_0}^{(1)}(\cdot;\omega),\hat\varphi_{i_0}^{(1)}
=\hat\varphi_{i_0}^{(1)}(\cdot;\omega)\in H^1(t_{i_0-1},s)^n,$ $\hat p_{i_0}^{(2)}=p_{i_0}^{(2)}(\cdot;\omega),\hat\varphi_{i_0}^{(2)}
=\hat\varphi_{i_0}^{(2)}(\cdot;\omega)\in H^1(s,t_{i_0})^n,$
and equalities (\ref{syst5.1llv}) are fulfilled with probability $1.$
System (\ref{syst5.1llv}) is uniquely solvable.
\end{pred}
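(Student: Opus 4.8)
The plan is to follow the proof of Theorem~\ref{tm2} verbatim in structure, the only change being that the two observation intervals $(\alpha,s),(s,\beta)$ are replaced by the finite chain of subintervals $(t_{i-1},t_i)$, and that the observations now enter through the jump conditions at the nodes $t_i$ rather than through source terms in the adjoint equations. First I would record the starting point supplied by the preceding theorem, namely that the minimax estimate is
\[
\widehat{\widehat{(a,\varphi(s))_n}}=\sum_{i=1}^N(\hat u_i,y_i)_n+\hat c,
\]
with $\hat u_i=\tilde Q_ip_i(t_i)$ (and the obvious substitutions $p_{i_0}^{(1)},p_{i_0}^{(2)}$ near $s$) given by \eqref{ddd7llv}, and with $\hat c$ as displayed there. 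Since $\tilde Q_i=\tilde Q_i^T$, each summand rewrites as $(\hat u_i,y_i)_n=(p_i(t_i),\tilde Q_iy_i)_n$, which is exactly what couples the observation data to the jump relations of system~\eqref{syst5.1llv}.

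The computational core is a double application of the Lagrange--Green identity~\eqref{xa5} on every subinterval. On each piece I would pair $p$ with $\hat p$ and $z$ with $\hat\varphi$, using the defining equations $Lp=Q_2^{-1}z$, $L^*\hat p=0$, $L^*z=0$, $L\hat\varphi=Q_2^{-1}\hat p+f^{(0)}$ (all homogeneous in the adjoint variables here, unlike in Theorem~\ref{tm2}). Summing \eqref{xa5} over the subintervals, the symmetric cross terms $\int(Q_2^{-1}z,\hat p)=\int(Q_2^{-1}\hat p,z)$ cancel between the two identities, while the telescoped boundary values at an interior node $t_i$ collapse, via the $z$-jump $z_{i+1}(t_i)-z_i(t_i)=\hat u_i$ and the $\hat p$-jump $\hat p_{i+1}(t_i)-\hat p_i(t_i)=\tilde Q_i(\hat\varphi_i(t_i)-y_i)$ (with $p$ and $\hat\varphi$ continuous there), into the contributions $(\hat u_i,y_i)_n-(\hat u_i,\hat\varphi_i(t_i))_n$ and $-(\hat u_i,\hat\varphi_i(t_i))_n$ respectively. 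Subtracting the two summed identities therefore isolates $\sum_{i=1}^N(\hat u_i,y_i)_n$ in terms of $(a,\hat\varphi_{i_0}^{(2)}(s))_n$, the integral $\int_0^T(\tilde z,f^{(0)})_n\,dt$, and the leftover endpoint terms at $0$ and $T$.

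The term $(a,\hat\varphi_{i_0}^{(2)}(s))_n$ is produced precisely at the node $s$: there $z$ carries the jump $z_{i_0}^{(1)}(s)-z_{i_0}^{(2)}(s)=a$ while $\hat\varphi$ is continuous, so its telescoped contribution is $(a,\hat\varphi_{i_0}^{(2)}(s))_n$. It then remains to dispose of the endpoint contributions. Here I would invoke the bilinear boundary identity~\eqref{xa6} together with the end conditions $\hat B_0z_1(0)=0$, $B_0p_1(0)=Q_0^{-1}\bar B_0z_1(0)$, $\hat B_0\hat p_1(0)=0$, $B_0\hat\varphi_1(0)=Q_0^{-1}\bar B_0\hat p_1(0)+f_0^{(0)}$, and their mirror images at $T$; using the symmetry of $Q_0^{-1}$ and $Q_1^{-1}$ the quadratic pieces cancel, and what survives combines with the $\int_0^T(\tilde z,f^{(0)})_n\,dt$ term and with $\hat c$ to vanish, leaving exactly $\widehat{\widehat{(a,\varphi(s))_n}}=(a,\hat\varphi_{i_0}^{(2)}(s))_n$. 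Unique solvability of \eqref{syst5.1llv} I would obtain as in Theorem~\ref{tm2}: reduce it to the minimization over $H$ of an auxiliary strictly convex, coercive, lower semicontinuous functional (the analogue of the functional $J(v)$ used for \eqref{tk}), whose unique minimizer delivers, through its Euler equation, the unique solution of \eqref{syst5.1llv}.

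The main obstacle is purely the bookkeeping: keeping straight the indexing across the $N+2$ subintervals, the exceptional nodes $t_{i_0-1}$, $s$, $t_{i_0}$ flanking the evaluation point, and---most delicately---the signs in the endpoint cancellation against $\hat c$, where one must apply \eqref{xa6} in the correct argument order and exploit the symmetry of $Q_0^{-1}$ and $Q_1^{-1}$ so that the $f_0^{(0)}$- and $f_1^{(0)}$-terms cancel exactly rather than reinforce. Everything else is a direct transcription of the Lagrange-identity manipulations already carried out in the proofs of Theorems~\ref{tm1} and~\ref{tm2}.
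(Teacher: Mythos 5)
Your proposal is correct and follows essentially the same route as the paper, which merely indicates the proof by appeal to the reasoning of Theorems \ref{tm1} and \ref{tm2}: your paired Green-identity/telescoping argument, with the observation data entering through the jumps at $t_1,\dots,t_N$ and the vector $a$ through the jump at $s$, is exactly that reasoning transcribed to the point-observation setting, and your treatment of unique solvability via an auxiliary coercive quadratic functional matches the paper's treatment of (\ref{tk}). The sign delicacy you flag at the endpoints is real and worth recording: computing $W(T)=(\hat\varphi_{N+1}(T),z_{N+1}(T))_n-(p_{N+1}(T),\hat p_{N+1}(T))_n$ with $\hat B_1z_{N+1}(T)=\hat B_1\hat p_{N+1}(T)=0$, $B_1p_{N+1}(T)=-Q_1^{-1}\bar B_1z_{N+1}(T)$ and the symmetry of $Q_1^{-1}$ shows that the $f_1^{(0)}$-terms cancel against $\hat c$ only if the terminal condition in (\ref{syst5.1llv}) is read as $B_1\hat\varphi_{N+1}(T)=-Q_1^{-1}\bar B_1\hat p_{N+1}(T)+f_1^{(0)}$ (with the printed minus sign a residual $-2(\bar B_1z_{N+1}(T),f_1^{(0)})_{n-m}$ survives), and this plus sign is the one consistent with the paper's own equality (\ref{j24}) in the proof of Theorem \ref{tm2}, so the printed sign is evidently a typo rather than a defect of your argument.
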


%\begin{center}
\section{
%[\bf
Minimax estimation of functionals of solutions to
boundary value problems for linear differential equations of order
$n$}
% и от неизвестных детерминированных данных этих задач
%]{}
%\end{center}

%\begin{quote}
%{\bf\large Минимаксное оценивание функционалов от решений краевых задач для линейных %дифференциальных уравнений $n$-го порядка общего вида

% и от неизвестных детерминированных данных этих задач

%}
%\end{quote}

In this chapter we propose a method for minimax estimation of
parameters of general two-point BVPs  for linear ordinary
differential equations of order $n$;  their solutions are
determined to within functions that are solutions to the
corresponding homogeneous problems and exist if the right-hand
sides of the equations and boundary conditions entering the problem
statement satisfy certain solvability conditions.

%
%Побудові теорії мінімаксного оцінювання для
%такого класу задач і присвячена эта глава.

\subsection{Auxiliary results}
If $H_0$ is a Hilbert space over  $\mathbb C$ with the inner
product $(\cdot,\cdot)_{H_0}$ and norm $\|\cdot\|_{H_0},$ then by
$J_{H_0}\in \mathcal L(H_0,H_0')$ we will denote an operator
called isometric isomorphism; this operator acts from  $H_0$ on
its conjugate space $H_0'$ and is defined by en equality
\footnote{This operator exists by the Riesz theorem.} $
(v,u)_{H_0}=$ $<v,J_{H_0} u>_{H_0\times H_0'}$ $\forall u,v\in
H_0,$ where $<x, f>_{H_0\times H_0'}$ $:=f(x)$ for $x\in H_0,$
$f\in H_0'.$

Denote by $L^2(a,b)$ the space of functions square integrable on
$(a,b).$ For any $n\geq 1$ denote by $W^n_2(a,b)$ the space of
functions absolutely continuous on $[a,b]$ together with the
derivatives up to order $(n-1)$ for which the derivative of order
$n$ that exists almost everywhere on $(a,b)$ belongs to
$L^2(a,b).$

Assume that functions $p_i(t)$ defined on a closed interval
$[a,b]$ are such that  $p_i^{(n-i)}\in C[a,b],$
$i=\overline{0,n},$ and $p_0(t)\neq 0$ on $[a,b].$ Let  functions
$\varphi(t),$ belong to the space $W^n_2(a,b);$ define a
differential operator $L$ acting in $L^2(a,b)$ by the formula
$$
L\varphi(t)=p_0(t)\varphi^{(n)}(t)+p_1(t)\varphi^{(n-1)}(t)+\ldots+
p_n(t)\varphi(t).
$$

Assume next that there are given a function $f\in L^2(a,b)$ and
numbers $\alpha_i,$ $i=\overline{1,m}.$ Consider the following
BVP: find $\varphi\in W^n_2(a,b)$ that satisfies the equation
\begin{equation}\label{skkv1}
L\varphi(t)=f(t)
\end{equation}
almost everywhere on  $(a,b)$ and the boundary conditions
\begin{equation}\label{skk1v1}
B_i(\varphi)=\alpha_i,\,\,i=\overline{1,m},
\end{equation}
where
\begin{equation}\label{scdv1}
B_i(\varphi)=\sum_{j=0}^{n-1}(\alpha_{i,j}\varphi^{(j)}(a)
+\beta_{i,j}\varphi^{(j)}(b)), \quad i=\overline{1,m},
\end{equation}
is a given system of $m$ linearly independent\footnote{It means
that the rank of the matrix composed of the coefficients of these
forms equals $m.$} forms of  $2n$ variables
\begin{equation}\label{skk2v1}
\varphi(a),\ldots,\varphi^{(n-1)}(a),
\varphi(b),\ldots,\varphi^{(n-1)}(b).
\end{equation}

In order to describe right-hand sides $f\in L^2(a,b)$ and
$\alpha_i,$ $i=\overline{1,m},$ for which BVP
(\ref{skkv1}), (\ref{skk1v1}) is solvable and also to formulate the results obtained in this work, it is necessary
to introduce a problem adjoint to (\ref{skkv1}), (\ref{skk1v1}). To this end, complement $m$
linearly independent forms $B_i(\varphi),$ $i=\overline{1,m},$ %від
%$2n$ змінних
given by (\ref{skk2v1}) by some linear forms
$S_{1}(\varphi),\ldots,S_{2n-m}(\varphi)$ to a linearly independent system of $2n$ forms  $ \{B_1(\varphi),\ldots,B_m(\varphi),
S_{1}(\varphi),\ldots,$ $ S_{2n-m}(\varphi)\} $ with respect to the same variables. Denote by $L^+$ a differential operator which is called formally adjoint to $L$; this operator is defined on functions from  $W^n_2(a,b)$ and act in $L^2(a,b)$ according to
$$
L^+\psi(t)=(-1)^n(\overline{
p_0(t)}\psi(t))^{(n)}+(-1)^{(n-1)}(\overline{
p_1(t)}\psi(t))^{(n-1)}+\ldots+\overline{p_n(t)}\psi(t).
$$
Using $B_i(\varphi),$ $i=\overline{1,m},$ and $S_{j}(\varphi),$
$j=\overline{1,2n-m},$ one can construct systems of linear forms $B_j^+(\psi),\,$ $j=\overline{1,2n-m},$ and $S_{i}^+(\psi),\,$
$\,i=\overline{1,m},$ with respect to variables
$
\psi(a),\ldots,$ $\psi^{(n-1)}(a),$ $\psi(b),$ $\ldots,$ $\psi^{(n-1)}(b).
$
The constructed forms possess the following properties:\\ $(i)$ system of forms
$
\{S_{1}^+(\psi),\ldots,S_{m}^+(\psi),
B_1^+(\psi),\ldots,B_{2n-m}^+(\psi) \}
$
is linearly independent;\\ $(ii)$ for any $\varphi,\psi\in W^n_2(a,b)$
the Green formula is valid:
\begin{equation}\label{greenv1}
\int_{a}^b L\varphi(t)\overline{\psi(t)}\, dt+\sum_{j=1}^m
B_j(\varphi)\overline{S_j^+(\psi)}=\sum_{j=1}^{2n-m}
S_j(\varphi)\overline{B_j^+(\psi)}+\int_{a}^b\varphi(t)
\overline{L^+\psi(t)}\,dt.
\end{equation}

Formulate a BVP: find a function $\psi\in W^n_2(a,b)$ that satisfies the equation
\begin{equation}\label{skk3v1}
L^+\psi(t)=g(t)
\end{equation}
almost everywhere on $(a,b)$ and the boundary conditions
\begin{equation}\label{skk4v1}
B_j^+(\psi)=\beta_j,\,\,j=\overline{1,2n-m},
\end{equation}
where $g\in L^2(a,b)$ is a given function and $\beta_j,$
$j=\overline{1,2n-m}$ are given numbers. This problem will be called adjoint to BVP (\ref{skkv1}), (\ref{skk1v1}).

Problems (\ref{skkv1}), (\ref{skk1v1}) and (\ref{skk3v1}), (\ref{skk4v1})
give rise to operators $A_B$ and $A^+_{B^+}$ defined on functions from
$W^n_2(a,b)\subset L^2(a,b)$ according to\label{rrv1}
\begin{equation}\label{A_Bv1}
A_B(\varphi)=\{L\varphi;B_1(\varphi),\ldots,B_m(\varphi)\},
\end{equation}
and
\begin{equation}\label{A_B+v1}
A^+_{B^+}(\psi)=\{L^+\psi;B_1^+(\psi),\ldots, B_{2n-m}^+(\psi)\};
\end{equation}
the operators act to the spaces $H:=L^2(a,b)\times \mathbb
C^m$ and $\tilde H:=L^2(a,b)\times \mathbb C^{2n-m},$
respectively. From the results proved in \cite{Krein}, it follows that

1)
 $A_B$ and  $A^+_{B^+}$ are Noether operators
 \footnote{Recall that
$A$ is a Noether operator if the dimensionality of its kernel $N(A)$
is finite and its image $R(A)$ is closed and has a finite codimensionality; then its index
% $\chi_A$ визначається формулою
$\chi_A=\mbox{dim}\,N(A)-\mbox{codim}\,R(A).$} acting, respectively, from
 $L^2(a,b)$ to $H$ and $\tilde
H$. Kernel $N(A_B)$ of operator $A_B$ has finite dimensionality $n-r$ and coincides with the set $N(A_B)=\{\varphi_0\in
C^n[a,b]:L\varphi_0=0\,\,\mbox{on}\,\,(a,b),
\,B_i(\varphi_0)=0,\,i=\overline{1,m} \}$ of all solutions to homogeneous BVP (\ref{skkv1}), (\ref{skk1v1}); kernel
$N(A^+_{B^+})$ of operator $A^+_{B^+}$ has dimensionality  $m-r$ and coincides with the set $N(A^+_{B^+})=\{\psi_0\in
C^n[a,b]:L^+\psi_0=0\,\,\mbox{on}\,\,(a,b), $ $\,B_j^+(\psi_0)
=0,\,j=\overline{1,2n-m} \}$ of all solutions to homogeneous BVP (\ref{skk3v1}), (\ref{skk4v1}); number $r$ is the rank of the matrix
$$
\left(
\begin{array}{cccc}
B_1(y_1)& B_1(y_2)&\ldots &B_1(y_n)\\ B_2(y_1)& B_2(y_2)&\ldots
&B_2(y_n)\\ \vdots& \vdots& \ddots& \vdots \\ B_m(y_1)&
B_m(y_2)&\ldots &B_m(y_n)
\end{array}
\right)
$$
and $y_1(t),\ldots,y_n(t)$ is a fundamental system of solutions to homogeneous equation (\ref{skkv1}).

2) BVP (\ref{skkv1}), (\ref{skk1v1}) is solvable for given %right-hand sides
$f\in L^2(a,b),$
$\alpha_i\in \mathbb C,$ $i=\overline{1,m},$ if and\label{rr+1v1} only if the solvability condition
\begin{equation}\label{skk5v1}
\int_a^bf(t)\overline{\psi_0(t)}\,dt+\sum_{i=1}^m\alpha_i
\overline{S_i^+(\psi_0)}=0 \quad\forall \psi_0\in N(A^+_{B^+})
\end{equation}
holds. If $\varphi(t)$ is a solution to (\ref{skkv1}),
(\ref{skk1v1}), then $\varphi(t)+\varphi_0(t)$ is also a solution for any $\varphi_0(t)\in N(A_B)$.

3) BVP (\ref{skk3v1}), (\ref{skk4v1}) is solvable for given %right-hand sides
$g\in L^2(a,b),$ $\beta_j\in
\mathbb C,$ $j=\overline{1,2n-m},$ if and only if the solvability condition
\begin{equation}\label{skk6v1}
\int_a^bg(t)\overline{\varphi_0(t)}\,dt
+\sum_{j=1}^{2n-m}\beta_j\overline{S_j(\varphi_0)}=0 \quad\forall
\varphi_0\in N(A_B)
\end{equation}
holds. If $\psi(t)$ is a solution to (\ref{skk3v1}), (\ref{skk4v1}), then $\psi(t)+\psi_0(t)$ is also a solution for any $\psi_0(t)\in N(A^+_{B^+})$.

Let $\varphi_1(t),\ldots,\varphi_{n-r}(t)$ and
$\psi_1(t),\ldots,\psi_{m-r}(t)$ denote in what follows bases of null-spaces $N(A_B)$ and
$N(A^+_{B^+})$ of operators $A_B$ and $A^+_{B^+}$, respectively.

\subsection{Statement of the estimation problem
%функціоналів від розв’язків крайових задач
}
An estimation problem can be formulated as follows:
to find the optimal (in a certain sense) estimate of the value of the functional
\begin{equation}\label{sk8fv1}
l(\varphi)=\int_a^b\overline{l_0(t)}\varphi(t)\,dt
\end{equation}
from observations of the form
\begin{equation}\label{skk7v1}
y=C\varphi+\eta
\end{equation}
in the class of estimates
\begin{equation}\label{sk9fv1}
\widehat {l(\varphi)}=(y,u)_{H_0}+c,
\end{equation}
%\\[-15pt]
linear with respect to observations; here $\varphi(x)$ is a solution to BVP (\ref{skkv1}),
$u$ is an element of Hilbert space\footnote{If $H_0$ is a finite-dimensional space then it is assumed that
$\mbox{dim}\,H_0> n-r.$} $H_0,$ $c\in \mathbb C,$ and $l_0\in
L^2(a,b)$ is a given function. It is assumed that right-hand sides
$f(t),$ $\alpha_1,\ldots,\alpha_n$ in (\ref{skkv1}),
(\ref{skk1v1}) and errors $\eta=\eta(\omega)$ in observations
(\ref{skk7v1}) that are random elements defined on a probability space $(\Omega, \mathcal B, P)$ with values in
$H_0$ are not known and it is known only that the element $F:=(f(\cdot),\mathbf
\alpha)\in G_0$ and $\eta\in G_1.$ Here $\varphi(x)$ is a solution to BVP (\ref{skkv1}), (\ref{skk1v1}); $C\in \mathcal
L(L^2(a,b),H_0)$ is a linear continuous operator such that its restriction on subspace  $N(A_B)$ is injective;
$\mathbf{\alpha}:=(\alpha_1,\ldots,\alpha_m)^T$ $\in \mathbb C^m$
is a vector with components $\alpha_1,\ldots,\alpha_m;$ $G_0$
denotes the set of elements
$$
\tilde F:=(\tilde f(\cdot),\widetilde{\mathbf\alpha})=(\tilde
f(\cdot),(\tilde \alpha_1,\ldots,\tilde \alpha_m)^T)\in
L^2(a,b)\times \mathbb C^m,
$$
satisfying the condition
\begin{equation}\label{skk10v1}
\int_a^b\tilde f(t)\overline{\psi_0(t)}\,dt+\sum_{i=1}^m\tilde
\alpha_i\overline{S_i^+(\psi_0)}=0 \quad\forall \psi_0\in
N(A^+_{B^+})
\end{equation}
and the inequality
\begin{equation}\label{skk11v1}
\int_a^bQ(\tilde f(t)- f^{(0)}(t)) \overline{(\tilde f(t)-
f^{(0)}(t))}\,dt +
(Q_1(\widetilde{\mathbf\alpha}-\mathbf\alpha^{(0)}),
\widetilde{\mathbf\alpha}-\mathbf\alpha^{(0)})_{\mathbb C^m} \leq
1,
\end{equation}
in which $(\cdot,\cdot)_{\mathbb C^m}$ is the inner product in
$\mathbb C^m,$ element $(
f^{(0)}(\cdot),\mathbf\alpha^{(0)})=(f^{(0)}(\cdot),$ $(
\alpha^{(0)}_1,\ldots,$ $\alpha^{(0)}_m)^T)$ $\in L^2(a,b)\times
\mathbb C^m$ and satisfies (\ref{skk10v1}), and $G_1$ is a\label{G_1v1} set of random elements $\tilde
\eta=\tilde\eta(\omega)$ defined on a probability space
$(\Omega, \mathcal B, P)$ with values in  $H_0,$ zero means, and finite second moments $\mathbb
M\|\tilde\eta\|_{H}^2<\infty$ satisfying
\begin{equation}\label{skk12v1}
M(Q_0\tilde\eta,\tilde\eta)_{H_0}\leq 1,
\end{equation}
where $Q$ and $Q_0$ are Hermitian operators in
$L^2(a,b)$ and $H_0$, respectively, $Q_1$ is a Hermitian
$m\times m$ matrix for which there exist, respectively, bounded inverse operators
$Q^{-1}$ and $Q_0^{-1},$ and inverse matrix $Q_1^{-1}.$

%%§§
\begin{predll}\label{oz1v1}
An estimate
$$
\widehat{\widehat {l(\varphi)}}=(y,\hat
u)_{H_0}+\hat c
$$
for which an element $\hat{u}$
and a constant $\hat{c}$
are determined from the condition
$$
\sigma(u,c):=\sup_{\tilde F\in
G_0, \tilde \eta\in G_1}
M|l(\tilde\varphi)-\widehat
{l(\tilde\varphi)}|^2\to\inf_{u\in H_0, c \in \mathbb
C}:=\sigma^2,
$$
where
\begin{equation}\label{otsv1}
\widehat {l(\tilde\varphi)}
=(\tilde y,u)_{H_0}+c,
\end{equation}
$\tilde y=C\tilde\varphi+\tilde\eta,$
and $\tilde\varphi$ is any solution to BVP \eqref{skkv1},
\eqref{skk1v1} at $f(t)=\tilde f(t),$ $\alpha_i=\tilde \alpha_i,
i=\overline{1,m},$ will be called a minimax estimate of $l(\varphi).$

The quantity
$$
\sigma=\sup_{\tilde F\in
G_0, \tilde \eta\in G_1} \{M|l(\tilde \varphi)-\widehat{\widehat
{l(\tilde \varphi)}}|^2\}^{1/2} $$ will be called the minimax estimation error of $l(\varphi).$
\end{predll}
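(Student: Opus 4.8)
The statement is in essence a definition — it names the optimizer $(\hat u,\hat c)$ and the error $\sigma$ — so what I would actually establish is that the problem $\sigma(u,c)\to\inf$ is well posed: that the inner supremum is finite precisely on an explicitly describable admissible set and that the outer infimum is attained at a unique $(\hat u,\hat c)$. The plan is to repeat the reduction-to-optimal-control scheme of Lemma~\ref{lem1} and Theorem~\ref{tm1}, now inside the Noether framework of Section~7.1, so that facts 1)--3) on p.~\pageref{rrv1} carry the burden created by the two null-spaces.

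First I would make the error explicit. Since $l(\tilde\varphi)-\widehat{l(\tilde\varphi)}=(\tilde\varphi,l_0-C^\ast u)_{L^2(a,b)}-(\tilde\eta,u)_{H_0}-c$, I introduce $z=z(\cdot;u)$ as a solution of the adjoint problem $L^+z=l_0-C^\ast u$, $B_j^+(z)=0$, $j=\overline{1,2n-m}$, and apply Green's formula \eqref{greenv1} with $\varphi=\tilde\varphi$, $\psi=z$, using $L\tilde\varphi=\tilde f$ and $B_j(\tilde\varphi)=\tilde\alpha_j$. This turns the first term into $\int_a^b\tilde f\,\overline z\,dt+\sum_{j=1}^m\tilde\alpha_j\,\overline{S_j^+(z)}$, the exact analogue of \eqref{xq}, so that
\[
l(\tilde\varphi)-\widehat{l(\tilde\varphi)}=\int_a^b\tilde f\,\overline z\,dt+\sum_{j=1}^m\tilde\alpha_j\,\overline{S_j^+(z)}-(\tilde\eta,u)_{H_0}-c .
\]
Because $M\tilde\eta=0$, the identity $M|\cdot|^2=|M(\cdot)|^2+D(\cdot)$ splits this into a deterministic part depending only on $\tilde F$ and a stochastic part $-(\tilde\eta,u)_{H_0}$, with vanishing cross term.

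Next I take the two suprema separately. Over $G_1$ the generalized Cauchy--Bunyakovsky inequality against \eqref{skk12v1} gives $\sup_{\tilde\eta\in G_1}M|(\tilde\eta,u)_{H_0}|^2=(Q_0^{-1}u,u)_{H_0}$, with equality at an explicit worst-case $\tilde\eta$; over the bounded affine set $G_0$ the same inequality against \eqref{skk11v1} bounds the deterministic term by a quadratic form built from $z(\cdot;u)$ and its boundary data $S_j^+(z)$. Minimising over $c$ removes the constant by centring it at the value of the deterministic term at $(f^{(0)},\mathbf\alpha^{(0)})$, exactly as $\hat c$ is formed in Theorem~\ref{tm1}. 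What remains is a cost functional $I(u)$ which is a continuous, strictly convex quadratic form dominated below by $(Q_0^{-1}u,u)_{H_0}\ge c\|u\|_{H_0}^2$, hence coercive; existence and uniqueness of its minimiser then follow from the minimisation theorem already invoked for Theorem~\ref{tm1} (Theorem~1.1 of \cite{BIBL20}), and $\hat c$ is read off from the inner infimum.

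The real obstacle, absent in Sections~2--3, is the bookkeeping forced by the two homogeneous problems. Since $\tilde\varphi$ is only determined up to $N(A_B)$ and $G_0$ does not constrain that ambiguity, $\sigma(u,c)$ is finite only when the estimate is insensitive to it, i.e. when $(C\varphi_0,u)_{H_0}=l(\varphi_0)$ for every $\varphi_0\in N(A_B)$; by the solvability criterion \eqref{skk6v1} this is exactly the condition for the adjoint $z$-problem to be solvable, so the admissible set is the nonempty closed affine subspace $U=\{u\in H_0:\ (C\varphi_0,u)_{H_0}=l(\varphi_0)\ \forall\varphi_0\in N(A_B)\}$ (nonempty because $C|_{N(A_B)}$ is injective), outside of which $\sigma=+\infty$. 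Dually, on $U$ the function $z(\cdot;u)$ is determined only up to $N(A^+_{B^+})$, but the solvability condition \eqref{skk10v1} built into $G_0$ makes $\int_a^b\tilde f\,\overline z\,dt+\sum_{j=1}^m\tilde\alpha_j\,\overline{S_j^+(z)}$ independent of that choice, so $I(u)$ is well defined on $U$ and the worst-case $\tilde F$ can be taken inside $G_0$. I would therefore minimise $I$ over the closed convex set $U$, deriving the Euler system through Lagrange multipliers for the finitely many linear constraints cutting out $U$, in the manner of Theorem~\ref{ntm4-1}; verifying these two compatibility facts, rather than the convex minimisation itself, is where the work lies.
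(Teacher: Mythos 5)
The statement you were asked to prove is, in the paper, a definition rather than a theorem: it merely attaches the names ``minimax estimate'' and ``minimax estimation error'' to the pair $(\hat u,\hat c)$ and to $\sigma$, and the paper supplies no proof of it at all. You recognized this correctly, and what you propose to establish instead --- that the supremum is finite exactly on an explicit admissible set and that the infimum is attained at a unique point --- is not the content of this Proposition but of the subsequent Lemma \ref{lemma2v1} and Theorem \ref{th3v1}, which the paper states and proves separately. Measured against those, your plan follows the paper's route almost step for step: Green's formula \eqref{greenv1} turns the deterministic part of the error into $(\tilde f,z(\cdot;u))_{L^2(a,b)}+(\tilde \alpha,\mathbf S^+(z(\cdot;u)))_{\mathbb C^m}$, finiteness of $\sigma(u,c)$ forces $u$ into the set $U$ cut out by $N(A_B)$ (identified via \eqref{skk6v1} with the solvability condition for the adjoint problem), both suprema are computed by the generalized Cauchy--Bunyakovsky inequality, and the resulting coercive quadratic functional is minimized by the theorem from \cite{BIBL20}.

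The one genuine gap is your treatment of the ambiguity of $z(\cdot;u)$ modulo $N(A^+_{B^+})$. You observe, correctly, that for $\tilde F\in G_0$ the value $(\tilde f,z)_{L^2(a,b)}+(\tilde\alpha,\mathbf S^+(z))_{\mathbb C^m}$ is the same for every representative $z$, and from this you conclude that ``the worst-case $\tilde F$ can be taken inside $G_0$.'' That conclusion does not follow. The Cauchy--Bunyakovsky bound $(Q^{-1}z,z)_{L^2(a,b)}+(Q_1^{-1}\mathbf S^+(z),\mathbf S^+(z))_{\mathbb C^m}$ \emph{does} depend on the representative, and the element at which equality holds (proportional to $Q^{-1}z$ and $Q_1^{-1}\mathbf S^+(z)$) satisfies the linear constraint \eqref{skk10v1} defining $G_0$ --- i.e.\ the bound is actually attained --- only when $z$ is the particular representative obeying the orthogonality conditions \eqref{gbk23v1}; for any other representative the quadratic form strictly overestimates the supremum, so the functional you minimize is not yet identified with $\sigma(u,c)$ minus the noise term. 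This is precisely why the paper's Lemma \ref{lemma2v1} builds \eqref{gbk23v1} into the adjoint system (\ref{gbk21v1})--(\ref{gbk23v1}), proves that augmented system uniquely solvable, and then verifies explicitly that the extremal element $\tilde F^{(0)}$ belongs to $G_0$. Your Lagrange-multiplier scheme would have to incorporate this selection of $z$ (equivalently, minimize the quadratic form over the affine family of representatives) before the Euler system and the error formula come out correctly.
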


%Відмітимо, що для будь-якої фіксованої оцінки $\widehat
%{l(\varphi)}$, величина $\mathbf
%M|l(\varphi)-\widehat{l(\varphi)}|^2,$ що являє собою
%средньоквадратичну похибку оцінювання, мажорується величиною
%$\sigma(u,c),$ оскільки коли елемент $\varphi_0(x)$ під знаком
%супремуму пробігає множину $N(A_B),$ то функція
%$\tilde\varphi(x)+\varphi_0(x)$ пробігає множину усіх розв’язків
%крайової задачі $L\tilde\varphi(t)=\tilde f(t),$
%$B_i(\tilde\varphi)=\tilde \alpha_i,$ $i=\overline{1,m},$ при
%фіксованих $\tilde F:=(\tilde f(\cdot),\widetilde{\alpha})=(\tilde
%f,(\tilde \alpha_1,\ldots,\tilde \alpha_m)^T)\in G_0.$ Ясно, що
%тоді $\sigma(\varphi)\leq \sigma(\hat u,\hat c)^{1/2}.$

\subsection{Representations for minimax estimates of the values of functionals from  solutions and estimation errors}
Using the statements formulated in Section 2.1, we arrive at the following results.

\begin{predl}\label{lemma2v1}
Finding a minimax estimate of the value of functional $l(\varphi)$ is equivalent to the problem of optimal control
of the integro-differential equation system
%%% described by BVP (\ref{sopr1v})
%(\ref{{sopr1BIBL20}})$-$(\ref{16'BIBL20})
%%%
\begin{equation}\label{gbk21v1}
L^+z(t;u)=l_0(t)-C^*J_{H_0}u\quad \mbox{на}\quad (a,b),
\end{equation}
\begin{equation}\label{gbk22v1}
 B_j^+(z(\cdot;u))=0\quad  (j=1,\ldots,2n-m),
\end{equation}
\begin{equation}\label{gbk23v1}
\int_a^b Q^{-1}z(t;u)\overline{\psi_i(t)} dt+ (Q_1^{-1}\mathbf
S^+(z(\cdot;u)),\mathbf S^+(\psi_i))_{\mathbb C^m}=0,\quad
i=\overline{1,m-r},
\end{equation}
with the cost function
\begin{multline}\label{gbk24''v1}
I(u)=\int_a^b Q^{-1}z(t;u)\overline{z(t;u)}dt +(Q_1^{-1}\mathbf
S^+(z(\cdot;u)), \mathbf S^+(z(\cdot;u)))_{\mathbb C^m}\\+
(Q_0^{-1}u,u)_{H_0}\!\to \inf_{u\in U},
\end{multline}
where
\begin{equation*}
U=\{u\in H_0: \int_a^b\bigl(l_0(t)-(C^*J_{H_0}u)(t)\bigr)
\overline{\varphi_0(t)}dt=0\quad \forall\varphi_0\in N(A_B)\},
\end{equation*}
$C^*:H_0'\to L_2(a,b)$ is the operator adjoint to $C$ defined by the relationship
$$
<Cv,w>_{H_0\times H_0'}=\int_a^bv(x)\overline{C^*w(x)}dt
\quad  \forall v\in L^2(a,b),\,\,w\in H_0',
$$
$\mathbf
l=(l_1,\ldots,l_m)^T,$ $\mathbf
S^+(z(\cdot;u)):=(S^+_1(z(\cdot;u)),\ldots,S^+_m(z(\cdot;u)))^T$ and
$\mathbf S^+(\psi_i):=(S^+_1(\psi_i),\ldots,S^+_m(\psi_i))^T
\in
\mathbb C^m$ are vectors with components, respectively, $l_j,$ $S_j^+(z(\cdot;u)),$
and $S_j^+(\psi_i),$ $j=\overline{1,m}$.
\end{predl}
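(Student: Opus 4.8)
The plan is to follow the scheme used in the proof of Lemma~\ref{lem1}, transcribing it to the complex, functional-analytic setting and paying attention to the nontrivial kernels. First I would fix $u\in H_0$ and $c\in\mathbb C$, let $\tilde\varphi$ be any solution of BVP \eqref{skkv1}, \eqref{skk1v1} at $f=\tilde f$, $\alpha_i=\tilde\alpha_i$, and substitute $\varphi=\tilde\varphi$, $\psi=z(\cdot;u)$ into the Green formula \eqref{greenv1}. Using $L\tilde\varphi=\tilde f$, $B_i(\tilde\varphi)=\tilde\alpha_i$, the defining relations \eqref{gbk21v1}, \eqref{gbk22v1} (so that $B_j^+(z(\cdot;u))=0$ and $L^+z(\cdot;u)=l_0-C^*J_{H_0}u$), and the definitions of $C^*$ and of the isometry $J_{H_0}$, the Green formula collapses to
$$
l(\tilde\varphi)=(C\tilde\varphi,u)_{H_0}+\int_a^b\tilde f(t)\overline{z(t;u)}\,dt+\sum_{j=1}^m\tilde\alpha_j\overline{S_j^+(z(\cdot;u))}.
$$
Subtracting $\widehat{l(\tilde\varphi)}=(\tilde y,u)_{H_0}+c=(C\tilde\varphi,u)_{H_0}+(\tilde\eta,u)_{H_0}+c$ removes the $(C\tilde\varphi,u)_{H_0}$ term and leaves
$$
l(\tilde\varphi)-\widehat{l(\tilde\varphi)}=\int_a^b\tilde f\,\overline{z(\cdot;u)}\,dt+\sum_{j=1}^m\tilde\alpha_j\overline{S_j^+(z(\cdot;u))}-(\tilde\eta,u)_{H_0}-c.
$$
Since $M\tilde\eta=0$, the expectation of the right-hand side is its deterministic part and $-(\tilde\eta,u)_{H_0}$ is the centred random part; hence $M|l(\tilde\varphi)-\widehat{l(\tilde\varphi)}|^2$ splits into the squared modulus of the deterministic term plus $M|(\tilde\eta,u)_{H_0}|^2$, and the two suprema over $G_0$ and $G_1$ decouple exactly as in \eqref{xq1}.

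Next I would compute $\inf_c\sup_{\tilde F\in G_0}$ of the squared deterministic term. Writing $\tilde f=f^{(0)}+(\tilde f-f^{(0)})$, $\tilde\alpha=\alpha^{(0)}+(\tilde\alpha-\alpha^{(0)})$ and choosing $c$ to absorb the $F^{(0)}$-contribution, the task reduces to maximizing $|Y|$ with
$$
Y=\int_a^b(\tilde f-f^{(0)})\overline{z(\cdot;u)}\,dt+\sum_{j=1}^m(\tilde\alpha_j-\alpha_j^{(0)})\overline{S_j^+(z(\cdot;u))}
$$
under the constraint \eqref{skk11v1}. The generalized Cauchy--Bunyakovsky inequality \cite{BIBL366} gives $|Y|\le q$, where $q^2=\int_a^bQ^{-1}z(\cdot;u)\overline{z(\cdot;u)}\,dt+(Q_1^{-1}\mathbf S^+(z(\cdot;u)),\mathbf S^+(z(\cdot;u)))_{\mathbb C^m}$, with extremizer $\tilde f-f^{(0)}=\lambda Q^{-1}z(\cdot;u)$, $\tilde\alpha-\alpha^{(0)}=\lambda Q_1^{-1}\mathbf S^+(z(\cdot;u))$. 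The kernel enters here: this extremizer must also satisfy the homogeneous solvability condition \eqref{skk10v1} (which $F^{(0)}$ obeys), and substituting it into \eqref{skk10v1} for each $\psi_0=\psi_i$ reproduces precisely the left-hand side of the constraint \eqref{gbk23v1}, which vanishes. Thus the extremizer lies in $G_0$, the constrained supremum equals the unconstrained bound, and $\inf_c\sup_{\tilde F\in G_0}|\cdot|^2=q^2$, i.e. the first two terms of \eqref{gbk24''v1}.

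Finally I would dispose of $u\notin U$ and of the random part. If $u\notin U$ there is $\varphi_0\in N(A_B)$ with $\int_a^b(l_0-C^*J_{H_0}u)\overline{\varphi_0}\,dt\neq0$; replacing the non-unique solution $\tilde\varphi$ by $\tilde\varphi+t\varphi_0$, still a solution, drives the deterministic term and hence $\sigma(u,c)$ to $+\infty$, so the infimum is attained on $U$, where \eqref{gbk21v1}, \eqref{gbk22v1} is solvable by \eqref{skk6v1} and \eqref{gbk23v1} selects the representative of $z(\cdot;u)$ modulo $N(A^+_{B^+})$. For the random part, the Cauchy--Bunyakovsky inequality in $H_0$ with weight $Q_0$ yields $M|(\tilde\eta,u)_{H_0}|^2\le(Q_0^{-1}u,u)_{H_0}\,M(Q_0\tilde\eta,\tilde\eta)_{H_0}\le(Q_0^{-1}u,u)_{H_0}$, with equality realized by a suitable $\tilde\eta\in G_1$, as in \eqref{xq3}. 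Collecting both suprema gives $\inf_c\sigma(u,c)=I(u)$ for $u\in U$, which is the claimed equivalence. The main obstacle is exactly the middle step's kernel bookkeeping: verifying that the constraints \eqref{gbk23v1} are precisely the conditions under which the Cauchy--Bunyakovsky extremizer respects the solvability condition \eqref{skk10v1}, so that restricting the supremum to the constrained set $G_0$ causes no loss relative to the full quadratic form; the rest is a routine reworking of the Section~2 computation.
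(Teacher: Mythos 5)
Your proposal is correct and follows essentially the same route as the paper's proof: the Green-formula error representation, the splitting into deterministic and stochastic parts via the zero-mean noise, the generalized Cauchy--Bunyakovsky step with the key verification that the extremizing $\tilde F$ satisfies the solvability condition \eqref{skk10v1} precisely because of the constraints \eqref{gbk23v1}, the blow-up along $N(A_B)$ for $u\notin U$, and the $Q_0$-weighted bound $\sup_{\tilde\eta\in G_1}M|(\tilde\eta,u)_{H_0}|^2=(Q_0^{-1}u,u)_{H_0}$ for the noise term. The only ingredients the paper proves that you merely assert are the well-posedness facts: that $U\neq\emptyset$ (the paper constructs an element of $U$ using the nonsingular Gram matrix of $C\varphi_1,\ldots,C\varphi_{n-r}$) and that \eqref{gbk23v1} really does select a unique representative $z(\cdot;u)$ modulo $N(A^+_{B^+})$, which the paper obtains by checking that the matrix $[a_{ij}]$ defined in \eqref{a_ijv1} is positive definite and hence nonsingular.
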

\begin{proof}
Show first that set $U$ is nonempty. Indeed, it is easy to see that $U$ is the meet of $n-r$ hypersurfaces
\begin{equation}\label{hipv1}
(C\varphi_i,u)_{H_0}=\gamma_i
\end{equation}
in space $H_0,$ where $\gamma_i=\int_a^b\varphi_i(t)l_0(t)\,dt$ and
$\varphi_i(t),$ $i=1,\ldots,n-r,$ is a basis of subspace $N(A_B).$

Denote by $\mbox{span}\{C\varphi_1,\ldots,C\varphi_{n-r}\}$
a subspace  in  $H_0$ spanned over vectors  $C\varphi_1,\ldots,$
$C\varphi_{n-r}$ and prove that there is one and only one element  $u_0\in
\mbox{span}\{C\varphi_1,$ $\ldots,C\varphi_{n-r}\}$ that belongs to set
$U.$ To this end, representing $u_0$ as
$u_0=\sum_{j=1}^{n-r}\beta_jC\varphi_j,$ where $\beta_j\in\mathbb C,$
and substituting this into (\ref{hipv1}), we see that $u_0$ belongs to $U$ if and only if the linear equation system
\begin{equation}\label{slauv1}
\sum_{j=1}^{n-r}\bar\beta_j(C\varphi_i,C\varphi_j)_{H_0}=\gamma_i
,\,\,i=1,\ldots,n-r,
\end{equation}
with respect to unknowns  $\beta_j$ is solvable. Indeed, operator $C$ is injective on $N(A_B)$; therefore, $C\varphi_j,$
$j=1,\ldots,n-r,$ are linearly independent and
$\mbox{det}\{(C\varphi_i,C\varphi_j)_{H_0}\}_{i,j=1}^{n-r}\ne 0$, so that system (\ref{slauv1}) has unique solution
$\beta_1,\ldots,\beta_{n-r}$. Consequently,  the element
$u_0=\sum_{j=1}^{n-r}\beta_jC\varphi_j,$ belongs, as well as
$u_0+u^\perp$ for any $u^\perp\in
H_0\circleddash\mbox{span}\{C\varphi_1,\ldots,C\varphi_{n-r}\}$,
to set $U.$ Thus $U\neq\emptyset.$

Next, let us show that for every fixed  $u\in U$, function
$z(t;u)$ can be uniquely determined from equalities (\ref{gbk21v1})--(\ref{gbk23v1}). Indeed, the condition $u\in U$ coincides, according to (\ref{skk6v1}), with the solvability condition for problem (\ref{gbk21v1})--(\ref{gbk22v1}). Let
$z_0(t;u)\in W^n_2(a,b)$ be a solution to this problem, e.g. a solution that is orthogonal to subspace $N(A^+_{B^+})$; then the function\label{20v1}
\begin{equation}\label{gbk24v1}
z(t;u):=z_0(t;u)+\sum_{i=1}^{m-r}c_i\psi_i(t),
\end{equation}
also satisfies (\ref{gbk21v1})--(\ref{gbk22v1}) for any
$c_i,\in\mathbb C^1,$ $i=\overline{1,m-r}.$ Let us prove that coefficients $c_i,$ $i=\overline{1,m-r},$ can be chosen so that this function would also satisfy  (\ref{gbk23v1}).
Substituting expression (\ref{gbk24v1}) for $z(u)$ into (\ref{gbk23v1})
we obtain a  system of $m-r$ linear algebraic equations with $m-r$ unknowns $c_1,\ldots, c_{m-r}:$
\begin{equation} \label{gbk26v1}
\sum_{i=1}^{m-r}a_{ij}c_i=b_j(u), \quad j=1,\ldots,m-r,
\end{equation}
where
\begin{equation}\label{a_ijv1}
a_{ij}=(Q^{-1} \psi_i,\psi_j)_{L^2(a,b)}+(Q_1^{-1}\mathbf
S^+(\psi_i),\mathbf S^+(\psi_j))_{\mathbb C^m},
\end{equation}
\begin{equation}\label{b_jv1}
b_j=-\int_a^b Q^{-1}z_0(t;u)\overline{\psi_j(t)} dt-
(Q_1^{-1}\mathbf S^+(z_0(\cdot;u)),\mathbf S^+(\psi_j))_{\mathbb
C^m}.
\end{equation}
Show that matrix $[a_{ij}]_{i,j=1}^{m-r}$ of system
(\ref{gbk26v1}) додатно визначена.
Indeed, taking into account that
$Q^{-1}$ is a Hermitian positive definite operator in $L^2(a,b)$ and
$Q_1^{-1}$ is a Hermitian positive definite $m\times m$ matrix, we have
$$
\sum_{i=1}^{m-r}\sum_{j=1}^{m-r}a_{ij}\lambda_i\bar\lambda_j
=(Q^{-1}\sum_{i=1}^{m-r}
\lambda_i\psi_i,\sum_{j=1}^{m-r}\lambda_j\psi_j)_{L^2(a,b)}
$$
$$
+(Q_1^{-1}(\mathbf S^+(\sum_{i=1}^{m-r} \lambda_i\psi_i),\mathbf
S^+(\sum_{j=1}^{m-r}\lambda_j\psi_j))_{\mathbb C^m} \geq
c\sum_{i=1}^{m-r}|\lambda_i|^2,\quad c=\mbox{const}>0,
$$
for any  $\lambda_i,$ $i=1,\ldots,m-r,$ such that
$\sum_{i=1}^{m-r}|\lambda_i|^2\neq 0.$
The latter implies that matrix  $[a_{ij}]_{i,j=1}^{m-r}$ is positive definite.\label{sav1}
Thus, $\det[a_{ij}]\neq 0$ and system (\ref{gbk26v1}) has unique solution,
$c_1,\ldots,c_{m-r}.$ Therefore, problem (\ref{gbk21v1})--(\ref{gbk23v1})
is uniquely solvable.\label{usv1} Indeed, we have shown that
there exists a solution to problem (\ref{gbk21v1})--(\ref{gbk23v1}); let us prove that this solution is unique. Assume that there are two solutions to this problem, $z_1(t)$ and $z_2(t)$. Then
\begin{equation}\label{gbk211v1}
L^+z_1(t;u)=l_0(t)-C^*J_{H_0}u\quad \mbox{on}\quad (a,b),
\end{equation}
\begin{equation}\label{gbk222v1}
 B_j^+(z_1(\cdot;u))=0\quad  j=\overline{1,2n-m},
\end{equation}
\begin{equation}\label{gbk233v1}
\int_a^b Q^{-1}z_1(t;u)\overline{\psi_i(t)} dt+ (Q_1^{-1}\mathbf
S^+(z_1(\cdot;u)),\mathbf S^+(\psi_i))_{\mathbb C^m}=0,\quad
i=\overline{1,m-r},
\end{equation}

\begin{equation}\label{gbk214v1}
L^+z_2(t;u)=l_0(t)-C^*J_{H_0}u\quad \mbox{on}\quad (a,b),
\end{equation}
\begin{equation}\label{gbk225v1}
 B_j^+(z_2(\cdot;u))=0\quad  j=\overline{1,2n-m},
\end{equation}
\begin{equation}\label{gbk236v1}
\int_a^b Q^{-1}z_2(t;u)\overline{\psi_i(t)} dt+ (Q_1^{-1}\mathbf
S^+(z_2(\cdot;u)),\mathbf S^+(\psi_i))_{\mathbb C^m}=0,\quad
i=\overline{1,m-r},
\end{equation}
Subtract (\ref{gbk214v1})--(\ref{gbk236v1}) from equalities (\ref{gbk211v1})--(\ref{gbk233v1}) to obtain
\begin{equation}\label{gbk217v1}
L^+(z_1(t;u)-z_2(t;u))=0,
\end{equation}
\begin{equation}\label{gbk228v1}
 B_j^+(z_1(\cdot;u)-z_2(\cdot;u))=0\quad  j=\overline{1,2n-m},
\end{equation}
\begin{equation}\label{gbk239v1}
\int_a^b Q^{-1}(z_1(t;u)-z_2(t;u))\overline{\psi_i(t)} dt+
(Q_1^{-1}\mathbf S^+(z_1(t;u)-z_2(\cdot;u)),\mathbf
S^+(\psi_i))_{\mathbb C^m}=0,
\end{equation}
$$
\quad i=\overline{1,m-r}.
$$
Set $z(t;u)=z_1(t;u)-z_2(t;u)$; then
\begin{equation}\label{gbk212v1}
L^+z(t;u)=0,
\end{equation}
\begin{equation}\label{gbk242v1}
 B_j^+(z(\cdot;u))=0\quad  j=\overline{1,2n-m},
\end{equation}
\begin{equation}\label{gbk253v1}
\int_a^b Q^{-1}z(t;u)\overline{\psi_i(t)} dt+ (Q_1^{-1}\mathbf
S^+(z(\cdot;u)),\mathbf S^+(\psi_i))_{\mathbb C^m}=0,\quad
i=\overline{1,m-r}.
\end{equation}
Since $z(t;u)$ solves homogeneous problem
(\ref{gbk21v1})--(\ref{gbk22v1}), this function has the form
\begin{equation}\label{gbk555v1}
z(t;u)=\sum_{i=1}^{m-r}c_i\psi_i(t).
\end{equation}
Substituting (\ref{gbk555v1}) into (\ref{gbk253v1}) we obtain
\begin{equation}\label{gbk273v1}
\sum_{i=1}^{m-r}c_i\left(\int_a^b
Q^{-1}\psi_i(t)\overline{\psi_j(t)} dt+
\sum_{i=1}^{m-r}(Q_1^{-1}\mathbf S^+(\psi_i),\mathbf
S^+(\psi_j)_{\mathbb C^m}\right)=0,\quad j=\overline{1,m-r},
\end{equation}
or, in line with (\ref{a_ijv1}),
\begin{equation}\label{gbk283v1}
\sum_{i=1}^{m-r}c_ia_{ij}=0,\quad j=\overline{1,m-r}.
\end{equation}
We see that coefficients $c_i$ satisfy a linear homogeneous algebraic equation system with nonsingular matrix $[a_{ij}]_{i,j=1}^{m-r}$; therefore, this system has only the trivial solution $c_i=0, i=\overline{1,m-r}$.

By virtue of (\ref{gbk555v1}), $z(t;u)=z_1(t;u)-z_2(t;u)=0$ identically, that is,
$z_1(t;u)=z_2(t;u)$ which proves the unique solvability of problem (\ref{gbk21v1}),(\ref{gbk22v1}).

Next, since any solution  $\tilde \varphi$ of problem \eqref{skkv1}, \eqref{skk1v1}
can be written as
$\tilde \varphi=\tilde \varphi_\bot+\varphi_0,$
 where $\tilde\varphi_0\in N(A_{B})$ and $\tilde \varphi_\bot$ is the unique solution to this problem orthogonal to subspace $N(A_{B}),$
we have\label{page14v1}
\begin{equation}\label{infwv1}
\sup_{\tilde F\in G_0, \tilde \eta\in
G_1} M|l(\tilde
\varphi)-\widehat {l(\tilde \varphi)}|^2=
\sup_{\tilde F\in G_0, \tilde \eta\in
G_1}\,\sup_{\varphi_0\in N(A_{B})} M|l(\tilde
\varphi_\bot+\varphi_0)-\widehat {l(\tilde \varphi_\bot+\varphi_0)}|^2.
\end{equation}
Taking into account (\ref{sk8fv1}), (\ref{otsv1}), and the fact that \label{L2.1v1}
$$
\widehat{l(\tilde\varphi)}=\widehat{l(\tilde\varphi_\bot+\varphi_0)}=(C(\tilde
\varphi_\bot+\varphi_0),u)_{H_0} + (\tilde\eta,u)_{H_0}+c
$$
$$
=<C(\tilde \varphi_\bot+\varphi_0),J_{H_0}u>_{H_0\times
H_0'}+(\tilde\eta,u)_{H_0}+c
$$
$$
=\int_a^b(\tilde
\varphi_\bot(t)+\varphi_0(t))\overline{C^*J_{H_0}u(t)}\,dt
+(\tilde\eta,u)_{H_0}+c
$$
$$
 =\int_a^b\tilde
\varphi_\bot(t)\overline{C^*J_{H_0}u(t)}\,dt
+\int_a^b\varphi_0(t)\overline{C^*J_{H_0}u(t)}\,dt
+(\tilde\eta,u)_{H_0}+c,
$$
for arbitrary $u\in H_0$, we have
\begin{equation*}
l(\tilde \varphi)-\widehat {l(\tilde
\varphi)}=\int_a^b\tilde
\varphi_\bot(t)\overline{(l_0(t)-C^*J_{H_0}u(t))}\,dt\\
+\int_a^b\varphi_0(t)\overline{(l_0(t)-C^*J_{H_0}u(t)}\,dt
-(\tilde\eta,u)_{H_0}-c.
\end{equation*}
From the latter equality we obtain, taking into consideration the relationship $D\xi=M|\xi-M\xi|^2=M\xi_1^2-(M\xi_1)^2+
M\xi_2^2-(M\xi_2)^2$ that couples dispersion $D\xi$
of a complex-valued random quantity  $\xi=\xi_1+i\xi_2$ and its expectation $ M\xi= M\xi_1+i
M\xi_2$,
$$
\sup_{\varphi_0\in N(A_B)} M\left|l(\tilde
\varphi_\bot(t)+\varphi_0)-\widehat {l(\tilde \varphi_\bot(t)+\varphi_0)}\right|^2
=\sup_{\varphi_0\in N(A_B)} \left|\int_a^b\tilde
\varphi_\bot(t)(t)\overline{(l_0(t)-C^*J_{H_0}u(t))}\,dt \right.
$$
\begin{equation}\label{ltkkgv1}
\left.+\int_a^b\varphi_0(t)
\overline{(l_0(t)-C^*J_{H_0}u(t))}\,dt-c\right|^2
+M|(\tilde\eta,u)_{H_0}|^2.
\end{equation}
Since function $\varphi_0(t)$ under the integral sign may be an arbitrary element of space $N(A_B),$ the quantity
$$ \sup_{\varphi_0\in N(A_B)} M\left|l(\tilde
\varphi_\bot(t)+\varphi_0)-\widehat {l(\tilde
\varphi_\bot(t)+\varphi_0)}\right|^2$$ will be finite if and only if  $u\in U,$ i.e. if the second integral on the right-hand side of (\ref{ltkkgv1}) vanishes. Assuming now that $u\in U$ and using
(\ref{gbk21v1})--(\ref{gbk23v1}) and (\ref{greenv1}), we obtain
$$
\int_a^b\tilde\varphi_\bot(t)\overline{(l_0(t)-C^*J_{H_0}u(t))}\,dt -c
=\int_a^b\tilde \varphi_\bot(t)\overline{L^+z(t;u)}\,dt-c
$$
$$
=\int_{a}^b L\tilde \varphi_\bot(t)\overline{z(t;u)}\, dt+\sum_{j=1}^m
B_j(\tilde \varphi_\bot)\overline{S_j^+(z(\cdot;u))}-c
$$
$$
 =\int_{a}^b \tilde f(t)\overline{z(t;u)}\, dt+\sum_{j=1}^m
\tilde \alpha_j\overline{S_j^+(z(\cdot;u))}-c
$$
$$
=(\tilde f,z(\cdot;u))_{L^2(a,b)}+(\tilde \alpha,\mathbf
S^+(z(\cdot;u)))_{\mathbb C^m}-c.
$$
Making use of the latter result together with (\ref{infwv1}) and (\ref{ltkkgv1}), we find
$$ \inf_{c \in
\mathbb C}\sup_{\tilde F\in G_0, \tilde \eta\in
G_1} M|l(\tilde
\varphi)-\widehat {l(\tilde \varphi)}|^2=
$$
\begin{equation}\label{exhv1}
=\inf_{c \in \mathbb C}\sup_{\tilde F\in G_0}\left|(\tilde
f,z(\cdot;u))_{L^2(a,b)}+(\tilde \alpha,\mathbf
S^+(z(\cdot;u)))_{\mathbb C^m}-c\right|^2+ \sup_{ \tilde \eta\in
G_1}M|(\tilde\eta,u)_{H_0}|^2.
\end{equation}
To calculate the first term on the right-hand side of (\ref{exhv1})
we apply the generalized Cauchy--Bunyakovsky inequality
(\ref{skk11v1}):
$$
\inf_{c \in \mathbb C}\sup_{\tilde F\in G_0,}\left|(\tilde
f,z(\cdot;u))_{L^2(a,b)}+(\tilde \alpha,\mathbf
S^+(z(\cdot;u)))_{\mathbb C^m}-c\right|^2
$$
\begin{multline*}
=\inf_{c \in \mathbb C}\sup_{\tilde F\in
G_0,}\left|\overline{(z(\cdot;u),\tilde
f-f_0)_{L^2(a,b)}}+\overline{(\mathbf S^+(z(\cdot;u)),\tilde
\alpha-\alpha^{(0)})_{\mathbb C^m}}\right.\\+
\left.(f_0,z(\cdot;u))_{L^2(a,b)}+( \alpha^{(0)},\mathbf
S^+(z(\cdot;u)))_{\mathbb C^m}-c\right|^2
\end{multline*}
\begin{multline*}
\leq \left\{(Q^{-1}z(\cdot;u),z(\cdot;u))_{L^2(a,b)}+
(Q_1^{-1}(\mathbf S^+(z(\cdot;u))), \mathbf
S^+(z(\cdot;u)))_{\mathbb C^m}\right\}\\ \times \left\{(Q(\tilde
f-f^{(0)}),\tilde f-f^{(0)})_{L^2(a,b)}+ (Q_1(\tilde \alpha
-\alpha^{(0)}),\tilde \alpha -\alpha^{(0)})_{\mathbb C^m}\right\}
\end{multline*}
\begin{equation}\label{ex2hv1} \leq
(Q^{-1}z(\cdot;u),z(\cdot;u))_{L^2(a,b)}+(Q_1^{-1}(\mathbf
S^+(z(\cdot;u))),\mathbf S^+(z(\cdot;u)))_{\mathbb C^m}.
\end{equation}
Performing direct substitution it is easy to check that inequality
(\ref{ex2hv1}) turns to equality at the element $\tilde F=(\tilde
f(\cdot),\tilde\alpha)=\tilde F^{(0)}:=(\tilde
f^{(0)}(\cdot),\tilde\alpha^{(0)})=\left(\tilde
f^{(0)}(\cdot),(\tilde\alpha_1^{(0)},\ldots,\right.$ $
\left.\tilde\alpha_m^{(0)})^T\right)\in L^2(a,b)\times \mathbb
C^m,$ where
$$
\tilde f^{(0)}(t):=\frac 1{d}Q^{-1}z(t,u))+f_0(t),
$$
$$
\tilde\alpha_i^{(0)}:=\frac 1{d}Q_1^{-1}\mathbf
S^+(z(\cdot;u))_i+\alpha_i^{(0)}, i=\overline{1,m},
$$
$$
d={\Bigl((Q^{-1}z(\cdot;u),z(\cdot;u))_{L^2(a,b)}+
(Q_1^{-1}\mathbf S^+(z(\cdot;u)),\mathbf S^+(z(\cdot;u)))_{\mathbb
C^m}\!\Bigr)^{1/2}},
$$
and $Q_1^{-1}(\mathbf S^+(z(\cdot;u)))_j$ is the $j$th component
of vector $Q_1^{-1}(\mathbf S^+(z(\cdot;u)))\in \mathbb C^m.$
Element $\tilde F^{(0)}\in G_0$ because, obviously, condition
(\ref{skk11v1})is fulfilled; in addition,
$$
(\tilde f^{(0)},\psi_0)_{L^2(a,b)}+\sum_{i=1}^m\tilde
\alpha_i^{(0)}\overline{S_i^+(\psi_0)}=
$$
$$
=\Bigl(Q^{-1}z(\cdot;u),z(\cdot;u))_{L^2(a,b)}+ (Q_1^{-1}(\mathbf
S^+(z(\cdot;u))),\mathbf S^+(z(\cdot;u)))_{\mathbb
C^m}\Bigr)^{-1/2}
$$
$$
\times \Bigl(Q^{-1}z(\cdot;u),\psi_0)_{L^2(a,b)}+ \sum_{i=1}^m
Q_1^{-1}(\mathbf S^+(z(\cdot;u)))_i\overline{S_i^+(\psi_0)}\Bigr)
$$
$$
+(f^{(0)},\psi_0)_{L^2(a,b)}+\sum_{i=1}^m
\alpha^{(0)}_i\overline{S_i^+(\psi_0)}=0 \quad\forall \psi_0\in
N(A^+_{B^+})
$$
which yields, by virtue of (\ref{gbk23v1}), the validity of
condition (\ref{skk10v1}). Therefore,
$$
\inf_{c \in \mathbb C}\sup_{\tilde F\in G_0}\left|(\tilde
f,z(\cdot;u))_{L^2(a,b)}+(\tilde \alpha,\mathbf
S^+(z(\cdot;u)))_{\mathbb C^m}-c\right|^2
$$
\begin{equation}\label{sk111hv1}
=\int_a^b Q^{-1}z(t;u)\overline{z(t;u)}dt +(Q_1^{-1}(\mathbf
S^+(z(\cdot;u))), \mathbf S^+(z(\cdot;u)))_{\mathbb C^m}
\end{equation}
at $c=\int_a^b\overline{z(t;u)} f^{(0)}(t)dt
+(\alpha^{(0)},\mathbf S^+(z(\cdot;u)))_{\mathbb C^m}.$

In order to calculate the second term in (\ref{exhv1}), note that
from the Cauchy--Bunyakovsky inequality (\ref{skk12v1}) it follows
that
$$
|(u,\tilde \eta)_{H_0}|^2\leq (Q_0^{-1}u,u)_{H_0}(Q_0\tilde
\eta,\tilde \eta)_{H_0},
$$
which yields
$$
\sup_{\tilde \eta\in G_1}M|(u,\tilde \eta)_{H_0}|^2\leq
(Q_0^{-1}u,u)_{H_0}.
$$
We have\label{L1v1}
$$
M|(u,\tilde \eta^{(0)})_{H_0}|^2=(Q_0^{-1}u,u)_{H_0}
$$
where $\tilde \eta^{(0)}=\nu
Q_0^{-1}u[(Q_0^{-1}u,u)_{H_0}]^{-1/2}\in G_1$ and $\nu$ is a
random quantity with $M\nu=0$ and $M|\nu|^2=1.$
Therefore\label{L2.2v1}
\begin{equation}\label{liehv1}
\sup_{\tilde \eta\in G_1}M|(u,\tilde
\eta)_{H_0}|^2=(Q_0^{-1}u,u)_{H_0}.
\end{equation}
Now the statement of Lemma 2.1 follows directly from relationships
(\ref{exhv1}), (\ref{sk111hv1}), and (\ref{liehv1}).
\end{proof}
\begin{pred}\label{th3v1}
The minimax estimate of $l(\varphi)$ can be represented as
\begin{equation}\label{minijv1}
\widehat{\widehat {l(\varphi)}}=(y,\hat
u)_{H_0}+\hat c,
\end{equation}
where
\begin{equation}\label{mjv1}
\hat u=Q_0Cp,\quad \hat c=\int_a^b\overline{z(t)}
f^{(0)}(t)dt+\sum_{i=1}^m \overline{S_i^+(z)}\alpha_i^{(0)}
\end{equation}
and functions $p(t)$ and $z(t)$ are determined from the
integro-differential equation system
\begin{equation}\label{llkyfv1}
L^+z(t)=l_0(t)-C^*J_{H_0}Q_0Cp(t)\quad \mbox{on}\quad (a,b),
\end{equation}
\begin{equation}\label{llk1yfv1}
B_j^+(z)=0,\quad j=\overline{1,2n-m},
\end{equation}
\begin{equation}\label{sgb30'fv1}
\int_a^bQ^{-1}z(t)\overline{\psi_i(t)}\, dt+ (Q_1^{-1}\mathbf
S^+(z), \mathbf S^+(\psi_i))_{\mathbb C^m}=0, \quad
i=\overline{1,m-r},
\end{equation}
\begin{equation}\label{sgb23'fv1}
Lp(t)=Q^{-1}z(t)\quad \mbox{on}\quad (a,b),
\end{equation}
\begin{equation}\label{slk1yfv1}
B_{j}(p)=Q_1^{-1}\mathbf S^+(z)_{j},\quad j=\overline{1,m},
\end{equation}
\begin{equation}\label{sgb25'fv1}
\int_a^b(l_0(t)-C^*J_{H_0}Q_0Cp(t))\overline{\varphi_{i}(t)}dt=0
,\,\, i=\overline{1,n-r},
\end{equation}
where $Q_1^{-1}\mathbf S^+(z)_j$ denotes the $j$th component of
vector $Q_1^{-1}\mathbf S^+(z)\in \mathbb C^m.$ Problem
\eqref{llkyfv1}--\eqref{sgb25'fv1} is uniquely solvable.
Estimation error $\sigma$ is determined by
$\sigma=l(p)^{1/2}.$
\end{pred}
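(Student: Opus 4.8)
The plan is to solve the optimal control problem to which Lemma~\ref{lemma2v1} has reduced the estimation problem, i.e.\ to minimize the quadratic cost $I(u)$ over the affine set $U$ subject to \eqref{gbk21v1}--\eqref{gbk23v1}, and then to re-express the optimal control $\hat u$ and the optimal value $I(\hat u)$ through an adjoint function $p$. First I would note that $U$ is nonempty (as already shown inside the proof of Lemma~\ref{lemma2v1}), closed and convex in $H_0$, that $u\mapsto z(\cdot;u)$ is an affine bounded map whose linear part $v\mapsto\bar z(\cdot;v)$ solves $L^{+}\bar z(\cdot;v)=-C^{*}J_{H_0}v$ with $B_j^{+}(\bar z(\cdot;v))=0$ and the homogeneous version of \eqref{gbk23v1}, and that $I$ is a continuous quadratic functional whose quadratic part dominates $(Q_0^{-1}u,u)_{H_0}\ge c\|u\|_{H_0}^{2}$. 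Exactly as in the proof of Theorem~\ref{tm1} (via the Lions-type existence theorem quoted there), coercivity, strict convexity and weak lower semicontinuity then yield a unique minimizer $\hat u\in U$.

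Next I would introduce the adjoint function $p$ as a solution of $Lp(t)=Q^{-1}z(t)$ on $(a,b)$ with $B_j(p)=Q_1^{-1}\mathbf S^{+}(z)_j$, $j=\overline{1,m}$. This is a problem of type \eqref{skkv1},\eqref{skk1v1}, and its solvability condition \eqref{skk5v1}, written out for the basis $\psi_1,\dots,\psi_{m-r}$ of $N(A^{+}_{B^{+}})$, coincides verbatim with the constraint \eqref{gbk23v1}; hence $p$ exists and is unique up to an element of the $(n-r)$-dimensional space $N(A_B)$. I would then write the stationarity condition $\frac{d}{d\tau}I(\hat u+\tau v)\big|_{\tau=0}=0$ along every tangent direction $v\in U_0:=\{v:(C\varphi_i,v)_{H_0}=0,\ i=\overline{1,n-r}\}$. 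Applying Green's formula \eqref{greenv1} with $\varphi=p$ and $\psi=\bar z(\cdot;v)$, and using $B_j^{+}(\bar z(\cdot;v))=0$, $L^{+}\bar z(\cdot;v)=-C^{*}J_{H_0}v$ together with the defining relation of $C^{*}$, the first two terms of the derivative collapse to $-(Cp,v)_{H_0}$, so that the condition becomes $(Q_0^{-1}\hat u-Cp,v)_{H_0}=0$ for all $v\in U_0$, i.e.\ $Q_0^{-1}\hat u-Cp\in\mathrm{span}\{C\varphi_1,\dots,C\varphi_{n-r}\}$.

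Because each $\varphi_i\in N(A_B)$ changes neither $Lp$ nor $B_j(p)$, the indeterminacy just found can be absorbed into $p$: replacing $p$ by the representative for which $Q_0^{-1}\hat u=Cp$ gives $\hat u=Q_0Cp$, which is \eqref{mjv1}. This representative is singled out precisely by the requirement $\hat u\in U$, which upon substituting $\hat u=Q_0Cp$ turns into the $n-r$ side conditions $\int_a^b\bigl(l_0(t)-C^{*}J_{H_0}Q_0Cp(t)\bigr)\overline{\varphi_i(t)}\,dt=0$. The constant $\hat c$ is the optimal value of $c$ obtained in the proof of Lemma~\ref{lemma2v1}, now evaluated at the optimal $z$. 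Collecting \eqref{llkyfv1}, its boundary and solvability relations, the equation for $p$ and these side conditions yields the full system; its unique solvability follows from the uniqueness of the minimizer $\hat u$, since $z$ is then determined by the control equations and $p$ by its BVP together with the $n-r$ conditions that exactly match the $n-r$ degrees of freedom in $N(A_B)$.

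Finally I would establish the error formula from $\sigma^{2}=I(\hat u)$. Substituting $\hat u=Q_0Cp$ and transforming the first two terms of $I$ by means of $Lp=Q^{-1}z$, $B_j(p)=Q_1^{-1}\mathbf S^{+}(z)_j$ and Green's formula \eqref{greenv1} with $\varphi=p$, $\psi=z$, and then invoking $B_j^{+}(z)=0$ and $L^{+}z=l_0-C^{*}J_{H_0}Q_0Cp$, these terms equal $\int_a^b p\,\overline{l_0}\,dt-(Cp,Q_0Cp)_{H_0}=l(p)-(Cp,Q_0Cp)_{H_0}$, while the remaining term $(Q_0^{-1}\hat u,\hat u)_{H_0}=(Cp,Q_0Cp)_{H_0}$ cancels the subtracted quantity, leaving $\sigma^{2}=l(p)$. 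The main obstacle is the bookkeeping in the previous paragraph: the stationarity condition holds only along tangent directions, so it fixes $Q_0^{-1}\hat u-Cp$ only modulo $\mathrm{span}\{C\varphi_i\}$, and one must check that this is exactly the freedom available in $p$ modulo $N(A_B)$ and that the $n-r$ side conditions close the count of unknowns and equations so that $\hat u=Q_0Cp$ holds identically; tracking the complex conjugates through the two Green-formula reductions is the other delicate, if routine, ingredient.
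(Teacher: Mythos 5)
Your proposal is correct and takes essentially the same approach as the paper's proof: unique minimizer $\hat u$ via the Lions-type theorem, stationarity plus Green's formula yielding $(Q_0^{-1}\hat u-Cp,v)_{H_0}=0$ on the tangent subspace, removal of the $N(A_B)$-indeterminacy in $p$ to obtain $\hat u=Q_0Cp$, and the same Green-formula cancellation giving $\sigma^2=I(\hat u)=l(p)$. The only places where you understate the work are the asserted boundedness of the solution map $v\mapsto\tilde z(\cdot;v)$ — which the paper must actually prove via Krein's normal-solvability theorem for Noether operators, leading to estimates \eqref{contv1} and \eqref{cont1v1}, and which is the most laborious part of the printed proof — and the need to differentiate along both $v$ and $iv$ to obtain the full complex orthogonality condition rather than only its real part; your absorption of the $\mathrm{span}\{C\varphi_1,\dots,C\varphi_{n-r}\}$ component into $p$ is an equivalent, slightly cleaner version of the paper's device of selecting the representative with $Q_0^{-1}\hat u-Cp\in V$ and then testing against $v=Q_0^{-1}\hat u-Cp$.
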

\begin{proof}
Let us prove that the solution to the optimal control problem
(\ref{gbk21v1})--(\ref{gbk24''v1}) can be reduced to the solution
of system (\ref{llkyfv1})--(\ref{sgb25'fv1}). Show first that
there exists one and only one element  $\hat u \in U$ at which the
minimum of functional  (\ref{gbk24''v1}) is attained: $I(\hat
u)=\inf_{u \in U}I(u).$

For an arbitrary $u\in U$ set $u=\bar u+v$ where $\bar u$ is a
fixed element of $U$ and $v=u-\bar u$ belongs to the linear
subspace $V:=\{w\in H_0: \int_a^b C^*J_{H_0}w(t)
\overline{\varphi_0(t)}dt=0\quad \forall\varphi_0\in N(A_B)\}$ of
$H_0$. Let $z(t;\bar u)$ be the unique solution to the problem
\begin{equation}\label{igbbk21v1}
L^+z(t;\bar u)=l_0(t)-C^*J_{H_0}\bar u(t)\quad \mbox{on}\quad
(a,b),
\end{equation}
\begin{equation}\label{igbbk22v1}
B_j^+(z(\cdot;\bar u))=0 \quad j=\overline{1,2n-m},
\end{equation}
\begin{equation}\label{igbbk23v1}
\int_a^b Q^{-1}z(t;\bar u)\overline{\psi_i(t)} dt+
(Q_1^{-1}\mathbf S^+(z(\cdot;\bar u)),\mathbf
S^+(\psi_i))_{\mathbb C^m}=0,\quad i=\overline{1,m-r},
\end{equation}
and $\tilde z(t;v)$ the unique solution to the
problem\footnote{Existence and uniqueness of solution to this
problem follows from the condition $v\in V$ and the reasoning that
is used in the proof of the unique solvability of problem
(\ref{igbbk21v1})--(\ref{igbbk23v1}).}
\begin{equation}\label{igbb21xv1}
L^+(t)\tilde z(t;v)=-C^*J_{H_0}v\quad \mbox{on}\quad (a,b),
\end{equation}
\begin{equation}\label{igbb22xv1}
 B_j^+(\tilde z(\cdot;v))=0\quad  (j=1,\ldots,2n-m),
\end{equation}
\begin{equation}\label{igbb23xv1}
\int_a^b Q^{-1}\tilde z(t;v)\overline{\psi_i(t)}\,dt+ (\tilde
Q^{-1}\mathbf S^+(\tilde z(\cdot;v)),\mathbf S^+(\psi_i))_{\mathbb
C^m}=0,\quad i=\overline{1,m-r}.
\end{equation}
Solution $z(t;u)$ of BVP (\ref{gbk21v1})--(\ref{gbk24''v1}) can be
represented in the form
\begin{equation}\label{ieqv1}
z(t;u)=z(t;\bar u)+\tilde z(t;v);
\end{equation}
here if $v$ is an arbitrary element of space $V,$ then $u=\bar
u+v$ can be an arbitrary element of $U.$ Indeed, adding termwise
equalities (\ref{igbbk21v1})--(\ref{igbbk23v1}) to the
corresponding equalities (\ref{igbb21xv1})--(\ref{igbb23xv1}) we
obtain
\begin{equation*}
L^+(z(t;\bar u)+\tilde z(t;v))=l_0(t)-C^*J_{H_0}(\bar
u(t)+v(t))=l_0(t)-C^*J_{H_0}u(t)\quad \mbox{on}\quad (a,b),
\end{equation*}
\begin{equation*}
B_j^+(z(\cdot;\bar u)+\tilde z(\cdot;v))=0 \quad
j=\overline{1,2n-m},
\end{equation*}
%\begin{multline*}
\begin{equation*}
\int_a^b Q^{-1}(z(t;\bar u)+\tilde z(t;v))\overline{\psi_i(t)} dt
%\\
+(Q_1^{-1}(\mathbf S^+(z(\cdot;\bar u)+\tilde
z(\cdot;v)),\mathbf S^+(\psi_i))_{\mathbb C^m}=0,\quad
i=\overline{1,m-r}.
%\end{multline*}
\end{equation*}
Equating these equalities with the corresponding ones from
(\ref{gbk21v1})--(\ref{gbk23v1}) we prove, taking into account the
unique solvability of problem (\ref{gbk21v1})--(\ref{gbk23v1})
proved on page \pageref{usv1}, the required representation of
solution in the form  (\ref{ieqv1}).

Prove that the solution to BVP
(\ref{igbb21xv1})--(\ref{igbb23xv1}) is continuously dependent on
$v\in V.$ Consider first operator  $A_B$ defined by (\ref{A_Bv1});
from Green's formula (\ref{greenv1}) and the reasoning similar to
that on pp. 90--91 in \cite{Krein}, it follows that the image
$\mathcal D(A_B^*)$ of operator $A_B^*$ adjoint to $A_B,$ is
formed by elements of the form
$(g(t),(S^+_1(g),\ldots,S^+_m(g))^T),$ where $g$ is an arbitrary
function from $W_2^n(a,b)$ satisfying the boundary conditions
\begin{equation}\label{adjv1}
B_j^+(g)=0\quad  j=1,\ldots,2n-m,
\end{equation}
and the operator $A_B^*:L^2(a,b)\times \mathbb C^m\to L^2(a,b)$
acts according to
\begin{equation}\label{adj1v1}
A_B^*(g(t),(S^+_1(g),\ldots,S^+_m(g))^T)=L^+g.
\end{equation}
Note that $A_B^*$ is a Noether operator because it is adjoint to a
Noether one, $A_B$; therefore,$A_B^*$ is a closed operator and the
equations
\begin{equation}\label{normsolvv1}
A_B^*(\tilde z_0(\cdot;v),(S^+_1(\tilde z_0(\cdot;v)),
\ldots,S^+_m(\tilde z_0(\cdot;v)))^T)=L^+\tilde
z_0(\cdot;v)=-C^*J_{H_0}v
\end{equation}
are normally solvable (that is, $R(A_B^*)=\overline{R(A_B^*)}$).
In line with (\ref{adjv1}) and (\ref{adj1v1}), the latter equation
is equivalent to BVP (\ref{igbb21xv1})--(\ref{igbb22xv1}). Using
this fact and applying Theorem 2.3 from
\cite{Krein}\footnote{Formulate this theorem.
\begin{predd} The equation
$$Ax=y$$ with a closed operator $A$ acting from a Hilbert space $E$
to a Hilbert space $F$ is normally solvable (i.e.
$R(A)=\overline{R(A)}$) if and only if for every $y\in R(A)$ there
is an $x\in\mathcal D(A)$ such that $y=Ax$ and $\|x\|\leq k\|Ax\|
=k\|y\|$ where $k>0$ and is independent of $y\in R(A).$
\end{predd}} to equation (\ref{normsolvv1})
we obtain that for every $v\in V$ there is a solution $\tilde
z_0(t;v)\in W_2^n(a,b)$ of problem
(\ref{igbb21xv1})--(\ref{igbb22xv1}) such that
\begin{multline*}
\|(\tilde z_0(\cdot;v),(S^+_1(\tilde z_0(\cdot;v)),
\ldots,S^+_m(\tilde z_0(\cdot;v)))^T)\|_{L^2(a,b)\times \mathbb
C^m}\\ \leq a_1\|L^+\tilde
z_0(\cdot;v)\|_{L^2(a,b)}=a_1\|C^*J_{H_0}v\|_{L^2(a,b)}\leq
a\|v\|_{H_0},
\end{multline*}
or, equivalently,
\begin{equation}\label{contv1}
\left( \int_a^b|\tilde z_0(t;v)|^2dt+\sum_{i=1}^m|S^+_i(\tilde
z_0(\cdot;v))|^2 \right)^{1/2}\leq a\|v\|_{H_0},
\end{equation}
where $a$ and $a_1$ are constants independent of $v.$

Proceeding similarly to the proof on p. \pageref{20v1}, we
conclude that the unique solution $\tilde z(t;v)$ to BVP
(\ref{igbb21xv1})--(\ref{igbb23xv1}) can be represented in the
form
\begin{equation}\label{comsolv1}
\tilde z(t;v)=\tilde z_0(t;v)+\sum_{i=1}^{m-r}c_i\psi_i(t),
\end{equation}
where the coefficients $c_i=c_i(v)\in \mathbb C$ are uniquely
determined from the linear algebraic equation system
\begin{equation}\label{Slauv1}
\sum_{i=1}^{m-r}a_{ij}c_i=b_j(v) ,\quad j=1,\ldots,m-r
\end{equation}
by the formulas
\begin{equation}\label{frv1}
c_i(v)=\frac{d_i(v)}{d},
\end{equation}
where
$$
d=\left|
\begin{array}{ccccccc}
a_{1,1} &\cdots & a_{i-1,1} & a_{i,1} & a_{i+1,1}&\cdots &
a_{m-r,1}\\ a_{1,2} &\cdots & a_{i-1,2}& a_{i,2}& a_{i+1,2}&\cdots
& a_{m-r,2}\\ \hdotsfor[2]{7}\\ a_{1,m-r} & \cdots & a_{i-1,m-r}&
a_{i,m-r} &a_{i+1,m-r}& \cdots & a_{m-r,m-r}
\end{array}
\right|,
$$
$$
d_i(v)=\left|
\begin{array}{ccccccc}
a_{1,1} &\cdots & a_{i-1,1} & b_1(v) & a_{i+1,1}&\cdots &
a_{m-r,1}\\ a_{1,2} &\cdots & a_{i-1,2}& b_2(v)& a_{i+1,2}&\cdots
& a_{m-r,2}\\ \hdotsfor[2]{7}\\ a_{1,m-r} & \cdots & a_{i-1,m-r}&
b_{m-r}(v) &a_{i+1,m-r}& \cdots & a_{m-r,m-r}
\end{array}
\right|
$$
elements $a_{i,j},$ $i,j=1,\ldots,m-r,$ of positive definite
matrix $[a_{i,j}]$ are determined from (\ref{a_ijv1}), and
$$
b_j(v)=-\int_a^b Q^{-1}\tilde z_0(t;v)\overline{\psi_j(t)} dt-
(Q_1^{-1}\mathbf S^+(\tilde z_0(\cdot;v)),\mathbf
S^+(\psi_j))_{\mathbb C^m},\,\,j=\overline{1,m-r}.
$$

Expanding determinant $d_i(v)$ entering the right-hand side of
(\ref{frv1}) in elements of the $i$th column, we have
$$
c_i(v)=\gamma_1^{(i)}b_1(v)+\gamma_2^{(i)}b_2(v)
+\cdots+\gamma_{m-r}^{(i)}b_{m-r}(v),\,\,i=1,\ldots,m-r,
$$
here
%$\gamma_j^{(i)}$
$$
\gamma_j^{(i)}=\frac{A_{ji}}{d} ,\quad i,j=1\ldots,m-r,
$$
are constants independent of $v$, where $A_{ji}$ is the algebraic
complement of the element of the $i$th column that enters the
$j$th row of determinant $d_i(v)$ which is independent of $v.$
Applying the generalized Cauchy--Bunyakovsky inequality to the
representation for $c_i(v)$, we obtain
$$
|c_i(v)|\leq \sum_{j=1}^{m-r}|\gamma_j^{(i)}||b_j(v)|
=\sum_{j=1}^{m-r}|\gamma_j^{(i)}|\Bigl|\int_a^b Q^{-1}\tilde
z_0(t;v)\overline{\psi_j(t)}\,dt\Bigr.
$$
$$
\Bigl.+ (Q_1^{-1}\mathbf S^+(\tilde z_0(\cdot;v)),\mathbf
S^+(\psi_j))_{\mathbb C^m}\Bigr|\leq
\sum_{j=1}^{m-r}|\gamma_j^{(i)}|\Bigl|\int_a^b Q^{-1}\tilde
z_0(t;v)\overline{\tilde z_0(t;v)}\,dt\Bigr.
$$
$$
\Bigl.+ (Q_1^{-1}\mathbf S^+(\tilde z_0(\cdot;v)),\mathbf
S^+(\tilde z_0(\cdot;v)))_{\mathbb C^m}\Bigr|^{1/2}
$$
$$ \times
\Bigl|\int_a^b Q^{-1}\psi_j(t)\overline{\psi_j(t)}\,dt\Bigr.
\Bigl.+ (Q_1^{-1}\mathbf S^+(\psi_j),\mathbf S^+(\psi_j))_{\mathbb
C^m}\Bigr|^{1/2}
$$
\begin{equation}\label{in1v1}
=A_i\Bigl|\int_a^b Q^{-1}\tilde z_0(t;v)\overline{\tilde
z_0(t;v)}\,dt\Bigr. \Bigl.+ (Q_1^{-1}\mathbf S^+(\tilde
z_0(\cdot;v)),\mathbf S^+(\tilde z_0(\cdot;v)))_{\mathbb
C^m}\Bigr|^{1/2}
\end{equation}
where
$$
A_i=\sum_{j=1}^{m-r}|\gamma_j^{(i)}|\Bigl|\int_a^b
Q^{-1}\psi_j(t)\overline{\psi_j(t)}\,dt\Bigr. \Bigl.+
(Q_1^{-1}\mathbf S^+(\psi_j),\mathbf S^+(\psi_j))_{\mathbb
C^m}\Bigr|^{1/2}, \,\,i=\overline{1,m-r},
$$
are constants independent of $v.$ Next, taking into consideration
the following estimate obtained with the help of the
Cauchy--Bunyakovsky inequality and (\ref{contv1})
$$
\Bigl|\int_a^b Q^{-1}\tilde z_0(t;v)\overline{\tilde
z_0(t;v)}\,dt\Bigr. \Bigl.+ (Q_1^{-1}\mathbf S^+(\tilde
z_0(\cdot;v)),\mathbf S^+(\tilde z_0(\cdot;v)))_{\mathbb
C^m}\Bigr|^{1/2}
$$
$$
\leq (\|Q^{-1}\tilde z_0(\cdot;v)\|_{L^2(a,b)}\|\tilde
z_0(\cdot;v)\|_{L^2(a,b)}+\|Q_1^{-1}\mathbf S^+(\tilde
z_0(\cdot;v))\|_{\mathbb C^m}\|\mathbf S^+(\tilde
z_0(\cdot;v))\|_{\mathbb C^m})^{1/2}
$$
$$
\leq (\|Q^{-1}\|\|\tilde
z_0(\cdot;v)\|^2_{L^2(a,b)}+\|Q_1^{-1}\|\|\mathbf S^+(\tilde
z_0(\cdot;v))\|^2_{\mathbb C^m})^{1/2}
$$
$$
\leq \max\{\|Q^{-1}\|^{1/2},\|Q_1^{-1}\|^{1/2}\}\left(\int_a^b
|\tilde z_0(t;v)|^2dt+\sum_{i=1}^m |S_i^+(\tilde z_0(\cdot;v))|^2
\right)^{1/2}
$$
\begin{equation}\label{in2v1}
\leq a\max\{\|Q^{-1}\|^{1/2},\|Q_1^{-1}\|^{1/2}\}\|v\|_{H_0},
\end{equation}
where constant $a$ enters the right-hand side of (\ref{contv1}).

Estimates (\ref{in1v1}) and (\ref{in2v1}) yield the inequality
\begin{equation}\label{in3v1}
|c_i(v)|\leq C_i\|v\|_{H_0}, \,\,i=1,\ldots,m-r,
\end{equation}
where $C_i=A_ia\max\{\|Q^{-1}\|^{1/2},\|Q_1^{-1}\|^{1/2}\}$ are
constants independent of $v.$

Using inequalities (\ref{in3v1}), (\ref{contv1}) and
representation (\ref{comsolv1}) of solution $\tilde z(t;v)$ to BVP
(\ref{igbb21xv1})--(\ref{igbb23xv1}) we will prove that this
solution satisfies the inequality
\begin{equation}\label{cont1v1}
\left( \int_a^b|\tilde z(t;v)|^2dt+\sum_{i=1}^m|S^+_i(\tilde
z(\cdot;v))|^2 \right)\leq K\|v\|_{H_0}^2,
\end{equation}
where $K$ is a constant independent of $v.$ Taking into notice the
inequality $\|a+b\|^2\leq 2(\|a\|^2+\|b\|^2)$ which is valid for
any elements $a$ and $b$ from a normed space, we have
$$
\int_a^b|\tilde z(t;v)|^2dt+\sum_{i=1}^{m}|S^+_i(\tilde
z(\cdot;v))|^2=\int_a^b|\tilde z(t;v)|^2dt+\|\mathbf
S^+(z(\cdot;v))\|_{\mathbb C^m}^2
$$
$$
=\int_a^b|\tilde z_0(t;v)+\sum_{i=1}^{m-r}c_i(v)\psi_i(t)|^2dt
+\|\mathbf
S^+(z_0(\cdot;v)+\sum_{i=1}^{m-r}c_i(v)\psi_i(\cdot))\|_{\mathbb
C^m}^2
$$
$$
\leq 2\left(\int_a^b|\tilde
z_0(t;v)|^2dt+\sum_{i=1}^{m-r}|c_i(v)|^2\int_a^b|\psi_i(t)|^2dt\right)
$$
$$
+2\left(\|\mathbf S^+(z_0(\cdot;v)\|^2_{\mathbb C^m}+
\sum_{i=1}^{m-r}|c_i(v)|^2\|\mathbf S^+(\psi_i)\|^2_{\mathbb
C^m}\right)
$$
$$
=2\left(\int_a^b|\tilde z_0(t;v)|^2dt+\sum_{i=1}^{m}|S^+_i(\tilde
z_0(\cdot;v))|^2\right)
$$
$$
+2\sum_{i=1}^{m-r}|c_i(v)|^2\left(\int_a^b|\psi_i(t)|^2dt+\|\mathbf
S^+(\psi_i)\|^2_{\mathbb C^m} \right)
$$
$$
\leq 2a^2\|v\|^2_{H_0}+2\sum_{i=1}^{m-r}C_i^2\|v\|^2_{H_0}
\left(\int_a^b|\psi_i(t)|^2dt+\|\mathbf S^+(\psi_i)\|^2_{\mathbb
C^m} \right)=K\|v\|^2_{H_0},
$$
where
$$
K=2a^2+2\sum_{i=1}^{m-r}C_i^2\left(\int_a^b|\psi_i(t)|^2dt+\|\mathbf
S^+(\psi_i)\|^2_{\mathbb C^m} \right).
$$
Thus, inequality (\ref{cont1v1}) is proved.

Reduce now the minimization of functional $I(u)$ on set $U$ to the
problem of finding the minimum of the functional
$$
I_V(v):=I(\bar u+v)
$$
on a linear subspace $V$ of space $H_0.$ To this end, make use of
representation (\ref{ieqv1}) for $z(t;u)$ and write $I(u)$ in the
form
$$
I(u)=\int_a^bQ^{-1}z(t;u)\overline{z(t;u)}dt +(Q_1^{-1}\mathbf
S^+(z(\cdot;u)), \mathbf S^+(z(\cdot;u))_{\mathbb C^m}+
(Q_0^{-1}u,u)_{H_0}
$$
$$
=\int_a^bQ^{-1}(z(t;\bar u)+\tilde z(t;v))\overline{(z(t;\bar
u)+z(t;u))}dt
$$
$$
+(Q_1^{-1}(\mathbf S^+(z(\cdot;\bar u))+\tilde z(\cdot;v)),
\mathbf S^+(z(\cdot;\bar u)+\tilde z(\cdot;v)))_{\mathbb C^m}+
(Q_0^{-1}(\bar u+v),\bar u+v)_{H_0}
$$
$$
=I(\bar u)+\int_a^bQ^{-1}\tilde z(t;v)\overline{\tilde z(t;v)}dt+
(Q_1^{-1}\mathbf S^+(\tilde z(\cdot;v)), \mathbf S^+(\tilde
z(\cdot;v)))_{\mathbb C^m}+ (Q_0^{-1}v,v)_{H_0}
$$
$$
+2\mbox{Re}\int_a^bQ^{-1}\tilde z(t;v)\overline{z(t;\bar u)}dt
+2\mbox{Re}(Q_1^{-1}\mathbf S^+(\tilde z(\cdot;v)), \mathbf
S^+(z(\cdot;\bar u)))_{\mathbb C^m}
$$
\begin{equation}\label{repv1}
+2\mbox{Re}(Q_0^{-1}v,\bar u)_{H_0}=I(\bar u)+\tilde
I(v)+2\mbox{Re}L(v),
\end{equation}
where, by virtue of estimate (\ref{cont1v1}),
\begin{equation}\label{rep1v1}
\tilde I(v)=\int_a^bQ^{-1}\tilde z(t;v)\overline{\tilde z(t;v)}dt+
(Q_1^{-1}\mathbf S^+(\tilde z(\cdot;v)), \mathbf S^+(\tilde
z(\cdot;v)))_{\mathbb C^m}+ (Q_0^{-1}v,v)_{H_0}
\end{equation}
is a quadratic functional in $V$ associated with the semi-bilinear
continuous Hermitian form\label{cont2v1}
\begin{equation}
%\begin{multline}
\label{rep2v1}
\pi(v,w)=\int_a^bQ^{-1}\tilde z(t;v)\overline{\tilde z(t;w)}dt
%\\
+(Q_1^{-1}\mathbf S^+(\tilde z(\cdot;v)), \mathbf S^+(\tilde
z(\cdot;w)))_{\mathbb C^m}+ (Q_0^{-1}v,w)_{H_0}
%\end{multline}
\end{equation}
on $V\times V;$ the functional satisfies the inequality
\begin{equation}\label{rep3v1}
\tilde I(v)\geq c \|v\|^2_{H_0}\,\,\forall v\in
V,\,\,c=\mbox{const},
\end{equation}
and
\begin{equation}\label{rep4v1}
L(v)=\int_a^bQ^{-1}\tilde z(t;v)\overline{z(t;\bar u)}dt\Bigr.
\Bigl. +(Q_1^{-1}\mathbf S^+(\tilde z(\cdot;v)), \mathbf
S^+(z(\cdot;\bar u)))_{\mathbb C^m}+(Q_0^{-1}v,\bar u)_{H_0}
\end{equation}
is a linear continuous functional in $V.$

Prove, for example, the continuity of form (\ref{rep2v1}), that
is, the inequality
\begin{equation}\label{rep5v1}
\pi(v,w)\leq c  \|v\|_{H_0}  \|w\|_{H_0} \quad\forall v,w\in
V,\,\,c=\mbox{const}
\end{equation}
(the continuity of linear functional $L(v)$ can be proved in a
similar manner). Using estimate (\ref{cont1v1}) and the
Cauchy--Bunyakovsky inequality, we have
$$
|\pi(v,w)| \leq\left(\int_a^bQ^{-1}\tilde z(t;v)\overline{\tilde
z(t;v)}dt\right)^{1/2}\left(\int_a^bQ^{-1}\tilde
z(t;w)\overline{\tilde z(t;w)}dt\right)^{1/2}
$$
$$
+\left(Q_1^{-1}\mathbf S^+(\tilde z(\cdot;v)), \mathbf S^+(\tilde
z(\cdot;v))\right)_{\mathbb C^m}^{1/2}\left(Q_1^{-1}\mathbf
S^+(\tilde z(\cdot;w)), \mathbf S^+(\tilde
z(\cdot;w))\right)_{\mathbb C^m}^{1/2}
$$
$$
+(Q_0^{-1}v,v)_{H_0}^{1/2}(Q_0^{-1}w,w)_{H_0}^{1/2}
$$
$$
\leq \left(\int_a^bQ^{-1}\tilde z(t;v)\overline{\tilde z(t;v)}dt+
(Q_1^{-1}\mathbf S^+(\tilde z(\cdot;v)), \mathbf S^+(\tilde
z(\cdot;v)))_{\mathbb C^m}+(Q_0^{-1}v,v)_{H_0}\right)^{1/2}
$$
$$
\times \left(\int_a^bQ^{-1}\tilde z(t;w)\overline{\tilde
z(t;w)}dt+ (Q_1^{-1}\mathbf S^+(\tilde z(\cdot;w)), \mathbf
S^+(\tilde z(\cdot;w)))_{\mathbb
C^m}+(Q_0^{-1}w,w)_{H_0}\right)^{1/2}
$$
\begin{multline*}
\leq \Biggl\{\left(\int_a^b|Q^{-1}\tilde z(t;v)|^2dt\right)^{1/2}
\left(\int_a^b|\tilde z(t;v)|^2dt\right)^{1/2}\Biggr.\\ \Biggl.+
\|Q_1^{-1}\mathbf S^+(\tilde z(\cdot;v))\|_{\mathbb C^m}\|\mathbf
S^+(\tilde z(\cdot;v))\|_{\mathbb
C^m}+\|Q_0^{-1}v\|_{H_0}\|v\|_{H_0}\Biggr\}^{1/2}
\end{multline*}
\begin{multline*}
\times \{\left(\int_a^b|Q^{-1}\tilde z(t;w)|^2dt\right)^{1/2}
\left(\int_a^b|\tilde z(t;w)|^2dt\right)^{1/2}\Biggr.\\ \Biggl.+
\|Q_1^{-1}\mathbf S^+(\tilde z(\cdot;w))\|_{\mathbb C^m}\|\mathbf
S^+(\tilde z(\cdot;w))\|_{\mathbb
C^m}+\|Q_0^{-1}w\|_{H_0}\|w\|_{H_0}\Biggr\}^{1/2}
\end{multline*}
$$
\leq \max\{\|Q^{-1}\|,\|Q_1^{-1}\|,\|Q_0^{-1}\|\}
\Biggl\{\int_a^b|\tilde z(t;v)|^2dt+ \|\mathbf S^+(\tilde
z(\cdot;v))\|^2_{\mathbb C^m}+\|v\|^2_{H_0}\Biggr\}^{1/2}
$$
$$
\times \Biggl\{\int_a^b|\tilde z(t;w)|^2dt+ \|\mathbf S^+(\tilde
z(\cdot;w))\|^2_{\mathbb C^m}+\|w\|^2_{H_0}\Biggr\}^{1/2}
$$
$$
\leq \max\{\|Q^{-1}\|,\|Q_1^{-1}\|,\|Q_0^{-1}\|\}(K\|v\|^2_{H_0}
+\|v\|^2_{H_0})^{1/2}(K\|w\|^2_{H_0} +\|w\|^2_{H_0})^{1/2}
$$
$$
\leq c \|v\|_{H_0}  \|w\|_{H_0},
$$
where
$$
c=\max\{\|Q^{-1}\|,\|Q_1^{-1}\|,\|Q_0^{-1}\|\}(K+1).
$$
Thus, we have proved inequality (\ref{rep5v1}) and consequently
the continuity of form (\ref{rep2v1}).

Taking into consideration the continuity of (\ref{rep2v1}) and
Remark 1.1 to Theorem 1.1 from \cite{BIBL20} \label{d2v1} we see
that there exists the unique element  $\hat v\in V$ (dependent on
$\bar u$) such that
\begin{equation*}
I_V(\hat v)=\inf_{v \in V}I_V(v), =\inf_{v \in V} I(\bar
u+v)=\inf_{u-\bar u \in V} I(u)= \inf_{u \in \bar u+V}
I(u)=\inf_{u \in U} I(u).
\end{equation*}
Setting $\hat u=\bar u+\hat v$ and using the equality
$$
I_V(\hat v)=I(\bar u+\hat v)=I(\hat u)
$$
we conclude that there exists one and only one element $\hat
u=\bar u+\hat v,$ $\hat u\in U,$ such that functional  $I(u)$
attaints the minimum at  $u\in U$. Therefore, for any $\tau\in R$
and $v\in V$,
\begin{equation}\label{z43v1}
\frac{d}{d\tau}I(\hat u+\tau
v)\Bigl.\Bigr|_{\tau=0}=0\quad\mbox{and}\quad
\frac{d}{d\tau}I(\hat u+i\tau v)\Bigl.\Bigr|_{\tau=0}=0,
\end{equation}
where $i=\sqrt{-1}.$ Since
% $z(t;\hat u+\tau v)$ можна представити
$z(t;\hat u+\tau v)=z(t;\hat u)+\tau\tilde z(t;v),$ where $\tilde
z(t;v)$ is the unique solution to BVP
(\ref{gbk21v1})--(\ref{gbk23v1}) at $u=v$ and $l_0=0,$ from the
first relationship (\ref{z43v1}) we obtain
$$
0=\frac 1{2}\frac{d}{d\tau}I(\hat u+\tau v)|_{\tau=0}
$$
\begin{equation*} =\lim_{\tau\to 0}\frac
1{2\tau}\Bigl\{\Bigl[(Q^{-1}z(\cdot;\hat u+\tau v),z(\cdot;\hat
u+\tau v))_{L^2(a,b)}
%\Bigr. \Bigl.
-(Q^{-1}z(\cdot;\hat
u),z(\cdot;\hat u))_{L^2(a,b)}\Bigr]\Bigr.
\end{equation*}
$$
%\begin{multline*}
+\Bigl[(Q_1^{-1}(\mathbf S^+(z(\cdot;\hat u+\tau v))),\mathbf
S^+(z(\cdot;\hat u+\tau v)))_{\mathbb C^m}-\Bigr.
\Bigl.(Q_1^{-1}(\mathbf S^+(z(\cdot;\hat u))),\mathbf
S^+(z(\cdot;\hat u)))_{\mathbb C^m}\Bigr]
%\end{multline*}\\[-30pt]
%$$
$$
$$
\Bigl.+\left[Q^{-1}_0(\hat u+\tau v),\hat u+\tau v)_{H_0}-(Q^{-1}_0\hat
u,\hat u)_{H_0}\right]\Bigr\}
$$
$$
=\mbox{Re}\{(Q^{-1}z(\cdot;\hat u),\tilde z(\cdot;v
))_{L^2(a,b)}+(Q_1^{-1}(\mathbf S^+(z(\cdot;\hat u))),\mathbf
S^+(\tilde z(\cdot;v)))_{\mathbb C^m}+(Q_0^{-1}\hat u,v)_{H_0}\}.
$$
Similarly, taking into account that $z(t;\hat u+i\tau v)=z(t;\hat
u)+i\tau\tilde z(t;v),$
$$
0=\frac 1{2}\frac{d}{d\tau}I(\hat u+i\tau v)|_{\tau=0}
$$
$$
=\mbox{Im}\{(Q^{-1}z(\cdot;\hat u),\tilde z(\cdot;v
))_{L^2(a,b)}+(Q_1^{-1}(\mathbf S^+(z(\cdot;\hat u))),\mathbf
S^+(\tilde z(\cdot;v)))_{\mathbb C^m}\!+\!(Q_0^{-1}\hat
u,v)_{H_0}\},
$$
which yields
\begin{equation}\label{vcv1}
(Q^{-1}z(\cdot;\hat u),\tilde z(\cdot;v ))_{L^2(a,b)}+\sum_{j=1}^m
Q_1^{-1}(\mathbf S^+(z(\cdot;\hat u)))_{j}\overline{\mathbf
S_j^+(\tilde z(\cdot;v))}+(Q_0^{-1}\hat u,v)_{H_0}=0.
\end{equation}
Let $p(t)$ be a solution to the BVP\footnote{Relationship
(\ref{sgb30'fv1}) coincides with the solvability condition for
this problem by virtue of (\ref{skk5v1}).}
\begin{equation}\label{1vsgbk23'v1}
Lp(t)=Q^{-1}z(t;\hat u)\quad \mbox{on}\quad (a,b),
\end{equation}
\begin{equation}\label{1vsllk1ykv1}
B_{j}(p)=Q_1^{-1}(\mathbf S^+(z(\cdot,\hat u)))_{j},\quad
j=\overline{1,m}.
\end{equation}
Then the sum of the first two terms on the left-hand side of
(\ref{vcv1}) can be written, in view of Green's formula
(\ref{greenv1}), in the form
$$
(Q^{-1}z(\cdot;\hat u),\tilde z(\cdot;v ))_{L^2(a,b)}+\sum_{j=1}^m
Q_1^{-1}(\mathbf S^+(z(\cdot;\hat u)))_{j}\overline{\mathbf
S_j^+(\tilde z(\cdot;v))}
$$
$$
=\int_a^b Lp(t)\overline{\tilde z(t;v)}\,dt+\sum_{j=1}^m B_{j}(p)
\overline{\mathbf S_j^+(\tilde z(\cdot;v))}=\int_a^b
p(t)\overline{L^+\tilde z(t;v)}\,dt
$$
$$
=-\int_a^b p(t)\overline{C^*J_{H_0}v(t)}\,dt
=-<Cp,J_{H_0}v>_{H_0\times H_0'}=-(Cp,v)_{H_0}.
$$
From the latter equality and formula (\ref{vcv1}), it follows that
for any $v\in V$,
\begin{equation}\label{gfklhv1}
(Q_0^{-1}\hat u-Cp,v)_{H_0}=0.
\end{equation}
Let us show that in the set of solutions to problem
(\ref{1vsgbk23'v1}), (\ref{1vsllk1ykv1}) there is only one,
$p(t),$ for which
\begin{equation}\label{1gfk2v1}
Q_0^{-1}\hat u-Cp\in V.
\end{equation}
Indeed, condition (\ref{1gfk2v1}) means that for any $1\leq i\leq
n-r$ the equalities
\begin{equation}\label{1gfk3v1}
\int_a^b\varphi_i(t)\overline{C^*J_{H_0}(Q_0^{-1}\hat
u-Cp)(t)}\,dt=0
\end{equation}
hold. Since general solution $p(t)$ to BVP\label{reppv1}
(\ref{1vsgbk23'v1}), (\ref{1vsllk1ykv1}) has the form
$$
p(t)=\tilde p(t)+\sum_{j=1}^{n-r}a_j\varphi_j(t),
$$
where $\tilde p(t)$ is a particular solution to this problem and
$a_j\in\mathbb C$ $(j=\overline{1,n-r})$ are arbitrary numbers, we
conclude that in line with (\ref{1gfk3v1}), function $p(t)$
satisfies condition (\ref{1gfk2v1}) if $(a_1,\dots,a_{n-r})^T$ is
a solution to the uniquely solvable linear algebraic equation
system
$$
\sum_{i=1}^{n-r}a_i(C\varphi_i,C\varphi_j)_{H_0} =(Q_0^{-1}\hat
u-C\tilde p,C\varphi_j)_{H_0},\quad j=\overline{1,n-r},
$$
where matrix $[(C\varphi_i,C\varphi_j)_{H_0}]_{i,j=1}^{n-r}$ has a
non-zero determinant bacause it is the Gram matrix of the system
of linearly independent elements
$C\varphi_1,\dots,C\varphi_{n-r}$. It is easy to see that the
unique solvability of this system yields the existence of the
unique function
 $p(t)$ that satisfies condition
(\ref{1gfk2v1}) and equations (\ref{1vsgbk23'v1}) and
(\ref{1vsllk1ykv1}).

Setting in (\ref{gfklhv1}) $v=Q_0^{-1}\hat u-Cp$
%%§§
we have $Q_0^{-1}\hat u-Cp=0,$ so that $\hat u=Q_0Cp.$
Substituting the latter into $\int_a^b (l_0(t)-C^*J_{H_0}\hat
u(t)) \overline{\varphi_0(t)}dt=0$ and denoting $z(t;\hat
u)=:z(t),$ we see that $z(t)$ and $p(t)$ satisfy system
(\ref{llkyfv1})--(\ref{sgb25'fv1});  the unique solvability of
this system follows from the uniqueness of element $\hat u.$

Prove now that $\sigma(\varphi)\leq\sigma(\hat u,\hat
c)=l(p)^{1/2}.$ Substituting $\hat u=Q_0Cp$ into $I(\hat u)$ and
taking into account the designation $z(t)=z(t;\hat u),$ we obtain
$$
I(\hat u)=\int_a^b Q^{-1}z(t)\overline{z(t)}\,dt+(Q_1^{-1} \mathbf
S^+(z),\mathbf S^+(z))_{C^m}
$$
$$
+(Cp,Q_0Cp)_{H_0}=\int_a^bLp(t)\overline{z(t)}\,dt
$$
$$
+ \sum_{j=1}^m(Q_1^{-1}\mathbf
S^+(z))_j(\overline{S_j^+(z)})+(Cp,Q_0Cp)_{H_0}
$$
$$
=\int_a^bLp(t)\overline{z(t)}\,dt +\sum_{j=1}^m
B_j(p)\overline{S_j^+(z)}+(Cp,Q_0Cp)_{H_0}
$$
\begin{equation}\label{ww1gv1}
=\int_a^bp(t)\overline{L^+z(t)}\,dt+\sum_{j=1}^{2n-m}S_j(p)
\overline{B_j^+(z)} +(Cp,Q_0Cp)_{H_0}.
\end{equation}
For the first term in (\ref{ww1gv1}) we have
$$
\int_a^bp(t)\overline{L^+z(t)}\,dt
=\int_a^bp(t)\overline{l_0(t)}\,dt
-\int_a^bp(t)(\overline{C^*J_{H_0}Q_0Cp)(t)})\,dt
$$
$$
=\int_a^bp(t)\overline{l_0(t)}\,dt-<Cp,J_{H_0}Q_0Cp>_{H_0\times
H_0'}.
$$
The later equality together with (\ref{ww1gv1}) yield $I(\hat
u)=l(p).$ The theorem is proved.
\end{proof}

\begin{pred}\label{th4v1}
The minimax estimate of $l(\varphi)$ has the form
$$
\widehat{\widehat {l(\varphi)}}=l(\hat\varphi),
$$
where function $\hat\varphi$ is determined from the solution to
the problem
\begin{equation}\label{ugb42hv1}
L^+\hat p(t)=C^*J_{H_0}Q_0(y-C\hat\varphi)(t)\quad
\mbox{on}\quad (a,b),
\end{equation}
\begin{equation}\label{ugb422hv1}
B_j^+(\hat p)=0,\quad j=\overline{1,2n-m},
\end{equation}
\begin{equation}\label{ugb44hv1}
\int_a^bQ^{-1}\hat p(t)\overline{\psi_i(t)}\, dt+ (Q_1^{-1}\mathbf
S^+(\hat p),\mathbf S^+(\psi_i))_{\mathbb C^m }=0, \quad
i=\overline{1,m-r},
\end{equation}
\begin{equation}\label{ugb46hv1}
L\hat \varphi(t)=Q^{-1}\hat p(t)+f^{(0)}(t)\quad \mbox{on}\quad
(a,b),
\end{equation}
\begin{equation}\label{usllk1yhv1}
B_{j}(\hat \varphi)=Q_1^{-1}\mathbf S^+(\hat
p)_{j}+\alpha_{j}^{(0)},\quad
 j=\overline{1,m},
\end{equation}
\begin{equation}\label{ugb48hv1}
\int_a^bC^*J_{H_0}Q_0\left(y-C\hat \varphi\right)(t)
\overline{\varphi_{i}(t)}\,dt=0,\quad i=\overline{1,n-r}.
\end{equation}
Problem \eqref{ugb42hv1}--\eqref{ugb48hv1} is uniquely solvable.
\end{pred}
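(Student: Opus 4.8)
The plan is to obtain this representation from Theorem~\ref{th3v1} in the same way that Theorem~\ref{tm2} was obtained from Theorem~\ref{tm1}. By Theorem~\ref{th3v1} the minimax estimate is already known to be $\widehat{\widehat{l(\varphi)}}=(y,Q_0Cp)_{H_0}+\hat c$, where $p,z$ solve \eqref{llkyfv1}--\eqref{sgb25'fv1} and, by \eqref{mjv1}, $\hat c=\int_a^b\overline{z(t)}f^{(0)}(t)\,dt+\sum_{i=1}^m\overline{S_i^+(z)}\alpha_i^{(0)}$. It therefore remains to rewrite the scalar $(y,Q_0Cp)_{H_0}$ in terms of the observation-driven pair $(\hat p,\hat\varphi)$ governed by \eqref{ugb42hv1}--\eqref{ugb48hv1}, and to prove that the latter system is well posed.

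First I would establish unique solvability of \eqref{ugb42hv1}--\eqref{ugb48hv1}. Its coupling is structurally identical to that of \eqref{llkyfv1}--\eqref{sgb25'fv1}, so I would mimic the solvability argument of Theorem~\ref{th3v1}: the adjoint BVP \eqref{ugb42hv1}--\eqref{ugb422hv1} for $\hat p$ is solvable precisely under the compatibility condition \eqref{skk6v1}, which here reads \eqref{ugb48hv1}, while the direct BVP \eqref{ugb46hv1}--\eqref{usllk1yhv1} for $\hat\varphi$ is solvable precisely under \eqref{skk5v1}, which (using that $(f^{(0)},\alpha^{(0)})$ already satisfies it) reduces to \eqref{ugb44hv1}. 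The remaining freedom in the null spaces $N(A^+_{B^+})$ and $N(A_B)$ is removed by \eqref{ugb44hv1} and \eqref{ugb48hv1}, the associated coefficient matrix being the positive definite matrix $[a_{ij}]$ of \eqref{a_ijv1} from Lemma~\ref{lemma2v1}; equivalently, one may reduce the whole system to an auxiliary optimal control problem with a dual quadratic cost (as in the passage preceding Theorem~\ref{tm2} around \eqref{tk}) and invoke the existence--uniqueness theorem of \cite{BIBL20} for its strictly convex coercive functional.

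For the identity itself I would use the adjoint relations for $C^*$ and $J_{H_0}$ to write $(y,Q_0Cp)_{H_0}=\int_a^b\bigl(C^*J_{H_0}Q_0y\bigr)(t)\,\overline{p(t)}\,dt$, and then substitute \eqref{ugb42hv1} in the form $C^*J_{H_0}Q_0y=L^+\hat p+C^*J_{H_0}Q_0C\hat\varphi$. The first resulting piece $\int_a^b(L^+\hat p)\overline{p}$ is transformed by Green's formula \eqref{greenv1} for the pair $(p,\hat p)$, using $B_j^+(\hat p)=0$, $Lp=Q^{-1}z$ and $B_j(p)=Q_1^{-1}\mathbf S^+(z)_j$; it becomes $\int_a^bQ^{-1}\hat p\,\overline{z}+(Q_1^{-1}\mathbf S^+(\hat p),\mathbf S^+(z))_{\mathbb C^m}$ after invoking the Hermitian symmetry of $Q^{-1}$ and $Q_1^{-1}$. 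The second piece equals $(C\hat\varphi,Q_0Cp)_{H_0}=\int_a^b\hat\varphi\,\overline{l_0-L^+z}$ by the $z$-equation $L^+z=l_0-C^*J_{H_0}Q_0Cp$; applying \eqref{greenv1} to the pair $(\hat\varphi,z)$ with $B_j^+(z)=0$, $L\hat\varphi=Q^{-1}\hat p+f^{(0)}$ and $B_j(\hat\varphi)=Q_1^{-1}\mathbf S^+(\hat p)_j+\alpha_j^{(0)}$ turns it into $l(\hat\varphi)-\int_a^bQ^{-1}\hat p\,\overline{z}-(Q_1^{-1}\mathbf S^+(\hat p),\mathbf S^+(z))_{\mathbb C^m}-\hat c$. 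Adding the two pieces, the quadratic $Q^{-1}$- and $Q_1^{-1}$-terms cancel and $(y,Q_0Cp)_{H_0}=l(\hat\varphi)-\hat c$, whence $\widehat{\widehat{l(\varphi)}}=l(\hat\varphi)$.

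I expect the main obstacle to lie not in this cancellation, which is forced by the Hermitian symmetry of $Q^{-1},Q_1^{-1},Q_0$, but in the well-posedness half: to justify that the nonlocal coupled system \eqref{ugb42hv1}--\eqref{ugb48hv1} has a unique solution one must reproduce, in the present complex setting with general non-separated forms $B_j,B_j^+,S_j,S_j^+$, the full Noether-operator and normal-solvability apparatus of \cite{Krein} together with the continuous-dependence estimate used in Theorem~\ref{th3v1}, and then track the boundary forms through the two applications of \eqref{greenv1} without sign or conjugation errors. Verifying that the inhomogeneous $f^{(0)},\alpha^{(0)}$ contributions reassemble exactly into $\hat c$, rather than merely up to the null-space ambiguity, is the delicate bookkeeping point.
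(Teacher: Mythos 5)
Your proposal is correct and takes essentially the same route as the paper: unique solvability is obtained by reducing \eqref{ugb42hv1}--\eqref{ugb48hv1} to an auxiliary optimal control problem (with cost containing $(Q_0^{-1}u,u)_{H_0}$ and the shifted quadratic terms) and repeating the argument of Theorem \ref{th3v1}, while the identity $\widehat{\widehat{l(\varphi)}}=l(\hat\varphi)$ follows by substituting $\hat u=Q_0Cp$, splitting $C^*J_{H_0}Q_0y=L^+\hat p+C^*J_{H_0}Q_0C\hat\varphi$, and applying Green's formula \eqref{greenv1} to the pairs $(p,\hat p)$ and $(\hat\varphi,z)$ exactly as you describe. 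The only difference is bookkeeping: the paper carries the $f^{(0)},\alpha^{(0)}$ contributions (i.e.\ $\hat c$) through the Green-formula chain from the outset, whereas you keep the two pieces separate and observe the cancellation of the $Q^{-1}$- and $Q_1^{-1}$-terms at the end; both orderings are valid.
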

\begin{proof}
Consider the problem of optimal control of the equation system
\begin{equation}\label{gbbk21gv1}
L^+\hat
p(t;u)=-(C^*J_{H_0}u)(t)+(C^*J_{H_0}Q_0y)(t)\quad
\mbox{on}\quad (a,b),
\end{equation}
\begin{equation}\label{gbbk22gv1}
 B_j^+(\hat p(\cdot;u))=0\quad  (j=1,\ldots,m),
\end{equation}
\begin{equation}\label{gbb23gv1}
\int_a^b Q^{-1}\hat p(t;u)\overline{\psi_i(t)} dt+
(Q_1^{-1}\mathbf S^+(\hat p(\cdot;u)),\mathbf
S^+(\psi_i))_{\mathbb C^m}=0,\quad i=\overline{1,m-r},
\end{equation}
with the cost function
\begin{multline}\label{gb24'gv1}
I(u)=\int_a^b Q^{-1}(\hat p(t;u)+Qf^{(0)}(t))\overline{ (\hat
p(t;u)+Qf^{(0)}(t))}dt \\+(Q_1^{-1}(\mathbf S^+(\hat
p(\cdot;u))+Q_1\alpha^{(0)}), \mathbf S^+(\hat
p(\cdot;u))+Q_1\alpha^{(0)})_{\mathbb C^m}+
(Q_0^{-1}u,u)_{H_0}\!\to \inf_{u\in \tilde U},
\end{multline}
where $ \tilde U=\{u\in H_0:
\int_a^b(C^*J_{H_0}Q_0y)(t) -C^*J_{H_0}u(t))
\overline{\varphi_0(t)}dt=0 \}$ for arbitrary solutions
$\varphi_0(t)$ of homogeneous BVP (\ref{skkv1}), (\ref{skk1v1}).
%\}.

The form of functional $I(u)$ and the reasoning contained in the
proof of Theorem 2.1 suggest the existence of the unique element
$\hat u\in \tilde U$ such that
$$
I(\hat u)= \inf_{u\in \tilde U} I(u).
$$
Next, finding $\hat \varphi(t)$ as the unique solution to the BVP
$$
L\hat \varphi(t)=Q^{-1}\hat p(t;\hat u)+f^{(0)}(t)\quad
\mbox{on}\quad (a,b),
$$
$$
B_{j}(\hat \varphi)=Q_1^{-1}\mathbf S^+(\hat p(\cdot;\hat
u)_{j}+\alpha_{j}^{(0)},\quad
 j=\overline{1,m},
$$
$$
\int_a^bC^*J_{H_0}Q_0\left(y-C\hat \varphi\right)(t)
\overline{\varphi_{i}(t)}\,dt=0,\quad i=\overline{1,n-r},
$$
and denoting $\hat p(t)=\hat p(t;\hat u),$ we conclude, repeating
virtually the proof of Theorem 2.1,  that problem
(\ref{ugb42hv1})--(\ref{ugb48hv1}) is uniquely solvable.

Now let us prove the representation  $\widehat{\widehat
{l(\varphi)}}=l(\hat \varphi).$ Substituting expression
(\ref{mjv1}) for $\hat u$ and $\hat c$ into (\ref{minijv1}) and
taking into consideration relationships
(\ref{ugb42hv1})--(\ref{ugb44hv1}), we obtain
$$
\widehat{\widehat {l(\varphi)}}=(y,\hat
u)_{H_0}+\hat c =(y,Q_0Cp)_{H_0}+\hat c
=\overline{(Cp,Q_0y)_{H_0}}+\hat c
$$
$$
=\overline{<Cp,J_{H_0}Q_0y>_{H_0\times H_0'}}
=\overline{(p,C^*J_{H_0}Q_0y)_{L^2(a,b)}}+\hat c
$$
$$
=\overline{\int_a^bp(t)\overline{C^*J_{H_0}Q_0y(t)}\,dt
}+\hat c =\overline{\int_a^bp(t) \overline{L^+\hat p(t)}\,dt
}+\int_a^b\overline{z(t)} f^{(0)}(t)dt
$$
\begin{equation}\label{tranv1}
 +\sum_{j=1}^m
\overline{S_j^+(z)}\alpha_j^{(0)} +\overline{\int_a^bp(t)
\overline{C^*J_{H_0}Q_0C\hat\varphi(t)}\,dt}.
\end{equation}
Transform the sum of the first three terms on the right-hand side
of this equality using Green's formula (\ref{greenv1}) and
equalities (\ref{llkyfv1})--(\ref{sgb25'fv1}) and
(\ref{ugb46hv1})--(\ref{ugb48hv1}). We have
$$
\overline{\int_a^bp(t) \overline{L^+\hat p(t)}\,dt
}+\int_a^b\overline{z(t)} f^{(0)}(t)dt +\sum_{j=1}^m
\overline{S_j^+(z)}\alpha_j^{(0)}
$$
$$
=\overline{\int_a^b Lp(t) \overline{\hat p(t)}\,dt} +\sum_{j=1}^m
\overline{B_j(p)\overline{S_j^+(\hat p(t))}}
$$
$$
+\int_a^b\overline{z(t)} f^{(0)}(t)dt +\sum_{j=1}^m
\overline{S_j^+(z)}\alpha_j^{(0)}
$$
$$
=\overline{\int_a^b Q^{-1}z(t)\overline{\hat p(t)}\,dt }
+\sum_{j=1}^m \overline{Q_1^{-1}\mathbf
S^+(z)_j\overline{S_j^+(\hat p)}}
$$
$$
+\int_a^b\overline{z(t)} f^{(0)}(t)dt +\sum_{j=1}^m
\overline{S_j^+(z)}\alpha_j^{(0)}
$$
$$
=\overline{\int_a^b Q^{-1}z(t)\overline{\hat p(t)}\,dt }
+(\overline{Q_1^{-1}\mathbf S^+(z),\mathbf S^+(\hat p))_{\mathbb
C^m}}
$$
$$
+\int_a^b\overline{z(t)} f^{(0)}(t)dt +\sum_{j=1}^m \overline {
S_j^+(z)}\alpha_j^{(0)}
$$
$$
=\overline{\int_a^b z(t)\overline{Q^{-1}\hat p(t)}\,dt }
+(\overline{\mathbf S^+(z),Q_1^{-1}\mathbf S^+(\hat p))_{\mathbb
C^m}}
$$
$$
+\int_a^b\overline{z(t)} f^{(0)}(t)dt +\sum_{j=1}^m
\overline{S_j^+(z)}\alpha_j^{(0)}
$$
$$
=\int_a^b \overline{z(t)}Q^{-1}\hat p(t)\,dt
+\sum_{j=1}^m\overline{S_j^+(z)\overline{(Q_1^{-1}\mathbf S^+(\hat
p))_j}}
$$
$$
+\int_a^b\overline{z(t)} f^{(0)}(t)dt +\sum_{j=1}^m
\overline{S_j^+(z)}\alpha_j^{(0)}
$$
$$
=\int_a^b\overline{z(t)} Q^{-1}\hat p(t)dt +\sum_{j=1}^m\overline{
S_j^+(z)}(Q_1^{-1}\mathbf S^+(\hat p))_j
$$
$$
+\int_a^b\overline{z(t)} f^{(0)}(t)dt +\sum_{j=1}^m
\overline{S_j^+(z)}\alpha_j^{(0)}
$$
$$
=\int_a^b\overline{z(t)} (Q^{-1}\hat p(t)+f^{(0)}(t))dt
+\sum_{j=1}^m\overline{ S_j^+(z)}(Q_1^{-1}\mathbf S^+(\hat
p))_j+\alpha_j^{(0)})
$$
$$
=\int_a^bL\hat \varphi(t)\overline{z(t)}dt+\sum_{j=1}^mB_j(\hat
\varphi)\overline{ S_j^+(z)}=\int_a^b\hat
\varphi(t)\overline{L^+z(t)}dt
$$
$$
=l(\hat \varphi)-\int_a^b\hat
\varphi(t)\overline{C^*J_{H_0}Q_0Cp(t)}dt =l(\hat \varphi)-<C\hat
\varphi,J_{H_0}Q_0Cp>_{H_0\times H_0'}
$$
$$
=l(\hat \varphi)-(C\hat \varphi,Q_0Cp)_{H_0}=l(\hat
\varphi)-(Q_0C\hat \varphi,Cp)_{H_0} =l(\hat
\varphi)-\overline{(Cp,Q_0C\hat \varphi)_{H_0}}
$$
$$
=l(\hat \varphi)-\overline{<Cp,J_{H_0}Q_0C\hat \varphi>_{H_0\times
H_0'}} =l(\hat \varphi)-\overline{(p,C^*J_{H_0}Q_0C\hat
\varphi)_{L^2(a,b)}}
$$
$$
=l(\hat
\varphi)-\overline{\int_a^bp(t)\overline{C^*J_{H_0}Q_0C\hat
\varphi(t)}dt }.
$$
The latter equality together with  (\ref{tranv1}) prove the
sought-for representation.
\end{proof}

\begin{predlll} Function $\hat\varphi(t)$ can be taken as an estimate
of solution  $\varphi(t)$ to BVP \eqref{skkv1}, \eqref{skk1v1}
which is observed.
\end{predlll}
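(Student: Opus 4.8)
The plan is to deduce the corollary directly from Theorem \ref{th4v1}, whose entire content is the identity $\widehat{\widehat{l(\varphi)}}=l(\hat\varphi)$ for the functional $l(\varphi)=\int_a^b\overline{l_0(t)}\varphi(t)\,dt$. The decisive observation on which everything rests is that the system \eqref{ugb42hv1}--\eqref{ugb48hv1} determining $\hat\varphi$ does not contain the weight $l_0$ anywhere: the right-hand sides and boundary data of the equations for $\hat p$ and $\hat\varphi$ are assembled only from the observation $y$, the operators $C,Q,Q_0,Q_1$, the forms $S_j^+$, and the centres $f^{(0)},\alpha^{(0)}$. Consequently the function $\hat\varphi$ is one and the same for every choice of $l_0\in L^2(a,b)$, and this $l_0$-independence is the single fact I would verify explicitly.

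First I would fix an arbitrary $l_0\in L^2(a,b)$ and invoke Theorem \ref{th4v1} to conclude that $l(\hat\varphi)=\int_a^b\overline{l_0(t)}\hat\varphi(t)\,dt$ is the minimax estimate of $\int_a^b\overline{l_0(t)}\varphi(t)\,dt$, with the associated error $\sigma=l(p)^{1/2}$ from Theorem \ref{th3v1}. Since, by the remark above, $\hat\varphi$ does not depend on $l_0$, the single function $\hat\varphi$ simultaneously furnishes the optimal estimate, linear in $y$, of every functional $l_0\mapsto\int_a^b\overline{l_0(t)}\varphi(t)\,dt$.

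Then I would note that, as $l_0$ ranges over $L^2(a,b)$, these functionals determine $\varphi$ uniquely: letting $l_0$ run through a complete system in $L^2(a,b)$, or through approximations of point masses, recovers all local averages of $\varphi$ and hence $\varphi$ itself. Because the same $\hat\varphi$ is optimal for all of them at once, it is the natural object to declare an estimate of the whole state $\varphi$, which is precisely the assertion of the corollary. There is essentially no analytic obstacle here; the argument is the exact analogue of the corollary following Theorem \ref{tm2}, established there by the identical reasoning, so apart from recording the $l_0$-independence of \eqref{ugb42hv1}--\eqref{ugb48hv1} the proof is a direct restatement of Theorems \ref{th3v1} and \ref{th4v1}.
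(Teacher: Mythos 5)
Your proof is correct and matches the paper's (implicit) reasoning: the paper states this corollary without separate proof as an immediate consequence of Theorem \ref{th4v1}, the point being precisely the one you isolate — the system \eqref{ugb42hv1}--\eqref{ugb48hv1} defining $\hat\varphi$ involves only $y$, $C$, $Q$, $Q_0$, $Q_1$, $f^{(0)}$, $\alpha^{(0)}$ and the bases of $N(A_B)$, $N(A^+_{B^+})$, but not $l_0$, so the single function $\hat\varphi$ yields the minimax estimate $l(\hat\varphi)$ of $l(\varphi)$ simultaneously for every $l_0\in L^2(a,b)$. This is the same justification underlying the analogous corollary following Theorem \ref{tm2}.
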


\begin{predllll} Statements similar to Theorems {\rm\ref{th3v1}} and {\rm\ref{th4v1}}
can be obtained in the case when  errors $\eta$ in observations
\eqref{skk7v1} are deterministic elements with values in space
$H_0.$
\end{predllll}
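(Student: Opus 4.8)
The plan is to carry the three-part argument of Lemma~\ref{lemma2v1} and Theorems~\ref{th3v1}, \ref{th4v1} over almost verbatim, isolating the single place where the probabilistic nature of $\eta$ was actually used. Throughout, the observation is still $y=C\varphi+\eta$ and the estimate still has the form $\widehat{l(\varphi)}=(y,u)_{H_0}+c$, but now $\eta$ is a deterministic element of the set $G_1=\{\eta\in H_0:(Q_0\eta,\eta)_{H_0}\leq 1\}$ and the quantity to be made minimax is $\sup_{\tilde F\in G_0,\ \eta\in G_1}|l(\tilde\varphi)-\widehat{l(\tilde\varphi)}|^2$, with no expectation. First I would reproduce, unchanged, the reduction from Lemma~\ref{lemma2v1}: introduce the same auxiliary state $z(\cdot;u)$ solving \eqref{gbk21v1}--\eqref{gbk23v1}, verify $U\neq\emptyset$ exactly as before, and use Green's formula to reach, for $u\in U$, the identity $l(\tilde\varphi)-\widehat{l(\tilde\varphi)}=(\tilde f,z(\cdot;u))_{L^2(a,b)}+(\tilde\alpha,\mathbf{S}^+(z(\cdot;u)))_{\mathbb C^m}-(\eta,u)_{H_0}+(\text{const})-c$. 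Every step up to this identity is insensitive to whether $\eta$ is random.

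The only genuinely new computation is the worst case over the noise. In the random setting this was handled by the dispersion identity together with $M\eta=0$, which split $M|\cdot|^2$ additively and produced $\sup_{\tilde\eta\in G_1}M|(u,\tilde\eta)_{H_0}|^2=(Q_0^{-1}u,u)_{H_0}$ in \eqref{liehv1}. For deterministic $\eta$ the same bound on the individual term holds, since by the generalized Cauchy--Bunyakovsky inequality $|(u,\eta)_{H_0}|^2\leq (Q_0^{-1}u,u)_{H_0}(Q_0\eta,\eta)_{H_0}\leq (Q_0^{-1}u,u)_{H_0}$, with equality at $\eta=Q_0^{-1}u\,[(Q_0^{-1}u,u)_{H_0}]^{-1/2}$, now a legitimate deterministic maximizer rather than a random element. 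Writing $q(u)=[(Q^{-1}z(\cdot;u),z(\cdot;u))_{L^2(a,b)}+(Q_1^{-1}\mathbf{S}^+(z(\cdot;u)),\mathbf{S}^+(z(\cdot;u)))_{\mathbb C^m}]^{1/2}$ for the parameter part (controlled by \eqref{skk11v1}, as in the bound \eqref{ex2hv1}) and $r(u)=[(Q_0^{-1}u,u)_{H_0}]^{1/2}$ for the noise part, the infimum over $c$ removes the constant summand.

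The hard part, and the real point of the remark, is that these two contributions no longer separate additively. Because $\eta$ is no longer zero-mean one cannot invoke the dispersion identity; instead $A-A_0:=(\tilde f-f^{(0)},z(\cdot;u))_{L^2(a,b)}+(\tilde\alpha-\alpha^{(0)},\mathbf{S}^+(z(\cdot;u)))_{\mathbb C^m}$ sweeps out a full complex disk of radius $q(u)$ as $\tilde F$ ranges over $G_0$ (a common unit phase factor preserves both \eqref{skk11v1} and the homogeneous solvability condition \eqref{skk10v1}), while $(\eta,u)_{H_0}$ sweeps a disk of radius $r(u)$; aligning phases gives $\inf_c\sup_{\tilde F,\eta}|\cdots|^2=(q(u)+r(u))^2$ rather than the $q(u)^2+r(u)^2$ of the random case. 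Thus the reduced problem is the minimization over the closed convex set $U$ of the convex, coercive functional $u\mapsto (q(u)+r(u))^2$. I would then obtain a minimizer $\hat u$ from coercivity (since $r(u)^2\geq c\|u\|_{H_0}^2$) together with lower semicontinuity and convexity, differentiate the optimality condition along directions $v\in V$ exactly as in the passage to \eqref{vcv1}, and introduce the same adjoint $p$ through \eqref{sgb23'fv1}--\eqref{slk1yfv1}. This reproduces coupled systems structurally identical to \eqref{llkyfv1}--\eqref{sgb25'fv1} and to \eqref{ugb42hv1}--\eqref{ugb48hv1}, together with the representation $\widehat{\widehat{l(\varphi)}}=l(\hat\varphi)$, the only changes being the weights carried by the cross term and the error, which becomes $\sigma=q(\hat u)+r(\hat u)$. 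If one prefers the identical conclusion $\sigma=l(p)^{1/2}$ of Theorems~\ref{th3v1} and \ref{th4v1}, it suffices to impose the single combined constraint obtained by replacing \eqref{skk11v1} and \eqref{skk12v1} by their sum $\leq 1$, whereupon one application of the generalized Cauchy--Bunyakovsky inequality restores $I(u)=q(u)^2+r(u)^2$ and every step of the two theorems transfers word for word.
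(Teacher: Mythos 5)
Your proposal is correct, and it supplies an argument where the paper gives none: this proposition is stated as a bare remark, with the implicit suggestion that the proofs of Lemma \ref{lemma2v1} and Theorems \ref{th3v1}, \ref{th4v1} simply repeat. You go further by isolating the one step that genuinely uses randomness --- the dispersion identity behind \eqref{ltkkgv1} and \eqref{liehv1}, which needs $M\tilde\eta=0$ to split the cost additively into $q(u)^2+r(u)^2$ --- and by observing that for deterministic $\eta$ with the constraint sets kept separate, the parameter term and the noise term sweep two complex disks of radii $q(u)$ and $r(u)$ whose phases can be aligned, so the reduced functional is $(q(u)+r(u))^2$ instead. Both of your resolutions are sound: direct minimization of $(q+r)^2$ over $U$ (convex and coercive since $r(u)^2\geq c\|u\|^2_{H_0}$; the minimizer is in fact unique, because equality in the triangle inequality for the Hilbert norm $r$ forces proportionality), whose first-order conditions are the paper's systems with scalar reweightings $1/q(\hat u)$ and $1/r(\hat u)$; or replacement of \eqref{skk11v1} and \eqref{skk12v1} by their sum, after which one application of the generalized Cauchy--Bunyakovsky inequality restores $q^2+r^2$ exactly and Theorems \ref{th3v1}, \ref{th4v1} transfer word for word --- presumably the reading intended by ``similar''. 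Two points deserve explicit mention if this is written out: (i) $(q+r)^2$ fails to be differentiable where $q$ or $r$ vanishes, so the derivation of the analogue of \eqref{vcv1} must exclude, or treat separately, the degenerate cases $\hat u=0$ and $z(\cdot;\hat u)\equiv 0$; and (ii) since the weights $1/q(\hat u)$, $1/r(\hat u)$ involve the unknown minimizer, the resulting optimality system is no longer linear as in \eqref{llkyfv1}--\eqref{sgb25'fv1} but a coupled system with two scalar unknowns to be determined self-consistently, which is the real price of keeping $G_0$ and $G_1$ separate.
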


As an example, consider the case  $H_0=\left(L^2(a,b)\right)^N$
and the operator $C:$ $L^2(a,b)\to H_0$ in observations
(\ref{skk7v1}) is defined by the equality
$$
C\varphi(t)=\left(\int_a^bK_1(t,\xi)\varphi(\xi)\,d\xi,
\ldots,\int_a^bK_N(t,\xi)\varphi(\xi)\,d\xi \right)^T,
$$
where kernels $K_j\in L^2(a,b)\times L^2(a,b)$ of the integral
operators
$$
C_j\varphi(t):=\int_a^bK_j(t,\xi)\varphi(\xi)\,d\xi, \quad
j=\overline{1,N},
$$
are assumed to be such that the vector-functions
$$
C\varphi_i(t)=\left(\int_a^bK_1(t,\xi)\varphi_i(\xi)\,d\xi,
\ldots,\int_a^bK_N(t,\xi)\varphi_i(\xi)\,d\xi \right)^T\in
\left(L^2(a,b)\right)^N,
$$
$i=\overline{1,n-r},$ are linearly independent. Observations
(\ref{skk7v1}) take the form
$$
y_i(t)=\int_a^bK_i(t,\xi)
\varphi(\xi)\,d\xi+\eta_i(t) ,\quad i=\overline{1,N},
$$
where $\eta(t):=(\eta_1(t),\ldots,\eta_N(t))\in G_1$ is a random
vector process with components $\eta_j(t)$ which are random
processes with zero expectations and finite second moments and
operator $Q_0$ in condition (\ref{skk12v1}) that specifies set
$G_1$ is identified with an $N\times N$ matrix that has elements
$q_{ij}^{(0)}\in C[a,b].$ Consequently, this condition takes the
form $\int_a^b\mbox{Sp}(Q_0(t)\tilde R(t,t))\,dt\leq 1$, in which
$\tilde R(t,s)=[\tilde b_{i,j}(t,s)]_{i,j=1}^N$ denotes the
unknown correlation matrix of the vector process
$\tilde\eta=(\tilde\eta_1(t),\ldots,\tilde\eta_N(t))$ with the
elements $\tilde b_{i,j}(t,s)=\mathbf
M\tilde\eta_i(t)\overline{\tilde\eta_j(s)}$ and
$\mbox{Sp}(Q_0(t)\tilde R(t,t))$ denotes the trace of matrix
$Q_0(t)\tilde R(t,t).$

It is easy to see now that the operator $C^*:$ $(L^2(a,b))^N\to
L^2(a,b)$ adjoint to $C$ is given by the formula
%\\[-15pt]
$$
(C^*\psi)(t)=(C^*(\psi_1(\cdot),\ldots,\psi_N(\cdot))^T)(t)=
\sum_{j=1}^N\int_a^b\overline{K_j(\xi,t)}\psi_j(\xi)\,d\xi,
$$
%\\[-15pt]
and minimax estimate $\widehat{\widehat {l(\varphi)}}$ has the form
%\\[-8pt]
$$
\widehat{\widehat
{l(\varphi)}}=\sum_{i=1}^N\int_a^by_i(t)
\overline{\hat u_i(t)}\,dt+\hat c,
$$
%\\[-8pt]
where
$$
\hat u_i(t)=\sum_{j=1}^N q_{ij}^{(0)}(t)\int_a^b
K_j(t,\xi)p(\xi)\,d\xi,\quad i=\overline{1,N}.
$$
Equalities (\ref{llkyfv1}) and (\ref{sgb25'fv1}) become
$$
L^+z(t)=l_0(t)- \int_a^b\tilde K(t,\xi)p(\xi)\,d\xi\quad \mbox{on}
\quad (a,b),\eqno(2.35')
$$
%\\[-15pt]
and
%\\[-15pt]
$$
\int_a^b\left(l_0(t)-\tilde K(t,\xi)p(\xi)\,d\xi\right)
\overline{\varphi_{i'}(t)}dt=0 , \,\,
i'=\overline{1,n-r},\eqno(2.40')
$$
where
$$
\tilde K(t,\xi)=\sum_{i=1}^N\sum_{j=1}^N
\int_a^b\overline{K_i(t',t)}q_{ij}^{(0)}(t') K_j(t',\xi)\,dt'.
$$

\section{Minimax estimation of functionals from
right-hand sides of equations that enter the problem statements.
Representations for minimax estimates and estimation  errors}
An
estimation problem in question can be formulated as follows: to
find the optimal (in a certain sense) estimate of the value of the
functional
\begin{equation}\label{skk8v1}
l(F)=\int_a^b\overline{l_0(t)}f(t)\,dt
+\sum_{j=1}^m\overline{l_j}\alpha_j,
\end{equation}
from observations of the form
\begin{equation}\label{skkk7v1}
y=C\varphi+\eta
\end{equation}
in the class of estimates
\begin{equation}\label{skk9v1}
\widehat {l(F)}=(y,u)_{H_0}+c,
\end{equation}
%\\[-15pt]
linear with respect to observations; here $u$ belongs to Hilbert
space $H_0,$ $c\in \mathbb C,$ $l_0\in L^2(a,b)$ is a given
function, and $l_j\in \mathbb C,$ $j=\overline{1,m}$ are given
numbers; the assumption is that
$F:=(f(\cdot),\alpha)=(f(\cdot),(\alpha_1,\ldots,\alpha_n))\in
G_0$ and the errors  $\eta=\eta(\omega)$ in observations
(\ref{skkk7v1}) belong to $G_1,$ where sets $G_0$ and $G_1$ are
specified, respectively, by (\ref{skk10v1}), (\ref{skk11v1}), and
(\ref{skk12v1}).

\begin{predll}\label{oz2v1}  An estimate $\widehat{\widehat
{l(F)}}=(y,\hat u)_{H_0}+\hat c$ for which an element
$\hat u$ and a constant  $\hat c$
are determined from the condition
$$
\sup_{\tilde F\in G_0,\tilde \eta\in G_1} M|l(\tilde F)-\widehat
{l(F)}|^2\to \inf_{u\in H_0,\,c\in \mathbb C},
$$
where $\widehat {l(\tilde F)} =(\tilde y,u)_{H_0}+c,$ $\tilde
y=C\tilde\varphi+\tilde\eta,$ and $\tilde\varphi$ is any solution
to BVP \eqref{skkv1}, \eqref{skk1v1} at $f(t)=\tilde f(t)$ and
$\alpha_i=\tilde \alpha_i, i=\overline{1,m},$
 will be called a minimax estimate of $l(F).$

The quantity
$
\sigma:=\sup_{\tilde F\in G_0,\tilde \eta\in G_1} \{M|l(\tilde F)-\widehat{\widehat {l(\tilde F)}}|^2\}^{1/2} $
will be called the minimax estimation error of
$l(F)$.
\end{predll}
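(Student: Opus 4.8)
The statement is definitional: it merely fixes the terminology ``minimax estimate'' and ``minimax estimation error'' for the functional $l(F)$, so strictly speaking it carries no logical content requiring proof. What genuinely has to be established — and what the development plainly heads toward, in exact parallel with Definition \ref{oz1v1} and Theorems \ref{th3v1} and \ref{th4v1} of the previous section — is that the infimum in the defining condition is attained at a unique $(\hat u,\hat c)$ and that the resulting estimate admits a closed representation through an auxiliary integro-differential system. The plan is therefore to mirror the three-step scheme of Section 6.3: first reduce the minimax problem to a constrained optimal control problem; then prove existence and uniqueness of the optimal control and derive the optimality system; and finally rewrite the estimate in terms of the state of that system.

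First I would establish the analogue of Lemma \ref{lemma2v1}. Since $\tilde\varphi$ determines the data through $L\tilde\varphi=\tilde f$ and $B_i(\tilde\varphi)=\tilde\alpha_i$, the error $l(\tilde F)-\widehat{l(\tilde F)}$ splits, exactly as on p.~\pageref{page14v1}, into a part linear in the deterministic datum $\tilde F-F^{(0)}$ and a part linear in the noise $\tilde\eta$. Introducing the adjoint state $z(\cdot;u)$ as the solution of $L^+z=l_0-C^*J_{H_0}u$ subject to $B_j^+(z)=0$ together with the orthogonality relations against $\psi_i$ — the same system (\ref{gbk21v1})--(\ref{gbk23v1}) — and applying Green's formula (\ref{greenv1}), the deterministic part becomes $(\tilde f,z)_{L^2(a,b)}+(\tilde\alpha,\mathbf S^+(z))_{\mathbb C^m}$ plus known terms. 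The generalized Cauchy--Bunyakovsky inequality (\ref{skk11v1}) then evaluates $\sup_{\tilde F\in G_0}$ of the squared deterministic part, and the bound (\ref{liehv1}) evaluates $\sup_{\tilde\eta\in G_1}$ of the noise part, yielding a quadratic cost functional $I(u)$ of the same shape as (\ref{gbk24''v1}) but now driven by the new data $(l_0,\mathbf l)$ appearing in the right-hand side $l_0-C^*J_{H_0}u$.

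Next I would invoke the coercivity estimate $\tilde I(v)\geq c\|v\|_{H_0}^2$ and the continuity of the associated Hermitian form, so that Theorem~1.1 and its Remark~1.1 from \cite{BIBL20} deliver a unique minimizer $\hat u$ on the closed affine constraint set $U$. Differentiating $I(\hat u+\tau v)$ and $I(\hat u+i\tau v)$ at $\tau=0$ gives the variational identity; introducing the second adjoint function $p$ through $Lp=Q^{-1}z$ and $B_j(p)=Q_1^{-1}\mathbf S^+(z)_j$, and applying Green's formula once more, forces $\hat u=Q_0Cp$ after selecting, among the solutions of the $p$-problem, the unique one with $Q_0^{-1}\hat u-Cp\in V$ via the Gram-matrix argument of Theorem \ref{th3v1}. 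The representation $\widehat{\widehat{l(F)}}=l(\hat F)$ with $\hat f=L\hat\varphi$ and $\hat\alpha_j=B_j(\hat\varphi)$ then follows from the same chain of Green-formula identities used at the end of the proof of Theorem \ref{th4v1}, and $\sigma^2=I(\hat u)=l(p)$.

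The main obstacle will be bookkeeping the solvability constraint (\ref{skk10v1}) on $G_0$ together with the non-uniqueness of $\varphi$ modulo $N(A_B)$. These two facts are precisely what force the admissible controls into the set $U$ and make the reduction in the first step legitimate: one must check, as in (\ref{ltkkgv1}), that the supremum over $\varphi_0\in N(A_B)$ is finite exactly when $u\in U$, and one must verify that the extremal datum realizing equality in the Cauchy--Bunyakovsky inequality indeed satisfies (\ref{skk10v1}). The remaining analytic ingredients — continuity of the semi-bilinear form and the bound (\ref{cont1v1}) on the solution map $v\mapsto\tilde z(\cdot;v)$ — transfer verbatim from Section 6.3, so I expect no new difficulty beyond this constraint handling.
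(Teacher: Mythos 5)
You are right on the essential point: this statement is a definition (it fixes the terms \emph{minimax estimate} and \emph{minimax estimation error} of $l(F)$), and accordingly the paper supplies no proof for it; none is required.

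The caveat concerns the development you sketch as the ``real content.'' You transplant the machinery of the preceding section (estimation of $l(\varphi)$) verbatim, but for $l(F)$ --- a functional of the right-hand sides rather than of the solution --- the paper's construction in Lemma \ref{lemma1v1} is structurally different, and your version would actually break down. If, as you propose, the adjoint state solved $L^+z=l_0-C^*J_{H_0}u$, then Green's formula applied to $-\int_a^b\tilde\varphi_\bot(t)\overline{C^*J_{H_0}u(t)}\,dt$ would leave behind the term $-\int_a^b\tilde\varphi_\bot(t)\overline{l_0(t)}\,dt$, which depends on the unknown solution and is neither a known quantity nor directly controlled by $G_0$, so the clean reduction fails. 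The paper instead takes $L^+z(t;u)=-C^*J_{H_0}u(t)$ (equation (\ref{gbbk21v1}), with no $l_0$ in the equation), so that after Green's formula the deterministic part of the error becomes $(\tilde f,l_0+z(\cdot;u))_{L^2(a,b)}+(\tilde\alpha,\mathbf l+\mathbf S^+(z(\cdot;u)))_{\mathbb C^m}-c$. Consequently $l_0+z$ and $\mathbf l+\mathbf S^+(z)$ enter the orthogonality conditions (\ref{gbbk23v1}) and the cost (\ref{gbk24'v1}) rather than the differential equation; the admissible set is the linear subspace $V=\{u\in H_0:\int_a^bC^*J_{H_0}u(t)\overline{\varphi_0(t)}\,dt=0\ \ \forall\varphi_0\in N(A_B)\}$, not the affine set $U$ of Lemma \ref{lemma2v1}; the second adjoint problem reads $Lp=Q^{-1}(l_0+z)$, $B_j(p)=Q_1^{-1}(\mathbf l+\mathbf S^+(z))_j$ (see (\ref{sgbk23'v1}), (\ref{sllk1yv1})); and the estimation error is $\sigma=l(\hat{\hat P})^{1/2}$ with $\hat{\hat P}=(Q^{-1}(l_0+z),(Q_1^{-1}(\mathbf l+\mathbf S^+(z))_1,\ldots,Q_1^{-1}(\mathbf l+\mathbf S^+(z))_m)^T)$ (Theorem \ref{th1v1}), not $\sigma^2=l(p)$ as in Theorem \ref{th3v1}. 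In short: your definitional reading of the statement is exactly right, but the ``exact parallel'' you describe would, if carried out literally, solve the wrong estimation problem.
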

%Ясно, що для будь-якої оцінки $l(F),$ маємо
%$$
%\mathbf M|l(F)-\widehat {l(F)}|^2\leq \sup_{\tilde F\in G_0,
%\tilde \eta\in G_1}\,\sup_{\varphi_0\in N(A_B)} \mathbf M\bigcup|l(\tilde
%F)-(y(\tilde \varphi+\varphi_0;\tilde
%\eta),u)_{H_0}-c|^2=:\sigma(u,c),
%$$
%оскільки коли елемент $\varphi_0(x)$ під знаком супремуму пробігає
%множину $N(A_B),$ то функція $\tilde\varphi(t)+\varphi_0(t)$
%пробігає множину усіх розв’язків крайової задачі
%$L\tilde\varphi(t)=\tilde f(t),$ $B_i(\tilde\varphi)=\tilde
%\alpha_i,$ $i=\overline{1,m},$ при фіксованих $\tilde F:=(\tilde
%f(\cdot),\widetilde{\mathbf\alpha})=(\tilde f,(\tilde
%\alpha_1,\ldots,\tilde \alpha_m)^T)\in G_0.$

Using the above results and definitions, formulate and prove the
following statements.
\begin{predl}\label{lemma1v1}
Determination of the minimax estimate of $l(F)$ is
equivalent to the problem of optimal control of the operator
equation system
\begin{equation}\label{gbbk21v1}
L^+z(t;u)=-C^*J_{H_0}u(t)\quad \mbox{on}\quad (a,b),
\end{equation}
\begin{equation}\label{gbbk22v1}
B_j^+(z(\cdot;u))=0 \quad j=\overline{1,2n-m},
\end{equation}
\begin{equation}\label{gbbk23v1}
\int_a^b \!\!\!\!Q^{-1}(l_0(t)+z(t;u))\overline{\psi_i(t)} dt+
(Q_1^{-1}(\mathbf l+\mathbf S^+(z(\cdot;u))),\mathbf
S^+(\psi_i))_{\mathbb C^m}\!\!=0,\,\,\, i=\overline{1,m-r},
\end{equation}
with the cost function
\begin{multline}\label{gbk24'v1}
I(u)=\int_a^bQ^{-1}(l_0(t)+z(t;u))\overline{(l_0(t)+z(t;u))}dt
\\+(Q_1^{-1}(\mathbf l+\mathbf S^+(z(\cdot;u))), \mathbf l+\mathbf
S^+(z(\cdot;u)))_{\mathbb C^m}+ (Q_0^{-1}u,u)_{H_0}\to \inf_{u\in
V},
\end{multline}
where
\begin{equation*}
V=\{u\in H_0: \int_a^b C^*J_{H_0}u(t)
\overline{\varphi_0(t)}dt=0\quad \forall\varphi_0\in N(A_B)\}.
\end{equation*}
\end{predl}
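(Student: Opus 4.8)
The plan is to follow the same scheme as in the proof of Lemma~\ref{lemma2v1}, the only essential change being that the functional now depends on the data $\tilde F$ rather than on $\tilde\varphi$, which makes the admissible control set slightly simpler. First I would record that $V$ is a nonempty closed linear subspace of $H_0$: it contains $0$ and is the intersection of the kernels of the continuous functionals $u\mapsto\int_a^b C^*J_{H_0}u(t)\overline{\varphi_0(t)}\,dt$, $\varphi_0\in N(A_B)$. Next, for each fixed $u\in V$ I would show that problem \eqref{gbbk21v1}--\eqref{gbbk23v1} has a unique solution $z(\cdot;u)$. By the solvability criterion \eqref{skk6v1} the boundary value problem \eqref{gbbk21v1}--\eqref{gbbk22v1} with right-hand side $-C^*J_{H_0}u$ and zero boundary data is solvable precisely when $\int_a^b C^*J_{H_0}u(t)\overline{\varphi_0(t)}\,dt=0$ for every $\varphi_0\in N(A_B)$, i.e. exactly when $u\in V$; its solution is determined up to the subspace $N(A^+_{B^+})$, and the $m-r$ conditions \eqref{gbbk23v1} fix this free component uniquely by the positive-definiteness of the Gram-type matrix $[a_{ij}]$, exactly as on p.~\pageref{sav1}.

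The heart of the argument is the representation of the estimation error. Writing $(C\tilde\varphi,u)_{H_0}=\langle C\tilde\varphi,J_{H_0}u\rangle_{H_0\times H_0'}=(\tilde\varphi,C^*J_{H_0}u)_{L^2(a,b)}$ and applying Green's formula \eqref{greenv1} with $\psi=z(\cdot;u)$, so that the terms $\overline{B_j^+(z(\cdot;u))}$ vanish while $L\tilde\varphi=\tilde f$ and $B_j(\tilde\varphi)=\tilde\alpha_j$, I would obtain
$$
(\tilde\varphi,C^*J_{H_0}u)_{L^2(a,b)}=-(\tilde f,z(\cdot;u))_{L^2(a,b)}-(\tilde\alpha,\mathbf S^+(z(\cdot;u)))_{\mathbb C^m},
$$
and hence
$$
l(\tilde F)-\widehat{l(\tilde F)}=(\tilde f,\,l_0+z(\cdot;u))_{L^2(a,b)}+(\tilde\alpha,\,\mathbf l+\mathbf S^+(z(\cdot;u)))_{\mathbb C^m}-(\tilde\eta,u)_{H_0}-c.
$$
At this point I would observe that replacing $\tilde\varphi$ by $\tilde\varphi+\varphi_0$ with $\varphi_0\in N(A_B)$ leaves $l(\tilde F)$ unchanged but adds $(\varphi_0,C^*J_{H_0}u)_{L^2(a,b)}$ to the estimate, so the supremum over all admissible solutions $\tilde\varphi$ is $+\infty$ unless $u\in V$; this is the intrinsic reason why the control set in \eqref{gbk24'v1} is $V$.

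Finally, since $M\tilde\eta=0$, the variance decomposition $D\xi=M|\xi-M\xi|^2$ used in Lemma~\ref{lemma2v1} splits
$$
M\bigl|l(\tilde F)-\widehat{l(\tilde F)}\bigr|^2=\bigl|(\tilde f,l_0+z)_{L^2(a,b)}+(\tilde\alpha,\mathbf l+\mathbf S^+(z))_{\mathbb C^m}-c\bigr|^2+M\bigl|(\tilde\eta,u)_{H_0}\bigr|^2 .
$$
To the first term I would apply the generalized Cauchy--Bunyakovsky inequality attached to \eqref{skk11v1} and then take the infimum over $c$, obtaining $(Q^{-1}(l_0+z),l_0+z)_{L^2(a,b)}+(Q_1^{-1}(\mathbf l+\mathbf S^+(z)),\mathbf l+\mathbf S^+(z))_{\mathbb C^m}$; to the second term the inequality \eqref{skk12v1} gives $\sup_{\tilde\eta\in G_1}M|(\tilde\eta,u)_{H_0}|^2=(Q_0^{-1}u,u)_{H_0}$, exactly as in Lemma~\ref{lemma2v1}. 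Summing the two contributions reproduces the cost functional \eqref{gbk24'v1}, which proves the asserted equivalence. The delicate point, and the main obstacle, is to verify that the extremal data $\tilde F^{(0)}$ at which the Cauchy--Bunyakovsky bound is attained actually lies in $G_0$, i.e. satisfies the solvability constraint \eqref{skk10v1}; testing this identity against the basis $\{\psi_i\}$ of $N(A^+_{B^+})$ and using that $(f^{(0)},\alpha^{(0)})$ already satisfies \eqref{skk10v1} reduces it precisely to the conditions \eqref{gbbk23v1}, so it is exactly constraint \eqref{gbbk23v1} that guarantees admissibility of the worst-case data, in complete analogy with the proof of Lemma~\ref{lemma2v1}.
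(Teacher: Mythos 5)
Your proposal is correct and follows essentially the same route as the paper's own proof: the same splitting of the mean-square error into a deterministic term plus noise variance, the same use of Green's formula \eqref{greenv1} to rewrite the deterministic term as $(\tilde f,l_0+z(\cdot;u))_{L^2(a,b)}+(\tilde\alpha,\mathbf l+\mathbf S^+(z(\cdot;u)))_{\mathbb C^m}-c$, the same generalized Cauchy--Bunyakovsky argument with admissibility of the extremal data $\tilde F^{(0)}$ secured precisely by constraint \eqref{gbbk23v1}, and the same evaluation $\sup_{\tilde\eta\in G_1}M|(\tilde\eta,u)_{H_0}|^2=(Q_0^{-1}u,u)_{H_0}$. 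The only cosmetic difference is that you apply Green's formula to an arbitrary solution $\tilde\varphi$ and obtain the necessity of $u\in V$ by perturbing with $\varphi_0\in N(A_B)$, whereas the paper first writes $\tilde\varphi=\tilde\varphi_\bot+\varphi_0$ and then imposes finiteness of the supremum; the two orderings are equivalent.
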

\begin{proof}
Note first that set $V$ is is nonempty because any element $\tilde
u\in H_0$ orthogonal to the subspace spanned over vectors
$\{C\varphi_1,\ldots,$ $C\varphi_{n-r}\}$ belongs to $V$ and at
any fixed  $u\in V$function $z(t;u)$ is uniquely determined from
equations (\ref{gbbk21v1})--(\ref{gbbk23v1}). Indeed, condition
$u\in V$ coincides, by virtue of (\ref{skk6v1}), with the
solvability condition of problem
(\ref{gbbk21v1})--(\ref{gbbk22v1}); let $z_0(t;u)\in W^n_2(a,b)$
be a solution to this problem. Then the function
\begin{equation}\label{gbk24gv1}
z(t;u):=z_0(t;u)+\sum_{i=1}^{m-r}c_i\psi_i(t),
\end{equation}
also satisfies (\ref{gbbk21v1})--(\ref{gbbk22v1}) for any
$c_i,\in\mathbb C^1,$ $i=\overline{1,m-r)}.$ Let us prove that
coefficients  $c_i,$ $i=\overline{1,m-r},$ can be chosen so that
this function would also satisfy condition (\ref{gbbk23v1}).
Substituting expression (\ref{gbk24gv1}) for $z(t;u)$ into
(\ref{gbbk23v1}), we obtain a linear algebraic system of $m-r$
equations with $m-r$ unknowns $c_1,\ldots, c_{m-r}:$
\begin{equation} \label{gbk26gv1}
\sum_{i=1}^{m-r}a_{ij}c_i=b_j(u), \quad j=1,\ldots,m-r;
\end{equation}
its matrix $[a_{ij}]_{i,j=1}^{m-r}$ whose elements $a_{ij}$ are
determined according to (\ref{a_ijv1}) is positive definite, thus
$\det [a_{ij}]\neq 0$ (see p. \pageref{sav1}) and elements
$b_j(u)$ are determined from
\begin{multline}\label{b_jgv1}
b_j(u)=-\int_a^b Q^{-1}(l_0(t)+z_0(t;u))\overline{\psi_j(t)} dt\\-
(Q_1^{-1}(\mathbf l+\mathbf S^+(z_0(\cdot;u))),\mathbf
S^+(\psi_j))_{\mathbb C^m},\quad j=1,\ldots,m-r.
\end{multline}
Therefore this system has unique solution $c_1,\ldots,c_{m-r}$ and
problem (\ref{gbbk21v1})--(\ref{gbbk23v1}) is uniquely solvable.

Next, writing a solution  $\tilde \varphi$ to problem
\eqref{skkv1}, \eqref{skk1v1} in the form $\tilde \varphi=\tilde
\varphi_\bot+\varphi_0,$ where $\tilde\varphi_0$ and $\tilde
\varphi_\bot$ are introduced on p. \pageref{page14v1}, and using
the formula
$$
(\tilde y,u)_{H_0}=(C\tilde \varphi,u)_{H_0}+ (\tilde\eta,u)_{H_0}=(C(\tilde
\varphi_\bot+\varphi_0),u)_{H_0} + (\tilde\eta,u)_{H_0}
$$
$$
=<C(\tilde \varphi_\bot+\varphi_0),J_{H_0}u>_{H_0\times
H_0'}+(\tilde\eta,u)_{H_0} =\int_a^b(\tilde
\varphi_\bot(t)+\varphi_0(t))\overline{C^*J_{H_0}u(t)}\,dt
+(\tilde\eta,u)_{H_0}
$$
$$
 =\int_a^b\tilde
\varphi_\bot(t)\overline{C^*J_{H_0}u(t)}\,dt
+\int_a^b\varphi_0(t)\overline{C^*J_{H_0}u(t)}\,dt
+(\tilde\eta,u)_{H_0},
$$
for arbitrary  $u\in H_0$, we have
$$
l(\tilde F)-\widehat {l(\tilde F)}=\int_a^b\overline{l_0(t)}\tilde
f(t)\,dt +\sum_{j=1}^m\overline{l_j}\tilde \alpha_j-(\tilde y,u)_{H_0}-c
$$
$$
=\int_a^b\overline{l_0(t)}\tilde
f(t)\,dt+\sum_{j=1}^m\overline{l_j}\tilde \alpha_j
-\int_a^b\tilde\varphi_\bot(t)\overline{C^*J_{H_0}u(t)}\,dt
$$
$$
-\int_a^b\varphi_0(t)\overline{C^*J_{H_0}u(t)}\,dt
-(\tilde\eta,u)_{H_0}-c.
$$
The latter implies
$$
M\left|l(\tilde F)-\widehat
{l(\tilde F)}\right|^2 =
\left|\int_a^b\overline{l_0(t)}\tilde
f(t)\,dt+\sum_{j=1}^m\overline{l_j}\tilde \alpha_j \right.
$$
\begin{equation}\label{lttkfv1}
\left.-\int_a^b\tilde \varphi_\bot(t)(t)\overline{C^*J_{H_0}u(t)}\,dt
-\int_a^b\varphi_0(t)\overline{C^*J_{H_0}u(t)}\,dt-c\right|^2
+M|(\tilde\eta,u)_{H_0}|^2.
\end{equation}
Since function $\varphi_0(t)$ under the integral sign of the last
term is an arbitrary element of space $N(A_B),$ quantity $\mathbf
M|l(\tilde F)-\widehat {l(\tilde F)}|^2$ takes all values from
$-\infty$ to $+\infty.$ This quantity is finite if
$$
\int_a^b\varphi_0(t)\overline{C^*J_{H_0}u(t)}\,dt=0,
$$
which is a necessary condition which holds if and only if $u\in
V.$ Assuming now that $u\in V$ and taking into account
(\ref{gbbk21v1})--(\ref{gbbk23v1}) and (\ref{greenv1}), we obtain
the following representation for the expression under the sign of
absolute value in (\ref{lttkfv1})
$$
\int_a^b\overline{l_0(t)}\tilde
f(t)\,dt+\sum_{j=1}^m\overline{l_j}\tilde \alpha_j -\int_a^b\tilde
\varphi_\bot(t)\overline{C^*J_{H_0}u(t)}\,dt -c
$$
$$
=\int_a^b\overline{l_0(t)}\tilde
f(t)\,dt+\sum_{j=1}^m\overline{l_j}\tilde \alpha_j +\int_a^b\tilde
\varphi_\bot(t)\overline{L^+z(t;u)}\,dt-c
$$
$$
=\int_a^b\overline{l_0(t)}\tilde
f(t)\,dt+\sum_{j=1}^m\overline{l_j}\tilde \alpha_j +\int_{a}^b
L\tilde \varphi_\bot(t)\overline{z(t;u)}\, dt+\sum_{j=1}^m B_j(\tilde
\varphi_\bot)\overline{S_j^+(z(\cdot;u))}-c
$$
$$
=\int_a^b\overline{l_0(t)}\tilde
f(t)\,dt+\sum_{j=1}^m\overline{l_j}\tilde \alpha_j +\int_{a}^b
\tilde f(t)\overline{z(t;u)}\, dt+\sum_{j=1}^m \tilde
\alpha_j\overline{S_j^+(z(\cdot;u))}-c
$$
$$
=\int_a^b\overline{(l_0(t)+z(t;u))}\tilde f(t)\,dt+
\sum_{j=1}^m\overline{(l_j+S_j^+(z(\cdot;u)))}\tilde \alpha_j-c
$$
$$
=(\tilde f,l_0+z(\cdot;u))_{L^2(a,b)}+(\tilde \alpha,\mathbf
l+\mathbf S^+(z(\cdot;u)))_{\mathbb C^m}-c.
$$
The latter equality in combination with (\ref{lttkfv1}) yields
$$
\inf_{c \in
\mathbb C}\sup_{\tilde F\in G_0, \tilde \eta\in
G_1} M|l(\tilde F)-\widehat
{l(\tilde F)}|^2=
$$
\begin{equation}\label{exkv1}
=\inf_{c \in \mathbb C}\sup_{\tilde F\in G_0}\left|(\tilde
f,l_0+z(\cdot;u))_{L^2(a,b)}+(\tilde \alpha,\mathbf l+\mathbf
S^+(z(\cdot;u)))_{\mathbb C^m}-c\right|^2+ \sup_{ \tilde \eta\in
G_1}M|(\tilde\eta,u)_{H_0}|^2.
\end{equation}
To calculate the first term on the right-hand side of
(\ref{exkv1}) use the generalized Cauchy--Bunyakovsky inequality
([10], p. 196) and (\ref{skk11v1}) to obtain
$$
\inf_{c \in \mathbb C}\sup_{\tilde F\in G_0,}\left|(\tilde
f,l_0+z(\cdot;u))_{L^2(a,b)}+(\tilde \alpha,\mathbf l+\mathbf
S^+(z(\cdot;u)))_{\mathbb C^m}-c\right|^2
$$
\begin{multline*}
=\inf_{c \in \mathbb C}\sup_{\tilde F\in
G_0,}\left|\overline{(l_0+z(\cdot;u),\tilde
f-f_0)_{L^2(a,b)}}+\overline{(\mathbf l+\mathbf
S^+(z(\cdot;u)),\tilde \alpha-\alpha^{(0)})_{\mathbb
C^m}}\right.\\+ \left.(f_0,l_0+z(\cdot;u))_{L^2(a,b)}+(
\alpha^{(0)},\mathbf l+\mathbf S^+(z(\cdot;u)))_{\mathbb
C^m}-c\right|^2
\end{multline*}
$$
\leq
\left\{(Q^{-1}(l_0+z(\cdot;u)),l_0+z(\cdot;u))_{L^2(a,b)}\right.
$$
$$
\left.+ (Q_1^{-1}(\mathbf l+\mathbf S^+(z(\cdot;u))),\mathbf
l+\mathbf S^+(z(\cdot;u)))_{\mathbb C^m}\right\}
$$
$$
\times \left\{(Q(\tilde f-f^{(0)}),\tilde f-f^{(0)})_{L^2(a,b)}+
(Q_1(\tilde \alpha -\alpha^{(0)}),\tilde \alpha
-\alpha^{(0)})_{\mathbb C^m}\right\}
$$
$$ \leq
\left\{(Q^{-1}(l_0+z(\cdot;u)),l_0+z(\cdot;u))_{L^2(a,b)}\right.
$$
\begin{equation}\label{exk2v1}
\left. + (Q_1^{-1}(\mathbf l+\mathbf S^+(z(\cdot;u))),\mathbf
l+\mathbf S^+(z(\cdot;u)))_{\mathbb C^m}\right\}.
\end{equation}
The direct substitution shows that inequality (\ref{exk2v1}) turns
to equality at $\tilde F=(\tilde f(\cdot),\tilde\alpha)=\tilde
F^{(0)}:=(\tilde f^{(0)}(\cdot),\tilde\alpha^{(0)})=\left(\tilde
f^{(0)}(\cdot),(\tilde\alpha_1^{(0)},\ldots,\right.$ $
\left.\tilde\alpha_m^{(0)})^T\right)\in L^2(a,b)\times \mathbb
C^m,$ where
$$
\tilde f^{(0)}(t):=\frac 1{d}Q^{-1}(l_0(t)+z(t,u))+f_0(t),
$$
$$
\tilde\alpha_i^{(0)}:=\frac 1{d}Q_1^{-1}(\mathbf l+\mathbf
S^+(z(\cdot;u)))_i+\alpha_i^{(0)}, i=\overline{1,m},
$$
$$
\!\!\!\!d\!=\!\!{\Bigl(\!(Q^{-1}(l_0+z(\cdot;u)),l_0\!
+\!z(\cdot;u))_{L^2(a,b)}\!+\! (Q_1^{-1}(\mathbf l\!+\!\mathbf
S^+(z(\cdot;u))),\mathbf l\!+\!\mathbf S^+(z(\cdot;u)))_{\mathbb
C^m}\!\Bigr)\!^{1/2}},
$$
and $Q_1^{-1}(\mathbf l+\mathbf S^+(z(\cdot;u)))_j$ is the $j$th
component of vector $Q_1^{-1}(\mathbf l+\mathbf
S^+(z(\cdot;u)))\in \mathbb C^m.$ Element $\tilde F^{(0)}\in G_0$
because it obviously satisfies condition (\ref{skk11v1}) and from
the following chain of equalities
$$
(\tilde f^{(0)},\psi_0)_{L^2(a,b)}+\sum_{i=1}^m\tilde
\alpha_i^{(0)}\overline{S_i^+(\psi_0)}=
$$
$$
=\Bigl(Q^{-1}(l_0+z(\cdot;u)),l_0 +z(\cdot;u))_{L^2(a,b)}+
(Q_1^{-1}(\mathbf l+\mathbf S^+(z(\cdot;u))),\mathbf l+\mathbf
S^+(z(\cdot;u)))_{\mathbb C^m}\Bigr)^{-1/2}
$$
$$
\times \Bigl(Q^{-1}(l_0+z(\cdot;u)),\psi_0)_{L^2(a,b)}+
\sum_{i=1}^m Q_1^{-1}(\mathbf l+\mathbf
S^+(z(\cdot;u)))_i\overline{S_i^+(\psi_0)}\Bigr)
$$
$$
+(f^{(0)},\psi_0)_{L^2(a,b)}+\sum_{i=1}^m
\alpha^{(0)}_i\overline{S_i^+(\psi_0)}=0 \quad\forall \psi_0\in
N(A^+_{B^+})
$$
it follows that this element also satisfies, in line with
(\ref{gbbk23v1}), condition (\ref{skk10v1}). Therefore,
$$
\inf_{c \in \mathbb C}\sup_{\tilde F\in G_0}\left|(\tilde
f,l_0+z(\cdot;u))_{L^2(a,b)}+(\tilde \alpha,\mathbf l+\mathbf
S^+(z(\cdot;u)))_{\mathbb C^m}-c\right|^2
$$
$$
=\int_a^b Q^{-1}(l_0(t)+z(t;u))\overline{(l_0(t)+z(t;u))}dt
$$
\begin{equation}\label{skk111v1}
+(Q_1^{-1}(\mathbf l+\mathbf S^+(z(\cdot;u))), (\mathbf l+\mathbf
S^+(z(\cdot;u))))_{\mathbb C^m}
\end{equation}
at $c=\int_a^b\overline{(l_0(t)+z(t;u))} f^{(0)}(t)dt
+(\alpha^{(0)},\mathbf l+\mathbf S^+(z(\cdot;u)))_{\mathbb C^m}.$
For the second term on the right-hand side of  (\ref{exkv1}), we
have proved (see p. \pageref{L1v1}) that
\begin{equation}\label{liekv1}
\sup_{\tilde \eta\in G_1}M|(u,\tilde
\eta)_{H_0}|^2=(Q_0^{-1}u,u)_{H_0}.
\end{equation}
The statement of Lemma \ref{lemma1v1} follows now from
(\ref{exkv1}), (\ref{skk111v1}), and (\ref{liekv1}).
\end{proof}

\begin{pred}\label{th1v1}
The minimax estimate of $l(F)$ can be represented as
\begin{equation}\label{miniv1}
\widehat{\widehat {l(F)}}=(y,\hat u)_{H_0}+\hat c,
\end{equation}
where
\begin{equation}\label{repk1v1}
\hat u=Q_0Cp, \quad\hat c=\int_a^b\overline{(l_0(t)+z(t))}
f^{(0)}(t)dt +\sum_{i=1}^m
\overline{(l_i+S_i^+(z))}\alpha_i^{(0)},
\end{equation}
and functions $p(t)$ and $z(t)$ are determined from the operator
equation system
\begin{equation}\label{llkkyv1}
L^+z(t)=-C^*J_{H_0}Q_0Cp(t)\quad \mbox{on}\quad (a,b),
\end{equation}
\begin{equation}\label{llkk1yv1}
B_j^+(z)=0,\quad j=\overline{1,2n-m},
\end{equation}
\begin{equation}\label{sgbk30'v1}
\int_a^bQ^{-1}(l_0(t)+z(t))\overline{\psi_i(t)}\, dt+
(Q_1^{-1}(\mathbf l+\mathbf S^+(z)),\mathbf S^+(\psi_i))_{\mathbb
C^m}=0, \quad i=\overline{1,m-r},
\end{equation}
\begin{equation}\label{sgbk23'v1}
Lp(t)=Q^{-1}(l_0(t)+z(t))\quad \mbox{on}\quad (a,b),
\end{equation}
\begin{equation}\label{sllk1yv1}
B_{j}(p)=Q_1^{-1}(\mathbf{l}+\mathbf S^+(z))_{j},\quad
j=\overline{1,m},
\end{equation}
\begin{equation}\label{sgbk25'v1}
\int_a^bC^*J_{H_0}Q_0Cp(t)\overline{\varphi_{i}(t)}dt=0 ,\,\,
i=\overline{1,n-r},
\end{equation}
where $Q_1^{-1}(\mathbf l+\mathbf S^+(z))_j$ denotes the $j$-а
component of vector $Q_1^{-1}(\mathbf l+\mathbf S^+(z))\in \mathbb
C^m.$ Problem \eqref{llkkyv1}--\eqref{sgbk25'v1} is uniquely
solvable.
%Похибка оцінювання $\sigma(F)$ задовольняє
%нерівності
The estimation error
%% $\sigma$ знаходиться за формулою $
%\sigma(F)$ $\leq \sigma(\hat u,\hat c)=
$$
\sigma=l(\hat{\hat P})^{1/2},\quad {\mbox{where}}\quad \hat{\hat
P}=(Q^{-1}(l_0+z),(Q_1^{-1}(\mathbf l+\mathbf
S^+(z))_1,\ldots,Q_1^{-1}(\mathbf l+\mathbf S^+(z))_m)^T).
$$
\end{pred}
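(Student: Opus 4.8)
The plan is to reproduce, with the replacements $l(\varphi)\rightsquigarrow l(F)$ and the natural changes in the cost functional, the argument already carried out in full for Theorem~\ref{th3v1}, taking as the starting point the reduction in Lemma~\ref{lemma1v1}: determining the minimax estimate of $l(F)$ is equivalent to minimizing the functional $I(u)$ of \eqref{gbk24'v1} over the linear subspace $V\subset H_0$. First I would note that $V$ is closed, being the intersection of the kernels of the $n-r$ continuous (conjugate\nobreakdash-)linear functionals $u\mapsto\int_a^b(C^*J_{H_0}u)(t)\overline{\varphi_i(t)}\,dt$, $i=\overline{1,n-r}$, and that for each $u\in V$ the state $z(\cdot;u)$ is uniquely determined by \eqref{gbbk21v1}--\eqref{gbbk23v1}; this is precisely what was proved inside Lemma~\ref{lemma1v1} (the solvability of \eqref{gbbk21v1}--\eqref{gbbk22v1} is the condition $u\in V$ via \eqref{skk6v1}, and the $N(A^+_{B^+})$-component is fixed by the nonsingular system \eqref{gbk26gv1}).

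The analytic heart of the proof is the continuous dependence of $u\mapsto z(\cdot;u)$, i.e.\ an estimate of the form \eqref{cont1v1}. I would obtain it exactly as for Theorem~\ref{th3v1}: the differential part of \eqref{gbbk21v1}--\eqref{gbbk22v1} is the equation $L^+z_0=-C^*J_{H_0}u$ for the adjoint Noether operator $A_B^*$, which is normally solvable, so Theorem~2.3 of \cite{Krein} produces a solution with $\|z_0(\cdot;u)\|\le a\|u\|_{H_0}$; combining this with the bounds \eqref{in3v1} on the coefficients of the $\psi_i$-part yields \eqref{cont1v1}. Since $z(\cdot;u)$ is affine in $u$, the functional decomposes as $I(u)=\tilde I(u)+2\,\mathrm{Re}\,L(u)+\text{const}$, where $\tilde I$ is a continuous Hermitian quadratic form with $\tilde I(u)\ge(Q_0^{-1}u,u)_{H_0}\ge c\|u\|_{H_0}^2$ ($c>0$) and $L$ is a continuous linear functional on $V$. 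Remark~1.1 to Theorem~1.1 of \cite{BIBL20} then gives a unique minimizer $\hat u\in V$.

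Next I would write the Euler--Lagrange conditions $\frac{d}{d\tau}I(\hat u+\tau v)\big|_{\tau=0}=0$ and $\frac{d}{d\tau}I(\hat u+i\tau v)\big|_{\tau=0}=0$ for all $v\in V$. Writing $z(\cdot;\hat u+\tau v)=z(\cdot;\hat u)+\tau\zeta(\cdot;v)$, where $\zeta(\cdot;v)$ solves \eqref{gbbk21v1}--\eqref{gbbk23v1} with $l_0=0$, $\mathbf l=0$ and $u=v$, these combine into
\begin{multline*}
(Q^{-1}(l_0+z(\cdot;\hat u)),\zeta(\cdot;v))_{L^2(a,b)}\\
+(Q_1^{-1}(\mathbf l+\mathbf S^+(z(\cdot;\hat u))),\mathbf S^+(\zeta(\cdot;v)))_{\mathbb C^m}+(Q_0^{-1}\hat u,v)_{H_0}=0.
\end{multline*}
I would then introduce $p(t)$ as a solution of $Lp=Q^{-1}(l_0+z(\cdot;\hat u))$, $B_j(p)=Q_1^{-1}(\mathbf l+\mathbf S^+(z(\cdot;\hat u)))_j$ — solvable precisely because \eqref{sgbk30'v1} is its solvability condition \eqref{skk5v1} — and apply Green's formula \eqref{greenv1} to the pair $(p,\zeta(\cdot;v))$, using $B_j^+(\zeta)=0$ and $L^+\zeta=-C^*J_{H_0}v$, to turn the first two terms into $-(Cp,v)_{H_0}$. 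This gives $(Q_0^{-1}\hat u-Cp,v)_{H_0}=0$ for every $v\in V$.

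Finally I would single out the right $p$: among the solutions $p=\tilde p+\sum_j a_j\varphi_j$ there is exactly one with $Q_0^{-1}\hat u-Cp\in V$, the coefficients being fixed by the Gram system $\sum_i a_i(C\varphi_i,C\varphi_j)_{H_0}=(Q_0^{-1}\hat u-C\tilde p,C\varphi_j)_{H_0}$, whose matrix is nonsingular since $C$ is injective on $N(A_B)$. Taking $v=Q_0^{-1}\hat u-Cp$ forces $\hat u=Q_0Cp$, which is \eqref{repk1v1}; substituting it into \eqref{gbbk21v1} and into the constraint $u\in V$ reproduces exactly system \eqref{llkkyv1}--\eqref{sgbk25'v1}, whose unique solvability follows from the uniqueness of $\hat u$ and of $p$, while $\hat c$ is read off as the minimizing value of $c$ in \eqref{skk111v1}. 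For the error I would evaluate $\sigma^2=I(\hat u)$ at $\hat u=Q_0Cp$: replacing $Q^{-1}(l_0+z)$ by $Lp$ and $Q_1^{-1}(\mathbf l+\mathbf S^+(z))_j$ by $B_j(p)$ and applying Green's formula once more (with $B_j^+(z)=0$, $L^+z=-C^*J_{H_0}Q_0Cp$), the term $(Cp,Q_0Cp)_{H_0}$ cancels and there remains $\int_a^b\overline{l_0}\,Q^{-1}(l_0+z)\,dt+\sum_j\overline{l_j}\,Q_1^{-1}(\mathbf l+\mathbf S^+(z))_j=l(\hat{\hat P})$, giving $\sigma=l(\hat{\hat P})^{1/2}$. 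The main obstacle is the continuous-dependence estimate \eqref{cont1v1} that underlies coercivity and the use of Lions' theorem; once it is in place, the remainder is a transcription of the computations in Theorems~\ref{th3v1} and \ref{th4v1} with $l_0$ and $\mathbf l$ carried along.
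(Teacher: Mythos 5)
Your proposal is correct and follows essentially the same route as the paper's own proof: reduction via Lemma \ref{lemma1v1}, continuity of the quadratic part (resting on the Krein normal-solvability estimate \eqref{cont1v1} carried over from Theorem \ref{th3v1}) plus coercivity giving a unique minimizer $\hat u\in V$ by Lions' theorem, the real and imaginary Euler--Lagrange conditions, introduction of $p$ through the BVP whose solvability condition is \eqref{sgbk30'v1}, Green's formula to obtain $(Q_0^{-1}\hat u-Cp,v)_{H_0}=0$ on $V$, selection of the unique solution $p$ with $Q_0^{-1}\hat u-Cp\in V$ via the nonsingular Gram system, and the same Green-formula evaluation of $I(\hat u)=l(\hat{\hat P})$ for the error. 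The only cosmetic difference is that you expand $I$ around $u=0$ (legitimate, since here $V$ is a linear subspace), whereas the paper expands around an arbitrary fixed $\bar u\in V$; the two decompositions are equivalent.
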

\begin{proof}
Show first that the solution to optimal control problem
(\ref{gbbk21v1})--(\ref{gbk24'v1}) is reduce to the solution of
system (\ref{llkkyv1})--(\ref{sgbk25'v1}). To this end, note that
the form of functional (\ref{gbk24'v1}) and the fact that $
A^+_{B^+}$ is a Noether operator suggest that there is one and
only one element $\hat u \in V$ at which the minumum of the
functional is attained, $I(\hat u)=\inf_{u \in V}I(u).$ Indeed,
set $u=\bar u+v$ for an arbitrary $u\in V$  where $\bar u$ is a
fixed element from $V$ and $v=u-\bar u.$ Then solution $z(t;u)$ to
BVP (\ref{gbbk21v1})--(\ref{gbbk23v1}) can be represented as
\begin{equation}\label{ieqiv1}
z(t;u)=z(t;\bar u)+\tilde z(t;v),
\end{equation}
where $z(t;\bar u)$ and $\tilde z(t;v)$ are the unique solutions
of this problem at $u=\bar u$ and $u=v,$ $l_0(x)=0,$ and $l_j=0,$
$j=\overline{1,m};$ in addition, if $v$ is an arbitrary element of
$V,$ then $u=\bar u+v$ is also an arbitrary element of this space.

Using expression (\ref{ieqiv1}) for $z(t;u)$ write functional
$I(u)$ in the form
$$
I(u)=\int_a^bQ^{-1}(l_0(t)+z(t;u))\overline{(l_0(t)+z(t;u))}dt
$$
$$
+(Q_1^{-1}(\mathbf l+\mathbf S^+(z(\cdot;u))), \mathbf l+\mathbf
S^+(z(\cdot;u)))_{\mathbb C^m}+ (Q_0^{-1}u,u)_{H_0}
$$
$$
=\int_a^bQ^{-1}(l_0(t)+z(t;\bar u)+\tilde z(t;v)
)\overline{(l_0(t)+z(t;\bar u)+\tilde z(t;v))}dt
$$
$$
+(Q_1^{-1}(\mathbf l+\mathbf S^+(z(\cdot;\bar u)+\tilde
z(\cdot;v))), \mathbf l+\mathbf S^+(z(\cdot;\bar u)+\tilde
z(\cdot;v)))_{\mathbb C^m}+ (Q_0^{-1}(\bar u+v),\bar u+v)_{H_0}
$$
$$
=I(\bar u)+\int_a^bQ^{-1}\tilde z(t;v)\overline{\tilde z(t;v)}dt+
(Q_1^{-1}\mathbf S^+(\tilde z(\cdot;v)), \mathbf S^+(\tilde
z(\cdot;v)))_{\mathbb C^m}+ (Q_0^{-1}v,v)_{H_0}
$$
$$
+2\mbox{Re}\int_a^bQ^{-1}\tilde z(t;v)\overline{(l_0(t)+ z(t;\bar
u))}dt +2\mbox{Re}(Q_1^{-1}\mathbf S^+(\tilde z(\cdot;v)), \mathbf
S^+(l_0(\cdot)+ z(\cdot;\bar u))_{\mathbb C^m}
$$
\begin{equation}\label{repiv1}
+2\mbox{Re}(Q_0^{-1}v,\bar u)_{H_0}=I(\bar u)+\tilde
I(v)+2\mbox{Re}L(v),
\end{equation}
where, due to the linearity and continuity of the mapping,
$$
V\ni v\to(\tilde z(\cdot,v),(S^+_1(\tilde
z(\cdot,v)),\ldots,S^+_m(\tilde z(\cdot,v)))^T)\in L^2(a,b)\times
\mathbb C^m,$$
\begin{equation}\label{rep1iv1}
\tilde I(v)=\int_a^bQ^{-1}\tilde z(t;v)\overline{\tilde z(t;v)}dt+
(Q_1^{-1}\mathbf S^+(\tilde z(\cdot;v)), \mathbf S^+(\tilde
z(\cdot;v)))_{\mathbb C^m}+ (Q_0^{-1}v,v)_{H_0}
\end{equation}
is a linear quadratic functional in $V$ associated with the
continuous semi-bilinear form
\begin{equation}\label{rep2iv1}
\pi(v,w)=\int_a^bQ^{-1}\tilde z(t;v)\overline{\tilde z(t;w)}dt+
(Q_1^{-1}\mathbf S^+(\tilde z(\cdot;v)), \mathbf S^+(\tilde
z(\cdot;w)))_{\mathbb C^m}+ (Q_0^{-1}v,w)_{H_0}
\end{equation}
on $V\times V,;$ this functional satisfies
\begin{equation}\label{rep3iv1}
\tilde I(v)\geq c \|v\|_{H_0}\,\,\forall v\in
V,\,\,c=\mbox{const},
\end{equation}
and
\begin{multline}\label{rep4iv1}
L(v)=\int_a^bQ^{-1}\tilde z(t;v)\overline{(l_0(t)+ z(t;\bar
u))}dt\Bigr.\\ \Bigl. +(Q_1^{-1}\mathbf S^+(\tilde z(\cdot;v)),
\mathbf S^+(l_0(\cdot)+ z(\cdot;\bar u))_{\mathbb
C^m}+(Q_0^{-1}v,\bar u)_{H_0}
\end{multline}
is a linear continuous functional\footnote{The continuity of form
(\ref{rep2iv1}) is proved on p. \pageref{cont2v1}, and the
continuity of linear functional (\ref{rep4iv1}) can be proved
similarly to the case of functional (\ref{rep4v1}). } in $V.$

Consequently (see Remark 1.1 to Theorem 1.1 in \cite{BIBL20}),
there exists the unique element $\hat v\in V$ (depending on $\bar
u$) such that
\begin{equation*}
I_V(\hat v)=\inf_{v \in V}I_V(v), =\inf_{v \in V} I(\bar
u+v)=\inf_{u-\bar u \in V} I(u)= \inf_{u \in \bar u+V}
I(u)=\inf_{u \in V} I(u).
\end{equation*}
Setting $\hat u=\bar u+\hat v$ and taking into account that
$$
I_V(\hat v)=I(\bar u+\hat v)=I(\hat u),
$$
we conclude that there is one and only one element $\hat u\in V$
admitting the representation $\hat u=\bar u+\hat v$ at which the
minimum of functional $I(u)$ is attained for $u\in V$.

Therefore, for any $\tau\in R$ and $v\in V,$
\begin{equation}\label{zk43v1}
\frac{d}{dt}I(\hat u+\tau
v)\Bigl.\Bigr|_{\tau=0}=0\quad\mbox{and}\quad \frac{d}{dt}I(\hat
u+i\tau v)\Bigl.\Bigr|_{\tau=0}=0,
\end{equation}
where $i=\sqrt{-1}.$
%Враховуючи, що функцію $z(t;\hat u+\tau v)$
%можна представити у вигляді
Since $z(t;\hat u+\tau v)=z(t;\hat u)+\tau\tilde z(t;v),$ where
$\tilde z(t;v)$ is the unique solution to BVP
(\ref{gbbk21v1})--(\ref{gbbk23v1}) at $u=v$, $l_0=0,$ and $l_j=0,$
$j=\overline{1,m},$ we can use the first relationship in
(\ref{zk43v1}) to obtain
$$
0=\frac 1{2}\frac{d}{d\tau}I(\hat u+\tau v)|_{\tau=0}
$$
$$
%\begin{multline*}
 =\lim_{\tau\to 0}\frac
1{2\tau}\Bigl\{\Bigl[(Q^{-1}(l_0+z(\cdot;\hat u+\tau
v)),l_0+z(\cdot;\hat u+\tau v))_{L^2(a,b)}\Bigr.\Bigr.
%\\
\Bigl.-(Q^{-1}(l_0+z(\cdot;\hat u)),l_0+z(\cdot;\hat
u))_{L^2(a,b)}\Bigr]
%\end{multline*}
$$
$$
%\begin{multline*}
+\Bigl[(Q_1^{-1}(\mathbf l+\mathbf S^+(z(\cdot;\hat u+\tau
v))),\mathbf l+\mathbf S^+(z(\cdot;\hat u+\tau v)))_{\mathbb
C^m}-\Bigr.
%\\
\Bigl.(Q_1^{-1}(\mathbf l+\mathbf S^+(z(\cdot;\hat
u))),\mathbf l+\mathbf S^+(z(\cdot;\hat u)))_{\mathbb C^m}\Bigr]
%\end{multline*}
$$
$$
+\left[Q^{-1}_0(\hat u+\tau v),\hat u+\tau v)_{H_0}-(Q^{-1}_0\hat
u,\hat u)_{H_0}\right]\Bigr\}
$$
$$
\!\!\!=\!\mbox{Re}\{(Q^{-1}(l_0+z(\cdot;\hat u)),\tilde z(\cdot;v
))_{L^2(a,b)}
$$
$$
+(Q_1^{-1}(\mathbf l+\mathbf S^+(z(\cdot;\hat
u))),\mathbf S^+(\tilde z(\cdot;v)))_{\mathbb
C^m}\!+\!(Q_0^{-1}\hat u,v)_{H_0}\}.
$$
Next, since $z(t;\hat u+i\tau v)=z(t;\hat u)+i\tau\tilde z(t;v),$
we have
$$
0=\frac 1{2}\frac{d}{d\tau}I(\hat u+i\tau v)|_{\tau=0}
$$
$$
\!\!\!=\!\mbox{Im}\{(Q^{-1}(l_0+z(\cdot;\hat u)),\tilde z(\cdot;v
))_{L^2(a,b)}
$$
$$
+(Q_1^{-1}(\mathbf l+\mathbf S^+(z(\cdot;\hat u))),\mathbf
S^+(\tilde z(\cdot;v)))_{\mathbb C^m}\!+\!(Q_0^{-1}\hat
u,v)_{H_0}\},
$$
which yields
$$
(Q^{-1}(l_0+z(\cdot;\hat u)),\tilde z(\cdot;v
))_{L^2(a,b)}
$$
\begin{equation}\label{vckv1}
+\sum_{j=1}^m Q_1^{-1}(\mathbf{l}+\mathbf S^+(z(\cdot;\hat
u)))_{j}\overline{\mathbf S_j^+(\tilde z(\cdot;v))}+(Q_0^{-1}\hat
u,v)_{H_0}=0.
\end{equation}
Let $p(t)$ be a solution\footnote{Formula (\ref{sgbk30'v1})
coincides with the solvability condition for this problem by
virtue of (\ref{skk5v1}).} to the BVP
\begin{equation}\label{vsgbk23'gv1}
Lp(t)=Q^{-1}(l_0(t)+z(t;\hat u))\quad \mbox{on}\quad (a,b),
\end{equation}
\begin{equation}\label{vsllk1ykgv1}
B_{j}(p)=Q_1^{-1}(\mathbf{l}+\mathbf S^+(z(\cdot,\hat
u)))_{j},\quad j=\overline{1,m}.
\end{equation}
Then, taking into consideration Green's formula (\ref{greenv1}),
we can transform the sum of the first two terms on the left-hand
side of (\ref{vckv1}) as follows:
$$
(Q^{-1}(l_0+z(\cdot;\hat u)),\tilde z(\cdot;v
))_{L^2(a,b)}+\sum_{j=1}^m Q_1^{-1}(\mathbf{l}+\mathbf
S^+(z(\cdot;\hat u)))_{j}\overline{\mathbf S_j^+(\tilde
z(\cdot;v))}
$$
$$
=\int_a^b Lp(t)\overline{\tilde z(t;v)}\,dt+\sum_{j=1}^m B_{j}(p)
\overline{\mathbf S_j^+(\tilde z(\cdot;v))}=\int_a^b
p(t)\overline{L^+\tilde z(t;v)}\,dt
$$
$$
=-\int_a^b p(t)\overline{C^*J_{H_0}v(t)}\,dt
=-<Cp,J_{H_0}v>_{H_0\times H_0'}=-(Cp,v)_{H_0}.
$$
From the latter equality and relationship (\ref{vckv1}), it
follows that
\begin{equation}\label{gfklgv1}
(Q_0^{-1}\hat u-Cp,v)_{H_0}=0
\end{equation}
for any  $v\in V.$ Next, repeating the reasoning contained in the
proof of Theorem 2.1 on p. \pageref{reppv1}, we see that among all
solutions to problem (\ref{vsgbk23'gv1}), (\ref{vsllk1ykgv1}),
there is one and only one solution $p(t)$ for which
\begin{equation}\label{gfk2gv1}
Q_0^{-1}\hat u-Cp\in V.
\end{equation}

Setting in (\ref{gfklgv1}) $v=Q_0^{-1}\hat u-Cp$ we obtain
$Q_0^{-1}\hat u-Cp=0,$ so that $\hat u=Q_0Cp.$ Substituting this
into the equality $\int_a^b C^*J_{H_0}\hat u(t)
\overline{\varphi_0(t)}dt=0$ and denoting $z(t;\hat u)=:z(t),$ we
see that $z(t)$ and $p(t)$ satisfy system
(\ref{llkkyv1})--(\ref{sgbk25'v1}); the unique solvability of this
system follows from the uniqueness of element $\hat u.$

Show now that $\sigma(\hat u,\hat c)=l(\hat{\hat P})^{1/2};$ then
the estimate $\sigma(F)\leq l(\hat{\hat P})^{1/2}$ will be proved.
Substituting $\hat u=Q_0Cp$ into the expression for $I(\hat u)$
and taking into account that $z(t)=z(t;\hat u),$ we have
$$
I(\hat u)=\int_a^b
Q^{-1}(l_0(t)+z(t))\overline{(l_0(t)+z(t))}\,dt+(Q_1^{-1}(\mathbf
l+\mathbf S^+(z)),\mathbf l+\mathbf S^+(z))_{C^m}
$$
$$
+(Cp,Q_0Cp)_{H_0}=\int_a^bLp(t)\overline{z(t)}\,dt+
\int_a^b\overline{l_0(t)}Q^{-1}(l_0(t)+z(t))\,dt
$$
$$
+ \sum_{j=1}^mQ_1^{-1}(\mathbf l+\mathbf
S^+(z))_j(\overline{l_j+S_j^+(z)})+(Cp,Q_0Cp)_{H_0}
$$
$$
=\int_a^bLp(t)\overline{z(t)}\,dt
+\int_a^b\overline{l_0(t)}Q^{-1}(l_0(t)+z(t))\,dt
$$
$$
+\sum_{j=1}^m
B_j(p)\overline{S_j^+(z)}+\sum_{j=1}^m\overline{l_j}Q_1^{-1}(\mathbf
l+\mathbf S^+(z))_j+(Cp,Q_0Cp)_{H_0}
$$
$$
=\int_a^bp(t)\overline{L^+(t)}\,dt+\sum_{j=1}^{2n-m}S_j(p)
\overline{B_j^+(z)}+\int_a^b
\overline{l_0(t)}Q^{-1}(l_0(t)+z(t))\,dt
$$
\begin{equation}\label{w1v1}
+\sum_{j=1}^m\overline{l_j}Q_1^{-1}(\mathbf l+\mathbf
S^+(z))_j+(Cp,Q_0Cp)_{H_0}.
\end{equation}
For the first term in (\ref{w1v1}) we have
$$
\int_a^bp(t)\overline{L^+(t)}\,dt
=-\int_a^bp(t)(\overline{C^*I_{H_0}Q_0Cp)(t)})\,dt
=-<Cp,I_{H_0}Q_0Cp>_{H_0\times H_0'}.
$$
From the latter equality and (\ref{w1v1}), it follows that $I(\hat
u)=l(\hat{\hat P}),$ where
$$
\hat{\hat P}=(Q^{-1}(l_0+z),(Q_1^{-1}(\mathbf l+\mathbf
S^+(z))_1,\ldots,Q_1^{-1}(\mathbf l+\mathbf S^+(z))_m)^T).
$$
The proof of the theorem is completed.
\end{proof}

Another representation for the minimax estimate is given by the
following theorem.
\begin{pred}\label{th2v1}
The minimax estimate of $l(F)$ has the form
\begin{equation}\label{reprkv1}
\widehat{\widehat {l(F)}}=l(\hat F),
\end{equation}
where
$$
\hat F=(\hat f(\cdot),(\hat \alpha_1,\ldots, \hat
\alpha_m)^T), \,\,\hat f(t)=Q^{-1}\hat p(t)+f^{(0)}(t),\,\,$$
$$\hat \alpha_j=Q_1^{-1}\mathbf S^+(\hat p)_j+\alpha_j^{(0)},\,\,
j=\overline{1,m},
$$
and function $\hat p$ is determined from the solution to the
problem
\begin{equation}\label{ugbk42v1}
L^+\hat p(t)=C^*J_{H_0}Q_0(y-C\hat\varphi)(t)\quad
\mbox{on}\quad (a,b),
\end{equation}
\begin{equation}\label{ugbk422v1}
B_j^+(\hat p)=0,\quad j=\overline{1,2n-m},
\end{equation}
\begin{equation}\label{ugbk44v1}
\int_a^bQ^{-1}\hat p(t)\overline{\psi_i(t)}\, dt+ (Q_1^{-1}\mathbf
S^+(\hat p),\mathbf S^+(\psi_i))_{\mathbb C^m }=0, \quad
i=\overline{1,m-r},
\end{equation}
\begin{equation}\label{ugbk46v1}
L\hat \varphi(t)=Q^{-1}\hat p(t)+f^{(0)}(t)\quad \mbox{on}\quad
(a,b),
\end{equation}
\begin{equation}\label{usllk1ykv1}
B_{j}(\hat \varphi)=Q_1^{-1}\mathbf S^+(\hat
p)_{j}+\alpha_{j}^{(0)},\quad
 j=\overline{1,m},
\end{equation}
\begin{equation}\label{ugbk48v1}
\int_a^bC^*J_{H_0}Q_0\left(y-C\hat \varphi\right)(t)
\overline{\varphi_{i}(t)}\,dt=0,\quad i=\overline{1,n-r},
\end{equation}
where $\mathbf S^+(\hat p):=(S^+_1(\hat p),\ldots,$ $S^+_m(\hat
p))^T\in \mathbb C^m$ is the vector with components $S_j^+(\hat
p),$ $j=\overline{1,m}.$
%а праві частини в (\ref{ugbk42v1}) розглядаються
%для кожної реалізації випадкового елемента $y(\varphi;\eta).$
Problem \eqref{ugbk42v1}--\eqref{ugbk48v1} is uniquely solvable.
\end{pred}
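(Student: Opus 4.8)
The plan is to follow the proof of Theorem~\ref{th4v1} almost line for line, the only genuinely new feature being that the object reconstructed is the datum functional $l(F)$ rather than $l(\varphi)$. The unique solvability of system \eqref{ugbk42v1}--\eqref{ugbk48v1} needs no separate argument: these six relations coincide verbatim with \eqref{ugb42hv1}--\eqref{ugb48hv1}, whose unique solvability was already secured in Theorem~\ref{th4v1} by reduction to the auxiliary optimal control problem \eqref{gbbk21gv1}--\eqref{gb24'gv1} and the existence of its unique minimizer $\hat u\in\tilde U$. Hence $\hat p$ and $\hat\varphi$ exist, are unique, and the datum $\hat F=(\hat f,\hat\alpha)$ built from them through $\hat f=Q^{-1}\hat p+f^{(0)}$ and $\hat\alpha_j=Q_1^{-1}\mathbf S^+(\hat p)_j+\alpha_j^{(0)}$ is well defined.

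For the identity $\widehat{\widehat{l(F)}}=l(\hat F)$ I would begin from the representation furnished by Theorem~\ref{th1v1}, that is $\widehat{\widehat{l(F)}}=(y,\hat u)_{H_0}+\hat c$ with $\hat u=Q_0Cp$ and $\hat c$ as in \eqref{repk1v1}, where $p$ and $z$ solve \eqref{llkkyv1}--\eqref{sgbk25'v1}. Exploiting the Hermiticity of $Q_0$ and the definitions of $C^*$ and $J_{H_0}$,
$$
(y,Q_0Cp)_{H_0}=\overline{(Cp,Q_0y)_{H_0}}=\overline{<Cp,J_{H_0}Q_0y>_{H_0\times H_0'}}=\overline{(p,C^*J_{H_0}Q_0y)_{L^2(a,b)}}.
$$
I would then use the adjoint equation \eqref{ugbk42v1} to write $C^*J_{H_0}Q_0y=L^+\hat p+C^*J_{H_0}Q_0C\hat\varphi$ and apply Green's formula \eqref{greenv1} to the integral carrying $L^+\hat p$, transferring the operator onto $p$ via \eqref{sgbk23'v1} and \eqref{sllk1yv1} (which prescribe $Lp$ and $B_j(p)$), the boundary terms $S_j(p)\overline{B_j^+(\hat p)}$ vanishing on account of \eqref{ugbk422v1}. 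A second application of \eqref{greenv1}, peeling $L$ off $\hat\varphi$ through \eqref{ugbk46v1} and \eqref{usllk1ykv1}, converts the surviving expression into $\int_a^b\overline{l_0}\,\hat f\,dt+\sum_j\overline{l_j}\hat\alpha_j=l(\hat F)$, the term $\overline{\int_a^b p\,\overline{C^*J_{H_0}Q_0C\hat\varphi}\,dt}$ split off at the outset cancelling the contribution produced by moving $L$ off $\hat\varphi$. This reproduces the Green-formula chain of Theorem~\ref{th4v1}, now with the $(p,z)$-equations in the form \eqref{sgbk23'v1}--\eqref{sllk1yv1}, where the datum $l_0$ sits in the equation for $p$ rather than for $z$; it is exactly this displacement that turns the endpoint $l(\hat\varphi)$ of Theorem~\ref{th4v1} into $l(\hat F)$ here.

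The delicate point, and the step I expect to cost the most care, is the disappearance of the finite-dimensional ambiguities. Since $A_B$ and $A^+_{B^+}$ are only Noether operators, $p,\hat p$ (and likewise $z,\hat\varphi$) are determined merely modulo $N(A_B)$ and $N(A^+_{B^+})$, so each pass through Green's formula deposits residual terms of the form $\int_a^b(\cdots)\overline{\varphi_i}\,dt$ and $\sum(\cdots)\overline{S_j^+(\psi_i)}$. These are annihilated precisely by the orthogonality constraints \eqref{sgbk30'v1} and \eqref{sgbk25'v1} of the $(p,z)$ system and \eqref{ugbk44v1} and \eqref{ugbk48v1} of the $(\hat p,\hat\varphi)$ system, which by \eqref{skk5v1} and \eqref{skk6v1} are nothing but the solvability conditions attached to the two boundary value problems. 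I would verify that every residual generated by the two Green-formula steps matches one of these relations, so that the chain closes on $l(\hat F)$ independently of the representatives chosen in the null spaces.
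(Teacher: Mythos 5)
Your proposal is correct and essentially reproduces the paper's own proof: the paper likewise starts from the representation $(y,\hat u)_{H_0}+\hat c$ of Theorem \ref{th1v1}, splits $C^*J_{H_0}Q_0y=L^+\hat p+C^*J_{H_0}Q_0C\hat\varphi$ via \eqref{ugbk42v1}, and closes the identity $\widehat{\widehat{l(F)}}=l(\hat F)$ by exactly the two Green-formula passes you describe, with the split-off term cancelling at the end; moreover your shortcut for unique solvability is legitimate, since \eqref{ugbk42v1}--\eqref{ugbk48v1} is verbatim the system \eqref{ugb42hv1}--\eqref{ugb48hv1} of Theorem \ref{th4v1}, whereas the paper instead repeats the auxiliary control-problem argument (\eqref{gbbkk21v1}--\eqref{gbkk24'v1}). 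One clarification on your ``delicate point'': the Green-formula passes deposit no null-space residuals at all, because their boundary terms vanish already on account of $B_j^+(\hat p)=0$ and $B_j^+(z)=0$, by \eqref{ugbk422v1} and \eqref{llkk1yv1}; the orthogonality relations \eqref{sgbk30'v1}, \eqref{sgbk25'v1}, \eqref{ugbk44v1}, \eqref{ugbk48v1} enter only as the solvability conditions (via \eqref{skk5v1} and \eqref{skk6v1}) that make the boundary value problems for $p$, $z$, $\hat p$, $\hat\varphi$ well-posed and their solutions --- hence $\hat F$ itself --- unique, so no term-by-term matching of residuals is actually required.
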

\begin{proof}
Introduce the problem of optimal control of the equation system
\begin{equation}\label{gbbkk21v1}
L^+\hat
p(t;u)=-(C^*J_{H_0}u)(t)+(C^*J_{H_0}Q_0(y)(t)\quad
\mbox{on}\quad (a,b),
\end{equation}
\begin{equation}\label{gbbkk22v1}
 B_j^+(\hat p(\cdot;u))=0\quad  (j=1,\ldots,m),
\end{equation}
\begin{equation}\label{gbbkk23v1}
\int_a^b Q^{-1}\hat p(t;u)\overline{\psi_i(t)} dt+
(Q_1^{-1}\mathbf S^+(\hat p(\cdot;u)),\mathbf
S^+(\psi_i))_{\mathbb C^m}=0,\quad i=\overline{1,m-r},
\end{equation}
with the cost function
\begin{multline}\label{gbkk24'v1}
I(u)=\int_a^b Q^{-1}(\hat p(t;u)+Qf^{(0)}(t))\overline{ (\hat
p(t;u)+Qf^{(0)}(t))}dt \\+(Q_1^{-1}(\mathbf S^+(\hat
p(\cdot;u))+Q_1\alpha^{(0)}), \mathbf S^+(\hat
p(\cdot;u))+Q_1\alpha^{(0)})_{\mathbb C^m}+
(Q_0^{-1}u,u)_{H_0}\!\to \inf_{u\in U},
\end{multline}
where
$$
U=\{u\in H_0: \int_a^b(C^*J_{H_0}Q_0(y(t)
-C^*J_{H_0}u(t)) \overline{\varphi_0(t)}dt=0 \}
$$
for any solutions $\varphi_0(t)$ of homogeneous problem
(\ref{skkv1}), (\ref{skk1v1}).

The form of functional $I(u)$ and reasoning contained in the proof
of Theorem \ref{th1v1} suggest the existence of unique element
$\hat u\in U$ such that
$$
I(\hat u)= \inf_{u\in U} I(u).
$$
Next, funding $\hat \varphi(t)$ as the unique solution to the
problem
$$
L\hat \varphi(t)=Q^{-1}\hat p(t;\hat u)+f^{(0)}(t)\quad
\mbox{on}\quad (a,b),
$$
$$
B_{j}(\hat \varphi)=Q_1^{-1}\mathbf S^+(\hat p(\cdot;\hat
u)_{j}+\alpha_{j}^{(0)},\quad
 j=\overline{1,m},
$$
$$
\int_a^bC^*J_{H_0}Q_0\left(y-C\hat \varphi\right)(t)
\overline{\varphi_{i}(t)}\,dt=0,\quad i=\overline{1,n-r},
$$
and repeating the proof of Theorem \ref{th1v1},  we conclude,
taking into consideration the notation  $\hat p(t)=\hat p(t;\hat
u),$ that problem (\ref{ugbk42v1})--(\ref{ugbk48v1}) is uniquely
solvable.

Let us prove the validity of the representation $\widehat{\widehat
{l(F)}}=l(\hat F).$ Substituting expressions (\ref{repk1v1}) for
$\hat u$ and $\hat c$ into (\ref{miniv1}) and using
(\ref{ugbk42v1})--(\ref{ugbk44v1}), we obtain
$$
\widehat{\widehat {l(F)}}=(y,\hat u)_{H_0}+\hat c
=(y,Q_0Cp)_{H_0}+\hat c
=\overline{(Cp,Q_0y)_{H_0}}+\hat c
$$
$$
=\overline{<Cp,J_{H_0}Q_0y>_{H_0\times H_0'}}
=\overline{(p,C^*J_{H_0}Q_0y)_{L^2(a,b)}}+\hat c
$$
$$
=\overline{\int_a^bp(t)\overline{C^*J_{H_0}Q_0y(t)}\,dt
}+\hat c =\overline{\int_a^bp(t) \overline{L^+\hat p(t)}\,dt
}+\int_a^b\overline{(l_0(t)+z(t))} f^{(0)}(t)dt
$$
\begin{equation}\label{trankv1}
 +\sum_{j=1}^m
\overline{(l_j+S_j^+(z))}\alpha_j^{(0)} +\overline{\int_a^bp(t)
\overline{C^*J_{H_0}Q_0C\hat\varphi(t)}\,dt}.
\end{equation}
Transform the sum of the first three terms on the right-hand side
of this equality using Green's formula (\ref{greenv1}) and taking
into account equalities (\ref{llkkyv1})--(\ref{sgbk25'v1}) and
(\ref{ugbk46v1})--(\ref{ugbk48v1}). As a result, we obtain
$$
\overline{\int_a^bp(t) \overline{L^+\hat p(t)}\,dt
}+\int_a^b\overline{(l_0(t)+z(t))} f^{(0)}(t)dt +\sum_{j=1}^m
\overline{(l_j+S_j^+(z))}\alpha_j^{(0)}
$$
$$
=\overline{\int_a^b Lp(t) \overline{\hat p(t)}\,dt} +\sum_{j=1}^m
\overline{B_j(p)\overline{S_j^+(\hat p(t))}}
$$
$$
+\int_a^b\overline{(l_0(t)+z(t))} f^{(0)}(t)dt +\sum_{j=1}^m
\overline{(l_j+S_j^+(z))}\alpha_j^{(0)}
$$
$$
=\overline{\int_a^b Q^{-1}(l_0(t)+z(t))\overline{\hat p(t)}\,dt }
+\sum_{j=1}^m \overline{Q_1^{-1}(\mathbf{l}+\mathbf
S^+(z))_j\overline{S_j^+(\hat p)}}
$$
$$
+\int_a^b\overline{(l_0(t)+z(t))} f^{(0)}(t)dt +\sum_{j=1}^m
\overline{(l_j+S_j^+(z))}\alpha_j^{(0)}
$$
$$
=\overline{\int_a^b Q^{-1}(l_0(t)+z(t))\overline{\hat p(t)}\,dt }
+(\overline{Q_1^{-1}(\mathbf{l}+\mathbf S^+(z),\mathbf S^+(\hat
p))_{\mathbb C^m}}
$$
$$
+\int_a^b\overline{(l_0(t)+z(t))} f^{(0)}(t)dt +\sum_{j=1}^m
\overline{(l_j+S_j^+(z))}\alpha_j^{(0)}
$$
$$
=\overline{\int_a^b (l_0(t)+z(t))\overline{Q^{-1}\hat p(t)}\,dt }
+(\overline{\mathbf{l}+\mathbf S^+(z),Q_1^{-1}\mathbf S^+(\hat
p))_{\mathbb C^m}}
$$
$$
+\int_a^b\overline{(l_0(t)+z(t))} f^{(0)}(t)dt +\sum_{j=1}^m
\overline{(l_j+S_j^+(z))}\alpha_j^{(0)}
$$
$$
=\int_a^b \overline{(l_0(t)+z(t))}Q^{-1}\hat p(t)\,dt
+\sum_{j=1}^m\overline{(l_j+ S_j^+(z))\overline{(Q_1^{-1}\mathbf
S^+(\hat p)_)j}}
$$
$$
+\int_a^b\overline{(l_0(t)+z(t))} f^{(0)}(t)dt +\sum_{j=1}^m
\overline{(l_j+S_j^+(z))}\alpha_j^{(0)}
$$
$$
=l(\hat F)+\int_a^b\overline{z(t)} Q^{-1}\hat p(t)dt
+\sum_{j=1}^m\overline{ S_j^+(z)}(Q_1^{-1}\mathbf S^+(\hat p))_j
$$
$$
+\int_a^b\overline{z(t)} f^{(0)}(t)dt +\sum_{j=1}^m
\overline{S_j^+(z)}\alpha_j^{(0)}
$$
$$
=l(\hat F)+\int_a^b\overline{z(t)} (Q^{-1}\hat p(t)+f^{(0)}(t))dt
+\sum_{j=1}^m\overline{ S_j^+(z)}(Q_1^{-1}\mathbf S^+(\hat
p))_j+\alpha_j^{(0)})
$$
$$
=l(\hat F)+\int_a^bL\hat
\varphi(t)\overline{z(t)}dt+\sum_{j=1}^mB_j(\hat
\varphi)\overline{ S_j^+(z)}=l(\hat F)+\int_a^b\hat
\varphi(t)\overline{L^+z(t)}dt
$$
$$
=l(\hat F)-\int_a^b\hat \varphi(t)\overline{C^*J_{H_0}Q_0Cp(t)}dt
=l(\hat F)-<C\hat \varphi,J_{H_0}Q_0Cp>_{H_0\times H_0'}
$$
$$
=l(\hat F)-(C\hat \varphi,Q_0Cp)_{H_0}=l(\hat F)-(Q_0C\hat
\varphi,Cp)_{H_0} =l(\hat F)-\overline{(Cp,Q_0C\hat
\varphi)_{H_0}}
$$
$$
=l(\hat F)-\overline{<Cp,J_{H_0}Q_0C\hat \varphi>_{H_0\times
H_0'}} =l(\hat F)-\overline{(p,C^*J_{H_0}Q_0C\hat
\varphi)_{L^2(a,b)}}
$$
$$
=l(\hat F)-\overline{\int_a^bp(t)\overline{C^*J_{H_0}Q_0C\hat
\varphi(t)}dt }.
$$
The resulting relationships together with equality (\ref{trankv1})
yield the sought-for representation.
\end{proof}
\begin{predlll} The function $\hat f=Q^{-1}\hat p+f^{(0)}$ and
numbers $\hat\alpha_j= Q_1^{-1}\mathbf S^+(\hat
p)_j+\alpha_j^{(0)}$ can be taken as estimates of the right-hand
sides  $f$ and $\alpha_j$ $(j=\overline{1,m})$ of equalities
\eqref{skkv1} and \eqref{skk1v1}, respectively.
\end{predlll}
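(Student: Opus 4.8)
The plan is to deduce the corollary directly from Theorem \ref{th2v1}, exploiting the fact that the representation $\widehat{\widehat{l(F)}}=l(\hat F)$ holds for an \emph{arbitrary} linear functional $l$ of the form \eqref{skk8v1}, while the element $\hat F=(\hat f,\hat\alpha)$ itself is manufactured independently of the data $l_0,l_1,\ldots,l_m$ of that functional. First I would observe that the defining system \eqref{ugbk42v1}--\eqref{ugbk48v1} for the pair $(\hat p,\hat\varphi)$, and consequently the formulas $\hat f=Q^{-1}\hat p+f^{(0)}$ and $\hat\alpha_j=Q_1^{-1}\mathbf S^+(\hat p)_j+\alpha_j^{(0)}$, contain no occurrence of $l_0$ or $l_j$; they depend only on the observation $y$, the a priori center $(f^{(0)},\alpha^{(0)})$, and the operators $Q,Q_0,Q_1,C$. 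Hence one and the same $\hat F$ serves every functional $l$.

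Next I would specialize $l$ so as to read off the two components of $\hat F$. Taking $l_j=0$ for all $j$ and letting $l_0$ range over $L^2(a,b)$, Theorem \ref{th2v1} gives that for each such $l_0$ the minimax estimate of $\int_a^b\overline{l_0(t)}f(t)\,dt$ equals $\int_a^b\overline{l_0(t)}\hat f(t)\,dt$. Since this identity holds for every weight $l_0$, the function $\hat f$ is the unique element of $L^2(a,b)$ whose pairing against any test function reproduces the minimax estimate of the corresponding linear functional of $f$, which is exactly what justifies regarding $\hat f$ as a minimax estimate of $f$. Symmetrically, setting $l_0\equiv 0$ and $l_k=\delta_{jk}$ yields that the minimax estimate of the scalar $\alpha_j$ equals $\hat\alpha_j$, so each $\hat\alpha_j$ estimates $\alpha_j$.

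The only point requiring care — the conceptual crux rather than a computational obstacle — is the verification that $\hat F$ is independent of $l$, so that the family of scalar estimates $\{l(\hat F)\}_l$ is genuinely generated by a single vector estimate $\hat F$ of $F$ rather than by a separately optimized estimate for each functional. Once this is noted, no further computation is needed: the corollary is the coordinate-free restatement of Theorem \ref{th2v1}, and the unique solvability of \eqref{ugbk42v1}--\eqref{ugbk48v1} already established there guarantees that $\hat f$ and the $\hat\alpha_j$ are well defined.
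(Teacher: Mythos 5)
Your proposal is correct and is essentially the paper's own (implicit) justification: the paper states this corollary without separate proof precisely because it is the coordinate-free reading of Theorem \ref{th2v1}, whose defining system \eqref{ugbk42v1}--\eqref{ugbk48v1} for $(\hat p,\hat\varphi)$ — and hence $\hat F=(\hat f,\hat\alpha)$ — involves only $y$, $(f^{(0)},\alpha^{(0)})$, and $Q,Q_0,Q_1,C$, never the data $l_0,l_1,\ldots,l_m$ of the functional. Your additional step of specializing $l$ (letting $l_0$ range over $L^2(a,b)$ with $l_j=0$, and taking $l_0\equiv 0$, $l_k=\delta_{jk}$) makes explicit exactly the observation the paper leaves to the reader.
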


\renewcommand{\refname}
{\large \bf  References}
%{\large \bf  СПИСОК ЛИТЕРАТУРЫ}
%{\bf СПИСОК ЛИТЕРАТУРЫ}

\end{document}